\theoremstyle{definition}
\newtheorem{theorem}{Theorem}
\newtheorem*{theorem*}{Theorem}
\numberwithin{theorem}{section}
\newtheorem{proposition}[theorem]{Proposition}
\newtheorem{lemma}[theorem]{Lemma}
\newtheorem{remark}[theorem]{Remark}
\newtheorem{example}[theorem]{Example}
\newtheorem{cor}[theorem]{Corollary}
\newenvironment{customthm}[1]
  {\innercustomthm}
  {\endinnercustomthm}
\DeclareMathOperator{\initial}{in}
\DeclareMathOperator{\wt}{wt}
\newcommand{\la}{\lambda}
\newcommand{\mL}{\mathcal L}
\newcommand{\mM}{\mathcal M}
\newcommand{\mN}{\mathcal N}
\newcommand{\bC}{\mathbb{C}}
\newcommand{\bP}{\mathbb{P}}
\newcommand{\bR}{\mathbb{R}}
\newcommand{\bZ}{\mathbb{Z}}
\DeclareMathSymbol{\lsb@l}{\mathalpha}{letters}{`l}
\title[Gr\"obner fans of generalized Hibi ideals and flag varieties]{Gr\"obner fans of Hibi ideals, generalized Hibi ideals and flag varieties}
\author{Igor Makhlin}
\address{I. Makhlin:\newline
Skolkovo Institute of Science and Technology\\ 
Center for Advanced Studies\\
Bolshoy Boulevard 30, bld. 1\\
Moscow 121205\\
Russia\newline
{\it and }\newline
National Research University Higher School of Economics\\
Faculty of Mathematics\\
Ulitsa Usacheva 6\\Moscow 119048\\Russia}
\email{imakhlin@mail.ru}
\begin{document}

\maketitle

\begin{abstract}
The main goal of this paper is to give explicit descriptions of two maximal cones in the Gr\"obner fan of the Pl\"ucker ideal. These cones correspond to the monomial ideals given by semistandard and PBW-semistandard Young tableaux. For the first cone, as an intermediate result we obtain the description of a maximal cone in the Gr\"obner fan of any Hibi ideal. For the second, we generalize the notion of Hibi ideals by associating an ideal with every interpolating polytope. This is a family of polytopes that generalizes the order and chain polytopes of a poset (\`a la Fang--Fourier--Litza--Pegel). We then describe a maximal cone in the Gr\"obner fan of each of these ideals. We also establish some useful facts concerning PBW-semistandardness, in particular, we prove that it provides a new Hodge algebra structure on the Pl\"ucker algebra. 
\end{abstract}

\tableofcontents

\section*{Introduction}

The Gr\"obner fan $\Sigma(I)$ of an ideal $I$ in a polynomial ring is a polyhedral fan composed of cones $C(I,J)$ which parametrize the initial ideals $J$ of $I$. These fans were introduced by Mora and Robbiano in~\cite{MR} as a tool in computer algebra where they can be applied to the construction of universal Gr\"obner bases as well as Gr\"obner basis conversion (\cite{CKM}). However, these fans have also been recognized to play a key role in tropical geometry, since the tropicalization of an algebraic variety can be viewed as the support of a subfan of the Gr\"obner fan (see~\cite[Chapter 2]{MLS}). Gr\"obner fans are also of general relevance to algebraic geometry, since they parametrize an important class of flat degenerations of a scheme, its Gr\"obner degenerations (see, for instance,~\cite[Section 3.3]{HH1}). 

We will be interested in the Gr\"obner fan of the ideal $I$ of Pl\"ucker relations that defines the Pl\"ucker embedding of a (type A) flag variety. The last two decades have seen a variety of results concerning the structure of these fans both in the Grassmannian case (\cite{SS,SW,HJS,KaM}) and the case of a complete or partial flag variety (\cite{BLMM,fafefom,M}). However, all of these papers specifically consider the tropical subfan while little is known regarding the rest of $\Sigma(I)$. This paper was written with the initial goal of making some progress in this direction by explicitly describing two maximal cones in $\Sigma(I)$ that correspond to two known initial monomial ideals of $I$.

Let $I\subset R=\bC[\{X_{i_1<\dots<i_k}\}]$ be the ideal of Pl\"ucker relations for the complete flag variety in the ring of polynomials in Pl\"ucker variables. Consider a poset $\mM$ with elements $a_{i_1,\dots,i_k}$ corresponding to the Pl\"ucker variables with $a_{i_1,\dots,i_k}\le a_{j_1,\dots,i_l}$ if $k\ge l$ and $i_r\le j_r$ for $r\le l$. In this context a standard monomial is a product of Pl\"ucker variables such that the corresponding elements in $\mM$ form a weakly increasing sequence. A classical fact that dates back to the work of Hodge (but see also~\cite{DEP}) is that the monomial ideal $I^m(\mM)$ spanned by non-standard monomials is an initial ideal of $I$. We aim to describe the corresponding maximal cone $C(I,I^m)$ in $\Sigma(I)$ by finding its facets.

It turns out that it is helpful to solve an intermediate problem first. It is easily seen that $\mM$ is a distributive lattice, its Hibi ideal $I^h(\mM)\subset R$ is the ideal generated by the expressions $X_aX_b-X_{a\land b}X_{a\lor b}$. The monomial ideal $I^m(\mM)$ is also an initial ideal of $I^h(\mM)$ and the maximal cone $C(I^h(\mM),I^m(\mM))$ in $\Sigma(I^h(\mM))$ is not too hard to describe. We will term $\{a,b\}\subset\mM$ a \emph{diamond pair} if $a\lor b$ covers both $a$ and $b$. Recall that the Gr\"obner fan is contained in the real space with coordinates corresponding to variables of the ambient polynomial ring. We prove
\begin{customthm}{A}\label{thma}
$C(I^h(\mM),I^m(\mM))$ consists of points $w$ such that $w_a+w_b<w_{a\land b}+w_{a\lor b}$ for every diamond pair $\{a,b\}$. Each of these inequalities provides a facet of the cone.
\end{customthm}
A complete analog of this theorem holds for the Hibi ideal of an arbitrary distributive lattice and we prove it in that generality. Also note that the ideal $I^h(\mM)$ is precisely the toric ideal studied in~\cite{GL,KM} and provides the well known toric degeneration associated with Gelfand--Tsetlin polytope of~\cite{GT}.

General properties of Gr\"obner fans imply that every inequality in the above theorem also provides a facet of $C(I,I^m(\mM))$ and we are to find the remaining facets. They turn out to correspond to certain \emph{special} diamond pairs which can be concisely defined as diamond pairs $\{a,b\}$ for which one has elements $p_1(a,b)$ covered by $a\land b$ and $q_1(a,b)$ covering $a\lor b$ such that $X_aX_b-X_{a\land b}X_{a\lor b}+X_{p_1(a,b)}X_{q_1(a,b)}\in I$ is the corresponding straightening relation.

\begin{customthm}{B}
$C(I,I^m(\mM))$ is cut out by the inequalities in Theorem~\ref{thma} together with $w_a+w_b<w_{p_1(a,b)}+w_{q_1(a,b)}$ for every special diamond pair $\{a,b\}$. Each of these inequalities provides a facet of the cone.
\end{customthm}
This theorem is proved via a careful examination of the combinatorics of $\mM$ and of the straightening relations in $I$.

Now, by definition, $a_{i_1,\dots,i_k}\le a_{j_1,\dots,j_l}$ if and only if the columns $(i_1,\dots,i_k)$ and $(j_1,\dots,j_l)$ form a semistandard Young tableau. A different much more recent notion of semistandardness, called \emph{PBW-semistandardness}, appears in the theory of PBW degenerations (\cite{Fe}). We consider another lattice $\mN$ composed of $b_{\alpha_1,\dots,\alpha_k}$ such that $(\alpha_1,\dots,\alpha_k)$ is a one-column PBW-semistandard tableau and $b_{\alpha_1,\dots,\alpha_k}\ge b_{\beta_1,\dots,\beta_l}$ if $(\alpha_1,\dots,\alpha_k)$ and $(\beta_1,\dots,\beta_l)$ is a two-column PBW-semistandard tableau. The elements of $\mN$ are again in bijection with the Pl\"ucker variables and we consider the monomial ideal $I^m(\mN)\subset R$. Results in the literature can easily be applied to show that $I^m(\mN)$ is an initial ideal of $I$ and we set ourselves the task of describing $C(I,I^m(\mN))$. This requires substantially more work, mainly for two reasons: the combinatorics here is a) far less studied and b) technically more complicated despite many structural similarities.

To find an approach similar to the above one must first find the appropriate analog of $I^h(\mM)$, i.e.\ a toric initial ideal $J$ of $I$ that further degenerates into $I^m(\mN)$. The next task would then be to interpret $J$ as some variation (or, rather, generalization) of the Hibi ideal with respect to the lattice $\mN$. A candidate for the role of $J$ is obvious: the ideal $I^{fflv}$ that provides the toric degeneration associated with the Feigin--Fourier--Littelmann--Vinberg polytope (\cite{FFL1}). $I^{fflv}$ is known to be an initial ideal of $I$ (\cite{favourable,fafefom}) and is also seen to have $I^m(\mN)$ as an initial ideal. The task of interpreting $I^{fflv}$ in terms of the lattice $\mN$ is more difficult.

For a distributive lattice $\mL$ the Hibi ideal $I^h(\mL)$ is known to be the toric ideal associated with the order polytope of the poset $\mathcal P(\mL)$ of join-irreducible elements in $\mL$ (see~\cite{stan,H}). To find a similar interpretation of $I^{fflv}$ we generalize this construction by defining a polytope $\Pi_{U_o,U_c}(P)$ for every partition $U_o\sqcup U_c=P$ of a poset $P$, with $\Pi_{P,\varnothing}(P)$ being the order polytope and $\Pi_{\varnothing,P}(P)$ the chain polytope. Let us note that this construction is very similar to those in~\cite{FF} and~\cite{FFLP}, in fact, our polytopes are unimodular transforms of certain polytopes considered there. Some of the properties we prove (pairwise Ehrhart equivalence and the Minkowski sum property) follow easily from general theorems in~\cite{FFLP} and~\cite{FFP} and should not be viewed as new results.

We then show that for a distributive lattice $\mL$ the toric ideal $I^{U_o,U_c}(\mL)$ associated with $\Pi^{U_o,U_c}(\mathcal P(\mL))$ is generated by the binomials $X_aX_b-X_{a\odot_{U_o,U_c}b}X_{a\lor b}$ for a certain binary operation $\odot_{U_o,U_c}$ that is provided by the polytope. This toric ideal is seen to degenerate into $I^m(\mL)$ and the cone $C(I^{U_o,U_c}(\mL),I^m(\mL))$ is described by a generalization of Theorem~\ref{thma} where $\land$ is replaced by $\odot_{U_o,U_c}$. We then interpret $I^{fflv}$ as an ideal of the form $I^{U_o,U_c}(\mN)$.

In order to augment the description of $C(I^{fflv},I^m(\mN))$ to a description of $C(I,I^m(\mN))$ we establish two fundamental properties of $\mN$. Namely, we construct a lattice isomorphism $\tau:\mM\to\mN$ and show that the Pl\"ucker algebra $R/I$ has the structure of an algebra with straightening laws (or Hodge algebra) on $\mN$ (\cite{H,DEP2}). We note that these facts have been previously conjectured by Xin Fang and, possibly, other experts in the field (see~\cite[Sections 8 and 9]{FL} for some related ideas and conjectures). In this respect we also point out that out of several technical combinatorial proofs that could not be avoided, the proof of the second property is the longest and most complicated in this paper.

We then finally prove our description of $C(I,I^m(\mN))$. Here a special diamond pair $\{a,b\}\subset\mN$ is the $\tau$-image of a special diamond pair in $\mM$ and $X_aX_b-X_{a\odot_{U_o,U_c} b}X_{a\lor b}-X_{g_1(a,b)}X_{h_1(a,b)}\in I$. 
\begin{customthm}{C}
$C(I,I^m(\mN))$ is cut out by the inequalities that provide the facets of $C(I^{fflv},I^m(\mN))$ together with $w_a+w_b<w_{g_1(a,b)}+w_{h_1(a,b)}$ for every special diamond pair $\{a,b\}\subset\mN$. Each of these inequalities provides a facet of the cone.
\end{customthm}

We conclude the paper by giving a certain convex geometric implication of our results that we find curious. It relates our descriptions of the maximal cones to the descriptions of the smaller cones $C(I,I^h(\mM))$ and $C(I,I^{fflv})$ which were given in~\cite{M} and~\cite{fafefom}. 

Let us note that while this paper considers only complete flag varieties, descriptions of the corresponding maximal cones for Grassmannians are seen to follow from our results. The case of a partial flag variety should also be similar but the combinatorial statements would need some adjustments. We also point out that while this paper is primarily concerned with the combinatorics of the Gr\"obner fan, as mentioned above, every face of the cones in question provides a Gr\"obner degeneration of the flag variety. The geometry of these degenerations is the subject of a separate paper currently in preparation.

\textbf{Acknowledgements.} The author would like to thank Xin Fang, Evgeny Feigin and Ievgen Makedonskyi for helpful discussions of these subjects. The work was partially supported by the grant RSF 19-11-00056. This research was also supported in part by the Young Russian Mathematics award.

\section{Gr\"obner fans}

For a positive integer $N$ consider the polynomial ring $R=\bC[X_1,\ldots,X_N]$. A \emph{monomial order} on $R$ is a linear order $\prec$ on the set of monomials in $R$ (products of the $X_i$ with coefficient 1) that respects multiplication: for any monomials $A,B,C\in R$ the relation $A\prec B$ implies $AC\prec BC$. 

For $d=(d_1,\ldots,d_N)\in\bZ_{\ge0}^N$ let $\mathbf X^d=\prod X_i^{d_i}$ and let $(,)$ be the standard scalar product in $\bR^N$. For a set of vectors $d^j\in\bZ_{\ge0}^N$ consider $p=\sum_j c_j\mathbf X^{d^j}\in R$ with $c_j\neq 0$. For a monomial order $\prec$ the \emph{initial part} of $p$ is $\initial_\prec p=c_l\mathbf X^{d^l}$ where $X^{d^l}$ is minimal with respect to $\prec$ among the $X^{d^j}$. For an ideal $I\subset R$ its \emph{initial ideal} $\initial_\prec I$ is the linear span $\bC\{\initial_\prec p, p\in I\}$ which is easily seen to be a monomial ideal in $R$.

We will, however, primarily concern initial ideals of a different kind, given by a vector $w\in\bR^N$ rather than a monomial order. Let $p$ be as above and let $\min_j (w,d^j)=m$, the initial part of $p$ is then \[\initial_w p=\sum_{j|(w,d^j)=m} c_j\mathbf X^{d^j}.\] In other words, we define a grading on $R$ by setting the grading of $X_i$ equal to $w_i$ and then take the nonzero homogeneous component of $p$ of the least possible grading. For an ideal $I\subset R$ its initial ideal $\initial_w I$ is the linear span $\bC\{\initial_w p, p\in I\}$. This is also seen to be an ideal which, however, is not necessarily monomial.

Now let the ideal $I\subset R$ be homogeneous with respect to the usual total degree. For an ideal $J\subset R$ denote $C(I,J)$ the subset in $\bR^N$ of points $w$ for which $\initial_w I=J$. The nonempty subsets $C(I,J)$ form a partition of $\bR^N$ with $w$ contained in $C(I,\initial_w I)$. This partition is known as the \emph{Gr\"obner fan} of $I$, its basic properties in the case of a homogeneous ideal are summed up in the below theorem. 
\begin{theorem}\label{gfans}
For a homogeneous ideal $I\subset R$ the following hold.
\begin{enumerate}[label=(\alph*)]
\item There are only finitely many different nonempty subsets $C(I,J)$.
\item Every nonempty subset $C(I,J)$ is a relatively open polyhedral cone (an open subset of its linear span).
\item Together all the nonempty $C(I,J)$ form a polyhedral fan with support $\bR^N$. This means that every face of the closure $\overline{C(I,J)}$ is itself the closure of some $C(I,J')$. 
\item\label{IJ'J} If $\overline{C(I,J')}$ is a face of $\overline{C(I,J)}$, then the set $C(J',J)$ is nonempty. Conversely, if the sets $C(I,J')$ and $C(J',J)$ are both nonempty, then so is $C(I,J)$ and $\overline{C(I,J')}$ is a face of $\overline{C(I,J)}$.
\item For every monomial order $\prec$ on $R$ there exists $w\in\bR^N$ such that $\initial_\prec I=\initial_w I$. The cone $C(I,\initial_\prec I)$ is maximal in the Gr\"obner fan, i.e.\ it has dimension $N$.
\end{enumerate}
\end{theorem}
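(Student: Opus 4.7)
The plan is to base everything on a single finite combinatorial object attached to $I$, namely a \emph{universal Gr\"obner basis}. I would begin by constructing a finite set $G\subset I$ such that for every $w\in\bR^N$ the initial ideal $\initial_w I$ is generated by $\{\initial_w g:g\in G\}$. Since $R$ is Noetherian every particular $\initial_w I$ is finitely generated, and for a homogeneous ideal one can restrict the analysis to a bounded range of total degrees (via a Macaulay-style bound, or cleanly through the state polytope). Taking $G$ to be the union of reduced Gr\"obner bases across a suitably exhaustive but finite family of monomial orders works. This already gives (a): each $\initial_w I$ is determined by the finite combinatorial datum of which terms of each $g\in G$ are $w$-minimal, and there are only finitely many such data.

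With $G=\{g_1,\dots,g_m\}$, $g_i=\sum_j c_{ij}\mathbf X^{d_{ij}}$, fixed, the set of $w$ with prescribed initial ideal $J$ is cut out by linear constraints: for each $i$ pick the subset $S_i$ of indices of $w$-minimal terms; this amounts to equalities $(w,d_{ij}-d_{ij'})=0$ for $j,j'\in S_i$ and strict inequalities $(w,d_{ij}-d_{ij''})<0$ for $j\in S_i$, $j''\notin S_i$. This proves (b), and replacing strict inequalities by weak ones gives $\overline{C(I,J)}$ as a polyhedral cone. For (c) I would check that the intersection of any two closures is again defined by a compatible system of such (in)equalities, hence is itself the closure of some $C(I,J')$, and that every face of $\overline{C(I,J)}$ is obtained by promoting some strict inequalities to equalities, which by the combinatorics above corresponds to an actual initial ideal of $I$.

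The heart of (d) is the identity $\initial_{w+\epsilon w'}I=\initial_{w'}(\initial_w I)$ for sufficiently small $\epsilon>0$, verified by running the two-step minimization on each element of $G$. Starting from a point $w$ in the relative interior of a face of $\overline{C(I,J)}$ and perturbing by a vector pointing into $C(I,J)$ yields the nonemptiness of $C(J',J)$; the converse direction is a symmetric application of the same identity combined with the fan property from (c). For (e), given a monomial order $\prec$ only finitely many monomial comparisons occur among the terms of a reduced Gr\"obner basis with respect to $\prec$; a routine linear-programming argument produces $w$ satisfying these finitely many strict inequalities, and the strictness places $w$ in the relative interior of a full-dimensional cone. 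The main obstacle throughout is (a): actually producing and controlling a universal Gr\"obner basis so that the finiteness and the polyhedral description become simultaneously available; once this is in place everything else is polyhedral bookkeeping.
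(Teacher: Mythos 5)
The paper does not actually prove this theorem: it is stated as a collection of known facts with a pointer to Chapters~1 and~2 of Sturmfels, \emph{Gr\"obner Bases and Convex Polytopes}. Your sketch retraces the standard line of argument from that reference (finite universal Gr\"obner basis $\Rightarrow$ polyhedral description of cells $\Rightarrow$ fan structure via the two-step refinement identity), so the overall approach is the right one. Two points, however, are glossed over in a way that conceals the real content.

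On (a): the phrase ``union of reduced Gr\"obner bases across a suitably exhaustive but finite family of monomial orders'' is circular as written. The set of monomial orders is infinite, and the assertion that only finitely many distinct reduced Gr\"obner bases arise \emph{is} the finiteness theorem one needs to prove (Sturmfels Theorem~1.2), established via a compactness/K\"onig-type argument or a degree bound. Without it you have not constructed $G$, and (a) does not yet follow.

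On (b)--(c): for a fixed tuple $(S_i)$ of supports you do obtain a relatively open polyhedral cell, but $C(I,J)$ is a priori a \emph{union} of all such cells that yield $J$, and it is not automatic that this union is a single relatively open convex polyhedral cone. That convexity is exactly the assertion that the datum $(S_i)$ is recoverable from $J$ alone, which is false for an arbitrary generating set and is precisely where the structure of \emph{reduced} Gr\"obner bases (equivalently, the state polytope and its normal fan) must enter; this is the content of Sturmfels' Proposition~2.3 and the Chapter~2 state-polytope argument. Without it, the ``polyhedral bookkeeping'' claimed for (c) and (d) is not secured either. The identity $\initial_{w'}(\initial_w I)=\initial_{w+\varepsilon w'} I$ you use for (d) is Sturmfels Proposition~1.13 (the paper itself cites this in its Proposition~1.3), and that part of your outline, together with (e), is sound.
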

\begin{proof}
Modulo a switch between the $\min$ and $\max$ conventions, most of these properties are found in~\cite{S}. Namely, part (a) is Theorem 1.2, (b) is Proposition 2.3, (c) is Proposition 2.4, the first claim in part (e) is Proposition 1.11. The second claim in part (e) is immediate from part (d), since a monomial ideal has no initial ideals other than itself.

Part (d) is also well-known and is easily deduced from Proposition 1.13 in~\cite{S} which states that for any $w,w'\in\bR^N$ for sufficiently small $\varepsilon>0$ one has 
\begin{equation}\label{prop113}
\initial_{w'}(\initial_w I)=\initial_{w+\varepsilon w'} I. 
\end{equation}
If $\overline{C(I,J')}$ is a face of $\overline{C(I,J)}$, then for any $w'\in C(I,J)$, $w\in C(I,J')$ and $\varepsilon>0$ we have $w+\varepsilon w'\in C(I,J)$. By~\eqref{prop113} $J$ is indeed an initial ideal of $J'=\initial_w I$. Conversely, if $w\in C(I,J')$ and $w'\in C(J',J)$, then~\eqref{prop113} provides $J=\initial_{w+\varepsilon w'} I$ for small enough $\varepsilon>0$. Consequently, $C(I,J)$ is nonempty and $\overline{C(I,J)}$ contains $C(I,J')$, hence $\overline{C(I,J')}$ is a face of $\overline{C(I,J)}$.
\end{proof}

A basic fact that is useful in understanding the geometry of Gr\"obner fans is that when the ideal is homogeneous (and we will only consider such ideals), none of the cones in its Gr\"obner fan are pointed (their apexes are not points).
\begin{proposition}
If $I$ is homogeneous, the set $C(I,I)$ is a vector subspace of positive dimension. This subspace is the minimal face (apex) of every nonempty $\overline{C(I,J)}$.
\end{proposition}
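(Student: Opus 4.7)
The plan is to identify $C(I,I)$ with the minimal face (apex) $L$ of every nonempty $\overline{C(I,J)}$. Since the minimal face of a polyhedral cone is always a vector subspace (its lineality space), this identification at once gives the subspace structure of $C(I,I)$ and the second assertion, while the positive-dimension claim will follow from exhibiting a specific nonzero vector in $C(I,I)$.

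The anchor is the all-ones vector $e=(1,\dots,1)$. I would begin by observing that $e\in C(I,I)$: since $I$ is homogeneous, it is spanned by its homogeneous components, and for any homogeneous $p$ of total degree $D$ every monomial of $p$ satisfies $(e,d^j)=D$, so $\initial_e p=p$; passing to linear spans gives $\initial_e I=I$.

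The main technical step is to verify that $e$ lies in the lineality of every nonempty $\overline{C(I,J)}$, i.e.\ $\initial_{w+te}I=\initial_w I$ for all $w\in\bR^N$ and all $t\in\bR$. The argument reduces to homogeneous $p\in I$: for such a $p$ of degree $D$ the weight $(w+te,d^j)=(w,d^j)+tD$ shifts uniformly across the monomials of $p$, so $\initial_{w+te}p=\initial_w p$. Since both initial ideals are spanned by the initial parts of homogeneous elements of $I$, they coincide. This shift-invariance is the only real calculation in the proof.

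The remainder is structural. Let $L$ denote the minimal face of $\overline{C(I,J)}$. By Theorem~\ref{gfans}(c) we may write $L=\overline{C(I,J^*)}$ for some $J^*$. A vector subspace regarded as a polyhedral cone has no proper faces, so its relative interior equals itself; consequently $\initial_v I=J^*$ for every $v\in L$. Applying this to $v=e\in L$ gives $J^*=\initial_e I=I$, so $L=\overline{C(I,I)}$. Since $C(I,I)$ coincides with the relative interior of $\overline{C(I,I)}=L$ by Theorem~\ref{gfans}(b), and that relative interior is $L$ itself, we conclude $C(I,I)=L$. Because $e\neq 0$, the common dimension is positive.
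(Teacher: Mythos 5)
Your proof is correct, but it takes a genuinely different route from the one in the paper. The paper's argument is much shorter: it applies the converse direction of Theorem~\ref{gfans}\ref{IJ'J} with $J'=I$, and since $C(I,I)$ is nonempty, concludes at once that $\overline{C(I,I)}$ is a face of every nonempty $\overline{C(I,J)}$; being the unique minimal cone it must be a vector subspace, and the span of $(1,\dots,1)$ supplies positive dimension. Your argument avoids part~\ref{IJ'J} entirely. Instead you exploit the homogeneity of $I$ twice and explicitly: once to show $e=(1,\dots,1)\in C(I,I)$, and once to show the shift-invariance $\initial_{w+te}I=\initial_w I$, which places $e$ in the lineality space of every closed cone in the fan. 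You then use parts (b) and (c) of Theorem~\ref{gfans} to write the minimal face as $\overline{C(I,J^*)}$ and to force $J^*=I$ by evaluating at $e$. This is longer but has the virtue of making the role of homogeneity (the uniform shift $(w+te,d^j)=(w,d^j)+tD$ across a homogeneous component) completely visible rather than packaging it into the black box of part~\ref{IJ'J}. One step should be made explicit: the inference $C(I,J^*)=L$ rests on the standard fact that a nonempty relatively open convex set equals the relative interior of its closure, combined with the observation that a vector subspace is its own relative interior; this is true and well known, but it is the hinge of your closing identification and is currently left implicit between the clauses of your penultimate sentence.
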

\begin{proof}
Part~\ref{IJ'J} of Theorem~\ref{gfans} implies that if $C(I,J)$ is nonempty, then $\overline{C(I,I)}$ is a face of $\overline{C(I,J)}$. This shows that $C(I,I)$ is indeed the unique minimal cone in the fan, in particular, it is necessarily a vector subspace.

Since $I$ is homogeneous, it is evident that adding the same value to every coordinate of $w$ does not alter the ideal $\initial_w I$. This shows that $C(I,I)$ contains the span of $(1,\ldots,1)$. 
\end{proof}

We state one more property of Gr\"obner fans which will be useful to us.
\begin{proposition}\label{minksum}
Suppose that $C(I,J')$ and $C(J',J)$ are both nonempty. Then $C(J',J)$ is the Minkowski sum of $C(I,J)$ and the linear span $\bR C(I,J')$.
\end{proposition}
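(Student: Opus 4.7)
The plan is to rely on the standard identity
\[
\initial_{v+\epsilon w}(I)\;=\;\initial_w(\initial_v I),
\]
which holds for any $v,w\in\bR^N$ and all sufficiently small $\epsilon>0$. For a single polynomial this follows by expanding $(v+\epsilon w,d)=(v,d)+\epsilon(w,d)$ and noting that for small $\epsilon$ the minimum is attained by first minimising the $v$-weight and then, among those terms, the $w$-weight; a uniform $\epsilon_0$ for the whole ideal is obtained from any finite generating set. With this identity in hand, the proof splits into two inclusions.

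For $C(I,J)+\bR C(I,J')\subseteq C(J',J)$, I would pick $w\in C(I,J)$, $u\in\bR C(I,J')$ and any $v\in C(I,J')$. Since $C(I,J')$ is relatively open in its linear span, $v+\epsilon u\in C(I,J')$ for small $\epsilon>0$. By Part~\ref{IJ'J} of Theorem~\ref{gfans}, $\overline{C(I,J')}$ is a face of $\overline{C(I,J)}$, so combining the face point $v+\epsilon u$ with a positive multiple of the relative-interior point $w$ lands in $C(I,J)$. Hence $\initial_{v+\epsilon(w+u)}(I)=J$, and the identity applied with base point $v$ and direction $w+u$ rewrites this as $\initial_{w+u}(\initial_v I)=\initial_{w+u}(J')=J$, that is, $w+u\in C(J',J)$.

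For the reverse inclusion $C(J',J)\subseteq C(I,J)+\bR C(I,J')$, take $w\in C(J',J)$ and any $v\in C(I,J')$. The identity gives $\initial_{v+\epsilon w}(I)=\initial_w(J')=J$ for small $\epsilon$, so $v+\epsilon w\in C(I,J)$. Rescaling by the positive number $1/\epsilon$ keeps the point in the cone $C(I,J)$, and subtracting $v/\epsilon\in\bR C(I,J')$ exhibits $w$ as an element of $C(I,J)+\bR C(I,J')$.

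The main obstacle is the careful formulation of the key identity at the level of ideals, especially the uniformity of $\epsilon_0$ across an ideal; once that is set up, both inclusions reduce to it together with the elementary convex-geometric fact that a positive combination of a point of a convex cone with a relatively interior point lies in the relative interior.
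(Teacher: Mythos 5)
Your proof is correct and uses the same key input as the paper, namely the ideal-level identity $\initial_{v+\epsilon w}(I)=\initial_w(\initial_v I)$ for sufficiently small $\epsilon>0$ (this is Proposition 1.13 in Sturmfels's \emph{Gr\"obner Bases and Convex Polytopes}, which the paper cites directly rather than re-deriving). Your forward inclusion matches the paper's almost verbatim. For the reverse inclusion you argue directly: from $w\in C(J',J)$ and $v\in C(I,J')$ you get $v+\epsilon w\in C(I,J)$, rescale by $1/\epsilon$ using that $C(I,J)$ is a cone, and then subtract $v/\epsilon\in\bR C(I,J')$. The paper instead argues contrapositively, picking $w'\notin C(I,J)+\bR C(I,J')$ and using that this set is translation-invariant under $\bR C(I,J')$ to deduce $w+\varepsilon w'\notin C(I,J)$. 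Your direct version is a bit more transparent and avoids checking that invariance, so it is a small stylistic improvement. One caution: your parenthetical remark that uniformity of $\epsilon_0$ across the ideal follows ``from any finite generating set'' is not quite right; taking initial forms of an arbitrary generating set need not generate the initial ideal, and Sturmfels's proof goes through a Gr\"obner basis with respect to the relevant weights. Since you are merely motivating a result you would cite anyway, this does not affect the validity of your proof, but it is worth correcting the heuristic.
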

\begin{proof}
We again make use of Proposition 1.13 in~\cite{S}, see~\eqref{prop113}.
In one direction, consider $w\in C(I,J')$, $w_1\in C(I,J)$ and $w_2\in\bR C(I,J')$. Since $C(I,J')$ is relatively open, for small enough $\varepsilon$ one has $w+\varepsilon w_2\in C(I,J')$ and \[w+\varepsilon(w_1+w_2)\in \varepsilon w_1+C(I,J')\subset C(I,J).\] For $w'=w_1+w_2$ identity~\eqref{prop113} takes the form $\initial_{w_1+w_2} J'=J$.

Conversely, consider $w\in C(I,J')$ and $w'\notin C(I,J)+\bR C(I,J')$. We see that $C(I,J)+\bR C(I,J')$ is invariant under translation by $w$. Therefore, for any $\varepsilon>0$ we have $w+\varepsilon w'\notin C(I,J)+\bR C(I,J')$ and, consequently, $w+\varepsilon w'\notin C(I,J)$. Now~\eqref{prop113} provides $\initial_{w'} J'\neq J$.
\end{proof}

\section{Distributive lattices and Hibi ideals}

For any finite distributive lattice the monomial ideal spanned by non-standard monomials is an initial ideal of the Hibi ideal. In this section we introduce these notions and give an explicit description of the corresponding maximal cone in the Gr\"obner fan of the Hibi ideal.

Let $(\mL,\lor,\land)$ be a finite distributive lattice with induced order relation $<$, so, for instance, $a=b\lor c$ is the minimal (with respect to $<$) element for which $b<a$ and $c<a$. For each $a\in\mL$ introduce the variable $X_a$ and consider the polynomial ring $R(\mL)=\bC[\{X_a,a\in\mL\}]$. The \textit{Hibi ideal} of $\mL$ is the ideal $I^h(\mL)\subset R(\mL)$ generated by the elements \[d(a,b)=X_aX_b-X_{a\lor b}X_{a\land b}\] for all $a,b\in\mathcal L$. Note that $d(a,b)\neq 0$ if and only if $a$ and $b$ are incomparable with respect to $<$. This notion originates from~\cite{H} (it is not to be confused with the related but different ideals introduced in~\cite{HH}, which are referred to by the same name).

Next, let us consider the monomial ideal $I^m(\mL)\subset R(\mL)$ generated by products $X_aX_b$ for all incomparable pairs $\{a,b\}\subset\mathcal L$. Call a monomial $X_{a_1}\ldots X_{a_k}$ \textit{standard} if the elements $a_1,\ldots,a_k$ are pairwise comparable (i.e.\ they form a weak chain). Then one sees that the non-standard monomials are a basis of $I^m(\mL)$. 

When the lattice in consideration is clear from the context we will simply write $R$, $I^h$ and $I^m$ for $R(\mL)$, $I^h(\mL)$ and $I^m(\mL)$. The following fact is easily deduced from~\cite{H}.
\begin{proposition}\label{hibimon}
There exists a monomial order $\prec$ on $R$ such that $\initial_\prec I^h=I^m$.
\end{proposition}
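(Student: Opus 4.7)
The plan is to combine an explicit monomial order with a Hilbert function comparison. I would begin by fixing a linear extension of $\mL$, i.e.\ a total order refining the partial order $<$, and decree the variable order $X_a\succ X_b$ to match this extension (larger element in the extension corresponds to larger variable). I would then let $\prec$ be the induced lexicographic monomial order on $R$; this is a genuine monomial order.

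The key pointwise check is that for every incomparable pair $\{a,b\}\subset\mL$, the join $a\lor b$ strictly dominates $a$, $b$ and $a\land b$ in the lattice, so its variable $X_{a\lor b}$ is the largest of the four variables appearing in $d(a,b)=X_aX_b-X_{a\lor b}X_{a\land b}$ with respect to $\prec$. Since $X_{a\lor b}$ has exponent $1$ in $X_{a\lor b}X_{a\land b}$ and exponent $0$ in $X_aX_b$, the lex comparison is decided at this variable and yields $X_{a\lor b}X_{a\land b}\succ X_aX_b$. Under the paper's $\min$ convention for initial terms, this means $\initial_\prec d(a,b)=X_aX_b$. As $a,b$ range over all incomparable pairs the elements $X_aX_b$ generate $I^m$, giving the containment $I^m\subseteq\initial_\prec I^h$.

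For the reverse containment I would run a Hilbert function argument. Passing to an initial ideal preserves the graded Hilbert function, so $\dim(R/\initial_\prec I^h)_d=\dim(R/I^h)_d$ in every degree $d$. By the classical result of Hibi in~\cite{H}, $R/I^h$ is the affine semigroup algebra of multichains in $\mL$, whence $\dim(R/I^h)_d$ equals the number of weak chains $a_1\le\cdots\le a_d$ in $\mL$. This is exactly the number of standard monomials of degree $d$, which is $\dim(R/I^m)_d$. Combined with the containment $I^m\subseteq\initial_\prec I^h$ from the previous paragraph, the equality of Hilbert functions forces $I^m=\initial_\prec I^h$.

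The only nontrivial input is Hibi's theorem identifying the Hilbert function of $R/I^h$; modulo this, everything reduces to the lex comparison above and a dimension match. A self-contained alternative would be a direct Buchberger-style verification that the $d(a,b)$ form a Gr\"obner basis under $\prec$, checking that each $S$-polynomial between two generators with overlapping leading monomials $X_aX_b$ and $X_bX_c$ (for triples with $b$ incomparable to both $a$ and $c$) reduces to zero modulo the generators via the distributive identities $(a\land b)\land c=a\land(b\land c)$ and $(a\lor b)\lor c=a\lor(b\lor c)$; this is elementary but notationally heavier and carries the same content.
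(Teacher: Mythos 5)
Your proof is correct and takes essentially the same route as the paper: a monomial order built from a linear extension of $\mL$ forces $\initial_\prec d(a,b)=X_aX_b$ for every incomparable pair, giving $I^m\subseteq\initial_\prec I^h$, and then Hibi's theorem that standard monomials form a basis of $R/I^h$, together with the invariance of the graded Hilbert function under passing to an initial ideal, upgrades the inclusion to equality. The only cosmetic difference is the choice of order (you use plain lexicographic, the paper uses a graded reverse lexicographic order determined by the same linear extension); both yield the needed leading-term computation, and the Hilbert-function closing argument is identical.
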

\begin{proof}
Choose a linearization $<_1$ of the order $<$ on $\mL$ (i.e.\ a total order compatible with $<$), let $\le_1$ be the corresponding nonstrict relation and define a graded reverse lexicographic monomial order on $R$ as follows. For tuples $a_1\le_1\ldots\le_1 a_k$ and $b_1\le_1\ldots \le_1 b_l$ set \[X_{a_1}\ldots X_{a_k}\prec X_{b_1}\ldots X_{b_l}\] whenever $k<l$ or $k=l$ and we have $a_i <_1 b_i$ for the largest $i$ such that $a_i\neq b_i$.  

It is evident that for any incomparable $a$ and $b$ we have $\initial_\prec d(a,b)=X_aX_b$. Consequently, $I^m\subset\initial_\prec I^h$. Now, for any homogeneous ideal $I$ let $I_d$ be the subspace of homogeneous polynomials of degree $d$ in $I$. It is well known (see, for instance,~\cite[Corollary 2.4.9]{MLS}) that if $J$ is an initial ideal of $I$, then $\dim I_d=\dim J_d$.  However, it is shown in~\cite[Section 2]{H} that the set of standard monomials in $R$ projects to a basis in $R/I^h$, hence $\dim I^m_d=\dim I^h_d=\dim(\initial_\prec I^h)_d$ for all $d$ and the above inclusion cannot be strict.
\end{proof}

In view of Theorem~\ref{gfans}, this proposition implies that there exist such $w\in\bR^\mL$ that $\initial_w I^h=I^m$ and that the set $C(I^h,I^m)$ of all such $w$ is an open polyhedral cone of dimension $|\mL|$. Below we will give a minimal H-description of this cone, i.e.\ list its facets. 
First, however, let us point out that giving a non-minimal H-description of the cone, i.e.\ expressing it as any intersection of a finite set of half-spaces is rather straightforward. Let $w\in\bR^\mL$ have coordinates $\{w_a,a\in\mL\}$. 
\begin{proposition}\label{redundant}
$C(I^h,I^m)$ consists of the points $w$ satisfying
\begin{equation}\label{diamondineq}
w_a+w_b<w_{a\lor b}+w_{a\land b}
\end{equation}
for all incomparable $a$ and $b$. 
\end{proposition}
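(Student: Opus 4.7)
The plan is to establish both inclusions between the two sets directly from the definitions and from the Hilbert function equality built into the proof of Proposition~\ref{hibimon}.

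For the ``if'' direction, suppose $w$ satisfies every inequality~\eqref{diamondineq}. For each incomparable pair $a,b$, the initial part $\initial_w d(a,b)$ is the $w$-minimum weight term of $d(a,b)$, and under the strict inequality this is precisely $X_aX_b$. Hence $X_aX_b\in\initial_w I^h$ for every incomparable pair, which shows $I^m\subset\initial_w I^h$. To upgrade this to equality I would invoke the same Hilbert function comparison used in Proposition~\ref{hibimon}: since $\initial_w I^h$ and $I^h$ have equal graded dimensions (this is the general fact cited from \cite[Corollary 2.4.9]{MLS}), and since the standard monomials are shown in \cite{H} to descend to a basis of $R/I^h$ (so $\dim(I^h)_d=\dim(I^m)_d$), the inclusion $I^m\subset\initial_w I^h$ is forced to be an equality of graded vector spaces, hence of ideals.

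For the ``only if'' direction, suppose $\initial_w I^h=I^m$ and assume toward a contradiction that some incomparable pair $a,b$ violates~\eqref{diamondineq}, i.e.\ $w_a+w_b\ge w_{a\lor b}+w_{a\land b}$. Then $\initial_w d(a,b)$ is either the monomial $X_{a\lor b}X_{a\land b}$ (in the strict case) or the binomial $d(a,b)=X_aX_b-X_{a\lor b}X_{a\land b}$ (in the equal case). In either case this element lies in $\initial_w I^h=I^m$. But $I^m$ is a monomial ideal whose standard monomial complement is spanned by products along chains in $\mL$, and the pair $\{a\land b,a\lor b\}$ is a chain, so $X_{a\lor b}X_{a\land b}\notin I^m$. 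This rules out the strict case directly, and in the equal case it shows $d(a,b)\notin I^m$ either, since a monomial ideal cannot contain a binomial one of whose monomials lies outside it. Either way we reach a contradiction, so every incomparable pair satisfies~\eqref{diamondineq}.

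Both directions are fairly short; the only non-routine step is recognizing that the containment $I^m\subset\initial_w I^h$ proved from the generators can be promoted to equality purely by matching graded dimensions, exactly as in Proposition~\ref{hibimon}. I do not expect any significant obstacle — the real work of the section is the minimal H-description in Theorem~\ref{thma} that follows, whereas the present proposition is essentially a repackaging of the argument already given for the monomial order $\prec$ into one phrased in terms of weight vectors.
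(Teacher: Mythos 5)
Your proof is correct and follows essentially the same two-step argument as the paper: the forward direction reads off the inequalities from the requirement that each $\initial_w d(a,b)$ land in the monomial ideal $I^m$, and the reverse direction upgrades the containment $I^m\subset\initial_w I^h$ to equality by matching graded dimensions (which is valid because $I^m$ is already known to be an initial ideal of $I^h$ from Proposition~\ref{hibimon}). You spell out the case analysis for $\initial_w d(a,b)$ more explicitly than the paper does, but there is no difference of substance.
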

\begin{proof}
For $w\in C(I^h,I^m)$ every inequality~\eqref{diamondineq} must hold in order to have $\initial_w d(a,b)=X_aX_b$. Moreover, the intersection of the half-spaces given by~\eqref{diamondineq} for all incomparable pairs is precisely $C(I^h,I^m)$, since for any $w$ in this intersection one has $I^m\subset \initial_w(I^h)$ and, consequently, $\initial_w(I^h)=I^m$. 
\end{proof}
The task at hand is to find the unique maximal irredundant subset of this set of inequalities. 

The following notion will be of key importance to us. We say that $\{a,b\}\subset\mL$ is a \emph{diamond pair} if $a\lor b$ covers both $a$ and $b$ (in terms of the order $<$) and both $a$ and $b$ cover $a\land b$. This means that in the Hasse diagram of the poset $(\mL,<)$ we have the following ``diamond'' as a subgraph:

\begin{center}
\begin{tikzcd}[row sep=1mm, column sep=1mm]
&a\lor b&\\
 a  \arrow[ru]&& b\arrow[lu]\\[3pt]
 &\arrow[lu] a\land b\arrow[ru]&
\end{tikzcd}
\end{center}

Note that as a poset $\mL$ is naturally equipped with a grading $|a|$ which is defined by setting $|a_0|=0$ for the unique minimal element $a_0\in\mL$ and $|a|=|b|+1$ when $a$ covers $b$. This is easily verified to be unambiguous. In particular, for a diamond pair $\{a,b\}$ we have \[|a|=|b|=|a\land b|+1=|a\lor b|-1.\] Now to the minimal H-description.

\begin{theorem}\label{hibimonfacets}
The cone $C(I^h,I^m)$ consists of those $w$ that satisfy inequality~\eqref{diamondineq} for all diamond pairs $\{a,b\}$ in $\mL$. This H-description is minimal: for every diamond pair $\{a,b\}$ those points in $\overline{C(I^h,I^m)}$ for which~\eqref{diamondineq} does not hold (equality holds instead) form a facet of $\overline{C(I^h,I^m)}$.
\end{theorem}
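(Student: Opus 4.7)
The plan is to establish the two assertions separately.

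For the first—that the diamond-pair inequalities suffice—I would invoke Proposition~\ref{redundant} and show that inequality~\eqref{diamondineq} for an arbitrary incomparable pair $\{a,b\}$ is implied by the diamond-pair subset, by induction on $n(a,b):=|a\lor b|-|a\land b|$. The minimum value $n(a,b)=2$ forces $|a|=|b|=|a\land b|+1=|a\lor b|-1$, which makes $\{a,b\}$ itself a diamond pair and provides the base. For $n(a,b)>2$, by symmetry one may assume $|a|-|a\land b|\ge 2$ and pick $a'\in\mL$ with $a\land b\le a'<a$ and $a$ covering $a'$. A direct verification gives $a'\land b=a\land b$, and distributivity yields $a\land(a'\lor b)=(a\land a')\lor(a\land b)=a'$; hence $\{a,a'\lor b\}$ and $\{a',b\}$ are both incomparable pairs. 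A brief application of the modular rank formula $|x|+|y|=|x\lor y|+|x\land y|$ shows that $n(a,a'\lor b)$ and $n(a',b)$ are both strictly smaller than $n(a,b)$, so the inductive hypothesis yields
\begin{align*}
w_a+w_{a'\lor b}&<w_{a\lor b}+w_{a'},\\
w_{a'}+w_b&<w_{a'\lor b}+w_{a\land b}.
\end{align*}
Summing these two inequalities cancels the $w_{a'}$ and $w_{a'\lor b}$ terms and yields the desired inequality for $\{a,b\}$.

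For the second—that each diamond inequality is a facet—set $\Delta_{a,b}(w):=w_{a\lor b}+w_{a\land b}-w_a-w_b$ for $a,b\in\mL$. Fixing a diamond pair $\{a_0,b_0\}$, it suffices to exhibit a point $w\in\overline{C(I^h,I^m)}$ with $\Delta_{a_0,b_0}(w)=0$ and $\Delta_{a,b}(w)>0$ at every other diamond pair $\{a,b\}$: such a $w$ lies in the relative interior of the candidate facet, so the face it defines has codimension exactly one. My proposed witness is the explicit vector
\[
w_a:=2\binom{|a|}{2}+[a=a_0]+[a=b_0],
\]
where $[\,\cdot\,]$ is the Iverson bracket. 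The quadratic summand contributes $2$ to $\Delta_{a,b}$ at every diamond, via the rank identity in a modular lattice. The indicator summand contributes $-2$ at $\{a_0,b_0\}$ (both $a_0$ and $b_0$ lie in $\{a_0,b_0\}$, while $a_0\lor b_0$ and $a_0\land b_0$ do not) and at least $-1$ at every other diamond, since at most one of $a,b$ can lie in the two-element set $\{a_0,b_0\}$ when $\{a,b\}\ne\{a_0,b_0\}$. Summing, $\Delta_{a_0,b_0}(w)=0$ while $\Delta_{a,b}(w)\ge 1$ at every other diamond, which is exactly what is needed.

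The main obstacle, modest as it is, is the bookkeeping in the inductive step of Part 1: one must correctly identify the two auxiliary pairs $\{a,a'\lor b\}$ and $\{a',b\}$, verify that both are incomparable (using distributivity), and check that both have strictly smaller $n$-invariant (using the modular rank formula). Once this is done, the arithmetic of summing two inequalities and the explicit verification for the witness in Part 2 are both routine.
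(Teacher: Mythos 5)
Your proof is correct. The two halves deserve separate comments because they relate to the paper quite differently.

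Part 1 (diamond inequalities suffice) takes a genuinely different route. The paper establishes redundancy of non-diamond inequalities by a dimension count on the degree-$3$ homogeneous piece: assuming a non-diamond hyperplane defines a facet, it picks a point $u$ just outside the cone and exhibits too many monomials in $(\initial_u I^h)_3$. Your argument instead expresses inequality~\eqref{diamondineq} for an arbitrary incomparable pair as a positive combination of diamond inequalities, by the double induction on $n(a,b)=|a\lor b|-|a\land b|$: choosing $a'$ covered by $a$ with $a\land b<a'<a$, checking $a'\land b=a\land b$ and (via distributivity) $a\land(a'\lor b)=a'$, and then adding the inequalities for the two incomparable pairs $\{a,a'\lor b\}$ and $\{a',b\}$, both of which have strictly smaller $n$ by the modular rank identity. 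All the lattice-theoretic verifications go through (the incomparability checks and the inequalities $n(a,a'\lor b),\,n(a',b)<n(a,b)$ are correct). This is precisely the ``nonnegative linear combination'' approach the paper explicitly mentions in the remark following the theorem, where the author notes it is arguably more transparent but was set aside because it does not generalize to Theorem~\ref{maingt}. So your approach buys a self-contained lattice-combinatorial proof with no reference to initial ideals, at the cost of a technique that does not carry over to the Pl\"ucker setting; the paper's choice of the dimension-count argument is precisely so that the same template works for the harder theorems.

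Part 2 (each diamond inequality is a facet) is essentially the paper's argument. The paper sets $v_a=v_b=1$ and $v_c=(|c|-|a|)^2$ elsewhere; you set $w_c=2\binom{|c|}{2}+[c=a_0]+[c=b_0]$. Both are of the form ``strictly convex function of the grading with constant second difference $2$, plus a unit bump at the two middle elements of the target diamond,'' and both yield $\Delta_{a_0,b_0}=0$ while $\Delta_{a,b}\ge 1$ at every other diamond because at most one of $a,b$ can lie in $\{a_0,b_0\}$. Your bookkeeping with the Iverson brackets is slightly cleaner to state, but the idea is the same. One small point worth making explicit for a fully polished writeup: to conclude that your $w$ lies in $\overline{C(I^h,I^m)}$ you should note that the non-strict version of your Part~1 argument shows that the closed polyhedron cut out by the non-strict diamond inequalities coincides with the closure of $C(I^h,I^m)$; once that is said, the fact that only one diamond inequality is tight at $w$ pins it in the relative interior of a facet.
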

\begin{proof}
We are to show two things. First, that every facet of $\overline{C(I^h,I^m)}$ is contained in the hyperplane given by
\begin{equation}\label{diamondeq}
w_a+w_b=w_{a\lor b}+w_{a\land b}
\end{equation}
for some diamond pair $\{a,b\}$. Second, that for any diamond pair $\{a,b\}$ the hyperplane given by~\eqref{diamondeq} contains a facet of $\overline{C(I^h,I^m)}$. 

To prove the first claim choose a facet $F$ of $\overline{C(I^h,I^m)}$, Proposition~\ref{redundant} shows that it is contained in the hyperplane~\eqref{diamondeq} for some incomparable $a$ and $b$. There exists a point $u'\in F$ such that $u'_c+u'_d<u'_{c\lor d}+u'_{c\land d}$ for any incomparable pair $\{c,d\}$ other than $\{a,b\}$ (in fact, $u'$ may be any point in the relative interior of $F$). Consequently, we may choose a point $u\notin\overline{C(I^h,I^m)}$ close enough to $u'$ such that $u_c+u_d<u_{c\lor d}+u_{c\land d}$ for any incomparable pair $\{c,d\}$ other than $\{a,b\}$ but $u_a+u_b>u_{a\lor b}+u_{a\land b}$.

Now, suppose that $\{a,b\}$ is not a diamond pair. This means that the shortest path between $a$ and $b$ in the (non-oriented) Hasse diagram contains more than 2 edges. This lets us consider such a shortest path and choose an element $c\in\mL$ within this path which is not $a$, $b$, $a\lor b$ or $a\land b$.  

We achieve a contradiction by showing that $\dim (\initial_u I^h)_3>\dim I^m_3$. Indeed, let us define a map $\varphi$ from the set of monomials in $I^m_3$ to the set of monomials in $(\initial_u I^h)_3$. Consider the monomial $M=X_p X_q X_r\in (I^m)_3$, if $\{p,q,r\}$ contains as a subset an incomparable pair different from $\{a,b\}$ we set $\varphi(M)=M$. Otherwise we may assume that $p=a$ and $q=b$ (since $\{p,q,r\}$ must contain an incomparable pair) and that $r\ge a\lor b$ or $r\le a\land b$. In this case we set $\varphi(M)=X_{a\lor b}X_{a\land b}X_r$. The map $\varphi$ is easily seen to be injective. However, we also have $\initial_u X_cd(a,b)=X_{a\lor b}X_{a\land b}X_c$ and this monomial does not lie in the image of $\varphi$, since $a\land b<c<a\lor b$. We arrive at the contradiction.

Let us move on to the second claim, it is easily proved with the use of the grading $|a|$. Indeed, the second claim is equivalent to the following. For every diamond pair $\{a,b\}$ there exists a point $v$ such that $v_a+v_b\ge v_{a\lor b}+v_{a\land b}$ but $v_c+v_d<v_{c\lor d}+v_{c\land d}$ for any diamond pair $\{c,d\}\neq\{a,b\}$. Choose a diamond pair $\{a,b\}$, let $f(x)=(x-|a|)^2$ and set $v_a=v_b=1$ and $v_c=f(|c|)$ for all other $c$. The mentioned properties of $v$ are immediate from the convexity of the function $f$.
\end{proof}

\begin{remark}
In terms of the above proof the first claim is equivalent to the following. For every incomparable pair $\{a,b\}$ inequality~\eqref{diamondineq} follows from such inequalities for diamond pairs. This can be then proved directly by expressing~\eqref{diamondineq} as a nonnegative linear combination of the inequalities given by diamond pairs. Such an approach may be more transparent but we were not able to generalize it to a proof of the main Theorem~\ref{maingt} where we use an argument similar to the above instead.
\end{remark}

\begin{remark}
The proof also provides a simple method of constructing points $w\in C(I^h,I^m)$: for any strictly convex function $f$ on $\bR$ set $w_a=f(|a|)$. Of course, not all points in $w\in C(I^h,I^m)$ are obtained in this way.
\end{remark}

\section{The semistandard maximal cone}\label{semistandard}

As we recall below, the monomial ideal spanned by monomials given by non-semistandard Young tableaux is an initial ideal of the ideal of Pl\"ucker relations $I$. The main goal of this section is to describe the corresponding maximal cone in the Gr\"obner fan $\Sigma(I)$. First we recall the definitions.

We fix an integer $n\ge 2$ and consider the set of variables $X_{i_1,\ldots,i_k}$ with $1\le i_1<\ldots<i_k\le n$ and $1\le k\le n-1$, these are known as the \emph{Pl\"ucker variables}. Let $R=\bC[\{X_{i_1,\ldots,i_k}\}]$ be the ring of polynomials in all these variables. We are interested in the \emph{ideal of Pl\"ucker relations} $I\subset R$. Note that $R$ can naturally be viewed as the multigraded coordinate ring of the product \[\bP=\bP(\bC^n)\times\bP(\wedge^2\bC^n)\times\ldots\times\bP(\wedge^{n-1}\bC^n).\] The subvariety cut out by $I$ in this product is the (Pl\"ucker embedding of the) variety of complete flags in $\bC^n$. More information regarding these classical notions can be found in~\cite[Chapter 9]{fulton}.

Now consider a distributive lattice $\mM$ with elements $a_{i_1,\ldots,i_k}$ (one for every Pl\"ucker variable) and the order relation defined as follows. We write $a_{i_1,\ldots,i_k}<a_{j_1,\ldots,j_l}$ whenever $k\ge l$ and $i_r\le j_r$ for every $1\le r\le l$. One easily sees that this indeed defines a distributive lattice and that when $k\ge l$ one has \[a_{i_1,\ldots,i_k}\land a_{j_1,\ldots,j_l}=a_{\min(i_1,j_1),\ldots,\min(i_l,j_l),i_{l+1},\ldots,i_k}\] and \[a_{i_1,\ldots,i_k}\lor a_{j_1,\ldots,j_l}=a_{\max(i_1,j_1),\ldots,\max(i_l,j_l)}.\] It might be helpful to keep in mind the Hasse diagram of this lattice for $n=4$ which can be found in~\cite[Section 14.2]{MS} as well as Example~\ref{hasse} below.

The order relation $<$ is in agreement with the notion of a semistandard Young tableau (see~\cite{fulton}): we have \[a_{i_1^1,\ldots,i_{k_1}^1}\le\ldots\le a_{i_1^m,\ldots,i_{k_m}^m}\] if and only if the Young tableau with columns $(i_1^1,\ldots,i_{k_1}^1),\ldots,(i_1^m,\ldots,i_{k_m}^m)$ is semistandard. Let us write $X_{i_1,\ldots,i_k}$ for $X_{a_{i_1,\ldots,i_k}}$ and identify $R(\mM)$ with $R$. We see that $I^m(\mM)=I^m$ is spanned by monomials \[X_{i_1^1,\ldots,i_{k_1}^1}\ldots X_{i_1^m,\ldots,i_{k_m}^m}\] such that the columns $(i_1^1,\ldots,i_{k_1}^1),\ldots,(i_1^m,\ldots,i_{k_m}^m)$ cannot be arranged into a semistandard Young tableau. 

The following was a pioneering result in the theory of toric degenerations of flag varieties (within this section $I^h=I^h(\mM)$). 
\begin{theorem}[\cite{GL}]
There exists a monomial order $\prec$ on $R$ such that $\initial_\prec I=I^h$.
\end{theorem}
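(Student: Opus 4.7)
The plan is to define a suitable monomial order $\prec$ on $R$, exhibit each generator $d(a,b)=X_aX_b-X_{a\vee b}X_{a\wedge b}$ of $I^h$ as the initial form of a classical Pl\"ucker straightening relation, and then finish via a Hilbert-function comparison against the standard monomial basis.

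First I would mimic the construction from Proposition~\ref{hibimon}: fix a linearization $<_1$ of the lattice order $<$ on $\mathcal M$ and take $\prec$ to be the graded reverse lexicographic order built from $<_1$. The crucial feature is that for any incomparable pair $\{a,b\}\subset\mathcal M$ one has $X_aX_b\prec X_{a\wedge b}X_{a\vee b}$, since after sorting the two factors by $\le_1$ the maximum element in $\{a\wedge b, a\vee b\}$ is strictly $<_1$-above the maximum element in $\{a,b\}$.

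Next I would invoke the classical straightening law (Hodge; see e.g.\ \cite{DEP} or \cite{fulton}): for every incomparable pair $\{a,b\}\subset\mathcal M$ there exists a Pl\"ucker relation
\[
X_aX_b - X_{a\wedge b}X_{a\vee b} + \sum_{c} \pm\, X_{p_c}X_{q_c} \in I,
\]
in which every trailing pair $\{p_c,q_c\}$ is a comparable pair satisfying $p_c<a\wedge b$ and $q_c>a\vee b$ in $\mathcal M$. By the choice of $\prec$ the top element of each trailing pair is strictly $<_1$-above $a\vee b$, so every trailing monomial is strictly $\prec$-larger than $X_{a\wedge b}X_{a\vee b}$, which is in turn $\prec$-larger than $X_aX_b$. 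Hence $\initial_\prec$ of the displayed relation is $X_aX_b$, giving $d(a,b)\in\initial_\prec I$ up to a generator of $I^m\subset I^h$; collecting these yields $I^h\subset\initial_\prec I$.

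Finally, I would close by a dimension count. Proposition~\ref{hibimon} (via Hibi~\cite{H}) gives that the standard monomials in $\mathcal M$ project to a basis of $R/I^h$, and the classical standard monomial theorem states that the same set projects to a basis of $R/I$. Thus $\dim I_d=\dim I^h_d$ in every degree, while Theorem~\ref{gfans} guarantees $\dim I_d=\dim(\initial_\prec I)_d$. The inclusion $I^h\subset\initial_\prec I$ is then forced to be an equality. The main obstacle here is the existence of straightening relations with the specific ``spreading'' behavior on the trailing terms; I would quote this from the classical references rather than reprove it, since combinatorially unpacking the Pl\"ucker syzygies is precisely the nontrivial content that the standard monomial theory packages for us.
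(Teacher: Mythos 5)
Your proposal does not establish the stated result, and the difficulty starts with the statement itself. As written, the theorem asserts the existence of a \emph{monomial order} $\prec$ with $\initial_\prec I = I^h$. But in the paper's own framework (and in general) the initial ideal of any ideal with respect to a monomial order is a \emph{monomial} ideal, whereas $I^h=I^h(\mM)$ is a toric (binomial) ideal — it contains $d(a,b)=X_aX_b - X_{a\lor b}X_{a\land b}$ but neither individual monomial. So no monomial order can produce $I^h$. The intended (and correct, per~\cite{GL}) statement is that there exists a weight vector $w\in\bR^\mM$ with $\initial_w I = I^h$; this is what the subsequent reasoning in the paper actually uses, since it appeals to Theorem~\ref{gfans}(d),(e) to chain $C(I,I^h)\neq\varnothing$ and $C(I^h,I^m)\neq\varnothing$ into $C(I,I^m)\neq\varnothing$.

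With this in mind, the central step in your argument proves a different fact than claimed. Choosing the grevlex order from a linearization of $<$ and observing that the trailing terms of $s(a,b)$ sit strictly $\prec$-above $X_{a\land b}X_{a\lor b}$, which sits strictly $\prec$-above $X_aX_b$, gives $\initial_\prec s(a,b)=X_aX_b$. That yields $I^m\subset\initial_\prec I$, and your dimension count (standard monomials project to a basis of $R/I$, $R/I^m$ and $R/I^h$; $\dim I_d = \dim(\initial_\prec I)_d$) then forces $\initial_\prec I = I^m$. It does \emph{not} give $I^h\subset\initial_\prec I$: you have only put the single monomial $X_aX_b$ into $\initial_\prec I$, not the binomial $d(a,b)$, and the phrase ``up to a generator of $I^m\subset I^h$'' is not a licit reduction — $X_{a\land b}X_{a\lor b}$ is a standard monomial and does not lie in $I^m$, nor in $\initial_\prec I$. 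The equality $\dim I^h_d=\dim(\initial_\prec I)_d$ at the end gives no inclusion at all; two different subspaces can have the same dimension, and indeed these two are different.

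To obtain the actual result you would instead need to exhibit a weight vector $w$ for which $\initial_w s(a,b) = d(a,b)$ exactly, i.e. so that $w_a+w_b = w_{a\land b}+w_{a\lor b}$ for all incomparable pairs while $w_a+w_b < w_{p_i(a,b)}+w_{q_i(a,b)}$ for every $i\ge 1$. The first family of equalities forces $w$ to be a modular (valuation-type) function on the lattice $\mM$: taking $w_a = \sum_{p\in\iota_\mM(a)} c_p$ for constants $c_p\in\bR$ uses $\iota_\mM(a\land b)=\iota_\mM(a)\cap\iota_\mM(b)$ and $\iota_\mM(a\lor b)=\iota_\mM(a)\cup\iota_\mM(b)$ and realizes all such $w$ (up to adding a constant). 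The substantial content then lies in showing that within this linear subspace the strict inequalities coming from the trailing terms of the straightening relations have a common solution — this is precisely what the cone $C(I,I^h)$ being nonempty means, which is what~\cite{GL} proves and what Theorem~\ref{gttropical} in this paper describes explicitly. A grevlex order cannot land you there, since every monomial order selects a single term in each $s(a,b)$ and so necessarily degenerates to $I^m$ (or another monomial ideal), never to $I^h$.
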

\begin{remark}
It is worth pointing out that the subvariety cut out in $\bP$ by $I^h$ is, in fact, the toric variety associated with the famous Gelfand--Tsetlin polytope (\cite{GT}). This discovery is due to~\cite{KM}.  
\end{remark}

Proposition~\ref{hibimon} and Theorem~\ref{gfans} now imply that the cone $C(I,I^m)\subset\bR^\mM$ is nonempty, we are to find its minimal H-description. It turns out that we already know some (most, in fact, as will be seen below) of the facets from the previous section. 
\begin{proposition}\label{diamondfacets}
For every diamond pair $\{a,b\}\subset\mM$ the hyperplane $w_a+w_b=w_{a\lor b}+w_{a\land b}$ contains a facet of $\overline{C(I,I^m)}$.
\end{proposition}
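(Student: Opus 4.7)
The plan is to derive the proposition from Theorem~\ref{hibimonfacets} by transferring facets of $\overline{C(I^h,I^m)}$ down to $\overline{C(I,I^m)}$ via the Minkowski identity of Proposition~\ref{minksum}. Since $I^h$ is an initial ideal of $I$ by the theorem above and $I^m$ is an initial ideal of $I^h$ by Proposition~\ref{hibimon}, applying Proposition~\ref{minksum} with $J'=I^h$, $J=I^m$ gives
\[\overline{C(I^h,I^m)}=\overline{C(I,I^m)}+L,\qquad L:=\bR C(I,I^h).\]
Theorem~\ref{gfans}\ref{IJ'J} also ensures that $\overline{C(I,I^h)}$ is a face of $\overline{C(I,I^m)}$, and by definition its linear span equals $L$.

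Fix a diamond pair $\{a,b\}\subset\mM$ and write $H$ for the linear hyperplane $w_a+w_b=w_{a\lor b}+w_{a\land b}$. By Theorem~\ref{hibimonfacets}, $F_2:=H\cap\overline{C(I^h,I^m)}$ is a facet of $\overline{C(I^h,I^m)}$, so $\dim F_2=|\mM|-1$. Writing $\ell=0+\ell$ in the Minkowski sum shows $L\subset\overline{C(I^h,I^m)}$, and because $L=-L$ the subspace $L$ sits inside the lineality of $\overline{C(I^h,I^m)}$, hence inside every facet-supporting hyperplane. In particular $L\subset H$.

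The containment $L\subset H$ lets us intersect the Minkowski identity with $H$ termwise, yielding
\[F_2=F_1+L,\qquad F_1:=H\cap\overline{C(I,I^m)}.\]
The proposition will follow once we show $\dim F_1=|\mM|-1$. For this, observe that $\overline{C(I,I^h)}\subset L\subset H$ and simultaneously $\overline{C(I,I^h)}\subset\overline{C(I,I^m)}$, hence $\overline{C(I,I^h)}\subset F_1$. Since the affine span of $\overline{C(I,I^h)}$ is already $L$, the affine span of $F_1$ contains $L$; therefore adding $L$ to $F_1$ does not enlarge the affine span, and $\dim F_1=\dim(F_1+L)=\dim F_2=|\mM|-1$, making $F_1$ a facet of $\overline{C(I,I^m)}$ contained in $H$.

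The argument is purely fan-theoretic on top of Theorem~\ref{hibimonfacets}, and no new combinatorics of $\mM$ or inspection of the Pl\"ucker relations beyond what is already present is required. The only step that warrants care is the containment $\overline{C(I,I^h)}\subset F_1$, since it relies on recognising first that $L$ lies in the lineality of $\overline{C(I^h,I^m)}$ (and thus in $H$) and second that the face $\overline{C(I,I^h)}$ of $\overline{C(I,I^m)}$ is contained in $L$; once both are in place the dimension count is immediate.
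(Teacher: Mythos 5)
Your proof is correct and follows essentially the same route as the paper: both reduce the statement to Theorem~\ref{hibimonfacets} via the Minkowski decomposition $\overline{C(I^h,I^m)}=\overline{C(I,I^m)}+\bR C(I,I^h)$ from Proposition~\ref{minksum}, and both close with the dimension count that hinges on the inclusion $\overline{C(I,I^h)}\subset H\cap\overline{C(I,I^m)}$. The only difference is stylistic: the paper invokes the general fact that a facet of a Minkowski sum of polyhedra decomposes as a sum of faces of the summands and then identifies that face with the intersection, whereas you bypass that general theory by observing that $L=\bR C(I,I^h)$ lies in the lineality of $\overline{C(I^h,I^m)}$, hence in $H$, and therefore the Minkowski identity can be intersected with $H$ termwise to give $F_2=F_1+L$ directly. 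Your version is a bit more self-contained and every step (the passage to closures, the termwise intersection, the use of $\overline{C(I,I^h)}$ to pin down the affine span of $F_1$) is handled carefully and correctly.
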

\begin{proof}
By Proposition~\ref{minksum} the cone $\overline{C(I^m,I^h)}$ is the Minkowski sum of $\bR{C(I,I^h)}$ and $\overline{C(I,I^m)}$. A facet $F$ of this Minkowski sum decomposes into the sum of faces of the summands, i.e a face $H$ of $\overline{C(I,I^m)}$ and $\bR{C(I,I^h)}$ itself. We have $H\subset F$ and we may assume that, in fact, $H=F\cap\overline{C(I,I^m)}$. However the latter intersection contains $\overline{C(I,I^h)}$ and, therefore, $H$ has the same dimension as $H+\bR C(I,I^h)=F$. Therefore $H$ is a facet, i.e.\ every facet of $\overline{C(I^m,I^h)}$ contains a facet of $\overline{C(I,I^m)}$. The proposition now follows from Theorem~\ref{hibimonfacets}.
\end{proof}

We are to determine what facets the cone $\overline{C(I,I^m)}$ has other than those provided by the above proposition. Similarly to the previous section, we first provide a finite set of hyperplanes which contains all hyperplanes providing facets. It is well known that the monomials not contained in $I^m$ (those given by semistandard tableaux) project to a basis in $R/I^m$. This means that for a monomial $M\in I^m$ there exists a unique element in $I$ of the form $M+Q$ where $Q$ is a linear combination of monomials not contained in $I^m$. The elements $M+Q$ are known as the \emph{straightening relations}. In particular, for incomparable $a$ and $b$ in $\mM$ this element can be written as \[s(a,b)=X_aX_b-\sum_{i=0}^{m(a,b)}c_i(a,b)X_{p_i(a,b)}X_{q_i(a,b)}\] where $m(a,b)$ is a positive integer, the pairs $\{p_i(a,b),q_i(a,b)\}$ are pairwise distinct, all $c_i(a,b)\neq 0$ and all $p_i(a,b)<q_i(a,b)$. Since these quadratic expressions form a reduced Gr\"obner basis for both the toric ideal $I^h$ and the monomial ideal $I^m$ (see~\cite[Section 14.3]{MS}), describing them explicitly is a natural and interesting open problem. 

Some basic understanding of what the straightening relations look like comes from the observation that the ideal $I$ (including every straightening relation) and all of its initial ideals are homogeneous with respect to two different gradings. The first grading is a $\bZ^{n-1}$-grading $\deg$ with $\deg(X_{i_1,\ldots,i_k})$ being the $k$th basis vector in $\bZ^{n-1}$. This measures the total degree in variables with exactly $k$ subscripts for all $k$. The homogeneity is immediate from the fact that $I$ is the vanishing ideal of a subvariety in $\bP$.

The second grading $\wt$ is a $\bZ^n$-grading with $\wt X_{i_1,\ldots,i_k}$ being the sum of the $i_r$th coordinate vectors in $\bZ^n$ for $1\le r\le k$. This measures the weight with respect to the action of the maximal torus in $GL_n$.

Consider incomparable $\{a,b\}=\{a_{i_1,\ldots,i_k},a_{j_1,\ldots,j_l}\}$ with $k\ge l$ and any $0\le i\le m(a,b)$. The mentioned homogeneity shows that $p_i(a,b)$ has the form $a_{\alpha_1,\ldots,\alpha_k}$ while $q_i(a,b)$ has the form $a_{\beta_1,\ldots,\beta_l}$. Moreover, the multisets $\{i_1,\ldots,i_k,j_1,\ldots,j_l\}$ and $\{\alpha_1,\ldots,\alpha_k,\beta_1,\ldots,\beta_k\}$ coincide.

The next theorem provides some more detailed information about the straightening relations, first, however, let us recall a helpful fact. Below for any $k\in [1,n-1]$ and $i_1,\dots,i_k\in[1,n]$ we use the standard notation $X_{i_1,\ldots,i_k}=(-1)^\sigma X_{i_{\sigma(1)},\ldots,i_{\sigma(k)}}$ for any permutation $\sigma\in S_k$ (in particular,  $X_{i_1,\ldots,i_k}=0$ when the $i_j$ are not pairwise distinct).
\begin{lemma}\label{k>l}
Consider integers $1\le l<k\le n$ and suppose that \[\sum_r \alpha_r X_{j_1^r,\dots,i_k^r}X_{i_1^r,\dots,j_l^r}\in I.\] Then for any $\alpha_1,\dots,\alpha_{k-l}\in[1,n]$ we also have 
\begin{equation}
\sum_r \alpha_r X_{i_1^r,\dots,i_k^r}X_{j_1^r,\dots,j_l^r,\alpha_1,\dots,\alpha_{k-l}}\in I.
\end{equation}
\end{lemma}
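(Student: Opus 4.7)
My plan is to use the representation theory of $GL_n$ acting on $V=\bC^n$. In the multidegree $\bd$ with $d_k=d_l=1$ and all other $d_i=0$ (for $k>l$), the component $R_\bd$ is canonically the $GL_n$-module $\wedge^kV^*\otimes\wedge^lV^*$, and in the multidegree $\bd'$ with $d_k=2$ and others zero, $R_{\bd'}=\Sym^2(\wedge^kV^*)$. Since $I$ is $GL_n$-stable and the flag variety is projectively normal, Borel--Weil identifies $(R/I)_\bd$ with the irreducible module $V_{\om_k+\om_l}^*=V_{(2^l,1^{k-l})}^*$ and $(R/I)_{\bd'}$ with $V_{2\om_k}^*=V_{(2^k)}^*$.

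The key step will be to produce a $GL_n$-equivariant map that implements the operation ``append $\alpha_1,\dots,\alpha_{k-l}$ to the second index set''. I take
\[
\Phi:\wedge^kV^*\otimes\wedge^lV^*\otimes\wedge^{k-l}V^*\longrightarrow\Sym^2(\wedge^kV^*),\qquad x\otimes y\otimes z\longmapsto x\cdot(y\wedge z).
\]
Setting $z=X_{\alpha_1,\dots,\alpha_{k-l}}$, the induced map sends $X_{i_1^r,\dots,i_k^r}\otimes X_{j_1^r,\dots,j_l^r}$ to $X_{i_1^r,\dots,i_k^r}X_{j_1^r,\dots,j_l^r,\alpha_1,\dots,\alpha_{k-l}}$, so the lemma reduces to $\Phi(I_\bd\otimes\wedge^{k-l}V^*)\subset I_{\bd'}$.

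To verify this inclusion, I apply Schur plus Pieri. Pieri's rule gives $\wedge^kV^*\otimes\wedge^lV^*=\bigoplus_{p=0}^{l}V_{(2^p,1^{k+l-2p})}^*$, so $I_\bd=\bigoplus_{p=0}^{l-1}V_{(2^p,1^{k+l-2p})}^*$ (everything below the top weight). For each $p\le l-1$, Pieri applied once more gives $V_{(2^p,1^{k+l-2p})}^*\otimes\wedge^{k-l}V^*=\bigoplus_\mu V_\mu^*$ with $\mu\supset(2^p,1^{k+l-2p})$; but $(2^p,1^{k+l-2p})$ has $k+l-p\ge k+1$ rows while $(2^k)$ has only $k$ rows, so $(2^k)$ is not among the $\mu$. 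By $GL_n$-equivariance $\Phi$ preserves isotypic components, so it maps $I_\bd\otimes\wedge^{k-l}V^*$ into the complement of $V_{(2^k)}^*$ in $\Sym^2(\wedge^kV^*)$, which is exactly $I_{\bd'}$.

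The only substantive step is the setup: identifying the two graded components with the tensor and symmetric products of exterior powers, and constructing the equivariant $\Phi$. After that the Schur--Pieri bookkeeping is routine. A purely combinatorial alternative---expanding the hypothesis as a linear combination of Pl\"ucker generators and appending indices term by term---would work in principle, but one then has to cancel the extra terms that arise after extending the indices; this cancellation is precisely what representation theory handles automatically above.
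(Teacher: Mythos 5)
Your proof is correct, but it takes a genuinely different route from the paper's. The paper identifies $I$ as the kernel of the map $\pi$ sending $X_{i_1,\dots,i_k}$ to a $k\times k$ minor in generic matrix entries $z_{i,j}$, then performs a Laplace expansion of the determinant $\pi(X_{j_1^r,\dots,j_l^r,\alpha_1,\dots,\alpha_{k-l}})$ along the last $k-l$ columns. Each summand in the Laplace expansion is $\pi(X_{j_1^r,\dots,j_l^r})\pi(X_{\alpha_1,\dots,\alpha_{k-l}})$ with a change of variables $z_{i,j}\mapsto z_{\sigma(i),j}$, and that same substitution multiplies $\pi(X_{i_1^r,\dots,i_k^r})$ only by $(-1)^\sigma$; grouping summands over $\sigma$ shows that the image of the desired polynomial under $\pi$ is a sum of substitution images of the hypothesis (times a fixed factor), hence zero. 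By contrast, you organize the same content as an equivariant Pieri computation: identify the two graded pieces with $\wedge^kV^*\otimes\wedge^lV^*$ and $\Sym^2(\wedge^kV^*)$, observe that the ideal components are exactly the sub-dominant Pieri constituents, and use the equivariant map $x\otimes y\otimes z\mapsto x\cdot(y\wedge z)$ together with a row-count argument to conclude. Your approach is cleaner conceptually and makes the mechanism (``lower strata map to lower strata'') transparent, at the cost of invoking Borel--Weil, multiplicity-freeness of $\Sym^2(\wedge^kV^*)$, and Pieri; the paper's approach is more elementary and self-contained, needing only the determinantal presentation of $I$. Both are correct, and the two are dual in spirit: Laplace expansion of a compound minor is precisely the coordinate realization of the comultiplication map underlying your $\Phi$.

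One cosmetic note: the statement as printed clashes notationally (the coefficients $\alpha_r$ and the appended indices $\alpha_1,\dots,\alpha_{k-l}$, plus scrambled $i$/$j$ subscripts); you correctly read through these typos to the intended meaning, consistent with how the lemma is used in the proof of Theorem~\ref{strlaws}(c).
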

\begin{proof}
For all $1\le i,j\le n$ consider variables $z_{i,j}$. A classical result characterizes $I$ as the kernel of the homomorphism $\pi$ from $R$ to $\bC[\{z_{i,j}\}]$ mapping $X_{i_1,\dots,i_k}$ to the determinant of the $k\times k$ matrix $A$ with elements $A_{i,j}=z_{i,i_j}$. Now expand the determinant $\pi(X_{i_1^r,\dots,j_l^r,\alpha_1,\dots,\alpha_{k-l}})$ with respect to the last $l$ columns (which are composed of the variables $z_{i,\alpha_j}$). Each of the resulting $k\choose {k-l}$ summands is obtained from $\pi(X_{j_1^r,\dots,j_l^r})\pi(X_{\alpha_1,\dots,\alpha_{k-l}})$ by a change of variables $z_{i,j}\mapsto z_{\sigma(i),j}$ for some permutation $\sigma\in S_k$. Note that this change of variables multiplies $\pi(X_{i_1^r,\dots,i_k^r})$ by $(-1)^\sigma$. The result now follows if we group together the summands corresponding to the same change of variables for all $r$. 
\end{proof}

\begin{theorem}\label{strlaws}
\hfill
\begin{enumerate}[label=(\alph*)]
\item The quadratic straightening relations $s(a,b)$ form a minimal generating set of $I$.
\item WLOG we may assume that $p_0(a,b)=a\land b$, $q_0(a,b)=a\lor b$ and $c_0(a,b)=1$.
\item For $1\le i\le m(a,b)$ we have $p_i(a,b)<a\land b$ and $q_i>a\lor b$.
\end{enumerate}
\end{theorem}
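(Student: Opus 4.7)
My plan is to prove (a) using standard Gr\"obner-basis theory, (b) by exploiting the Gelfand--Tsetlin degeneration from Section~\ref{semistandard}, and (c) by induction on a combinatorial measure of non-semistandardness combined with careful analysis of Pl\"ucker relations.

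For (a), I would take a monomial order $\prec$ with $\initial_\prec I = I^m$ (which exists since $I^m$ is an initial ideal of $I$); then each $s(a,b)$ is the unique element of $I$ with $\prec$-leading monomial $X_aX_b$ and standard-monomial tail, and the monomials $X_aX_b$ for incomparable pairs are exactly the minimal monomial generators of $I^m$, so $\{s(a,b)\}$ forms a reduced Gr\"obner basis and in particular generates $I$. For minimality, I fix a degree-$2$ multi-grade $\alpha$ in the bigrading $(\deg,\wt)$; since initial ideals preserve multigraded Hilbert functions, $\dim I_\alpha = \dim I^m_\alpha$ equals the number of incomparable pairs of multi-grade $\alpha$, and the corresponding $s(a,b)$ are linearly independent (distinct leading monomials) and hence a basis of $I_\alpha$. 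Since $I$ is classically generated in degree $2$, the component $(\mathfrak{m}I)_\alpha$ vanishes in such $\alpha$, so $\dim(I/\mathfrak{m}I)_\alpha = \dim I_\alpha$ and the $s(a,b)$ form a minimal generating set.

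For (b), I would take $w$ in the relative interior of $C(I, I^h)$. The defining linear equations of this cone are $w_c + w_d = w_{c\land d} + w_{c\lor d}$ for incomparable $\{c,d\}$, so that for sufficiently generic such $w$ no standard pair $(p,q)\ne(a\land b,a\lor b)$ satisfies $w_p + w_q = w_a + w_b$, and no term of $s(a,b)$ can have weight strictly less than $w_a + w_b$ (otherwise $\initial_w s(a,b)$ would be a nonzero linear combination of standard monomials, impossible in $I^h$ by linear independence of standard monomials modulo $I^h$). Hence $\initial_w s(a,b)$ is either $X_aX_b$ alone (if $X_{a\land b}X_{a\lor b}$ is absent from $s(a,b)$) or of the form $X_aX_b - c_0 X_{a\land b}X_{a\lor b}$ (if it is present). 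The first case is impossible since $X_aX_b\notin I^h$; in the second, membership in $I^h$ together with the Hibi binomial $X_aX_b - X_{a\land b}X_{a\lor b}\in I^h$ and the linear independence of standard monomials in $R/I^h$ forces $c_0=1$. Relabeling, $(p_0,q_0) = (a\land b, a\lor b)$ and $c_0 = 1$, proving (b).

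For (c), I would proceed by induction on a measure of non-semistandardness such as $\delta(a,b) = \sum_{r\le l}(i_r - j_r)^+$. In the base case $\delta = 1$, a single Pl\"ucker/Garnir relation straightens $X_aX_b$ directly: exactly one term is $X_{a\land b}X_{a\lor b}$ (coefficient $+1$ by part (b)), and every other term is either a standard pair $(p,q)$ satisfying $p < a\land b$ and $q > a\lor b$ by direct inspection of the swap, or a non-standard pair $(a',b')$ of strictly smaller $\delta$. In the inductive step I apply the Pl\"ucker step at the smallest bad position $r$; for each non-standard intermediate $(a',b')$ thus produced I verify both $\delta(a',b') < \delta(a,b)$ and the key lattice inequalities $a'\land b' \le a\land b$, $a'\lor b' \ge a\lor b$ in $\mM$. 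The inductive hypothesis applied to $(a',b')$ then provides standard pairs $(p',q')$ with $p' < a'\land b' \le a\land b$ and $q' > a'\lor b' \ge a\lor b$, which gives the required form of $s(a,b)$ after substitution and normalization using (b). The main obstacle is this last combinatorial step: carefully verifying, for every choice of Pl\"ucker swap at position $r$ and every resulting intermediate pair $(a',b')$, the lattice inequalities $a'\land b' \le a\land b$ and $a'\lor b' \ge a\lor b$. Lemma~\ref{k>l} should help reduce the case $k > l$ to the Grassmannian case $k=l$, where the action of the swap on the meet and join operations is easier to track.
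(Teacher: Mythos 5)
Parts (a) and (b) are essentially correct. For (a) you give the standard Gr\"obner-basis/Hilbert-function argument, where the paper simply cites~\cite{MS}. For (b) you take a genuinely different route, deriving $(p_0,q_0,c_0)=(a\land b,a\lor b,1)$ from the existence of the toric degeneration $\initial_w I=I^h$ (Gonciulea--Lakshmibai); the paper again just cites this as known. Your argument for (b) can in fact be streamlined and does not need genericity of $w$: for any $w$ with $\initial_w I=I^h$, since standard monomials project to a basis of $R/I^h$, the term $X_aX_b$ must appear in $\initial_w s(a,b)$ (otherwise the initial form would be a nonzero $\bC$-linear combination of standard monomials lying in $I^h$); then $\initial_w s(a,b)-(X_aX_b-X_{a\land b}X_{a\lor b})$ is a linear combination of distinct standard monomials lying in $I^h$, hence zero, which gives the conclusion directly.

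Part (c) has two genuine gaps. First, you flag ``the main obstacle'' --- verifying that every intermediate pair $(a',b')$ produced by a Pl\"ucker swap satisfies $a'\land b'\le a\land b$ and $a'\lor b'\ge a\lor b$ --- and leave it unproved; this is essentially the full Hodge/ASL content you set out to prove, so the argument is circular as stated. Second, and more concretely, your proposed induction measure $\delta(a,b)=\sum_{r\le l}(i_r-j_r)^+$ does not decrease under the Pl\"ucker step at the smallest bad position. For instance, take $n=6$, $a=a_{1,3,6}$, $b=a_{2,4,5}$: here $\delta(a,b)=1$ and the smallest bad position is $r=3$, yet the non-standard intermediate pair produced by exchanging $j_3=5$ with $i_2=3$ is $a'=a_{1,5,6}$, $b'=a_{2,3,4}$ with $\delta(a',b')=4$; worse, applying your heuristic again to $(a',b')$ at its smallest bad position $r=2$ produces a non-standard intermediate equal to $(a,b)$, so the procedure as written can loop. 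Termination of straightening is usually governed by a term order on monomials, not a coordinatewise difference, and your $\delta$ fails. The paper sidesteps all of this: for $k=l$ it cites the classical result (attributing it to Hodge, via~\cite{DEP}), and for $k>l$ it reduces to $k=l$ using Lemma~\ref{k>l} --- after increasing $n$, it pads the shorter column with fresh large subscripts $\alpha_i=n-k+l+i$, observes that the resulting relation is again $s$ of an equal-length pair, applies the $k=l$ case, and tracks back. You correctly identify Lemma~\ref{k>l} as the right tool for $k>l$ but do not carry out this reduction. The cleanest repair is to cite the $k=l$ case rather than attempt a from-scratch induction, and then complete the $k>l$ reduction via Lemma~\ref{k>l} as the paper does.
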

\begin{proof}
Parts (a) and (b) are well-known results, see, for instance, \cite[Section 14.3]{MS}. Let $a=a_{i_1,\ldots,i_k}$ and $b=a_{j_1,\ldots,j_l}$ with $k\ge l$. In the case of $k=l$ part (c) stems back to the work of William Hodge, see~\cite{DEP} for a more modern discussion. The case $k>l$ is also known and can be extracted from~\cite{LMS} and, perhaps, even earlier work. However, since the latter paper is written in very different terms, let us show how the case $k>l$ is deduced with the use of Lemma~\ref{k>l}.

Suppose that $k>l$. The relation $s(a,b)$ is independent of $n$ as long as $k,l<n$, all $i_r\le n$ and all $j_r\le n$ (this, for instance, follows from the above characterization in terms of determinants). We may, therefore, assume that all $i_r$ and all $j_r$ are less than $n-k+l$. Now apply Lemma~\ref{k>l} to the relation $s(a,b)$ with the $k-l$ subscripts being added given $\alpha_i=n-k+l+i$. It is easily seen that all the monomials in the resulting relation other than $X_{i_1,\ldots,i_k}X_{j_1,\ldots,j_l,\alpha_1,\dots,\alpha_{k-l}}$ are standard, in other words, the resulting relation is precisely $s(a_{i_1,\ldots,i_k},a_{j_1,\ldots,j_l,\alpha_1,\dots,\alpha_{k-l}})$. In particular, we see that \[p_i(a,b)=p_i(a_{i_1,\ldots,i_k},a_{j_1,\ldots,j_l,\alpha_1,\dots,\alpha_{k-l}})< a_{i_1,\ldots,i_k}\land a_{j_1,\ldots,j_l,\alpha_1,\dots,\alpha_{k-l}}=a\land b.\] We also have \[q_i(a_{i_1,\ldots,i_k},a_{j_1,\ldots,j_l,\alpha_1,\dots,\alpha_{k-l}})>a_{i_1,\ldots,i_k}\lor a_{j_1,\ldots,j_l,\alpha_1,\dots,\alpha_{k-l}}\] which implies $q_i(a,b)>a\lor b$. That is since if $q_i(a,b)=a_{\beta_1,\dots,\beta_l}$, then \[q_i(a_{i_1,\ldots,i_k},a_{j_1,\ldots,j_l,\alpha_1,\dots,\alpha_{k-l}})=a_{\beta_1,\dots,\beta_l,\alpha_1,\dots,\alpha_{k-l}}\] and $a_{i_1,\ldots,i_k}\lor a_{j_1,\ldots,j_l,\alpha_1,\dots,\alpha_{k-l}}$ is similarly determined by $a\lor b$.
\end{proof}
\begin{remark}
Part (c) of the above theorem establishes that the Pl\"ucker algebra $R/I$ is an algebra with straightening laws (or a Hodge algebra). This will be discussed in more detail in Section~\ref{ASL}.
\end{remark}

Next we prove
\begin{proposition}\label{ssytredundant}
The cone $C(I,I^m)$ is composed of all $w$ satisfying 
\begin{equation}\label{strineq}
w_a+w_b<w_{p_i(a,b)}+w_{q_i(a,b)}
\end{equation}
for all incomparable pairs $\{a,b\}$ and all $0\le i\le m(a,b)$.
\end{proposition}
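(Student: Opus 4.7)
The plan is to mirror the proof of Proposition~\ref{redundant} with the straightening relations $s(a,b)$ playing the role of the Hibi binomials $d(a,b)$, leveraging parts (b) and (c) of Theorem~\ref{strlaws} to control the tail $\sum_i c_i(a,b)X_{p_i(a,b)}X_{q_i(a,b)}$. For the forward direction, suppose $w\in C(I,I^m)$ and take any incomparable pair $\{a,b\}\subset \mM$. Since $s(a,b)\in I$, the initial form $\initial_w s(a,b)$ lies in $\initial_w I=I^m$. By parts (b) and (c) of Theorem~\ref{strlaws}, for every $0\le i\le m(a,b)$ we have $p_i(a,b)<q_i(a,b)$, so each monomial $X_{p_i(a,b)}X_{q_i(a,b)}$ is standard. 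As $I^m$ is monomial with the non-standard monomials as a basis, no nonzero $\bC$-linear combination of monomials that includes a standard monomial can lie in $I^m$. Since $\initial_w s(a,b)$ collects those monomials of $s(a,b)$ attaining the minimum $w$-weight and the only non-standard such monomial is $X_aX_b$, we must have $\initial_w s(a,b)=X_aX_b$. This forces the strict inequality $w_a+w_b<w_{p_i(a,b)}+w_{q_i(a,b)}$ for every $i$.

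Conversely, assume $w$ satisfies all inequalities~\eqref{strineq}. Then for every incomparable pair $\{a,b\}$ we have $\initial_w s(a,b)=X_aX_b$, so $X_aX_b\in\initial_w I$ for every such pair and hence $I^m\subset\initial_w I$. To promote this inclusion to equality I use a Hilbert-function argument: Proposition~\ref{hibimon} combined with the Gonciulea--Lakshmibai theorem and part~\ref{IJ'J} of Theorem~\ref{gfans} realizes $I^m$ as $\initial_v I$ for some $v\in\bR^\mM$, whence $\dim_\bC I^m_d=\dim_\bC I_d=\dim_\bC(\initial_w I)_d$ in every degree $d$. The inclusion must therefore be an equality, giving $\initial_w I=I^m$ and hence $w\in C(I,I^m)$.

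The main obstacle is the strictness in the forward direction. One must rule out ties $w_a+w_b=w_{p_i(a,b)}+w_{q_i(a,b)}$, and the argument for this rests entirely on parts (b) and (c) of Theorem~\ref{strlaws} ensuring that the supporting monomials $X_{p_i(a,b)}X_{q_i(a,b)}$ are all standard, and hence absent from the non-standard basis of $I^m$. Note that part (a) of Theorem~\ref{strlaws} (that the $s(a,b)$ generate $I$) is not explicitly needed here: we only extract inequalities from the relations $s(a,b)\in I$ individually rather than from arbitrary elements of the ideal.
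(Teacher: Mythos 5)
Your proof is correct and takes essentially the same approach as the paper's (very terse) argument, simply spelling out the details the paper leaves implicit. The one small inaccuracy is that the standardness of each $X_{p_i(a,b)}X_{q_i(a,b)}$ is already built into the definition of the straightening relations (the tail $Q$ is stipulated to be a combination of standard monomials, i.e.\ $p_i(a,b)<q_i(a,b)$), so you do not actually need to invoke Theorem~\ref{strlaws}(b),(c) for that step.
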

\begin{proof}
For $w\in C(I,I^m)$ we must have $\initial_w s(a,b)=X_aX_b$ which implies inequality~\eqref{strineq}. If all inequalities~\eqref{strineq} hold, then $\initial_w I$ contains $I^m$ and, subsequently, $\initial_w I=I^m$.
\end{proof}

Finding the minimal H-description means finding the unique maximal irredundant subset of the set of inequalities of form~\eqref{strineq}. From Proposition~\ref{diamondfacets} we already know that when $\{a,b\}$ is a diamond pair and $i=0$ the corresponding inequality is contained in this subset. We also know that when $\{a,b\}$ is not a diamond pair and $i=0$, the inequality is not contained therein. To find out what happens when $i\ge 1$ let us describe the diamond pairs in $\mM$ explicitly. The following fact follows directly from the definitions.
\begin{proposition}\label{cover}
If $a_{i_1,\ldots,i_k}$ is covered by $a_{j_1,\ldots,j_l}$, one of the two holds.
\begin{enumerate}
\item $k=l$ and $i_r=j_r$ for all $1\le r\le k$ except exactly one value $r_1$ for which $i_r=j_r-1$.
\item $k=l+1$ and $i_r=j_r$ for all $1\le r\le l$ while $i_k=n$.
\end{enumerate}
\end{proposition}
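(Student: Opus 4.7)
The plan is to assume $a = a_{i_1,\ldots,i_k}$ is covered by $b = a_{j_1,\ldots,j_l}$ in $\mM$. The definition of the order immediately gives $k \ge l$ together with $i_r \le j_r$ for $r \le l$, and I would split the argument into the two cases $k > l$ and $k = l$.

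For $k > l$, the key observation is that $c := a_{i_1,\ldots,i_{k-1}}$ lies strictly above $a$ (shorter length, same initial entries) and satisfies $c \le b$ (since $k-1 \ge l$ and $i_r \le j_r$ for $r \le l$). The cover hypothesis forces $c = b$, so $k = l+1$ and $i_r = j_r$ for $r \le l$. To obtain the additional condition $i_k = n$, I would argue by contradiction: if $i_k < n$, then $c' := a_{i_1,\ldots,i_{k-1}, i_k+1}$ is a well-defined element of $\mM$ (the subscripts remain strictly increasing and within $[1,n]$), and a direct check shows $a < c' < b$, contradicting the cover.

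For $k = l$, I would first note that any $c$ with $a < c < b$ must have $|c| = k$: the inequality $a < c$ forces $|c| \le k$ while $c < b$ forces $|c| \ge l = k$. Working within this sub-poset of length-$k$ tuples, let $D := \{r : i_r < j_r\}$, so $D \ne \varnothing$. If $|D| \ge 2$, I would pick $r_1 := \max D$ and form $c$ from $a$ by replacing $i_{r_1}$ with $j_{r_1}$; the essential point is that $j_{r_1} < j_{r_1+1} = i_{r_1+1}$ (since $r_1+1 \notin D$), so $c$ is a strictly increasing tuple, and one checks $a < c < b$, again contradicting the cover. Hence $|D|$ consists of a single index $r_1$. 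A similar construction, replacing $i_{r_1}$ by $i_{r_1}+1$, produces an intermediate element unless $j_{r_1} = i_{r_1}+1$, which is the desired conclusion of case~(1).

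The argument is largely routine and I do not expect any conceptual obstacle. The only mildly technical point is verifying strict monotonicity of the constructed intermediate tuples in the $k = l$ case; this is handled precisely by choosing $r_1 = \max D$, which guarantees the next coordinate is large enough to accommodate the substitution.
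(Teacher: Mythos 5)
Your argument is correct and fills in precisely the routine case-check that the paper omits (the paper states the proposition follows ``directly from the definitions'' and gives no proof). Both halves of your case split are sound: in the $k>l$ case, $c=a_{i_1,\dots,i_{k-1}}$ and then $c'=a_{i_1,\dots,i_{k-1},i_k+1}$ are exactly the right witnesses, and in the $k=l$ case the choice $r_1=\max D$ ensures the replacement tuples remain strictly increasing. The only cosmetic point worth noting is that when $r_1=k$ your stated justification ``$j_{r_1}<j_{r_1+1}=i_{r_1+1}$'' has no $(r_1+1)$st coordinate to refer to; in that boundary case the tuple is automatically increasing since $i_{k-1}<i_k<j_k$, so nothing breaks, but it is worth saying explicitly.
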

This directly implies the following.
\begin{proposition}\label{diamondpairs}
If $\{a_{i_1,\ldots,i_k},a_{j_1,\ldots,j_l}\}$ is a diamond pair, one of the two holds.
\begin{enumerate}
\item $k=l$ and $i_r=j_r$ for all $1\le r\le k$ except exactly two values $r_1<r_2$ for which $i_{r_1}=j_{r_1}-1$ and $i_{r_2}=j_{r_2}+1$.
\item $k=l+1$, $i_k=n$ and $i_r=j_r$ for all $1\le r\le l$ except exactly one value $r_1$ for which $i_{r_1}=j_{r_1}+1$.
\end{enumerate}
\end{proposition}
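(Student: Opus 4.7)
The plan is a direct case analysis of the four cover relations $a\land b\lessdot a$, $a\land b\lessdot b$, $a\lessdot a\lor b$, $b\lessdot a\lor b$ via Proposition~\ref{cover}. Writing $a=a_{i_1,\ldots,i_k}$, $b=a_{j_1,\ldots,j_l}$ with $k\ge l$ without loss of generality, the explicit formulas for the meet and join recalled just above Proposition~\ref{cover} give $a\land b$ exactly $k$ subscripts and $a\lor b$ exactly $l$ subscripts.

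First I would handle the case $k=l$. Here all four elements have $k$ subscripts, so both covers involving $a\land b$ must be of type (1) in Proposition~\ref{cover}. Writing $a\land b=a_{c_1,\ldots,c_k}$ with $c_r=\min(i_r,j_r)$, this isolates a single position $r^a$ where $a$ differs from $a\land b$ and a single position $r^b$ where $b$ does; these must be distinct (else $a=b$), and at them we read off $i_{r^a}=j_{r^a}+1$ and $j_{r^b}=i_{r^b}+1$. Relabeling so that $r_1<r_2$ (and interchanging $a,b$ if necessary) produces case (1). Next I would handle $k>l$: since $a\land b$ has strictly more subscripts than $b$, the cover $a\land b\lessdot b$ must be of type (2), which forces $k=l+1$, $i_{l+1}=n$, and $j_r=\min(i_r,j_r)$ (hence $j_r\le i_r$) for all $r\le l$. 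The cover $a\land b\lessdot a$ is then of type (1) and picks out one position $r_1\le l$ with $i_{r_1}=j_{r_1}+1$, while at the remaining positions $r\le l$ the identity $i_r=\min(i_r,j_r)$ combined with $j_r\le i_r$ forces $i_r=j_r$. This is case (2).

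After the two cases, one should verify that the cover conditions on $a\lor b$ are automatic consequences of what has already been derived; this is a quick check directly from the explicit form of $a\lor b$ and Proposition~\ref{cover}, and in both cases one simply reads off the unique position at which $a\lor b$ differs from $a$ and from $b$. I do not foresee a serious obstacle here. The only real care needed is making sure the split into $k=l$ and $k\ne l$ is exhaustive, but this is immediate: the number of subscripts of every element of $\mM$ is fixed by the element, so up to swapping $a$ and $b$ these are the only possibilities.
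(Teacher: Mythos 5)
Your proof is correct and carries out exactly the case analysis that the paper leaves implicit (the paper simply states that Proposition~\ref{diamondpairs} ``directly'' follows from Proposition~\ref{cover} and gives no further detail). The split into $k=l$ versus $k=l+1$, the use of the explicit $\min/\max$ formulas for $\land$ and $\lor$ to read off the positions where the cover relations $a\land b\lessdot a$ and $a\land b\lessdot b$ force a single subscript to change, and the observation that the cover conditions on $a\lor b$ then come for free, are all exactly what is intended.
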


Let us give two small examples which illustrate the two possibilities in the above proposition as well as those in Proposition~\ref{strrels} where we will explicitly describe the straightening relation for a diamond pair.
\begin{example}\label{ex1}
Let $n=4$, $k=l=2$, $(i_1,i_2)=(1,4)$ and $(j_1,j_2)=(2,3)$. This is possibility (1) in Proposition~\ref{diamondpairs} with $r_1=1$ and $r_2=2$. The straightening relation is \[X_{1,4}X_{2,3}-X_{1,3}X_{2,4}+X_{1,2}X_{3,4}\] (which is the defining relation of the Grassmannian $\mathrm{Gr}(2,4)$). Note that $a_{1,4}\land a_{2,3}=a_{1,3}$ and $a_{1,4}\lor a_{2,3}=a_{2,4}$.
\end{example}
\begin{example}\label{ex2}
Let $n=3$, $k=2$, $l=1$, $(i_1,i_2)=(2,3)$ and $j_1=1$. This is Possibility (2) in Proposition~\ref{diamondpairs} with $r_1=1$. The straightening relation is \[X_{2,3}X_{1}-X_{1,3}X_{2}+X_{1,2}X_{3}\] (which is the defining relation of the flag variety $F_3$). Note that $a_{2,3}\land a_1=a_{1,3}$ and $a_{2,3}\lor a_1=a_2$.
\end{example}

We will need to consider the simplest case of a \emph{classical Pl\"ucker relation}.
\begin{lemma}[{\cite[Section 9.1, Lemmas 1 and 2]{fulton}}]\label{classical}
For Pl\"ucker variables $X_{i_1,\dots,i_k}$ and $X_{j_1,\dots,j_l}$ with $k\ge l$ and any $r\in[1,l]$ we have \[X_{i_1,\dots,i_k}X_{j_1,\dots,j_l}-\sum_{s=1}^k X_{i_1,\dots,i_{s-1},j_r,i_{s+1},\dots,i_k}X_{j_1,\dots,j_{r-1},i_s,j_{r+1},\dots,j_l}\in I.\]
In the sum $j_r$ is exchanged with every $i_s$.
\end{lemma}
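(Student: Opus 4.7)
The plan is to reduce to a polynomial identity via the determinantal characterization of $I$ used in the proof of Lemma~\ref{k>l}: the ideal $I$ is the kernel of the homomorphism $\pi:R\to\bC[\{z_{i,j}\}]$ sending $X_{m_1,\dots,m_p}$ to $\det(z_{i,m_j})_{1\le i,j\le p}$. It therefore suffices to verify a polynomial identity in $\bC[\{z_{i,j}\}]$ between products of determinants. For each column index $m\in[1,n]$ I write $c_m^{(p)}$ for the truncated vector $(z_{1,m},\dots,z_{p,m})^T$, so the two factors on the left of the claimed relation become $\det(c_{i_1}^{(k)},\dots,c_{i_k}^{(k)})$ and $\det(c_{j_1}^{(l)},\dots,c_{j_l}^{(l)})$ respectively.

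The key ingredient is the classical vector identity
\[\sum_{s=1}^{k+1}(-1)^{s-1}\det(v_1,\dots,\widehat{v_s},\dots,v_{k+1})\,v_s=0\quad\text{in }\bC^k,\]
valid for any $v_1,\dots,v_{k+1}\in\bC^k$ (proved by expanding a $(k+1)\times(k+1)$ determinant with two equal rows obtained by prepending any row of $[v_1|\dots|v_{k+1}]$). I would apply this with $v_s=c_{i_s}^{(k)}$ for $s\le k$ and $v_{k+1}=c_{j_r}^{(k)}$, then truncate the resulting vector identity coordinatewise to its first $l$ entries. This expresses $\det(c_{i_1}^{(k)},\dots,c_{i_k}^{(k)})\,c_{j_r}^{(l)}$ as a linear combination of the vectors $c_{i_s}^{(l)}\in\bC^l$, with coefficients being, up to a universal sign, the $k$-determinants obtained by swapping $c_{i_s}$ for $c_{j_r}$.

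Substituting this vector expression into column $r$ of $\det(c_{j_1}^{(l)},\dots,c_{j_l}^{(l)})$ and exploiting multilinearity of the determinant in its $r$-th column yields the claimed identity, once each $k$-determinant $\det(c_{i_1},\dots,\widehat{c_{i_s}},\dots,c_{i_k},c_{j_r})$ is rewritten as $(-1)^{k-s}\det(c_{i_1},\dots,c_{i_{s-1}},c_{j_r},c_{i_{s+1}},\dots,c_{i_k})$ by cycling $c_{j_r}$ from position $k$ back to position $s$.

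The main (and only non-routine) obstacle is sign bookkeeping. The factors $(-1)^{s-1}$ from the vector identity, the isolated $(-1)^k$ coming from separating the last term of that identity, and the $(-1)^{k-s}$ from moving $c_{j_r}$ into its correct position must all collapse to $+1$. This is immediate but must be tracked carefully, since no nontrivial combinatorics are used elsewhere in the argument.
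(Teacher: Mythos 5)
The paper does not prove Lemma~\ref{classical}; it simply cites Fulton (Section 9.1, Lemmas 1 and 2), so there is no in-paper proof to compare against. Your argument is correct and is essentially the standard multilinear-algebra proof of the quadratic Pl\"ucker relations, and it also meshes naturally with the paper's own use of the determinantal homomorphism $\pi$ in the proof of Lemma~\ref{k>l}. The key ingredients check out: the linear-dependence identity $\sum_{s=1}^{k+1}(-1)^{s-1}\det(v_1,\dots,\widehat{v_s},\dots,v_{k+1})\,v_s=0$ in $\bC^k$ is correct and proved exactly as you say; isolating the $(k+1)$st term with its sign $(-1)^k$, dividing it off, moving $c_{j_r}$ from the last slot to slot $s$ (contributing $(-1)^{k-s}$), and combining with the $(-1)^{s-1}$ from the identity gives an overall factor $(-1)^k\cdot(-1)^{s-1}\cdot(-1)^{k-s}\cdot(-1)=(-1)^{2k}=1$, so the coefficients are all $+1$ as required; truncating the vector identity to the first $l$ coordinates and substituting into column $r$ of $\det(c_{j_1}^{(l)},\dots,c_{j_l}^{(l)})$ by multilinearity produces exactly the claimed element of $\ker\pi=I$. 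No gaps.
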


When $\{a,b\}$ is a diamond pair, the relation $s(a,b)$ is relatively easy to describe. \begin{proposition}\label{strrels}
For a diamond pair $\{a,b\}=\{a_{i_1,\ldots,i_k},a_{j_1,\ldots,j_l}\}$ we have $m(a,b)=1$ meaning that $s(a,b)$ contains exactly three monomials. Furthermore, one of two holds.
\begin{enumerate}
\item Possibility (1) from Proposition~\ref{diamondpairs} holds and
\begin{itemize}
\item $p_0(a,b)=a\land b=a_{\alpha_1,\ldots,\alpha_k}$ satisfies $\alpha_r=i_r$ when $r\neq r_2$ and $\alpha_{r_2}=j_{r_2}$, 
\item $q_0(a,b)=a\lor b=a_{\beta_1,\ldots,\beta_k}$ satisfies $\beta_r=j_r$ when $r\neq r_2$ and $\beta_{r_2}=i_{r_2}$,
\item $p_1(a,b)=a_{\gamma_1,\ldots,\gamma_k}$ satisfies $\gamma_r=i_r$ when $r<r_1$ or $r>r_2$, $\gamma_{r_1}=i_{r_1}$, $\gamma_{r_1+1}=j_{r_1}$ and $\gamma_r=i_{r-1}$ when $r_1+2\le r\le r_2$,
\item $q_1(a,b)=a_{\delta_1,\ldots,\delta_k}$ satisfies $\delta_r=j_r$ when $r<r_1$ or $r>r_2$, $\delta_r=j_{r+1}$ when $r_1\le r\le r_2-2$, $\delta_{r_2-1}=j_{r_2}$, $\delta_{r_2}=i_{r_2}$.
\item $c_1(a,b)=-1$.
\end{itemize}
\item Possibility (2) from Proposition~\ref{diamondpairs} holds and
\begin{itemize}
\item $p_0(a,b)=a\land b=a_{\alpha_1,\ldots,\alpha_k}$ satisfies $\alpha_r=i_r$ when $r\neq r_1$ and $\alpha_{r_1}=j_{r_1}$, 
\item $q_0(a,b)=a\lor b=a_{\beta_1,\ldots,\beta_l}$ satisfies $\beta_r=j_r$ when $r\neq r_1$ and $\beta_{r_1}=i_{r_1}$,
\item $p_1(a,b)=a_{\gamma_1,\ldots,\gamma_k}$ satisfies $\gamma_r=i_r$ when $r<r_1$, $\gamma_{r_1}=j_{r_1}$, $\gamma_{r_1+1}=i_{r_1}$ and $\gamma_r=i_{r-1}$ when $r\ge r_1+2$,
\item $q_1(a,b)=a_{\delta_1,\ldots,\delta_l}$ satisfies $\delta_r=j_r$ when $r<r_1$, $\delta_r=j_{r+1}$ when $r_1\le r\le l-1$ and $\delta_l=n$,
\item $c_1(a,b)=-1$.
\end{itemize}
\end{enumerate}
\end{proposition}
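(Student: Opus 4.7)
The plan is to produce a single relation of the form $X_aX_b - X_{a\land b}X_{a\lor b} + X_{p_1}X_{q_1} \in I$ with $p_1,q_1$ exactly as specified in the statement, and to verify that the two non-leading monomials are standard. Since the standard monomials project to a basis of $R/I$, any relation of that form must be the straightening relation $s(a,b)$; this identification simultaneously forces $m(a,b)=1$ and confirms the explicit formulas for $p_1,q_1$ as well as $c_1(a,b)=-1$. The relation itself will come from a single application of Lemma~\ref{classical}.

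In Case~(1), where $k=l$, I would invoke Lemma~\ref{classical} for $(a,b)$ with the parameter $r=r_1$, obtaining $X_aX_b=\sum_{s=1}^{k}X_{a(s)}X_{b(s)}$ modulo $I$, where the pair $(a(s),b(s))$ is produced by swapping $i_s$ with $j_{r_1}$. The diamond condition $i_r=j_r$ for $r\notin\{r_1,r_2\}$ forces $b(s)$ to have a repeated index whenever $s\notin\{r_1,r_2\}$, and those summands vanish. For $s=r_1$ the columns are already sorted and coincide with $a\lor b$ and $a\land b$. For $s=r_2$ each column must be sorted: in $a(r_2)$ the index $j_{r_1}=i_{r_1}+1$ shifts from position $r_2$ back to position $r_1+1$, while in $b(r_2)$ the index $i_{r_2}$ shifts from position $r_1$ forward to position $r_2-1$. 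The sorted columns can be read off against the explicit formulas for $p_1(a,b)$ and $q_1(a,b)$ in the statement, and counting the adjacent transpositions yields the overall sign $-1$, giving $c_1(a,b)=-1$.

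Case~(2), where $k=l+1$ and $i_k=n$, is handled by the same strategy applied to the unequal-size pair $(a,b)$ with $r=r_1$ in Lemma~\ref{classical}. The diamond condition $i_r=j_r$ for $r\ne r_1$ ensures that every term with $s\notin\{r_1,k\}$ vanishes by the same repeated-index argument. The $s=r_1$ term directly yields $X_{a\land b}X_{a\lor b}$, while the $s=k$ term, after analogous sorting of $a(k)$ and $b(k)$, yields $X_{p_1}X_{q_1}$ with the sign working out to $-1$. The last entry $n$ of $q_1$ arises precisely because $i_k=n$ is swapped into $b$. Standardness of $X_{a\land b}X_{a\lor b}$ and $X_{p_1}X_{q_1}$ in both cases is a quick check against Proposition~\ref{cover} and the definition of $<$ on $\mM$.

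The main obstacle I anticipate is the sign bookkeeping for the $s=r_2$ term of Case~(1) (and the analogous $s=k$ term of Case~(2)). The individual signs from sorting $a(s)$ and $b(s)$ each depend on the distance between $r_1$ and $r_2$, but they must combine to exactly $-1$ regardless of that distance; confirming this cleanly requires careful counting of adjacent transpositions. Everything else -- the vanishing of the unwanted summands via repeated indices, the identification of the sorted columns with the formulas for $p_0,q_0,p_1,q_1$, and the final appeal to uniqueness of the standard-monomial expansion -- is an essentially mechanical index chase.
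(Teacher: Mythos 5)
Your plan matches the paper's proof: one application of Lemma~\ref{classical}, vanishing of most summands via repeated indices, sorting the two surviving pairs, a sign check, and an appeal to uniqueness of the standard-monomial expansion. The only differences are that the paper invokes Lemma~\ref{classical} with $r=r_2$ rather than $r=r_1$ in Case~(1) (a symmetric choice yielding the same relation), and one small slip worth fixing in a careful writeup: after the swap, the index $i_{r_2}$ lands at position $r_2$ of $b(r_2)$, not $r_2-1$, so the two sorting cycles have lengths $r_2-r_1$ and $r_2-r_1+1$ and the combined sign is indeed $-1$.
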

\begin{proof}
The descriptions of $a\land b$ and $a\lor b$ are immediate from the definitions. We are to show that the defined expression \[X_aX_b-X_{a\land b}X_{a\lor b}+X_{p_1(a,b)}X_{q_1(a,b)}\] lies in $I$. In fact, this expression is a classical Pl\"ucker relation given by Lemma~\ref{classical}. 

Indeed, for the first possibility the element $j_{r_2}\in\{j_1,\ldots,j_k\}$ can be exchanged with only two of the $i_s$ so that the elements in both sets stay pairwise distinct. Exchanging $j_{r_2}$ with $i_{r_2}$ turns the tuples $(i_1,\ldots,i_k)$ and $(j_1,\ldots,j_k)$ into, respectively, $(\alpha_1,\ldots,\alpha_k)$ and $(\beta_1,\ldots,\beta_k)$. Exchanging it with $i_{r_1}$ turns them into permutations of, respectively, $(\delta_1,\ldots,\delta_k)$ and $(\gamma_1,\ldots,\gamma_k)$. These two permutations are cycles of lengths $r_2-r_1$ and $r_2-r_1+1$, i.e.\ of different signs, hence $c_1(a,b)=-1$.

Similarly, for the second possibility the element $j_{r_1}$ can be exchanged with either $i_{r_1}$ or $i_k=n$.
\end{proof}

We now give the notion needed to state our main result. A \emph{special} diamond pair in $\mM$ is a diamond pair $\{a,b\}$ such that $q_1(a,b)$ covers $a\lor b$. We immediately give a reformulation.
\begin{proposition}
A diamond pair $\{a,b\}$ is special if and only if $a\land b$ covers $p_1(a,b)$.
\end{proposition}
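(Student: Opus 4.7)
The plan is to prove the equivalence by a direct case analysis using the explicit formulas from Proposition~\ref{strrels} together with the characterization of cover relations in $\mM$ from Proposition~\ref{cover}. Both elements being compared ($a\land b$ with $p_1(a,b)$, and $a\lor b$ with $q_1(a,b)$) have the same number of subscripts in each case, so a cover relation between them is possible only when they differ in exactly one coordinate and the difference there is $1$. The strategy is therefore to tabulate the subscript tuples of all four elements in both cases of Proposition~\ref{diamondpairs}, count and locate the positions at which they disagree, and observe that the resulting conditions on the pair $\{a,b\}$ are the same whichever of the two covers one tries to impose.

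In Case~(1) of Proposition~\ref{diamondpairs} ($k=l$), the formulas in Proposition~\ref{strrels} show that $a\land b$ and $p_1(a,b)$ agree in every coordinate except those indexed by $r_1+1,r_1+2,\ldots,r_2$, and that $a\lor b$ and $q_1(a,b)$ agree in every coordinate except those indexed by $r_1,r_1+1,\ldots,r_2-1$. After verifying that each of these listed positions truly exhibits a difference (using $j_r<j_{r+1}$ and the defining relations $i_{r_1}=j_{r_1}-1$, $i_{r_2}=j_{r_2}+1$), we see that both comparisons involve exactly $r_2-r_1$ differing positions. Proposition~\ref{cover} then forces $r_2=r_1+1$ for either cover to hold. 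Under $r_2=r_1+1$ the single differing coordinate contributes a jump equal to $j_{r_2}-j_{r_1}$ on both sides, so each cover is equivalent to the single condition $j_{r_2}=j_{r_1}+1$. Hence the two cover conditions coincide.

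In Case~(2) ($k=l+1$), the same bookkeeping shows that $a\land b$ and $p_1(a,b)$ differ in exactly the positions $r_1+1,\ldots,k$, so cover forces $r_1=k-1=l$, and then reduces to $i_k-i_{k-1}=n-i_l=1$, i.e.\ $i_l=n-1$. For $q_1(a,b)$ and $a\lor b$ the differing positions are $r_1,r_1+1,\ldots,l$, with position $l$ always contributing because $\delta_l=n>\beta_l$; a short argument shows that $r_1<l$ is incompatible with cover (one would be forced into $r_1=l-1$ and $j_l=n-1=j_{l-1}+1$, and combining this with $i_{r_1}=j_{r_1}+1$ contradicts $i_{l-1}<i_l$). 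So again $r_1=l$, and the residual condition becomes $n-(j_l+1)=1$, i.e.\ $j_l=n-2$, which is identical to $i_l=n-1$ via the Case~(2) relation $i_l=j_l+1$. Thus both covers are equivalent to $r_1=l$ and $i_l=n-1$, completing the proof. The main obstacle is purely bookkeeping: one must be careful to verify that each listed position genuinely exhibits a difference (in particular that $j_{r_1}\neq i_{r_1+1}$ in Case~(1) and that position $l$ in Case~(2) really differs), and to rule out the deceptive subcase $r_1<l$ in Case~(2) via the strict-increase contradiction described above.
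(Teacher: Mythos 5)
Your proof is correct, and it takes a genuinely different route from the paper's. The paper gives a short conceptual argument: it writes down the explicit formula for the natural grading $|\cdot|$ on $\mM$, observes that $s(a,b)$ is $\wt$-homogeneous (hence also homogeneous with respect to $|X_a|=|a|$), and deduces the balance identity $|q_1(a,b)| - |a\lor b| = |a\land b| - |p_1(a,b)|$. Since $q_1(a,b) > a\lor b$ and $a\land b > p_1(a,b)$, a cover on either side is equivalent to the common value of both sides being $1$, so the two cover conditions hold or fail together. You instead carry out the comparison directly at the level of subscript tuples, using the explicit descriptions in Proposition~\ref{strrels}, locating the positions where the tuples disagree and invoking Proposition~\ref{cover} to translate ``covers'' into ``exactly one differing coordinate, with a jump of~$1$.'' In each of the cases of Proposition~\ref{diamondpairs} you show that both cover conditions reduce to the same explicit constraints (namely $r_2=r_1+1$ and $j_{r_2}=j_{r_1}+1$ in Case~(1); $r_1=l$ and $i_l=n-1$ in Case~(2)), which as a bonus essentially reproves Proposition~\ref{specialpairs} along the way. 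Your approach is more elementary and self-contained, in that it does not need the grading formula for $|a|$ or the $\wt$-homogeneity of $s(a,b)$ as inputs, but the bookkeeping is heavier and demands care (verifying each claimed position truly differs, and ruling out the $r_1<l$ subcase in Case~(2) via the strict-increase contradiction, which is exactly the check that position $r_1$ always contributes a nonzero difference when $r_1<l$). The paper's argument is shorter and more structural; it pays the cost up front by invoking the weight grading, and then derives the explicit criterion of Proposition~\ref{specialpairs} by a separate, similar computation.
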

\begin{proof}
The grading $|a|$ on $\mM$ is given by \[|a_{i_1,\ldots,i_k}|=i_1+\ldots+i_k-\frac{k(k+1)}2+\frac{(n-k)(n-k+1)}2-1.\] That is since $|a_{1,\ldots,n-1}|=0$ (minimal element of $\mM$) and for both possibilities in Proposition~\ref{cover} the grading of the covering element is greater by 1.

Now, we know that $s(a,b)$ is homogeneous with respect to $\wt$ which means that it is also homogeneous with respect to $|X_a|=|a|$. This implies that \[|q_1(a,b)|-|a\lor b|=|a\land b|-|p_1(a,b)|\] and the diamond pair is special if and only if both sides are equal to 1.
\end{proof}
A special diamond pair $\{a,b\}$ provides the following subgraph in the Hasse diagram.
\begin{center}
\begin{tikzcd}[row sep=1mm, column sep=1mm]\label{specialpair}
&p_1(a,b)&\\[4pt]
&a\lor b\arrow[u]&\\
 a\arrow[ru]&& b\arrow[lu]\\[3pt]
 &\arrow[lu] a\land b\arrow[ru]&\\[4pt]
 &q_1(a,b)\arrow[u]&\\
\end{tikzcd}
\end{center}

An explicit description of the special diamond pairs is easy to give.
\begin{proposition}\label{specialpairs}
A diamond pair $\{a,b\}$ is special if and only if one of the two holds.
\begin{enumerate}
\item We are within possibility (1) in Proposition~\ref{diamondpairs} and $r_1=r_2-1$ and $j_{r_1}=j_{r_2}-1$ (so that $i_{r_1}$, $j_{r_1}$, $j_{r_2}$ and $i_{r_2}$ are four consecutive numbers).
\item We are within possibility (2) in Proposition~\ref{diamondpairs} and $r_1=l$ and $j_{r_1}=n-2$ (so that $j_l$, $i_l$ and $i_k$ are $n-2$, $n-1$ and $n$ respectively).
\end{enumerate}
\end{proposition}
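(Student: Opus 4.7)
The plan is to apply the explicit formulas for $p_1(a,b)$, $q_1(a,b)$, $a\land b$ and $a\lor b$ from Proposition~\ref{strrels} in each of the two possibilities of Proposition~\ref{diamondpairs}, and then use Proposition~\ref{cover} to determine exactly when $q_1(a,b)$ covers $a\lor b$. Since $p_1(a,b)$ and $q_1(a,b)$ always have, respectively, the same number of subscripts as $a\land b$ and $a\lor b$, in both possibilities the covering condition from Proposition~\ref{cover} reduces to case~(1) there: we must have coordinatewise agreement at every index except one, where the difference is exactly $1$.

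In possibility~(1) of Proposition~\ref{diamondpairs}, I would compare the tuples $(\beta_1,\ldots,\beta_k)$ and $(\delta_1,\ldots,\delta_k)$ coming from $a\lor b$ and $q_1(a,b)$. Using Proposition~\ref{strrels}, both tuples agree for $r<r_1$, for $r>r_2$, and at $r=r_2$. At intermediate positions the differences $\delta_r-\beta_r$ are $j_{r_1+1}-j_{r_1},\ j_{r_1+2}-j_{r_1+1},\ \ldots,\ j_{r_2}-j_{r_2-1}$, i.e.\ $r_2-r_1$ strictly positive numbers. For a single position of disagreement one therefore needs $r_1=r_2-1$, and then the unique difference is $j_{r_2}-j_{r_1}$, which equals $1$ iff $j_{r_1}=j_{r_2}-1$. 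Combined with the constraints $i_{r_1}=j_{r_1}-1$ and $i_{r_2}=j_{r_2}+1$ already imposed by the diamond pair condition, this gives precisely the four-consecutive-integers description of case~(1).

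In possibility~(2) I would perform the analogous comparison of $(\beta_1,\ldots,\beta_l)$ and $(\delta_1,\ldots,\delta_l)$. The tuples agree for $r<r_1$, and for $r\ge r_1$ one reads off from Proposition~\ref{strrels} that position $l$ contributes a difference $n-j_l>0$ while each position $r$ with $r_1\le r\le l-1$ contributes $j_{r+1}-j_r>0$ (with the slight correction that at $r=r_1$ the difference is $j_{r_1+1}-i_{r_1}$, but this remains nonzero as soon as $r_1<l$). Hence $r_1<l$ forces at least two positions of disagreement, and the covering condition is only possible when $r_1=l$; in that case only position $l$ differs, with gap $n-i_l$, which equals $1$ iff $i_l=n-1$, i.e.\ $j_l=n-2$. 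This is exactly case~(2).

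The only real work is the bookkeeping in the first possibility, where one has to see that the displacements cannot cancel out except when $r_1$ and $r_2$ are adjacent; the second possibility is essentially immediate once the formulas for $\delta$ are written down. The equivalent formulation in terms of $p_1(a,b)$ and $a\land b$ is automatic from the preceding proposition relating the two differences, so it does not require a separate verification.
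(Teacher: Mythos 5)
Your argument is correct. It establishes the proposition by a different bookkeeping method than the paper, though the two approaches are closely related. You compare the coordinate tuples of $q_1(a,b)$ and $a\lor b$ entry by entry and invoke Proposition~\ref{cover} directly, counting the positions where they disagree. The paper instead works on the $p$ side ($a\land b$ versus $p_1(a,b)$) and collapses everything into a single number by computing the grading difference $|a\land b|-|p_1(a,b)|$, which equals the sum $\sum(\alpha_r-\gamma_r)$ of the coordinate displacements. Since all displacements are nonnegative (because $p_1(a,b)<a\land b$ with the same number of subscripts), the grading difference equals $1$ exactly when one displacement is $1$ and the rest vanish, which is again case~(1) of Proposition~\ref{cover} — so the underlying observation is the same. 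The paper's route is shorter because the sum telescopes cleanly to $j_{r_2}-j_{r_1}$ (resp.\ $n-i_{r_1}$) without tracking which positions disagree; your route is more explicit and makes the mechanism visible, at the cost of a little extra casework to confirm each displacement is strictly positive. Your remark that the $p_1$/$a\land b$ reformulation follows automatically from the preceding proposition is correct and is exactly how the paper also handles that equivalence.

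One tiny presentational slip, which you already flag: in possibility~(2) with $r_1<l$, the displacement at $r=r_1$ is $j_{r_1+1}-i_{r_1}=j_{r_1+1}-j_{r_1}-1$, not $j_{r_1+1}-j_{r_1}$; you note the correction parenthetically, and since $j_{r_1+1}\ge j_{r_1}+2$ it is indeed still positive, so the conclusion stands.
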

\begin{proof}
In the first case we see from Proposition~\ref{strrels} and the above formula for $|a|$ that $|a\land b|-|p_1(a,b)|=j_{r_2}-j_{r_1}$ which implies the conditions in the proposition. In the second case we see that $|a\land b|-|p_1(a,b)|=n-i_{r_1}$ which again implies the corresponding part of the proposition.
\end{proof}

In particular, we see that the diamond pairs in both Example~\ref{ex1} and Example~\ref{ex2} are special. The following alternative way to distinguish the special diamond pairs is crucial to our proof of the main result.
\begin{lemma}\label{between}
A diamond pair $\{a,b\}$ is not special if and only if there exists an element $c$ with $p_1(a,b)<c<q_1(a,b)$ that is incomparable to at least one of $a$ and $b$ and is neither $a$ nor $b$.
\end{lemma}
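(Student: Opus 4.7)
My plan is to prove both directions by explicit analysis using the descriptions of $p_1, q_1, a \wedge b, a \vee b$ from Proposition~\ref{strrels} and the characterization of special pairs in Proposition~\ref{specialpairs}. The conceptual starting point is a simple observation: since $a$ and $b$ are incomparable, any element $c$ that is comparable to both must satisfy $c \le a \wedge b$ or $c \ge a \vee b$, since the mixed possibilities $c \le a, c \ge b$ and $c \ge a, c \le b$ both force $a$ and $b$ to be comparable. Thus the witnesses asked for in the lemma are exactly the elements of $(p_1, q_1) \setminus \{a, b\}$ that lie outside $[p_1, a \wedge b] \cup [a \vee b, q_1]$.

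For the forward direction (special $\Rightarrow$ no such $c$ exists), I will enumerate the full interval $[p_1, q_1]$. In both possibilities of Proposition~\ref{specialpairs}, the six elements $p_1, a \wedge b, a, b, a \vee b, q_1$ differ from each other only in a small localized range of positions (two consecutive positions in Possibility~(1); the last two positions in Possibility~(2)). For any $c \in [p_1, q_1]$, componentwise sandwiching by $p_1$ and $q_1$ restricts the entries of $c$ at those positions to a small enumerable set, and the strict-increase constraint on Pl\"ucker columns eliminates all but the six canonical elements. Since $a \wedge b$ and $a \vee b$ are both comparable to $a$ and to $b$, and $a, b$ themselves are excluded, no valid $c$ remains.

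For the reverse direction (not special $\Rightarrow$ such $c$ exists), I will construct $c$ explicitly by case analysis. In Possibility~(1) non-special (which reduces to $j_{r_2} \ge j_{r_1} + 2$): if $r_2 = r_1 + 1$, take $c$ to be $b$ with the entry at position $r_1$ incremented by one; if $r_2 \ge r_1 + 2$, take $c$ to be $a$ with the entry at position $r_2 - 1$ incremented by one. In Possibility~(2) non-special: if $r_1 = l$ and $j_l \le n-3$, take $c$ to be $a$ with its last entry decremented from $n$ to $n-1$; if $r_1 < l$, take $c$ to be $b$ with its last entry replaced by $n$. In every case I verify (a)~validity of $c$ as a Pl\"ucker column, (b)~$p_1 < c < q_1$ using the formulas from Proposition~\ref{strrels}, (c)~$c \neq a,b$, and (d)~componentwise comparison of $c$ with $a$ or with $b$ reveals one position where $c$ is strictly larger and another where $c$ is strictly smaller, giving incomparability.

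The main obstacle is the delicate case handling in Possibility~(1) Case~B: the naive choice of incrementing $b$ at position $r_1$ fails precisely when $j_{r_1+1} = j_{r_1}+1$, which forces the alternative construction via $a$ at position $r_2-1$. Verifying that this alternative $c$ lies strictly between $p_1$ and $q_1$ at all intermediate positions requires the full entry-by-entry formulas for $p_1$ and $q_1$, and is the step that consumes the most calculation, although each individual inequality reduces to routine comparisons among the $j_r$'s.
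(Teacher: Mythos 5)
Your proposal is correct and follows essentially the same strategy as the paper: in the special case, enumerate the closed interval $[p_1(a,b),q_1(a,b)]$ componentwise to show it contains only $p_1$, $a\wedge b$, $a$, $b$, $a\vee b$, $q_1$; in the non-special case, exhibit an explicit witness $c$ incomparable to one of $a,b$ and lying strictly between $p_1$ and $q_1$. The particular witnesses you pick differ slightly from the paper's (for instance, the paper takes $a$ with the entry at $r_1+1$ decremented when $r_2>r_1+1$, where you increment $a$ at $r_2-1$), but both choices are valid and the verifications are of exactly the same type.
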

\begin{proof}
Suppose that $\{a,b\}$ is not special. If we are within possibility (1) in Proposition~\ref{diamondpairs}, then $j_{r_2}-j_{r_1}>1$. Now, if $r_2-r_1=1$, we consider $c=a_{t_1,\ldots,t_k}$ with $t_r=i_r$ when $r<r_1$ or $r>r_2$, $t_{r_1}=i_{r_1}+1$ and $t_{r_2}=i_{r_2}-2$. Such a $c$ is incomparable to $a_{i_1,\ldots,i_k}$. If $r_2-r_1>1$, then we consider $c=a_{t_1,\ldots,t_k}$ with $t_r=i_r$ when $r\neq r_1+1$ and $t_{r_1+1}=i_{r_1+1}-1$. Such a $c$ is incomparable to $a_{j_1,\ldots,j_k}$. In both cases $c$ is easily seen to lie between $p_1(a,b)$ and $q_1(a,b)$.

If we are within possibility (2) in Proposition~\ref{diamondpairs}, then $i_{r_1}<n-1$. If $r_1=k-1$, we consider $c=a_{t_1,\ldots,t_k}$ with $t_r=i_r$ when $r\neq k-1$ and $t_{k-1}=i_{k-1}+1$. Such $c$ is incomparable to $a_{j_1,\ldots,j_l}$. If $r_1<k-1$, we consider $c=a_{t_1,\ldots,t_l}$ with $t_r=j_r$ when $r\neq r_1+1$ and $t_{r_1+1}=j_{r_1+1}-1$. Such a $c$ is incomparable to $a_{i_1,\ldots,i_k}$.

Now, conversely, suppose that $\{a,b\}$ is a special diamond pair and employ Proposition~\ref{specialpairs}. It is easily seen that in both cases there are only 4 elements lying strictly between $p_1(a,b)$ and $q_1(a,b)$. These are necessarily $a$, $b$, $a\land b$ and $a\lor b$ and no $c$ with the required properties exists.
\end{proof}
\begin{cor}\label{only4}
A diamond pair $\{a,b\}$ is special if and only if the only elements lying strictly between $p_1(a,b)$ and $q_1(a,b)$ are $a$, $b$, $a\land b$ and $a\lor b$.
\end{cor}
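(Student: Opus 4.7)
The plan is to derive this corollary directly from Lemma~\ref{between}, since most of the combinatorial work has already been done there.

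For the forward implication I would proceed by a case analysis according to Proposition~\ref{specialpairs}. In each of the two cases one can explicitly list the tuples $(t_1,\dots,t_m)$ satisfying the componentwise inequalities defining $p_1(a,b) < a_{t_1,\dots,t_m} < q_1(a,b)$; the grading computation already used in the proof of Proposition~\ref{specialpairs} (namely $|q_1(a,b)|-|p_1(a,b)|=3$, with the intermediate level containing two elements) forces the strict open interval to contain at most four elements. Since $a$, $b$, $a\land b$, $a\lor b$ all lie in this interval and are pairwise distinct, they must be exactly those four elements. This is essentially the concluding observation in the proof of Lemma~\ref{between}, so in the write-up I would just refer back to it.

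For the converse I would argue by contrapositive using Lemma~\ref{between}. Suppose $\{a,b\}$ is not special; then Lemma~\ref{between} yields an element $c$ with $p_1(a,b) < c < q_1(a,b)$ that is distinct from $a$ and $b$ and incomparable to at least one of them. Since $a\land b$ is $\le$ than both $a$ and $b$, and $a\lor b$ is $\ge$ than both, neither $a\land b$ nor $a\lor b$ can serve as such a $c$. Hence $c$ is a genuine fifth element strictly between $p_1(a,b)$ and $q_1(a,b)$, contradicting the hypothesis.

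There is no real obstacle: the corollary is a clean repackaging of Lemma~\ref{between}, with the only small point to verify being that $a\land b$ and $a\lor b$ are automatically comparable to $a$ and $b$ (which is immediate from the lattice structure). The genuine combinatorial content lives in Lemma~\ref{between} and in the explicit descriptions of $p_1$, $q_1$ from Proposition~\ref{strrels}.
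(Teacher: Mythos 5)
Your proof is correct and follows exactly the route the paper intends: the corollary is stated without a proof precisely because it is an immediate reformulation of Lemma~\ref{between}, and your observation that the $c$ produced by that lemma cannot be $a\land b$ or $a\lor b$ (being incomparable to one of $a$, $b$) is the one small point needed to close the converse. The only slip is a numerical one: the chain $p_1(a,b)<a\land b<a<a\lor b<q_1(a,b)$ shows $|q_1(a,b)|-|p_1(a,b)|=4$, not $3$; the three intermediate levels have sizes $1$, $2$, $1$, giving the four elements, but your argument does not actually depend on this count since you defer to the concluding paragraph of the proof of Lemma~\ref{between}.
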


Finally, we present our first main result.
\begin{theorem}\label{maingt}
The cone $C(I,I^m)$ is composed of $w$ that satisfy inequality~\eqref{diamondineq} for every diamond pair $\{a,b\}$ and inequality 
\begin{equation}\label{specialineq}
w_a+w_b<w_{p_1(a,b)}+w_{q_1(a,b)}
\end{equation}
for every special diamond pair $\{a,b\}$. This H-description is minimal.
\end{theorem}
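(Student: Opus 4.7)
The plan is to combine Proposition~\ref{ssytredundant} with two complementary arguments. That proposition lists a (generally redundant) H-description of $C(I,I^m)$, so the task is to identify its minimal subset. Proposition~\ref{diamondfacets} already establishes that every diamond-pair inequality is non-redundant, so it remains to (i) show that every inequality in Proposition~\ref{ssytredundant} not of one of the two forms in the theorem is redundant, and (ii) show that every inequality of the form~\eqref{specialineq} for a special diamond pair is non-redundant.

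For part (i), I would adapt the strategy used in the proof of Theorem~\ref{hibimonfacets}. Suppose the hyperplane $w_a+w_b=w_{p_i(a,b)}+w_{q_i(a,b)}$ for some $(a,b,i)$ not of one of the two named forms contains a facet $F$. Pick a point $u$ outside $\overline{C(I,I^m)}$ but close to the relative interior of $F$, so that $u_a+u_b>u_{p_i(a,b)}+u_{q_i(a,b)}$ while $u_c+u_d<u_{p_j(c,d)}+u_{q_j(c,d)}$ strictly for every other triple $(c,d,j)$ from Proposition~\ref{ssytredundant}. Under these choices $\initial_us(a,b)=-c_i(a,b)X_{p_i(a,b)}X_{q_i(a,b)}$ (a single monomial up to sign). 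The combinatorial heart of the argument is to find an element $c\in\mM$ with $p_i(a,b)<c<q_i(a,b)$ that is incomparable to at least one of $a,b$: Lemma~\ref{between} supplies such a $c$ when $\{a,b\}$ is a non-special diamond pair (in which case $i=1$ by Proposition~\ref{strrels}); when $\{a,b\}$ is incomparable but not a diamond pair, the interval $[a\land b,a\lor b]$ has at least five elements, and any $c\in[a\land b,a\lor b]\setminus\{a\land b,a,b,a\lor b\}$ is automatically incomparable to $a$ or $b$ (an element of $[a\land b,a\lor b]$ comparable to both $a,b$ in a distributive lattice must equal $a\land b$ or $a\lor b$), while $p_i(a,b)\le a\land b<c<a\lor b\le q_i(a,b)$ by Theorem~\ref{strlaws}(c). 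Multiplying $s(a,b)$ by $X_c$ yields $X_cX_{p_i(a,b)}X_{q_i(a,b)}\in\initial_uI$; this monomial lies outside $I^m$ because $c,p_i(a,b),q_i(a,b)$ are pairwise comparable. Following the template of Theorem~\ref{hibimonfacets}, I would then define an injection $\varphi$ from the monomials in $I^m_3$ to the monomials in $(\initial_uI)_3$: monomials with an incomparable pair other than $\{a,b\}$ map to themselves; a monomial of the form $X_aX_bX_r$ (where $r$ is forced to be comparable to both $a$ and $b$) maps to $X_{p_i(a,b)}X_{q_i(a,b)}X_r$. Because $c$ is incomparable to $a$ or $b$, the monomial $X_cX_{p_i(a,b)}X_{q_i(a,b)}$ is not in the image of $\varphi$, yielding $\dim(\initial_uI)_3>\dim I^m_3$---a contradiction since both equal $\dim I_3$.

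For part (ii), given a special diamond pair $\{a,b\}$, I would exhibit $w\in\bR^\mM$ satisfying every diamond inequality~\eqref{diamondineq} strictly, every special inequality~\eqref{specialineq} for pairs other than $\{a,b\}$ strictly, and violating~\eqref{specialineq} for $\{a,b\}$. The natural starting point is $w^0_x=f(|x|)$ for a strictly convex $f$ centered at $|a|$; this satisfies every required inequality strictly (for the target special this uses strict convexity together with the grading identity $|p_1(a,b)|+|q_1(a,b)|=|a|+|b|$). One then adds a perturbation supported on the six-element interval $[p_1(a,b),q_1(a,b)]=\{p_1(a,b),a\land b,a,b,a\lor b,q_1(a,b)\}$ (Corollary~\ref{only4}) designed to raise $w_a+w_b-w_{p_1(a,b)}-w_{q_1(a,b)}$ above zero while keeping every other slack positive. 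The explicit catalogue of affected inequalities is controlled by Propositions~\ref{diamondpairs} and~\ref{specialpairs}, which describe all diamond and special pairs in $\mM$.

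The main obstacle is the calibration in part (ii): a naive perturbation that simply boosts $w_a,w_b$ breaks the target diamond inequality, while compensating at $w_{a\land b},w_{a\lor b}$ creates difficulties with diamond pairs $\{c,d\}$ whose meet or join lies in $\{a\land b,a\lor b\}$, and with still more delicate diamond pairs $\{c,d\}$ that share an element with $\{a,b,a\land b,a\lor b\}$ while neither their meet nor their join coincides with the target's. Resolving these tensions uses the rigid level structure $|p_1(a,b)|=|a|-2$, $|q_1(a,b)|=|a|+2$ for special pairs together with the precise description of covers in Proposition~\ref{cover}. This case analysis---verifying that a suitably tailored perturbation of the six coordinates can simultaneously violate~\eqref{specialineq} for $\{a,b\}$ and leave every other defining inequality strict---is expected to be the most technical portion of the proof.
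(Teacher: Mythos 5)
Your skeleton---reduce via Proposition~\ref{ssytredundant}, invoke Proposition~\ref{diamondfacets} for the diamond-pair inequalities, then prove redundancy of the remaining inequalities by a dimension count and irredundancy of the special-pair inequalities by exhibiting a witness---matches the paper's, but each half has a gap. For the dimension count you work in the full degree-three piece $I_3$, while the paper works in the bi-homogeneous piece $I_{\la,\mu}$ with $\la=\deg X_aX_bX_c$ and $\mu=\wt X_aX_bX_c$, and that refinement is load-bearing. Your map $\varphi$ sends $X_aX_bX_r$ (with $r$ comparable to both $a,b$) to $X_{p_i(a,b)}X_{q_i(a,b)}X_r$; but for $i\ge 1$ and $r\ge a\lor b$, the elements $r$ and $q_i(a,b)$ are both $\ge a\lor b$ and need not be comparable, and when they are not, the target monomial lies in $I^m_3$ and is also fixed by $\varphi$, so $\varphi$ fails to be injective and the count collapses. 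This really happens: for $n=5$ and the non-special diamond pair $\{a_{1,5},a_{2,4}\}$ one has $q_1(a,b)=a_{4,5}$ incomparable to $r=a_2>a\lor b=a_{2,5}$. Restricting to $I_{\la,\mu}$ makes $X_aX_bX_c$ the unique monomial of that shape, so no remapping is needed: one observes $I^m_{\la,\mu}\subset(\initial_u I)_{\la,\mu}$ directly and then notes the extra monomial $X_{p_i(a,b)}X_cX_{q_i(a,b)}$.

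The second half is the more serious gap, and it is not merely a deferred computation: the witness cannot be a symmetric convex base perturbed only on the six elements $\{p_1(a,b),a\land b,a,b,a\lor b,q_1(a,b)\}$. The paper's $v$ treats $p_1(a,b)$ and $q_1(a,b)$ asymmetrically ($v_{p_1(a,b)}=1$ while $v_{q_1(a,b)}=0$) and alters the value at every element $c$ with $|c|\in\{|a|-1,|a|,|a|+1\}$ according to whether $c<q_1(a,b)$ and whether $c<a\lor b$, so far more than six coordinates depart from any convex baseline. Concretely, a six-coordinate perturbation $\delta$ of a symmetric convex base $f$ is generically blocked (writing $f_k=f(|a|+k)$): diamond pairs $\{a,d\}$ and $\{b,d'\}$ whose remaining vertices avoid the six elements force $\delta_a,\delta_b\le 2(f_1-f_0)$, diamond pairs with $c\lor d=q_1(a,b)$ or $c\land d=p_1(a,b)$ whose remaining vertices avoid them force $\delta_{q_1(a,b)},\delta_{p_1(a,b)}\ge-(f_0+f_2-2f_1)$, and summing gives $\delta_a+\delta_b-\delta_{p_1(a,b)}-\delta_{q_1(a,b)}\le 2(f_2-f_0)$, which is exactly the opposite of what violating~\eqref{specialineq} demands. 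The witness must instead be assembled across a band of levels around $|a|$ with values depending on the order relations to $a\lor b$ and $q_1(a,b)$, as in the paper's construction and ensuing four-case analysis.
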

\begin{proof}
By Proposition~\ref{ssytredundant} we know that every facet is given by an equation of the form 
\begin{equation}\label{streq}
w_a+w_b=w_{p_i(a,b)}+w_{q_i(a,b)}
\end{equation}
for some incomparable pair $\{a,b\}$ and $i$. In view of Proposition~\ref{diamondfacets} we are to show two things. First, that for every facet given by equation~\eqref{streq} with $i\ge 1$ we, in fact, have $i=1$ and $\{a,b\}$ is a special diamond pair. Second, that for a special diamond pair $\{a,b\}$ the inequality $w_a+w_b\le w_{p_1(a,b)}+w_{q_1(a,b)}$ does not follow from all the other inequalities
\begin{equation}\label{nonstrict}
w_c+w_d\le w_{p_i(c,d)}+w_{q_i(c,d)}
\end{equation}
where either $\{c,d\}$ is a diamond pair and $i=0$ or $\{c,d\}$ is a special diamond pair and $i=1$. (Unlike the proof of Theorem~\ref{hibimonfacets}, it is more convenient for us to work with nonstrict inequalities in the second statement.)

The first claim is proved similarly to the proof of Theorem~\ref{hibimonfacets}. If $i\ge 1$ and~\eqref{streq} defines a facet, then we have a $u\in\bR^\mM$ for which $u_a+u_b>u_{p_i(a,b)}+u_{q_i(a,b)}$ but $u_c+u_d<u_{p_j(c,d)}+u_{q_j(c,d)}$ when $\{c,d\}\neq\{a,b\}$ or $j\neq i$. Suppose that $i>1$ or $\{a,b\}$ is not a special diamond pair. We then have an element $c$ with $p_i(a,b)<c<q_i(a,b)$ which is not $a$ nor $b$ and is incomparable to one of $a$ and $b$. Indeed, if $\{a,b\}$ is not a diamond pair, then either $a\lor b$ does not cover $a$ or it does not cover $b$. In the first case we may select $c$ strictly between $a$ and $a\lor b$ (then $c$ is incomparable to $b$), in the second case strictly between $b$ and $a\lor b$ (then $c$ is incomparable to $a$). If $\{a,b\}$ is a diamond pair we apply Lemma~\ref{between}. 

Denote $\deg X_aX_bX_c=\la$ and $\wt X_aX_bX_c=\mu$ and for and ideal $J\subset R$ homogeneous with respect to $\deg$ and $\wt$ denote $J_{\la,\mu}$ the corresponding homogeneous component. The dimensions of $I_{\la,\mu}$, $I^m_{\la,\mu}$ and $(\initial_u I)_{\la,\mu}$ all coincide. Now, for $X_p X_q X_r\in I^m_{\la,\mu}$ we may assume that $p$ and $q$ are incomparable. Furthermore, due to our choice of $c$ we may assume $\{p,q\}\neq\{a,b\}$, since otherwise $r=c$ which is incomparable to either $a$ or $b$ and we may rearrange $p$, $q$ and $r$. This means that $X_p X_q X_r=\initial_u (s(p,q)X_r)$. However, we see that $(\initial_u I)_{\la,\mu}$ also contains the monomial $X_{p_i(a,b)}X_cX_{q_i(a,b)}=\initial_u(s(a,b)X_c)$ which does not lie in $I^m$. This contradicts $\dim I^m_{\la,\mu}=\dim(\initial_u I)_{\la,\mu}$.

We prove the second claim by producing a point $v$ such that $v_a+v_b> v_{p_1(a,b)}+v_{q_1(a,b)}$ but\begin{equation}\label{vnonstrict}
v_c+v_d\le v_{p_i(c,d)}+v_{q_i(c,d)}
\end{equation}
whenever $\{c,d\}$ is a diamond pair and $i=0$ or $\{c,d\}\neq\{a,b\}$ is a special diamond pair and $i=1$. This point is defined as follows.
\begin{itemize}
\item Set $v_{q_1(a,b)}=0$. For all other $c$ with $|c|=|q_1(a,b)|(=|a|+2)$ set $v_c=2$.
\item Set $v_{a\lor b}=1$. For $c$ with $|c|=|a\lor b|(=|a|+1)$ such that $c<q_1(a,b)$ and $c\neq a\lor b$ set $v_c=0$. For all remaining $c$ with $|c|=|a\lor b|$ set $v_c=1$.
\item  For $c$ with $|c|=|a|$ such that $c<a\lor b$ (including $a$ and $b$) set $v_c=1$. For $c$ with $|c|=|a|$ such that $c<q_1(a,b)$ and $c\not<a\lor b$ set $v_c=0$. For all remaining $c$ with $|c|=|a|$ set $v_c=1$.
\item For all $c$ with $|c|=|a\land b|$ set $v_c=1$.
\item Set $v_{p_1(a,b)}=1$. For all other $c$ with $|c|=|p_1(a,b)|$ set $v_c=2$.
\item For all $c$ with $|c|<|p_1(a,b)|$ or $|c|>|q_1(a,b)|$ set $v_c=2^{||c|-|a||}$.
\end{itemize}
\vspace{2mm}

\hspace{-3mm}\begin{minipage}{.6\textwidth}
\setlength{\parindent}{1em}
On the right is a schematic representation of a subgraph in the Hasse diagram of $\mM$ with $v_c$ written in place of $c$. Here the value of $|c|$ ranges from $|a|-3$ to $|a|+3$, the elements $c$ in a single row have the same grading $|c|$. The red coloured elements from top to bottom are: $q_1(a,b)$, $a\lor b$, $a$ and $b$, $a\land b$, $p_1(a,b)$. The blue elements are those with $|c|=|a\lor b|$, $c<q_1(a,b)$ and $c\neq a\lor b$. The green element satisfies $|c|=|a|$, $c<a\lor b$ and $c\notin\{a,b\}$. The orange element satisfies $|c|=|a|$, $c<q_1(a,b)$ and $c\not<a\lor b$. The remaining elements are shown in black.\vspace{5mm}
\end{minipage}%
\begin{minipage}{.41\textwidth}
\raggedleft
\begin{tikzcd}[row sep=tiny, column sep=tiny]
&8&8&8\\
&\color{red} 0&2&2&2\\
&\color{red}1\arrow[u]&\color{blue}0\arrow[lu]&\color{blue}0\arrow[llu]&1&1\\
\color{red}1\arrow[ru]&& \color{red}1\arrow[lu]&\color{green}1\arrow[llu]&\color{orange}0\arrow[lu]\arrow[llu]&1&1&\\
&\color{red} 1\arrow[lu]\arrow[ru]&1&1&1&1\\
&\color{red}1\arrow[u]&2&2&2\\
&8&8&8\\ 
\end{tikzcd}
\end{minipage}

We see that $v_a+v_b> v_{p_1(a,b)}+v_{q_1(a,b)}$ and $v_a+v_b\le v_{a\land b}+v_{a\lor b}$. Consider a diamond pair $\{c,d\}$ different from $\{a,b\}$ and let $i$ be 0 or 1 with $i=1$ only if $\{c,d\}$ is special. Note that $|q_i(c,d)|-|p_i(c,d)|<=4$.

First, if $|p_i(c,d)|<|p_1(a,b)|$, then $|q_i(c,d)|<|q_1(a,b)|$ and each of  $v_c$ and $v_d$ is no greater than $v_{p_i(c,d)}/2$ and~\eqref{vnonstrict} ensues. The case of $|q_i(c,d)|>|q_1(a,b)|$ is symmetrical.

We are left to consider the case when both $|p_i(c,d)|$ and $|q_i(c,d)|$ lie in the interval $[|p_1(a,b)|,|q_1(a,b)|]$ (which contains 5 integers). Here we will make repeated use of Corollary~\ref{only4}. We have four possibilities.
\begin{enumerate}[label=\arabic*.]
\item We have $|p_i(c,d)|=|p_1(a,b)|$ and $|q_i(c,d)|=|q_1(a,b)|$, i.e.\ $\{c,d\}$ is a special diamond pair with $|c|=|d|=|a|=|b|$ and $i=1$. At least one of $c$ and $d$ does not lie between $p_1(a,b)$ and $q_1(a,b)$, therefore $p_i(c,d)\neq p_1(a,b)$ or $q_i(c,d)\neq q_1(a,b)$ which means that at least one of $v_{p_i(c,d)}$ and $v_{q_i(c,d)}$ is equal to 2. Meanwhile, both $v_c$ and $v_d$ are 0 or 1, \eqref{vnonstrict} follows.
\item We have $|q_i(c,d)|=|q_1(a,b)|$ and $|p_i(c,d)|=|q_1(a,b)|-2$, i.e.\ $\{c,d\}$ is a diamond pair with $|c|=|d|=|a|+1$ and $i=0$. If $q_i(c,d)\neq q_1(a,b)$, then \eqref{vnonstrict} is immediate, since $v_{q_i(c,d)}=2$ while all other terms are 0 or 1. If $q_i(c,d)=q_1(a,b)$ and one of $c$ and $d$ is $a\lor b$, then both sides of~\eqref{vnonstrict} are $0+1$. If $q_i(c,d)=q_1(a,b)$ and neither of $c$ and $d$ is $a\lor b$, then $v_c=v_d=0$ and~\eqref{vnonstrict} follows. 
\item We have $|q_i(c,d)|=|a\lor b|$ and $|p_i(c,d)|=|a\land b|$, i.e.\ $\{c,d\}$ is a diamond pair with $|c|=|d|=|a|$ and $i=0$. Since $v_{p_i(c,d)}=1$ and all other terms in~\eqref{vnonstrict} are 0 or 1, we only discuss the case when $v_{q_i(c,d)}=0$, i.e.\ $q_1(a,b)>q_i(c,d)\neq a\lor b$. We must show that we cannot have both $v_c=1$ and $v_d=1$, which would mean that $c<a\lor b$ and $d<a\lor b$. But we must then have $c\lor d=a\lor b$ which contradicts $q_i(c,d)\neq a\lor b$.
\item We have $|q_i(c,d)|=|a|$ and $|p_i(c,d)|=|p_1(a,b)|$, i.e.\ $\{c,d\}$ is a diamond pair with $|c|=|d|=|a\land b|$ and $i=0$. If $p_i(c,d)\neq p_1(a,b)$, then $v_{p_i(c,d)}=2$ and~\eqref{vnonstrict} is clear. If $p_i(c,d)=p_1(a,b)$, we must show that $v_{q_i(c,d)}\neq 0$, i.e.\ that $q_i(c,d)=c\lor d$ may not be less than $q_1(a,b)$ but not less than $a\lor b$. However, if $q_i(c,d)<q_1(a,b)$, then $q_i(c,d)$ lies strictly between $p_i(c,d)=p_1(a,b)$ and $q_1(a,b)$ which implies that $q_i(c,d)$ is one of $a$ and $b$.
\end{enumerate}
\end{proof}

We conclude this section by pointing out that the number of facets of $\overline{C(I,I^m)}$ is easy to compute and the corresponding sequence (multiplied by $\frac12$) can be found in the Online Encyclopedia of Integer Sequences~\cite{OEIS} where other interpretations are given.

\begin{cor}
The number of facets of $\overline{C(I,I^m)}$ is equal to $2^{n-5}(n^2+n-4)$ (this is, modulo a numeration shift, sequences \href{http://oeis.org/A001793}{A049611} and \href{http://oeis.org/A084851}{A084851} in OEIS multiplied by 2). 
\end{cor}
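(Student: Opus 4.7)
The plan is to invoke Theorem~\ref{maingt}, which gives a bijection between the facets of $\overline{C(I,I^m)}$ and the disjoint union of the diamond pairs in $\mM$ (contributing the inequalities~\eqref{diamondineq}) and the special diamond pairs in $\mM$ (contributing the inequalities~\eqref{specialineq}). Since Propositions~\ref{diamondpairs} and~\ref{specialpairs} give explicit combinatorial descriptions of each of these families split into two types, the problem reduces to four separate enumerations whose sum must be shown to equal $(n^2+n-4)\cdot 2^{n-5}$.

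For type (1) diamond pairs, I would parameterize the pair $\{a,b\}$ by the join $a\lor b=a_{\beta_1,\ldots,\beta_k}$ together with the unordered pair of positions $\{r_1,r_2\}$ at which $a$ and $b$ differ from this join. Reading off the constraints in Proposition~\ref{diamondpairs}(1), each $r_i$ must be a \emph{gap-before} position of the set $S=\{\beta_1,\ldots,\beta_k\}\subseteq[1,n]$, meaning $\beta_{r_i}-1\notin S\cup\{0\}$. The count then becomes $\sum_{S}\binom{g(S)}{2}$, where $g(S)$ is the number of gap-before positions; reorganizing by the unordered pair of values $\{\beta_{r_1},\beta_{r_2}\}\subseteq[2,n]$ and counting the admissible $S$ for each pair yields a closed form. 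Type (2) diamond pairs are enumerated by the analogous single sum $\sum_{S\subseteq[1,n-1]}g(S)$, where $S$ now plays the role of $a\lor b$ of length $l$ and the constraint $\beta_l<n$ forces $S\subseteq[1,n-1]$. For special diamond pairs Proposition~\ref{specialpairs} is still more restrictive: a type (1) special pair is determined by an integer $v\in[1,n-3]$ together with an arbitrary subset of $[1,v-1]\cup[v+4,n]$ filling the remaining column positions, while a type (2) special pair is determined by an arbitrary subset of $[1,n-3]$ prepended to the forced tail.

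The one nontrivial point is the type (1) diamond pair enumeration: when reindexing by $\{x,y\}=\{\beta_{r_1},\beta_{r_2}\}\subseteq[2,n]$ with $x<y$, the case $y=x+1$ must be excluded, since then $y-1=x\in S$ automatically and the gap-before condition at $y$ cannot hold. Missing this degeneracy would inflate the count; handling it correctly gives the four contributions $(n-2)(n-3)\cdot 2^{n-5}$, $(n-2)\cdot 2^{n-3}$, $(n-3)\cdot 2^{n-4}$, and $2^{n-3}$, and a short simplification produces the stated formula. Identification of the resulting sequence with the two OEIS entries is a routine final check.
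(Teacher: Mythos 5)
Your proposal is correct and gives exactly the paper's four contributions $(n-2)(n-3)\cdot2^{n-5}$, $(n-2)\cdot2^{n-3}$, $(n-3)\cdot2^{n-4}$, and $2^{n-3}$, so the arithmetic is identical and the overall strategy (reduce via Theorem~\ref{maingt} and Propositions~\ref{diamondpairs} and~\ref{specialpairs} to four enumerations) is the same as in the paper. The only difference is the parametrization: you organize type (1) and (2) diamond pairs by the value set $S$ of $a\lor b$ and its ``gap-before'' positions, while the paper directly picks the distinguished subscript values $i_{r_1},i_{r_2}$ (with $i_{r_2}-i_{r_1}\ge3$ for type (1)) and then chooses the remaining subscripts from $[1,n]\setminus\{i_{r_1},i_{r_1}+1,i_{r_2}-1,i_{r_2}\}$; these are two presentations of the same count, and your remark about excluding adjacent $\{x,y\}$ corresponds precisely to the paper's constraint $i_{r_2}-i_{r_1}\ge3$.
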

\begin{proof}
The number of facets is equal to the number of diamond pairs in $\mM$ plus the number of special diamond pairs in $\mM$. To obtain a diamond pair of the type given by possibility (1) in Proposition~\ref{diamondpairs} one needs to choose the integers $1<i_{r_1}<i_{r_2}\le n$ with $i_{r_2}-i_{r_1}\ge 3$ (which can be done in $\frac{n^2-5n+6}2$ ways) and then choose the remaining $i_r$ from $[1,n]\backslash\{i_{r_1},i_{r_1}+1,i_{r_2}-1,i_{r_2}\}$ which can be done in $2^{n-4}$ ways. Similarly, for possibility (2) one obtains $2^{n-3}(n-2)$ diamond pairs which gives us a total of $2^{n-5}(n^2-n-2)$ diamond pairs.

Possibility (1) in Proposition~\ref{specialpairs} provides $2^{n-4}(n-3)$ special diamond pairs and possibility (2) provides $2^{n-3}$. The proposition follows.
\end{proof}
In particular, we obtain
\begin{cor}
The number of facets of $\overline{C(I^h,I^m)}$ is equal to $2^{n-5}(n^2-n-2)$ (this is, modulo a numeration shift, sequence \href{http://oeis.org/A001793}{A001793} in OEIS). 
\end{cor}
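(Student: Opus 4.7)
The plan is to reduce the statement immediately to a combinatorial enumeration. By Theorem~\ref{hibimonfacets}, the facets of $\overline{C(I^h(\mM),I^m(\mM))}$ are in bijection with the diamond pairs in the lattice $\mM$, so the entire task is to count these pairs.

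Proposition~\ref{diamondpairs} partitions diamond pairs into two explicit families, and I would enumerate each. For the first family (case $k=l$), a pair is determined by the four distinguished entries $i_{r_1}<i_{r_1}+1<i_{r_2}-1<i_{r_2}$ (so $i_{r_2}-i_{r_1}\ge 3$, chosen in $\binom{n-2}{2}=\frac{(n-2)(n-3)}{2}$ ways) together with the common entries in the remaining positions, which form an arbitrary subset of the $(n-4)$-element set $[1,n]\setminus\{i_{r_1},i_{r_1}+1,i_{r_2}-1,i_{r_2}\}$ (giving $2^{n-4}$ subsets). This produces $(n-2)(n-3)\cdot 2^{n-5}$ diamond pairs. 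For the second family (case $k=l+1$ with $i_k=n$), I would specify the pair by choosing $i_{r_1}\in[2,n-1]$ (giving $n-2$ options) and then a subset of $[1,n]\setminus\{i_{r_1}-1,i_{r_1},n\}$ for the remaining entries $\{i_r:r\ne r_1,k\}$ of the longer column (giving $2^{n-3}$ options), for a total of $(n-2)\cdot 2^{n-3}$ pairs. Summing gives $2^{n-5}(n-2)[(n-3)+4]=2^{n-5}(n-2)(n+1)=2^{n-5}(n^2-n-2)$, the claimed formula, and the identification with OEIS sequence A001793 up to a numeration shift is then immediate from comparing initial values with~\cite{OEIS}.

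No conceptual obstacle arises here, since Theorem~\ref{hibimonfacets} already pins down the facets and the enumeration is essentially the diamond-pair summand extracted in the preceding corollary's proof. The only point of care is to avoid double-counting unordered pairs, which is handled automatically because Proposition~\ref{diamondpairs} specifies each pair through an asymmetric normal form (the ordered pair $r_1<r_2$ in family one and the single distinguished index $r_1$ in family two).
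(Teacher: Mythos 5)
Your proposal is correct and follows exactly the same route as the paper: Theorem~\ref{hibimonfacets} reduces the count to enumerating diamond pairs in $\mM$, and the paper performs precisely this two-family enumeration (via Proposition~\ref{diamondpairs}) inside the proof of the preceding corollary, arriving at the same $2^{n-5}(n^2-n-2)$. One minor note: the paper writes the constraint as $1<i_{r_1}<i_{r_2}\le n$, but the count $\frac{n^2-5n+6}{2}=\binom{n-2}{2}$ it uses is for $1\le i_{r_1}$, matching your version, so this is just a typographical slip in the paper.
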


\section{Interpolating polytopes}

The goal of the next two sections is to define a family of toric ideals generalizing the Hibi ideal for any finite distributive lattice and to describe certain maximal cones in their Gr\"obner fans. This family of ideals is parametrized by a family of polytopes which we study in this section.

Consider an arbitrary finite poset $(P,<)$ and let $U_o, U_c\subset P$ form a partition: $U_o\cup U_c=P$ and $U_o\cap U_c=\varnothing$. Here ``$o$'' stands for ``order'' and ``$c$'' for ``chain'', the reasons will become clear below. We define a polytope $\Pi_{U_o,U_c}(P)\subset\bR^P$ consisting of points $x$ with coordinates $\{x_p,p\in P\}$ such that:
\begin{enumerate}
\item for any $p\in P$ one has $0\le x_p\le 1$,
\item for any $p,q\in U_o$ with $p<q$ one has $x_q\le x_p$ (note the order!),
\item for any $p_1<\ldots<p_k$ with $p_i\in U_c$ for $i<k$ and $p_k\in P$ one has $\sum x_{p_i}\le 1$,
\item for any $q<p_1<\ldots<p_k$ with $q\in U_o$, $p_i\in U_c$ for $i<k$ and $p_k\in P$ one has $\sum x_{p_i}\le x_q$.
\end{enumerate}

The polytope $\Pi_{P,\varnothing}(P)$ consists of points satisfying $0\le x_q\le x_p\le 1$ for any $p<q$. This is commonly referred to as the \emph{order polytope} of $P$, but note that the original definition in~\cite{stan} requires $x_p\le x_q$ instead which amounts to reflecting the polytope in the point $(\frac12,\ldots,\frac12)$. The polytope $\Pi_{\varnothing,P}(P)$ consists of points satisfying $x_p\ge 0$ for all $p$ and $\sum x_{p_i}\le 1$ for any chain $(p_1,\ldots,p_k)$. This is the \emph{chain polytope} of $P$, also defined in~\cite{stan}. In general, we will refer to the polytopes $\Pi_{U_o,U_c}(P)$ as \emph{interpolating polytopes}, since they can be said to interpolate between the order and chain polytope. They are similar to but different from the \emph{order-chain polytopes} of~\cite{HLLLT}, \emph{marked chain-order polytopes} of~\cite{FF} and the \emph{marked poset polytopes} of~\cite{FFLP}, which are three other families interpolating between the order and chain polytopes. 
\begin{remark}
To make the above more precise, the polytope $\Pi_{U_o,U_c}(P)$ can, in fact, be represented as a unimodular transform of a certain marked poset polytope from~\cite{FFLP}. To see this consider the poset $Q=P\cup\{\varepsilon_0,\varepsilon_1\}$ such that $\varepsilon_0<p<\varepsilon_1$ for any $p\in P$. Then, in the terminology of~\cite[Subsection 1.3]{FFLP}, let $\{\varepsilon_0,\varepsilon_1\}$ be the set of marked elements in $Q$, let $U_o$ be the set of order elements, $U_c$ be the set of chain elements and consider the marking $\lambda:\varepsilon_i\mapsto i$. We obtain the \emph{marked chain-order polyhedron} (a subclass of marked poset polytopes) $\mathcal O_{U_c,U_o}(Q,\lambda)\subset\bR^Q$. One may then verify that $\Pi_{U_o,U_c}(P)$ is obtained from $\mathcal O_{U_c,U_o}(Q,\lambda)$ by projection onto $\bR^P$ followed by reflection in the point $x$ with $x_p=\frac12$ for $p\in U_o$ and $x_p=0$ otherwise. Subsequently, two of the facts proved in this section (Lemma~\ref{ehrhart} and Proposition~\ref{minkowski}) follow from results in~\cite{FFLP} and~\cite{FFP} and independent proofs are given for completeness only.
\end{remark}

\begin{example}\label{polytopeexample}
Consider the poset $(P=\{p,q,r,s\},<)$ with covering relations $p<q$, $q<r$ and $q<s$ so that its Hasse diagram is as follows:
\begin{center}
\begin{tikzcd}[row sep=1mm,column sep=tiny]
&&r\\
p\arrow[r]&q\arrow[ru]\arrow[rd]&\\
&&s\\
\end{tikzcd}
\end{center}
One easily checks that 
\begin{itemize}
\item the order polytope $\mathcal O_{P,\varnothing}$ is given by the inequalities $1\ge x_p\ge x_q\ge x_r\ge 0$ and $x_q\ge x_s\ge0$,
\item the chain polytope $\mathcal O_{\varnothing,P}$ is given by the inequalities $x_p,x_q,x_r,x_s\ge 0$, $x_p+x_q+x_r\le1$ and $x_p+x_q+x_s\le1$,
\item the interpolating polytope $\mathcal O_{\{p\},\{q,r,s\}}$ is given by the inequalities $1\ge x_p\ge x_q+x_r$, $x_p\ge x_q+x_s$ and $x_q,x_r,x_s\ge 0$. 
\end{itemize}
In fact, the inequalities above provide the facets of the polytopes while all other inequalities given by conditions (1)-(4) follow.
\end{example}

\begin{lemma}[{cf.~\cite[Corollary 2.5]{FFLP}}]\label{ehrhart}
Any interpolating polytope is Ehrhart equivalent to the order polytope: for any integer $t\ge 0$ the $t$-dilation $t\Pi_{U_o,U_c}(P)$ contains the same number of integer points as $t\Pi_{P,\varnothing}(P)$. 
\end{lemma}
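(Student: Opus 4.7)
Stanley's classical proof that the order and chain polytopes of a poset are Ehrhart equivalent proceeds via a piecewise-linear bijection (the transfer map) that restricts to a bijection on lattice points at every integer dilation. I would adapt this argument to interpolate between the two extremes.

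First I would define a map $\phi:\Pi_{P,\varnothing}(P)\to\Pi_{U_o,U_c}(P)$ by
\[
\phi(x)_p=\begin{cases}x_p,& p\in U_o,\\ x_p-\max\{x_q:p\lessdot q\},& p\in U_c,\end{cases}
\]
with the convention that the maximum over the empty set is $0$. So on the order part we do nothing, while on the chain part we apply the usual order-to-chain transfer. To see that $\phi(x)\in\Pi_{U_o,U_c}(P)$ I would check the defining inequalities one by one: the bounds $0\le\phi(x)_p\le 1$ and the reversed order condition for $U_o$ pairs are inherited from the order polytope, and the chain inequalities $\sum\phi(x)_{p_i}\le 1$ and $\sum\phi(x)_{p_i}\le x_q$ follow by telescoping once the chain is refined to consecutive cover relations, using $\max\{x_q:p_i\lessdot q\}\ge x_{p_{i+1}}$.

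Next I would produce an inverse $\psi:\Pi_{U_o,U_c}(P)\to\Pi_{P,\varnothing}(P)$ by $\psi(y)_p=y_p$ for $p\in U_o$ and
\[
\psi(y)_p=\max\left\{\sum_{i=1}^{k}y_{p_i}\ :\ p=p_1<\cdots<p_k,\ p_i\in U_c\text{ for }i<k\right\}
\]
for $p\in U_c$. The chain inequalities in the definition of $\Pi_{U_o,U_c}(P)$ immediately give $\psi(y)\in\Pi_{P,\varnothing}(P)$ (they are exactly the inequalities required to compare $\psi(y)_p$ with $\psi(y)_q$ across the four possible placements of $p,q$ in $U_o$ or $U_c$). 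The substantive step is checking $\phi\circ\psi=\id$ and $\psi\circ\phi=\id$. For the latter I would use a greedy construction: given $x\in\Pi_{P,\varnothing}(P)$, build a chain starting at $p$ by successively picking $p_{i+1}$ that realises $\max\{x_q:p_i\lessdot q\}$, halting as soon as $p_k\in U_o$ or $p_k$ is maximal; the sum $\sum\phi(x)_{p_i}$ along this chain telescopes to $x_p$, and by the bound just established it is the maximum. For $\phi\circ\psi=\id$ a dual argument plus the antimonotonicity of $\psi(y)$ (which follows from prepending elements to a chain) handles the case of chains whose steps are not covering relations.

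Finally, both $\phi$ and $\psi$ are piecewise-linear, and each linear piece is given by a unimodular integer transformation: for $\phi$ one subtracts a single coordinate from another, and for $\psi$ one sums a subset of coordinates. Consequently both maps commute with dilation and send lattice points to lattice points, giving a bijection between the integer points of $t\Pi_{P,\varnothing}(P)$ and $t\Pi_{U_o,U_c}(P)$ for every integer $t\ge 0$. The main technical obstacle I anticipate is the $\phi\circ\psi=\id$ verification, which must cope carefully with chains whose consecutive elements are not in a covering relation and with the asymmetric role the last element plays in the chain inequalities.
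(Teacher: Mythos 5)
Your proof takes essentially the same approach as the paper: the paper's maps $\zeta$ and $\zeta'$ are precisely your $\phi$ and $\psi$ (the paper writes $\zeta(x)_p=\min_{q>p}(x_p-x_q)$, which equals $x_p-\max\{x_q : p\lessdot q\}$ on the order polytope), and the verifications that these land in the right polytopes and compose to the identity proceed via the same telescoping and greedy-chain arguments you sketch.
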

\begin{proof}
We prove the lemma by defining a piecewise linear bijection between the two dilations that preserves integer points. This bijection is similar to the bijection in~\cite{stan} between the order and chain polytopes.

Define a map $\zeta:\bR^P\to \bR^P$ by setting $\zeta(x)_p=x_p$ if $p\in U_o$ or $p$ is maximal in $P$ and setting \[\zeta(x)_p=\min_{q\in P,q>p} (x_p-x_q)\] for all other $p\in U_c$. We claim that $\zeta(t\Pi_{P,\varnothing}(P))\subset t\Pi_{U_o,U_c}(P)$. Consider $x\in t\Pi_{P,\varnothing}(P)$, then it is obvious that $\zeta(x)$ satisfies (the $t$-dilations of) conditions (1) and (2) in the definition of the interpolating polytope. Now consider $p_1<\ldots<p_k$ with $p_i\in U_c$ for $i<k$ and $p_k\in P$. We see that \[\sum\zeta(x)_{p_i}\le(x_{p_1}-x_{p_2})+\dots+(x_{p_{k-1}}-x_{p_k})+x_{p_k}= x_{p_1}\le t.\] Consider $q<p_1<\ldots<p_k$ with $q\in U_o$, $p_i\in U_c$ for $i<k$ and $p_k\in P$, similarly, \[\sum\zeta(x)_{p_i}\le x_{p_1}\le x_q=\zeta(x)_q.\]

Now define a map $\zeta':\bR^P\to \bR^P$ by setting $\zeta'(x)_p=x_p$ if $p\in U_o$ and \[\zeta'(x)_p=\max_{p=p_1<\ldots<p_k}\sum_{i=1}^k x_{p_i}\] for $p\in U_c$ where the sum is taken over all chains with $p_1=p$, $p_i\in U_c$ for $i<k$ and $p_k\in P$. Consider $x\in t\Pi_{U_o,U_c}(P)$, we show that $\zeta'(x)\in t\Pi_{P,\varnothing}(P)$. We have $0\le \zeta'(x)_p\le t$ for every $p$ by (the $t$-dilations of) conditions (1) and (3) in the definition of $\Pi_{U_o,U_c}(P)$. Consider a pair $p<q$. If either $p,q\in U_o$ or $p,q\in U_c$, then $\zeta'(x)_p\ge \zeta'(x)_q$ is immediate. If $p\in U_o$ and $q\in U_c$, then $\zeta'(x)_q\le x_p=\zeta'(x)_p$ by condition (4). If $p\in U_c$ and $q\in U_o$, then obviously $\zeta'(x)_p\ge x_q=\zeta'(x)_q$.

To complete the proof we show that $\zeta(\zeta'(x))=x$ for $x\in t\Pi_{U_o,U_c}(P)$ and $\zeta'(\zeta(x))=x$ for $x\in t\Pi_{P,\varnothing}(P)$. Indeed, for any $x\in\bR^P$ for $p\in U_o$ or $p$ maximal in $P$ we simply have $\zeta(x)_p=\zeta'(x)_p=x_p$. Consider any other $p\in U_c$ and some $x\in t\Pi_{U_o,U_c}(P)$. Then $\zeta'(x)_p=x_p+\max_{q>p}\zeta'(x)_q$ which implies $\zeta(\zeta'(x))_p=x_p$. Now consider $x\in t\Pi_{P,\varnothing}(P)$. Set $p_1=p$, choose $p_2>p_1$ with the largest $x_{p_2}$, then $p_3>p_2$ with the largest $x_{p_3}$ and so on until we reach a $p_k$ that is either maximal in $P$ or lies in $U_o$. Evidently, $\zeta(x)_{p_i}=x_{p_i}-x_{p_{i+1}}$ for $i<k$ and $\zeta(x)_{p_k}=x_{p_k}$, consequently, $\sum \zeta(x)_{p_i}=x_p$ while for any other chain $p=q_1<\ldots<q_l$ with $q_i\in U_c$ for $i<k$ and $q_k\in P$ we have $\sum \zeta(x)_{q_i}\le x_p$. We obtain $\zeta'(\zeta(x))_p=x_p$.  
\end{proof}

In fact, the bijection $\zeta$ has a simple combinatorial description when $t=1$. First, note that the set of integer points in the order polytope $\Pi_{P,\varnothing}(P)$ consists of the indicator functions $\mathbf 1_J\in\bR^P$ of order ideals $J\subset P$. (For us $J$ is an order ideal if $p\in J$ and $q<p$ implies $q\in J$, i.e.\ it is a lower set.) Let $\zeta$ be as in the above proof. For an order ideal $J\subset P$ let $K_{U_o,U_c}(J)$ denote the set of such $p$ that $\zeta(\mathbf 1_J)_p=1$ (while $\zeta(\mathbf 1_J)_p=0$ for all other $p$). The following is immediate from the definition of $\zeta$.
\begin{proposition}\label{combdesc}
For an order ideal $J\subset P$ the set $K_{U_o,U_c}(J)$ consists of all $p\in J\cap U_o$ and those $p\in J\cap U_c$ that are maximal in $J$.
\end{proposition}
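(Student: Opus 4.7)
The plan is to unpack the definition of $\zeta$ on the single point $\mathbf 1_J$ and do a case split according to whether $p\in U_o$ or $p\in U_c$, and, in the second case, whether $p\in J$ and whether $p$ is maximal in $J$. What we must verify is that $\zeta(\mathbf 1_J)_p\in\{0,1\}$ for every $p$ and that the set of $p$ where this value equals $1$ is the one described in the statement.

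For the first case, if $p\in U_o$, or if $p\in U_c$ happens to be maximal in $P$, then by construction $\zeta(\mathbf 1_J)_p=\mathbf 1_J(p)$, which is $1$ exactly when $p\in J$. This covers all elements of $J\cap U_o$ and it also accounts for those $p\in J\cap U_c$ that are maximal in $P$; any such $p$ is automatically maximal in $J$, so the contribution is consistent with the right-hand side of the statement.

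For the second case, take $p\in U_c$ that is not maximal in $P$, so that $\zeta(\mathbf 1_J)_p=\min_{q>p}(\mathbf 1_J(p)-\mathbf 1_J(q))$. The key observation, used three times, is that $J$ is a lower set. If $p\notin J$, then every $q>p$ also satisfies $q\notin J$, hence the minimum is $0$. If $p\in J$ is maximal in $J$, then every $q>p$ has $q\notin J$, so each term equals $1-0=1$ and the minimum is $1$. If $p\in J$ is not maximal in $J$, then there exists $q>p$ with $q\in J$, which contributes a term $1-1=0$; no term of the form $0-1=-1$ appears since $p\in J$ forces $\mathbf 1_J(p)=1$, so the minimum equals $0$. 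Combining the two cases yields precisely the description of $K_{U_o,U_c}(J)$ claimed in the proposition.

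There is no real obstacle here; the only point worth being careful about is the slight redundancy in the definition of $\zeta$ between ``$p\in U_o$'' and ``$p$ maximal in $P$''. Once one notes that a maximal element of $P$ lying in $J$ is automatically maximal in $J$, both clauses of the definition become compatible with the statement without ambiguity.
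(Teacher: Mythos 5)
Your proof is correct and is essentially the same argument the paper has in mind; the paper simply declares the proposition ``immediate from the definition of $\zeta$'' without spelling out the case analysis, which you have carried out carefully and accurately (including the point that maximal elements of $P$ in $J$ are automatically maximal in $J$, and that the lower-set property of $J$ rules out any term $0-1$ in the minimum).
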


In particular, $K_{P,\varnothing}(J)=J$ and $K_{\varnothing,P}(J)$ is the antichain composed of the maximal elements in $J$. 
\begin{example}\label{polytopeexample1}
Let us consider the polytopes from Example~\ref{polytopeexample}. One sees that in the notation $(x_p,x_q,x_r,x_s)$
\begin{itemize}
\item the integer points in $\mathcal O_{P,\varnothing}$ are $(0,0,0,0)$, $(1,0,0,0)$, $(1,1,0,0)$, $(1,1,1,0)$, $(1,1,0,1)$, $(1,1,1,1)$,
\item the integer points in $\mathcal O_{\varnothing,P}$ are $(0,0,0,0)$, $(1,0,0,0)$, $(0,1,0,0)$, $(0,0,1,0)$, $(0,0,0,1)$, $(0,0,1,1)$,
\item the integer points in $\mathcal O_{\{p\},\{q,r,s\}}$ are $(0,0,0,0)$, $(1,0,0,0)$, $(1,1,0,0)$, $(1,0,1,0)$, $(1,0,0,1)$, $(1,0,1,1)$.
\end{itemize}
All three polytopes have the same number of integer points in accordance with Lemma~\ref{ehrhart}. Moreover, these integer points indeed have the forms $\mathbf 1_{K_{U_o,U_c}(J)}$ where $J$ is one of the order ideals $\varnothing,\{p\},\{p,q\},\{p,q,r\},\{p,q,s\},\{p,q,r,s\}$ and $K_{U_o,U_c}(J)$ is described by Proposition~\ref{combdesc}.
\end{example}

The following properties of this correspondence will be important to us.
\begin{proposition}\label{union}
For order ideals $J_1,J_2\subset P$ one has \[K_{U_o,U_c}(J_1)\cap K_{U_o,U_c}(J_2)\subset K_{U_o,U_c}(J_1\cup J_2) \subset K_{U_o,U_c}(J_1)\cup K_{U_o,U_c}(J_2).\] 
\end{proposition}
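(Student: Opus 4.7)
My plan is to reduce the entire statement to the combinatorial description of $K_{U_o,U_c}(J)$ given in Proposition~\ref{combdesc}, which splits the set into the "order part" $J\cap U_o$ (included wholesale) and the "chain part" $\{p\in J\cap U_c : p \text{ maximal in } J\}$. Once one has this description, both containments become direct case analyses, since the union $J_1\cup J_2$ of two order ideals is itself an order ideal.

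For the first containment, I would take $p\in K_{U_o,U_c}(J_1)\cap K_{U_o,U_c}(J_2)$, so in particular $p\in J_1\cap J_2\subset J_1\cup J_2$. If $p\in U_o$, then $p$ is automatically in $K_{U_o,U_c}(J_1\cup J_2)$ by Proposition~\ref{combdesc}. If $p\in U_c$, I need to check that $p$ is maximal in $J_1\cup J_2$: any $q>p$ in $J_1\cup J_2$ lies in $J_1$ or $J_2$, contradicting maximality of $p$ in whichever ideal contains it.

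For the second containment, I would take $p\in K_{U_o,U_c}(J_1\cup J_2)$; then $p\in J_1\cup J_2$, so without loss of generality $p\in J_1$. If $p\in U_o$, then $p\in K_{U_o,U_c}(J_1)$ immediately. If $p\in U_c$, then maximality of $p$ in $J_1\cup J_2$ trivially implies maximality in the smaller set $J_1$, so again $p\in K_{U_o,U_c}(J_1)$.

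There is no serious obstacle here: the statement is essentially a sanity check on the combinatorial description of $K_{U_o,U_c}$, and the only mild subtlety is remembering that an element of $U_o$ may fail to be maximal in $J$ yet still belong to $K_{U_o,U_c}(J)$, which is precisely why the two containments are asymmetric (the middle set need not equal either the left or the right side in general). This asymmetry does not cause trouble in the proof but does explain why the statement is phrased as a pair of inclusions rather than an equality.
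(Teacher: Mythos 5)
Your proof is correct and follows essentially the same route as the paper's: both reduce to the combinatorial description in Proposition~\ref{combdesc} and then split into the cases $p\in U_o$ and $p\in U_c$, with the $U_c$ cases handled by elementary reasoning about maximal elements of the union versus the individual ideals.
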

\begin{proof}
This is easily derived from Proposition~\ref{combdesc}. For the first inclusion note that \[K_{U_o,U_c}(J_1)\cap K_{U_o,U_c}(J_2)\cap U_o=J_1\cap J_2\cap U_o\subset (J_1\cup J_2)\cap U_o=K_{U_o,U_c}(J_1\cup J_2)\cap U_o.\] If $p\in K_{U_o,U_c}(J_1)\cap K_{U_o,U_c}(J_2)\cap U_c$, then $p$ is maximal in both $J_1$ and $J_2$, hence it is maximal in $J_1\cup J_2$ and lies in $K_{U_o,U_c}(J_1\cup J_2)$.

For the second inclusion, \[K_{U_o,U_c}(J_1\cup J_2)\cap U_o=(J_1\cup J_2)\cap U_o= (K_{U_o,U_c}(J_1)\cup K_{U_o,U_c}(J_2))\cap U_o.\] If $p\in U_c$ is maximal in $J_1\cup J_2$, then $p$ is maximal in the $J_i$ that contains it and, therefore, $p$ lies in $K_{U_o,U_c}(J_1)\cup K_{U_o,U_c}(J_2)$.
\end{proof}

The above proposition implies that for any order ideals $J_1,J_2\subset P$ there exists a subset $D\subset P$ such that for indicator functions we have 
\begin{equation}\label{indicator}
\mathbf 1_{K_{U_o,U_c}(J_1)}+\mathbf 1_{K_{U_o,U_c}(J_2)}=\mathbf 1_{K_{U_o,U_c}(J_1\cup J_2)}+\mathbf 1_D.
\end{equation}

\begin{proposition}\label{star}
In the above notations $D\subset K_{U_o,U_c}(J_1\cap J_2)$ and there exists an order ideal $J'\subset P$ such that $D=K_{U_o,U_c}(J')$.
\end{proposition}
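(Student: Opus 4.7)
The plan is to compute $D$ pointwise from equation~\eqref{indicator} by a case analysis on where $p$ lies relative to the partition $U_o \sqcup U_c$ and to the ideals $J_1, J_2$, then exhibit $J'$ as the order ideal generated by $D$ and verify its image under $K_{U_o,U_c}$.

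For the first claim, the natural approach is to write $\mathbf 1_D(p) = \mathbf 1_{K_{U_o,U_c}(J_1)}(p) + \mathbf 1_{K_{U_o,U_c}(J_2)}(p) - \mathbf 1_{K_{U_o,U_c}(J_1 \cup J_2)}(p)$ and evaluate using Proposition~\ref{combdesc}. If $p \in U_o$, all three values reduce to indicators of membership in $J_1$, $J_2$, $J_1 \cup J_2$ respectively, and one gets $\mathbf 1_D(p) = \mathbf 1_{J_1 \cap J_2}(p)$, which agrees with $\mathbf 1_{K_{U_o,U_c}(J_1\cap J_2)}(p)$ on $U_o$. If $p \in U_c$, the key observation is that if $p$ is maximal in exactly one of $J_1, J_2$ but not in the other, then either $p$ lies outside $J_1 \cap J_2$ (in which case $p$ is still maximal in $J_1 \cup J_2$ using that the $J_i$ are order ideals, forcing $\mathbf 1_D(p) = 0$), or $p$ lies in $J_1 \cap J_2$ but there is some $q > p$ in one of the $J_i$ (in which case $p$ fails to be maximal in $J_1 \cup J_2$ and $\mathbf 1_D(p) = 1$). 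In either ``$p \in D$'' subcase one checks directly that $p$ is also maximal in $J_1 \cap J_2$ (any $q > p$ with $q \in J_1 \cap J_2$ would contradict the maximality of $p$ in the $J_i$ where it is maximal), so $p \in K_{U_o,U_c}(J_1 \cap J_2)$. The case in which $p$ is maximal in both $J_1$ and $J_2$ is similar. This establishes $D \subset K_{U_o,U_c}(J_1 \cap J_2)$.

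For the second claim, I would set $J'$ to be the order ideal generated by $D$, i.e.\ the down-closure $\{p \in P : p \le q \text{ for some } q \in D\}$. Using the first claim together with the fact that $J_1 \cap J_2$ is an order ideal, one sees $J' \subset J_1 \cap J_2$. To verify $K_{U_o,U_c}(J') = D$, I would split into the two parts via Proposition~\ref{combdesc}. On $U_o$, both sides equal $(J_1 \cap J_2) \cap U_o$: the inclusion $D \cap U_o \subset J' \cap U_o$ is trivial, and the reverse follows because an element of $J' \cap U_o$ lies below some element of $D \subset J_1 \cap J_2$, hence lies in $(J_1 \cap J_2) \cap U_o = D \cap U_o$. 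On $U_c$, an element $p \in D \cap U_c$ is maximal in $J_1 \cap J_2$ by the first claim, so a fortiori maximal in $J' \subset J_1 \cap J_2$; conversely, any $p \in U_c$ maximal in $J'$ must equal an element of $D$ that dominates it, giving $p \in D$.

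I expect the main obstacle to be bookkeeping in the case analysis on $U_c$: distinguishing when $p \in D$ requires tracking simultaneously whether $p$ is in $J_1 \cap J_2$ and whether it is maximal in each of $J_1$, $J_2$, and $J_1 \cup J_2$. Once this is done cleanly the rest follows by routine verifications with order ideals.
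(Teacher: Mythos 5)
Your proof is correct and follows essentially the same route as the paper: compute $D$ pointwise on $U_o$ and $U_c$ via Proposition~\ref{combdesc} and the identity~\eqref{indicator}, deduce $D\subset K_{U_o,U_c}(J_1\cap J_2)$, then take $J'$ to be the down-closure of $D$ and verify $K_{U_o,U_c}(J')=D$ by again separating $U_o$ from $U_c$. The only cosmetic difference is that the paper phrases the verification for $U_c$ as showing each $p\in D\cap U_c$ is maximal in $D$, whereas you show it directly for $J'$, but these are equivalent since the maximal elements of $D$ and of its down-closure $J'$ coincide.
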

\begin{proof}
One easily sees that $D$ is the disjoint union of $K_{U_o,U_c}(J_1)\cap K_{U_o,U_c}(J_2)$ and $(K_{U_o,U_c}(J_1)\cup K_{U_o,U_c}(J_2))\backslash K_{U_o,U_c}(J_1\cup J_2)$. 

Now, for the first claim it is evident that both $D\cap U_o$ and $K_{U_o,U_c}(J_1\cap J_2)\cap U_o$ are equal to $J_1\cap J_2\cap U_o$. Consider a $p\in D\cap U_c$, there are two possibilities. Either $p\in K_{U_o,U_c}(J_1)\cap K_{U_o,U_c}(J_2)$ which means that $p$ is maximal in both of $J_1$ and $J_2$ and is, therefore, maximal in $J_1\cap J_2$. Or $p$ is maximal in just one of the $J_i$, say $J_1$, and is not contained in $K_{U_o,U_c}(J_1\cup J_2)$, i.e.\ there exists a $q\in J_2$ with $q>p$. This already means that $p$ lies in $J_1\cap J_2$ and is maximal therein (since it is maximal in $J_1$).

For the second claim, $J'$ is chosen as the minimal ideal containing $D$. First we show that $J'\cap U_o=J_1\cap J_2\cap U_o$. For this we must show that there are no $p\in D\cap U_c$ and $q\in U_o\backslash(J_1\cap J_2)$ with $p>q$. Indeed, such a $p$ would lie in $J_1\cap J_2$ by the first claim of the proposition and $p>q$ would then imply $q\in J_1\cap J_2\cap U_o$. We obtain $J'\cap U_o=D\cap U_o$. 

By Proposition~\ref{combdesc} we are left to show that $D\cap U_c$ is the set of maximal elements in $J'$ that lie in $U_c$. Note that the sets of maximal elements in $J'$ and $D$ coincide. Hence we only need to show that any $p\in D\cap U_c$ is maximal in $D$. If $p\in U_c\cap K_{U_o,U_c}(J_1)\cap K_{U_o,U_c}(J_2)$, then $p$ is maximal in both $J_1$ and $J_2$ and thus in $D$. Suppose that $p\in U_c\cap(K_{U_o,U_c}(J_1)\cup K_{U_o,U_c}(J_2))\backslash K_{U_o,U_c}(J_1\cup J_2)$, then it is maximal in one of the $J_i$, say $J_1$, but not the other. If we have $q\in D$ with $q>p$, then $q\in J_2$ and we may assume $q$ is maximal in $J_2$. This means that $q$ is maximal in $J_1\cup J_2$ but $q\notin J_1$ which contradicts $q\in D$.
\end{proof}

In the notations of Proposition~\ref{star}, we will write $J_1\odot_{U_o,U_c} J_2$ to denote the order ideal $J'$. We see that $J_1\odot_{U_o,U_c} J_2\subset J_1\cap J_2$ and that $J_1\odot_{P,\varnothing} J_2= J_1\cap J_2$. The operation $\odot_{\varnothing,P}$ is the operation $*$ considered in~\cite{HL} where $J_1*J_2$ is the ideal generated by $\max(J_1\cap J_2)\cap(\max(J_1)\cup\max(J_2))$. One also sees that when $J_1\subset J_2$ we have $J_1\odot_{U_o,U_c} J_2=J_1$.

\begin{example}
The poset $(P,<)$ from Example~\ref{polytopeexample} only has one pair of ideals for which $J_1\not\subset J_2$ and $J_2\not\subset J_1$, that is $J_1=\{p,q,r\}$ and $J_2=\{p,q,s\}$. With the use of Example~\ref{polytopeexample1} one checks that $J_1\odot_{P,\varnothing}J_2=\{p,q\}$, $J_1\odot_{\varnothing,P}J_2=\varnothing$ and $J_1\odot_{\{p\},\{q,r,s\}}J_2=\{p\}$.
\end{example}

We will need one more property of interpolating polytopes, namely the so-called \emph{Minkowski sum property}.
\begin{proposition}[{cf.~\cite[Theorem 20]{FFP}}]\label{minkowski}
For any integer $t\ge 1$ and an integer point $x\in t\Pi_{U_o,U_c}(P)$ there exist integer points $x_i\in \Pi_{U_o,U_c}(P)$ with $1\le i\le t$ such that $x=x_1+\ldots+x_t$.
\end{proposition}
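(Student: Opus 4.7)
The plan is to transport the statement to the order polytope via the bijection from Lemma~\ref{ehrhart} and then use the standard Minkowski decomposition there, checking that it transfers correctly back through $\zeta$.

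Given an integer point $x \in t\Pi_{U_o,U_c}(P)$, set $f = \zeta'(x) \in t\Pi_{P,\varnothing}(P)$, which is integer by the bijection $\zeta\circ\zeta' = \id$ from Lemma~\ref{ehrhart} (it preserves integrality in both directions). An integer point of $t\Pi_{P,\varnothing}(P)$ is a function $f : P \to \{0, 1, \ldots, t\}$ with $f(q) \le f(p)$ whenever $p < q$, and the level sets $J_i = \{p \in P : f(p) \ge i\}$ for $1 \le i \le t$ form a decreasing chain of order ideals $J_1 \supseteq J_2 \supseteq \cdots \supseteq J_t$ with $f = \sum_{i=1}^t \mathbf{1}_{J_i}$. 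This is the standard (and easy) Minkowski sum property for the order polytope.

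The key step is to prove the linearity identity
\[
\zeta\Bigl(\sum_{i=1}^t \mathbf{1}_{J_i}\Bigr) = \sum_{i=1}^t \mathbf{1}_{K_{U_o,U_c}(J_i)}
\]
for any nested chain $J_1 \supseteq \cdots \supseteq J_t$ of order ideals. For $p \in U_o$ or $p$ maximal in $P$, both sides equal $|\{i : p \in J_i\}|$ by the definition of $\zeta$ and of $K_{U_o,U_c}$ (Proposition~\ref{combdesc}). For $p \in U_c$ not maximal in $P$, the right-hand side counts those $i$ for which $p$ is maximal in $J_i$. On the left-hand side, for any $q > p$ the value $\sum \mathbf{1}_{J_i}(p) - \sum \mathbf{1}_{J_i}(q) = |\{i : p \in J_i,\ q \notin J_i\}|$, and by nestedness this equals $|\{i > j(q) : p \in J_i\}|$ where $j(q)$ is the largest index with $q \in J_{j(q)}$ (or $0$ if none). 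Letting $i^*$ denote the largest index $i$ such that $p$ is not maximal in $J_i$ (equivalently, the largest $i$ for which some $q > p$ lies in $J_i$), the minimum over $q > p$ is attained when $j(q) = i^*$ and equals $|\{i > i^* : p \in J_i\}|$. By nestedness $p \in J_i$ for all $i \le i^*$, so this count equals the number of $i$ with $p \in J_i$ minus $i^*$, which is exactly the number of $i$ for which $p \in J_i$ is maximal. This matches the right-hand side.

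With the identity in hand, we get $x = \zeta(f) = \sum_{i=1}^t \mathbf{1}_{K_{U_o,U_c}(J_i)}$. Since each $\mathbf{1}_{J_i}$ is an integer point of $\Pi_{P,\varnothing}(P)$, Lemma~\ref{ehrhart} (applied with $t=1$) guarantees that $\mathbf{1}_{K_{U_o,U_c}(J_i)} = \zeta(\mathbf{1}_{J_i})$ is an integer point of $\Pi_{U_o,U_c}(P)$, completing the decomposition. The main obstacle is the linearity verification above; although purely combinatorial, it genuinely uses nestedness of the $J_i$ (without which $\zeta$ is only piecewise linear and no such identity would hold).
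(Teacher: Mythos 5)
Your proof is correct, and it takes a genuinely different route from the paper's. The paper's argument is indirect: it shows by induction on $t$ that the sums $\sum_{i}\mathbf 1_{K_{U_o,U_c}(J_i)}$ over all decreasing chains $J_1\supseteq\cdots\supseteq J_t$ are pairwise distinct (by recovering $J_1$ from the support of the sum), and then invokes Lemma~\ref{ehrhart} to conclude that these chains account for every integer point of $t\Pi_{U_o,U_c}(P)$. You instead argue constructively: transport $x$ to $f=\zeta'(x)$ in the order polytope, decompose $f$ into level sets, and prove the linearity identity $\zeta(\sum_i\mathbf 1_{J_i})=\sum_i\zeta(\mathbf 1_{J_i})$ directly, which hands you the decomposition of $x$. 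Both proofs lean on Lemma~\ref{ehrhart}, and in fact both ultimately produce the same decomposition; but your version replaces the counting/injectivity step by an explicit verification that $\zeta$ is linear on nested families (which you correctly observe is where nestedness is essential). Your approach is somewhat longer but more transparent and tells you explicitly which chain realizes a given integer point; the paper's is shorter but leaves the explicit decomposition implicit.

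One small imprecision: you define $i^*$ as ``the largest index $i$ such that $p$ is not maximal in $J_i$'' and claim this is equivalent to ``the largest $i$ for which some $q>p$ lies in $J_i$.'' These are not the same, since $p$ also fails to be maximal in $J_i$ when $p\notin J_i$, which by nestedness happens for all $i>m_p$; so the first phrase would give $i^*=t$ whenever $p\notin J_t$. The second phrase (equal to $\max_{q>p}|\{i:q\in J_i\}|$) is what you actually use, and with that reading the argument goes through: the minimum over $q>p$ of $|\{i>j(q):p\in J_i\}|$ equals $m_p-i^*$, which is precisely the number of $i$ in which $p$ is a maximal element of $J_i$. The fix is simply to delete the first characterization of $i^*$ or replace it by ``the largest index $i$ such that $p$ lies in $J_i$ but is not maximal there.''
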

\begin{proof}
First, suppose that $U_o=P$, consider $i\in[1,t]$, then the set of $p$ with $x_p\ge i$ forms an order ideal $J_i\in P$. Setting $x_i=\mathbf 1_{J_i}$ proves the result in this case. We have also obtained a bijection between the integer points in $t\Pi_{P,\varnothing}(P)$ an (weakly) decreasing chains of order ideals $J_1\supset\ldots\supset J_t$.

Consider the general case. By Lemma~\ref{ehrhart} it now suffices to show that the sums \[S=\mathbf 1_{K_{U_o,U_c}(J_1)}+\dots+\mathbf 1_{K_{U_o,U_c}(J_t)}\] given by all decreasing chains of order ideals $J_1\supset\ldots\supset J_t$ are pairwise distinct, since then there will be no other integer points in $t\Pi_{U_o,U_c}(P)$. We prove this by induction on $t$, the base $t=1$ being trivial. For the induction step it suffices to show that the order ideal $J_1$ is uniquely determined by the sum $S$. Indeed, one easily sees that the set of maximal elements in $J_1$ must coincide with the set of maximal elements in $\{p|S_p>0\}$.
\end{proof}

\section{Generalized Hibi ideals}

We will now apply the notions introduced in the previous section to the study of distributive lattices. For this we will need the fundamental theorem of finite distributive lattices.

Let $\mL$ be a finite distributive lattice. An element $a\in\mL$ is called join-irreducible if $a$ is not minimal in $\mL$ and $a=b\lor c$ implies $a=b$ or $a=c$. This is equivalent to $a$ covering exactly one element. Let $\mathcal P(\mL)$ be the set of join-irreducible elements in $\mL$. We view $\mathcal P(\mL)$ as a poset with the induced relation $<$. For any poset $P$ let $\mathcal J(P)$ be the set of order ideals in $P$. It is easy to see that $\mathcal J(P)$ is a distributive lattice with union (of order ideals) as join, intersection as meet and inclusion as the order relation. The following classical result due to Garret Birkhoff is known as \emph{Birkhoff's representation theorem} or the \emph{fundamental theorem of finite distributive lattices}, a proof can be found in~\cite[Theorem 9.1.7]{HH1}.
\begin{theorem}
The distributive lattices $\mL$ and $\mathcal J(\mathcal P(\mL))$ are isomorphic. An isomorphism $\iota_\mL$ is obtained by mapping $a\in\mL$ to the order ideal in $\mathcal P(\mL)$ composed of all join-irreducible elements $p$ with $p\le a$.  
\end{theorem}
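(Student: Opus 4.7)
The plan is to exhibit the map $\iota_\mL$ as a bijective lattice homomorphism by constructing an explicit inverse. First I would verify that $\iota_\mL(a)$ is genuinely an order ideal in $\mathcal P(\mL)$: if $p\le a$ with $p\in\mathcal P(\mL)$ and $q\in\mathcal P(\mL)$ with $q<p$, then $q<p\le a$, so $q\in \iota_\mL(a)$. Hence $\iota_\mL$ is a well-defined map $\mL\to \mathcal J(\mathcal P(\mL))$. It is also manifestly order-preserving, so it suffices to show it is an inclusion-preserving bijection.

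Next I would check that $\iota_\mL$ respects the lattice operations. The meet identity $\iota_\mL(a\land b)=\iota_\mL(a)\cap \iota_\mL(b)$ is immediate from the equivalence $p\le a\land b\Leftrightarrow p\le a\text{ and }p\le b$. For the join identity $\iota_\mL(a\lor b)=\iota_\mL(a)\cup \iota_\mL(b)$, one inclusion is trivial; for the other, suppose $p\in\mathcal P(\mL)$ satisfies $p\le a\lor b$. Distributivity gives
\[
p=p\land(a\lor b)=(p\land a)\lor(p\land b),
\]
and since $p$ is join-irreducible, either $p=p\land a$ or $p=p\land b$, i.e.\ $p\le a$ or $p\le b$. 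This is the one step where distributivity is essential.

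I would then construct the inverse $\kappa:\mathcal J(\mathcal P(\mL))\to \mL$ by $\kappa(J)=\bigvee J$ (with the empty join being the bottom element $\hat 0\in\mL$). To verify $\kappa\circ\iota_\mL=\id_\mL$, I would proceed by induction on the rank of $a$: if $a$ is join-irreducible, then $a$ is the maximum of $\iota_\mL(a)$ and the claim is clear; if $a=b\lor c$ with $b,c<a$, then by induction $b=\bigvee\iota_\mL(b)$ and $c=\bigvee\iota_\mL(c)$, so applying the already-proved join identity gives $a=\bigvee\iota_\mL(a)$. For $\iota_\mL\circ\kappa=\id_{\mathcal J(\mathcal P(\mL))}$, given an order ideal $J$ set $a=\bigvee J$; then $J\subset\iota_\mL(a)$ is clear, and for the reverse inclusion, if $p\in\mathcal P(\mL)$ satisfies $p\le\bigvee_{q\in J}q$, iterated distributivity yields $p=\bigvee_{q\in J}(p\land q)$, so join-irreducibility forces $p\le q$ for some $q\in J$, and then $p\in J$ because $J$ is a down-set.

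The only genuinely nontrivial step is the join-preservation of $\iota_\mL$ together with the companion argument that closes $\iota_\mL\circ\kappa=\id$; both reduce to the same application of distributivity combined with the defining property of join-irreducibility. Everything else is bookkeeping about order ideals and lattice operations.
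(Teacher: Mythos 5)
The paper does not prove this theorem; it simply cites it as Birkhoff's representation theorem and refers the reader to Herzog--Hibi, \emph{Monomial Ideals}, Theorem 9.1.7. So there is no ``paper's proof'' to compare against. Your argument is the standard textbook proof of Birkhoff's theorem, and it is essentially correct. The two genuinely substantive steps --- that $\iota_\mL$ preserves joins, and that $\iota_\mL\circ\kappa=\id$ --- both reduce, as you say, to applying distributivity in the form $p\land\bigvee x_i=\bigvee(p\land x_i)$ and then invoking join-irreducibility of $p$, and you carry these out correctly.

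One small bookkeeping remark: in the induction proving $\kappa\circ\iota_\mL=\id$, your case analysis (``$a$ join-irreducible'' versus ``$a=b\lor c$ with $b,c<a$'') omits the minimum $\hat 0$, which falls into neither case since join-irreducibles are by definition non-minimal and $\hat 0$ cannot be written as a join of strictly smaller elements. Your parenthetical convention that the empty join equals $\hat 0$ already disposes of it ($\iota_\mL(\hat 0)=\varnothing$, $\kappa(\varnothing)=\hat 0$), but it is worth stating explicitly as the base case so the induction is complete. This is a cosmetic gap, not a flaw in the reasoning.
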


In particular, one sees that $a$ covers $b$ if and only if $\iota_\mL(a)$ is obtained from $\iota_\mL(b)$ by adding one element. Therefore, $|a|$ is equal to the cardinality of the order ideal $\iota_\mL(a)$. 


Before defining the family of ideals that generalizes the Hibi ideal let us explain an alternative approach to the Hibi ideal via the fundamental theorem. Consider variables $z_p$ indexed by $p\in\mathcal P(\mL)$ and let $S=\bC[\{z_p\},t]$. Define a homomorphism $\theta:R(\mL)\to S$ given by $\theta(X_a)=t\prod_{p\in\iota_\mL(a)}z_p$. It is immediate from the above that the generators of $I^h(\mL)$ lie in the kernel of $\theta$. Moreover, the following was observed in~\cite[Section 2]{H}.
\begin{proposition}
$I^h(\mL)$ is the kernel of $\theta$.
\end{proposition}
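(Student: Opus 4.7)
The plan is a standard two-sided argument: one inclusion is a direct check on generators, while the other follows from the known standard-monomial basis of $R(\mL)/I^h(\mL)$ recalled in the proof of Proposition~\ref{hibimon}.

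First I would verify that $I^h(\mL)\subset\ker\theta$. By the fundamental theorem, $\iota_\mL$ is a lattice isomorphism, so $\iota_\mL(a\lor b)=\iota_\mL(a)\cup\iota_\mL(b)$ and $\iota_\mL(a\land b)=\iota_\mL(a)\cap\iota_\mL(b)$. Therefore, as multisets of elements of $\mathcal P(\mL)$,
\[
\iota_\mL(a)\sqcup\iota_\mL(b)=\iota_\mL(a\lor b)\sqcup\iota_\mL(a\land b),
\]
which implies $\theta(X_aX_b)=\theta(X_{a\lor b}X_{a\land b})$ and hence $\theta(d(a,b))=0$.

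For the reverse inclusion, I would show that $\theta$ sends distinct standard monomials to distinct monomials in $S$. A standard monomial of degree $k$ has the form $X_{a_1}\cdots X_{a_k}$ for some weak chain $a_1\le\dots\le a_k$ in $\mL$, corresponding under $\iota_\mL$ to a nested sequence $J_1\subset\dots\subset J_k$ of order ideals in $\mathcal P(\mL)$. Its image is
\[
\theta(X_{a_1}\cdots X_{a_k})=t^k\prod_{p\in\mathcal P(\mL)}z_p^{\mu(p)},\qquad \mu(p)=\#\{i: p\in J_i\}.
\]
The exponent of $t$ recovers $k$, and because the $J_i$ are nested we recover each $J_i$ from $\mu$ by $J_i=\{p:\mu(p)\ge k-i+1\}$. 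So the map from standard monomials to monomials in $S$ is injective, and in particular the images are linearly independent.

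Now the proof of Proposition~\ref{hibimon} (drawing on~\cite{H}) states that the standard monomials project to a basis of $R(\mL)/I^h(\mL)$. Since $I^h(\mL)\subset\ker\theta$, the homomorphism $\theta$ descends to $\bar\theta:R(\mL)/I^h(\mL)\to S$, and the previous paragraph shows $\bar\theta$ maps a basis to a linearly independent set, hence is injective. Therefore $\ker\theta\subset I^h(\mL)$, completing the proof.

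The only nontrivial input is the standard-monomial basis statement for $R(\mL)/I^h(\mL)$, which has already been invoked, so I do not expect any serious obstacle; the main point to take care with is the combinatorial recovery of the chain $J_1\subset\dots\subset J_k$ from the multiplicity function $\mu$, which relies essentially on the nesting.
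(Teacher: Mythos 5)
The paper does not actually prove this proposition; it simply refers to Hibi's paper \cite[Section 2]{H}. Your two-sided argument is correct and is the standard proof: the containment $I^h(\mL)\subset\ker\theta$ follows from the lattice isomorphism $\iota_\mL$ turning $\lor,\land$ into $\cup,\cap$ (so $\iota_\mL(a)\sqcup\iota_\mL(b)=\iota_\mL(a\lor b)\sqcup\iota_\mL(a\land b)$ as multisets), and the reverse containment follows from injectivity of $\theta$ on the standard-monomial basis of $R(\mL)/I^h(\mL)$. Your recovery of the nested chain $J_1\subset\dots\subset J_k$ from the multiplicity function $\mu$ (namely $J_i=\{p:\mu(p)\ge k-i+1\}$) is exactly the point one needs and is handled correctly; the $t$-exponent recovering $k$ is also needed and you note it. The one thing to be transparent about is that you rely on the standard-monomial basis for $R(\mL)/I^h(\mL)$ as an external input, but this is the same fact the paper itself invokes in the proof of Proposition~\ref{hibimon}, so that is consistent with the paper's logical structure.
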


Now consider a partition of $U_o\cup U_c=\mathcal P(\mL)$. Define a homomorphism $\theta_{U_o,U_c}:R(\mL)\to S$ given by \[\theta_{U_o,U_c}(X_a)=t\prod_{p\in K_{U_o,U_c}(\iota_\mL(a))}z_p.\] Let the \emph{generalized Hibi ideal} $I^{U_o,U_c}(\mL)$ be the kernel of this map. We see that $I^{P,\varnothing}(\mL)=I^h(\mL)$. The ideal $I^{\varnothing,P}(\mL)$ has been studied in~\cite{HL}.

For $a,b\in\mL$ let us use the notation \[a\odot_{U_o,U_c}b=\iota_\mL^{-1}(\iota_\mL(a)\odot_{U_o,U_c}\iota_\mL(b)).\] Note that $a\odot_{U_o,U_c}b\le a\land b$.
\begin{proposition}
There exists a $w\in\bR^\mL$ such that $\initial_w I^{U_o,U_c}(\mL)=I^m(\mL)$.
\end{proposition}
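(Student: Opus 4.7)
The plan is to mimic the proof of Proposition~\ref{hibimon}, replacing the Hibi binomials $d(a,b) = X_a X_b - X_{a \land b} X_{a \lor b}$ by the generalized binomials $d_{U_o,U_c}(a,b) := X_a X_b - X_{a \odot_{U_o,U_c} b} X_{a \lor b}$. First I would verify that every $d_{U_o,U_c}(a,b)$ lies in $I^{U_o,U_c}(\mL)$. Applying $\theta_{U_o,U_c}$ to the two monomials produces $t^2$ times $z$-monomials with exponent vectors $\mathbf{1}_{K_{U_o,U_c}(\iota_\mL(a))} + \mathbf{1}_{K_{U_o,U_c}(\iota_\mL(b))}$ and $\mathbf{1}_{K_{U_o,U_c}(\iota_\mL(a \odot_{U_o,U_c} b))} + \mathbf{1}_{K_{U_o,U_c}(\iota_\mL(a \lor b))}$; since $\iota_\mL$ is a lattice isomorphism onto $\mathcal J(\mathcal P(\mL))$, in particular $\iota_\mL(a\lor b)=\iota_\mL(a)\cup\iota_\mL(b)$, equation~\eqref{indicator} together with Proposition~\ref{star} identifies these two expressions.

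Next I would set $w_a = f(|a|)$ for any strictly decreasing, strictly convex function $f\colon \bR_{\ge 0}\to\bR$ (for instance $f(x) = e^{-x}$). For an incomparable pair $\{a,b\}\subset\mL$ the grading $|a|$ (equal to $|\iota_\mL(a)|$) satisfies $|a\lor b|+|a\land b|=|a|+|b|$ with $|a|,|b|$ strictly between $|a\land b|$ and $|a\lor b|$, so strict convexity of $f$ gives $f(|a|)+f(|b|)<f(|a\land b|)+f(|a\lor b|)$. Because $a \odot_{U_o,U_c} b \le a \land b$ implies $|a \odot_{U_o,U_c} b| \le |a\land b|$ and $f$ is decreasing, this in turn yields $w_a+w_b<w_{a\odot_{U_o,U_c}b}+w_{a\lor b}$. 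Consequently $\initial_w d_{U_o,U_c}(a,b) = X_aX_b$ for every incomparable pair, and since these products generate $I^m(\mL)$ we obtain the inclusion $I^m(\mL)\subset\initial_w I^{U_o,U_c}(\mL)$.

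To promote this inclusion to equality I would compare Hilbert functions in each degree. Since $I^{U_o,U_c}(\mL)=\ker\theta_{U_o,U_c}$, the degree-$d$ part of $R(\mL)/I^{U_o,U_c}(\mL)\cong\theta_{U_o,U_c}(R(\mL))$ is spanned by the pairwise distinct monomials $t^d\prod_p z_p^{e_p}$ whose exponent vectors $e$ range over sums of $d$ vectors of the form $\mathbf{1}_{K_{U_o,U_c}(\iota_\mL(a))}$. By Proposition~\ref{minkowski} these exponent vectors are exactly the integer points of $d\cdot\Pi_{U_o,U_c}(\mathcal P(\mL))$, and by Lemma~\ref{ehrhart} the count of such points matches the count for $d\cdot\Pi_{\mathcal P(\mL),\varnothing}(\mathcal P(\mL))$. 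Via Birkhoff's theorem the latter integer points correspond bijectively to multichains of length $d$ in $\mL$, i.e.\ to standard monomials of degree $d$ in $R(\mL)$, which form a basis of $(R(\mL)/I^m(\mL))_d$. Therefore $\dim I^{U_o,U_c}(\mL)_d=\dim I^m(\mL)_d$ for every $d$, and since passing to initial ideals of homogeneous ideals preserves Hilbert functions, the inclusion of the previous paragraph must be an equality.

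The argument is a direct adaptation of Proposition~\ref{hibimon} relying on the infrastructure of the preceding section; no step is individually difficult. The one new subtlety, absent in the Hibi case where $a\odot_{U_o,U_c}b=a\land b$, is the need to pick $f$ monotone (and not just strictly convex) in order to accommodate the strict drop $|a\odot_{U_o,U_c}b|<|a\land b|$ that can occur here. That is the only step where I expect to spend even a moment's care.
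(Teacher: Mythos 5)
Your proof follows the same outline as the paper (verify that the generalized Hibi binomials lie in the ideal, choose a weight vector that selects $X_aX_b$ as the initial term, then compare Hilbert functions via Lemma~\ref{ehrhart} and Proposition~\ref{minkowski}), but in one crucial respect it is more careful than the paper's own proof. The paper sets $w_a=|a|^2$ and then asserts that $|a|+|b|=|a\land b|+|a\lor b|$ together with $a\odot_{U_o,U_c}b\le a\land b$ forces $\initial_w d_{U_o,U_c}(a,b)=X_aX_b$. That inference is wrong: convexity gives $|a|^2+|b|^2<|a\land b|^2+|a\lor b|^2$, but replacing $|a\land b|$ by the possibly much smaller $|a\odot_{U_o,U_c}b|$ pushes the right-hand side in the \emph{wrong} direction, since $|a|^2$ is increasing. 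Concretely, take $P=\{q_1<q_2<p_1,\,q_2<p_2\}$ with $p_1,p_2$ incomparable and $U_c=P$; for $a,b$ with $\iota_\mL(a)=\{q_1,q_2,p_1\}$ and $\iota_\mL(b)=\{q_1,q_2,p_2\}$ one finds $|a|=|b|=3$, $|a\lor b|=4$, $|a\odot_{U_o,U_c}b|=0$, so $w_a+w_b=18>16=w_{a\lor b}+w_{a\odot_{U_o,U_c}b}$ and the initial term flips. Your choice of a strictly decreasing, strictly convex $f$ — and your explicit remark that monotonicity is the one genuinely new ingredient relative to the Hibi case — is exactly the fix the paper needs. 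The remainder of your argument (membership of $d_{U_o,U_c}(a,b)$ in the ideal via equation~\eqref{indicator}, and the Hilbert-function comparison through Proposition~\ref{minkowski}, Lemma~\ref{ehrhart}, and Birkhoff's theorem) agrees with the paper in substance.
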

\begin{proof}
Set $w_a=|a|^2$. For incomparable $a$ and $b$ consider the expression \[d_{U_o,U_c}(a,b)=X_aX_b-X_{a\lor b}X_{a\odot_{U_o,U_c}b}.\] This expression lies in $I^{U_o,U_c}(\mL)$ by the definition of $\odot_{U_o,U_c}$. We have $|a|+|b|=|a\land b|+|a\lor b|$ and $a\odot_{U_o,U_c}b\le a\land b$ which means that $\initial_w d_{U_o,U_c}(a,b)=X_a X_b$. We see that $I^m(\mL)\subset \initial_w I^{U_o,U_c}(\mL)$, we are left to show that the corresponding homogeneous components of these two ideals have the same dimensions. However, Proposition~\ref{minkowski} shows that for any homogeneity degree $d$ the dimension of $\theta_{U_o,U_c}(R_d)$ is equal to the number of integer points in $d\Pi_{U_o,U_c}(\mathcal P(\mL))$. From Lemma~\ref{ehrhart} we now see that this dimension is independent of the partition and is equal to \[\dim\theta_{\mathcal P(\mL),\varnothing}(R_d)=\dim R_d-\dim I^h(\mL)_d=\dim R^d-\dim I^m(\mL)_d.\qedhere\]
\end{proof}

As a corollary of the proof we obtain
\begin{cor}\label{quadgens}
The ideal $I^{U_o,U_c}(\mL)$ is generated by the expressions $d_{U_o,U_c}(a,b)$ with $\{a,b\}$ ranging over incomparable pairs in $\mL$.
\end{cor}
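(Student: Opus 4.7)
The plan is to deduce the statement directly from the proof of the preceding proposition, using a standard dimension-counting argument. Let $J\subset R(\mL)$ denote the ideal generated by the elements $d_{U_o,U_c}(a,b)$ as $\{a,b\}$ ranges over incomparable pairs in $\mL$. By the definition of $\odot_{U_o,U_c}$ (specifically the identity \eqref{indicator} combined with $\theta_{U_o,U_c}(X_a)\theta_{U_o,U_c}(X_b)$), each generator of $J$ lies in $I^{U_o,U_c}(\mL)$, so we immediately have $J\subset I^{U_o,U_c}(\mL)$. The goal is to prove the reverse inclusion.

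The key observation is that the ideals $J$ and $I^{U_o,U_c}(\mL)$ are both homogeneous with respect to the usual total degree: $J$ is generated by quadratic elements, while $I^{U_o,U_c}(\mL)=\ker\theta_{U_o,U_c}$ is the kernel of a homomorphism which multiplies total degrees by a fixed power of $t$. Taking $w\in\bR^\mL$ defined by $w_a=|a|^2$ as in the previous proof, we saw that $\initial_w d_{U_o,U_c}(a,b)=X_aX_b$ for every incomparable pair. Therefore
\[
I^m(\mL)\subset\initial_w J\subset\initial_w I^{U_o,U_c}(\mL)=I^m(\mL),
\]
where the last equality is exactly the statement of the previous proposition. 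This sandwich forces $\initial_w J=\initial_w I^{U_o,U_c}(\mL)$.

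To conclude, I would apply the standard fact (cited in the proof of Proposition \ref{hibimon}) that taking an initial ideal preserves Hilbert functions in each degree: for any homogeneous ideal $I$ and weight $w$, $\dim I_d=\dim (\initial_w I)_d$. Applying this to both $J$ and $I^{U_o,U_c}(\mL)$ and using the equality of their initial ideals yields $\dim J_d=\dim I^{U_o,U_c}(\mL)_d$ for every $d$. Since $J_d\subset I^{U_o,U_c}(\mL)_d$, these finite-dimensional subspaces must coincide degree by degree, giving $J=I^{U_o,U_c}(\mL)$. There is no real obstacle here; the entire argument is an immediate corollary of what was proved just above, and the only step that would need a brief sentence of justification is the homogeneity of the two ideals.
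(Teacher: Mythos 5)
Your proof is correct and follows the same strategy as the paper: use the computation $\initial_w d_{U_o,U_c}(a,b)=X_aX_b$ from the previous proposition to get $I^m(\mL)\subset\initial_w J$, then sandwich via $J\subset I^{U_o,U_c}(\mL)$ and conclude by the Hilbert-function argument already invoked in Proposition~\ref{hibimon}. The paper compresses the final dimension count into a single ``Hence,'' but you are unpacking exactly that step, so there is no substantive difference.
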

\begin{proof}
We see that for the ideal $J$ generated by the $d_{U_o,U_c}(a,b)$ we have $I^m(\mL)\subset \initial_w J$. Hence $J=I^{U_o,U_c}(\mL)$. 
\end{proof}
\begin{remark}\label{generalizedtoric}
The above results show that the variety cut out by $I^{U_o,U_c}(\mL)$ in $\bP(\bC^\mL)$ is the toric variety associated with the polytope $\Pi_{U_o,U_c}(\mathcal P(\mL))$.
\end{remark}

Proposition~\ref{redundant} and Theorem~\ref{hibimonfacets} also generalize.
\begin{proposition}
$C(I^{U_o,U_c}(\mL),I^m(\mL))$ is nonempty and consists of the points $w$ satisfying
\begin{equation}\label{hibiliineq}
w_a+w_b<w_{a\lor b}+w_{a\odot_{U_o,U_c}b}
\end{equation}
for all incomparable $a$ and $b$. 
\end{proposition}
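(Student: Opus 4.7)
The argument is a routine adaptation of the proof of Proposition~\ref{redundant}. First, nonemptiness is already established by the proposition immediately preceding the statement, which exhibits the explicit point $w_a=|a|^2$ satisfying $\initial_w I^{U_o,U_c}(\mL)=I^m(\mL)$.

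For the forward containment, suppose $w\in C(I^{U_o,U_c}(\mL),I^m(\mL))$ and fix an incomparable pair $\{a,b\}\subset\mL$. By Corollary~\ref{quadgens} the binomial $d_{U_o,U_c}(a,b)=X_aX_b-X_{a\lor b}X_{a\odot_{U_o,U_c}b}$ lies in $I^{U_o,U_c}(\mL)$, so $\initial_w d_{U_o,U_c}(a,b)\in I^m(\mL)$. The key observation is that $a\odot_{U_o,U_c}b\le a\land b\le a\lor b$, so $a\odot_{U_o,U_c}b$ and $a\lor b$ are comparable, and hence $X_{a\lor b}X_{a\odot_{U_o,U_c}b}$ is a standard monomial lying outside $I^m(\mL)$. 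The initial part therefore cannot contain this monomial, which forces $\initial_w d_{U_o,U_c}(a,b)=X_aX_b$ and thus inequality~\eqref{hibiliineq}.

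For the reverse containment, assume that every inequality~\eqref{hibiliineq} holds. Then $\initial_w d_{U_o,U_c}(a,b)=X_aX_b$ for each incomparable pair $\{a,b\}$, and since by Corollary~\ref{quadgens} these binomials generate $I^{U_o,U_c}(\mL)$, we obtain $I^m(\mL)\subset\initial_w I^{U_o,U_c}(\mL)$. Now $\initial_w I^{U_o,U_c}(\mL)$ and $I^m(\mL)$ are both initial ideals of $I^{U_o,U_c}(\mL)$ (the latter by the previous proposition), so their graded Hilbert functions agree in every degree. The inclusion of graded subspaces must therefore be an equality, giving $\initial_w I^{U_o,U_c}(\mL)=I^m(\mL)$, i.e.\ $w\in C(I^{U_o,U_c}(\mL),I^m(\mL))$.

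There is no real obstacle here; the only point worth flagging is the comparability of $a\odot_{U_o,U_c}b$ and $a\lor b$, which is exactly what makes $X_{a\lor b}X_{a\odot_{U_o,U_c}b}$ standard and hence forces the desired strict inequality. The genuine work of cutting the cone down to its facet description is deferred to the analog of Theorem~\ref{hibimonfacets}, which will require identifying the appropriate notion of ``diamond pair'' relative to the operation $\odot_{U_o,U_c}$.
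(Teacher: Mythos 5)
Your proposal is correct and matches the paper's approach exactly: the paper simply states that the proof is completely analogous to that of Proposition~\ref{redundant}, and the steps you spell out (nonemptiness from the preceding proposition, the forward direction via $\initial_w d_{U_o,U_c}(a,b)=X_aX_b$ and the comparability of $a\odot_{U_o,U_c}b$ and $a\lor b$, the reverse direction via $I^m(\mL)\subset\initial_w I^{U_o,U_c}(\mL)$ plus Hilbert function agreement) are precisely the analogues of the steps in that earlier proof.
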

\begin{proof}
The proof is completely analogous to that of Proposition~\ref{redundant}.
\end{proof}

\begin{theorem}\label{genhibimonfacets}
$C(I^{U_o,U_c}(\mL),I^m(\mL))$ consists of the points $w$ satisfying~\eqref{hibiliineq} for all diamond pairs $\{a,b\}$ in $\mL$. This H-description is minimal.
\end{theorem}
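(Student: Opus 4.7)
The plan is to generalise the argument of Theorem~\ref{hibimonfacets} to the interpolating setting, with $\land$ uniformly replaced by $\odot_{U_o,U_c}$. As in that theorem there are two claims: (i) every facet of $\overline{C(I^{U_o,U_c}(\mL),I^m(\mL))}$ lies in a hyperplane $w_a+w_b=w_{a\lor b}+w_{a\odot_{U_o,U_c}b}$ for some diamond pair $\{a,b\}\subset\mL$, and (ii) each such hyperplane carries a facet.

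For (i) I would mimic the $\varphi$-construction of Theorem~\ref{hibimonfacets}. If $\{a,b\}$ is not a diamond pair, the shortest path in the Hasse diagram of $\mL$ between $a$ and $b$ has length at least $3$, so it passes through some $c\notin\{a,b,a\land b,a\lor b\}$ with $a\land b\le c\le a\lor b$. The inequality $a\odot_{U_o,U_c}b\le a\land b$ then gives the chain $a\odot_{U_o,U_c}b\le c\le a\lor b$, so $X_{a\odot_{U_o,U_c}b}X_cX_{a\lor b}$ is a standard monomial. Pick $u$ close to a generic point of the hypothetical facet but outside $\overline{C(I^{U_o,U_c}(\mL),I^m(\mL))}$, chosen so that only the inequality at $\{a,b\}$ is violated. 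Define $\varphi$ on degree-$3$ monomials of $I^m(\mL)$ exactly as in the Hibi proof: the identity on monomials containing an incomparable pair different from $\{a,b\}$, and $X_aX_bX_r\mapsto X_{a\lor b}X_{a\odot_{U_o,U_c}b}X_r=\initial_u(d_{U_o,U_c}(a,b)X_r)$ otherwise. Then $\varphi$ is injective and its image misses $X_{a\odot_{U_o,U_c}b}X_cX_{a\lor b}=\initial_u(X_cd_{U_o,U_c}(a,b))$, contradicting preservation of the degree-$3$ Hilbert function under taking initial ideals.

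For (ii) one must, for each diamond pair $\{a,b\}$, exhibit $v\in\bR^\mL$ with $v_a+v_b\ge v_{a\lor b}+v_{a\odot_{U_o,U_c}b}$ and $v_c+v_d<v_{c\lor d}+v_{c\odot_{U_o,U_c}d}$ strictly for every other diamond pair $\{c,d\}$. This is the main obstacle: the clean Hibi choice $v_a=v_b=1$, $v_c=(|c|-|a|)^2$ yields $v_{a\odot_{U_o,U_c}b}=(|a\odot_{U_o,U_c}b|-|a|)^2\ge 1$ with equality only when $a\odot_{U_o,U_c}b=a\land b$, so the required equality at $\{a,b\}$ fails whenever $|a\odot_{U_o,U_c}b|<|a|-1$. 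My remedy is to tune $v_{a\odot_{U_o,U_c}b}$ down to $1$ by hand while keeping $v_c=(|c|-|a|)^2$ elsewhere, which enforces equality at $\{a,b\}$. The strict inequalities at diamond pairs $\{c,d\}$ with $c\odot_{U_o,U_c}d=a\odot_{U_o,U_c}b$ must then be checked separately, and they can fail when $|c|$ is far from $|a|$. To neutralise this I would exploit the lineality direction $w_c\mapsto\sum_{p\in K_{U_o,U_c}(\iota_\mL(c))}g(p)$ — guaranteed by the indicator identity of Proposition~\ref{star} to satisfy $w_a+w_b=w_{a\lor b}+w_{a\odot_{U_o,U_c}b}$ identically for every incomparable pair — to add corrections that do not alter the slack at $\{a,b\}$ but push the offending companion pairs strictly into the interior, and/or replace the quadratic $(x-|a|)^2$ by a strictly convex strictly decreasing $f$ such as $f(x)=2^{-x}$, for which $f(|c\odot_{U_o,U_c}d|)\ge f(|c|-1)$ holds robustly regardless of how far $|c\odot_{U_o,U_c}d|$ drops below $|c|-1$. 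A case analysis, split by which of $c,d,c\lor d,c\odot_{U_o,U_c}d$ coincides with $a$, $b$, or $a\odot_{U_o,U_c}b$, then completes the argument in direct parallel with the Hibi case.
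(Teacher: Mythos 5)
Part (i) of your proposal reproduces the paper's argument verbatim (the $\varphi$-trick with $\land$ replaced by $\odot_{U_o,U_c}$), and you correctly diagnose the difficulty in part (ii): the Hibi point $v_c=(|c|-|a|)^2$ no longer lands on the bounding hyperplane because $v_{a\odot_{U_o,U_c}b}$ can be $(|a\odot_{U_o,U_c}b|-|a|)^2>1$. The problem is that neither of your proposed repairs survives inspection. The lineality argument is circular: a direction of the form $w_c=\sum_{p\in K_{U_o,U_c}(\iota_\mL(c))}g(p)$, being an identity for \emph{every} incomparable pair, leaves every slack in the defining system unchanged, so adding it cannot possibly push a failed strict inequality back into the interior — that is precisely what makes it a lineality direction. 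The decreasing-convex-$f$ idea (e.g.\ $f(x)=2^{-x}$) is worse: since $|a\odot_{U_o,U_c}b|<|a|-1$ is possible and $f$ is decreasing, it makes $f(|a\odot_{U_o,U_c}b|)\ge f(|a|-1)$ and hence pushes the $\{a,b\}$-inequality even more strictly into the open cone, the opposite of what you need. Tuning $v_{a\odot_{U_o,U_c}b}$ down by hand then reintroduces the original problem through a different door: for any diamond pair $\{c,d\}\ne\{a,b\}$ with $c\odot_{U_o,U_c}d=a\odot_{U_o,U_c}b$ and $|c|\le|a|$ (a configuration your case split allows for), the lowered $v_{a\odot_{U_o,U_c}b}$ makes the required strict inequality at $\{c,d\}$ degenerate or fail. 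You gesture at a concluding case analysis but do not carry it out, and the cases that remain open are exactly the ones that bite.

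The paper's construction is different and worth contrasting. It sets $A=3^{|a|-|a\odot_{U_o,U_c}b|}$ and uses an \emph{asymmetric} piecewise-exponential weight: $v_a=v_b=A$; $v_c=A^{|c|-|a|}$ for $|c|\ge|a|$ with $c\notin\{a,b\}$; and $v_c=3^{|a|-|c|}$ for $|c|<|a|$. The two halves are stitched precisely so that $v_{a\lor b}=A=v_{a\odot_{U_o,U_c}b}$, giving equality at $\{a,b\}$. Above level $|a|$, the base $A\ge 3$ is large enough that $v_{c\lor d}=A\cdot v_c>2v_c=v_c+v_d$ already, so the term $v_{c\odot_{U_o,U_c}d}$ need not be controlled; below level $|a|$, the fixed base $3$ gives $v_{c\odot_{U_o,U_c}d}\ge 3\cdot 3^{|a|-|c|}>2\cdot 3^{|a|-|c|}=v_c+v_d$, so the term $v_{c\lor d}$ need not be controlled. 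In each half exactly one side of the comparison does all the work, and the crossover at level $|a|$ is what the base $A$ is tuned for. This decoupling is the key idea your proposal is missing; without it (or something of comparable strength) part (ii) has a genuine gap.
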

\begin{proof}
The proof is rather similar to that of Theorem~\ref{hibimonfacets}. Choose a facet of $\overline{C(I^{U_o,U_c}(\mL),I^m(\mL))}$, we may assume that it is contained in the hyperplane given by $w_a+w_b=w_{a\lor b}+w_{a\odot_{U_o,U_c}b}$ for some incomparable $a$ and $b$. We may choose a point $u$ such that $u_c+u_d<u_{c\lor d}+u_{c\odot_{U_o,U_c}d}$ for any incomparable pair $\{c,d\}$ other than $\{a,b\}$ but $u_a+u_b>u_{a\lor b}+u_{a\odot_{U_o,U_c}b}$.

Furthermore, if $\{a,b\}$ is not a diamond pair, we may choose an element $c$ which is different from $a$ and $b$ and satisfies $a\land b<c<a\lor b$. We achieve a contradiction by showing that $\dim(\initial_u I^{U_o,U_c}(\mL))_3\neq\dim I^m(\mL)_3$. For $M=X_pX_qX_r\in I^m(\mL)$ we define $\varphi(M)=M$ if $\{p,q,r\}$ contains an incomparable pair different from $\{a,b\}$. For $M=X_aX_bX_r$ with $r\le a\land b$ or $r\ge a\lor b$ we set $\varphi(M)=X_{a\odot_{U_o,U_c}b}X_{a\lor b}X_r$. We see that $\varphi$ is an injective map from the set of monomials in $I^m(\mL)_3$ to the set of monomials in $(\initial_u I^{U_o,U_c}(\mL))_3$. However, the image of $\varphi$ does not contain the monomial $X_{a\odot_{U_o,U_c}b}X_{a\lor b}X_c$ which is contained in $(\initial_u I^{U_o,U_c}(\mL))_3$. 

Now, for a diamond pair $\{a,b\}$ we are left to find a point $v$ such that $v_a+v_b\ge v_{a\lor b}+v_{a\odot_{U_o,U_c}b}$ but $v_c+v_d<v_{c\lor d}+v_{c\odot_{U_o,U_c}d}$ for any other diamond pair $\{c,d\}$. We set $A=3^{|a|-|a\odot_{U_o,U_c}b|}$ and define 
\begin{equation*}
v_c=
\begin{cases}
A,\text{ if }c\in\{a,b\},\\
A^{|c|-|a|}\text{ if }|c|\ge|a|\text{ and }c\notin\{a,b\},\\
v_c=A^{\frac{|a|-|c|}{|a|-|a\odot_{U_o,U_c}b|}}=3^{|a|-|c|}\text{ if }|c|<|a|.
\end{cases}
\end{equation*}
We see that $v_a+v_b= v_{a\lor b}+v_{a\odot_{U_o,U_c}b}=A+A$. Consider a diamond pair $\{c,d\}\neq\{a,b\}$. We see that if $|c|\ge|a|$ and neither $c$ nor $d$ lie in $\{a,b\}$, then $v_{c\lor d}>v_c+v_d$. If $|c|<|a|$, then $v_{c\odot_{U_o,U_c} d}>v_c+v_d$. If one of $c$ and $d$ lies in $\{a,b\}$, then $v_c+v_d=A+1$ while $v_{c\lor d}=A$ and $v_{c\odot_{U_o,U_c}d}>1$. 
\end{proof}

\section{PBW-semistandard tableaux}

The goal of this section is to show that the distributive lattices corresponding to semistandard and PBW-semistandard Young tableaux are isomorphic and to deduce that the toric initial ideal corresponding to the FFLV polytope of~\cite{FFL1} is, in fact, a generalized Hibi ideal. We choose an $n\ge2$ and let lattice $\mM$ be as in Section~\ref{semistandard}. First of all, let us see how the fundamental theorem applies to $\mM$. 
\begin{proposition}\label{irreducible}
An element $a_{i_1,\ldots,i_k}$ is join-irreducible if and only if the set $\{i_1,\ldots,i_k\}$ is of the form $[1,n]\backslash[r,s]$ where $1\le r\le s\le n$ and $[r,s]$ is not $[1,n]$ or $[n,n]$. For two such sets $\{i_1,\ldots,i_k\}=[1,n]\backslash[r,s]$ and $\{j_1,\ldots,j_l\}=[1,n]\backslash[u,v]$ one has $a_{i_1,\ldots,i_k}\le a_{j_1,\ldots,j_l}$ if and only if $r\ge u$ and $s-r\le v-u$.
\end{proposition}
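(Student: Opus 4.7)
The plan is to reduce everything to the description of covers given in Proposition~\ref{cover}. By definition, $a\in\mM$ is join-irreducible iff it covers exactly one element and is not minimal. So for $a=a_{j_1,\dots,j_l}$, I would count the number of elements it covers using Proposition~\ref{cover}: the covered elements are obtained either by decrementing some $j_{r_1}$ by $1$ (requiring $j_{r_1}-1>j_{r_1-1}$, with the convention $j_0=0$), or by appending $n$ to the end (requiring $j_l<n$).

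Next, parametrize $J=\{j_1,\dots,j_l\}$ by its decomposition into maximal blocks of consecutive integers $J=[a_1,b_1]\sqcup\cdots\sqcup[a_t,b_t]$. The decrementable positions are exactly the block starts $a_i$ with $a_i\ge 2$, so the cover count equals $t-[a_1=1]+[b_t<n]$. I would then run through the four cases for $(a_1=1?,\,b_t=n?)$ to see that the count equals $1$ exactly when the complement $C=[1,n]\setminus J$ is a non-empty interval $[r,s]$ and $J\neq[1,n-1]$ (the minimal element). The excluded values of $[r,s]$ are precisely $[1,n]$ (forcing $J=\varnothing$) and $[n,n]$ (the minimal element). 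This yields the first half of the proposition.

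For the order relation, write any join-irreducible $a_I$ with $C_I=[r_I,s_I]$ explicitly as $I=[1,r_I-1]\cup[s_I+1,n]$, giving
\[
i_m=\begin{cases} m & m\le r_I-1,\\ s_I-r_I+1+m & m\ge r_I,\end{cases}
\]
and analogously for $a_J$. The condition $k\ge l$ is immediately $s_I-r_I\le s_J-r_J$. Assuming this, I would compare $i_m$ and $j_m$ block by block: when $r_I\ge r_J$, one checks the three ranges $m\le r_J-1$, $r_J\le m\le r_I-1$, and $m\ge r_I$ and sees $i_m\le j_m$ in each. Conversely, if $r_I<r_J$, the index $m=r_I$ satisfies $m\le l$ (this is a short arithmetic check using $s_J\le n$) and gives $i_{r_I}=s_I+1>r_I=j_{r_I}$, violating the inequality. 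Combining both directions yields the claimed criterion.

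I do not anticipate a serious obstacle here; the content is essentially case analysis, with the only mildly delicate point being the verification that the ``bad'' index $m=r_I$ actually falls within the range $m\le l$ when $r_I<r_J$. All other steps are straightforward arithmetic.
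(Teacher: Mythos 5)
Your proof follows essentially the same route as the paper: characterize covers via Proposition~\ref{cover} and then count, and the order comparison is done by the same explicit formula for $i_m$. The block decomposition is just a different bookkeeping device for the same computation.

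There is, however, a small gap in the cover count. Appending $n$ to $(j_1,\dots,j_l)$ yields an element of $\mM$ only when both $j_l<n$ \emph{and} $l<n-1$ --- the second constraint is needed because a Pl\"ucker index set has at most $n-1$ entries (the paper's proof states this as $k<n-1$). Without it, your formula $t-[a_1=1]+[b_t<n]$ returns $1$ for $J=[1,n-1]$, whereas the true cover count is $0$ since $[1,n]$ is not a valid index set. Running the four cases with this formula would therefore only give ``count $=1$ iff $C$ is a non-empty interval,'' and the extra exclusion of $J=[1,n-1]$ in your statement does not follow from the formula as written. Because $[1,n-1]$ is the minimal element and is ruled out separately by the definition of join-irreducibility, your final answer is correct, but the clean fix is to replace the last indicator by the condition ($b_t<n$ and $l<n-1$), after which the formula is exact and the exclusion is automatic rather than ad hoc. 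A second, minor point on the order comparison: in the converse you invoke $i_{r_I}$, which requires $r_I\le k$, i.e.\ $s_I\le n-1$; this can fail when $s_I=n$. It does hold once $k\ge l$ is assumed (since then $r_I\le l\le k$, using your bound $r_I\le l$), so the converse should be read under the standing assumption $k\ge l$, matching the paper's ``if $k\ge l$ and $r<u$.''
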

\begin{proof}
The element $a_{i_1,\ldots,i_k}$ is join-irreducible if and only if it covers exactly one element. By Proposition~\ref{cover} this means that either there is exactly one $t\in[1,k]$ such that $i_t>i_{t-1}+1$ (where $i_0=0$) and $i_k=n$ or there are no such $t$, $i_k<n$ and $k<n-1$. The first claim follows. To obtain the second claim first note that $s-r\le v-u$ is equivalent to $k\ge l$. Now, for $t\ge r$ we have $i_t=t-r+s+1$ and for $t\ge u$ we have $j_t=t-u+v+1$. Therefore, if $k\ge l$ and $r<u$ we have $i_r>j_r$ while if $k\ge l$ and $r\ge u$ we have all $i_t\le j_t$.
\end{proof}

For our purposes it will be more convenient to numerate the join-irreducible elements somewhat differently. For $1\le r\le s\le n$ and $(r,s)$ not one of $(1,1)$ and $(n,n)$ let $y_{r,s}\in\mathcal P(\mM)\subset\mM$ be $a_{i_1,\ldots,i_k}$ with $\{i_1,\ldots,i_k\}=[1,n]\backslash[n-s+1,n+r-s]$. Now we have $y_{r,s}\le y_{u,v}$ if and only if $r\le u$ and $s\le v$. The poset $\mathcal P(\mM)$ is easy to visualize, for instance, when $n=4$ its Hasse diagram looks like this (cf. also Example~\ref{hasse}):
\begin{equation}
\begin{tikzcd}[row sep=1mm,column sep=tiny]\label{n=4}
&y_{2,2}\arrow[rd]&&y_{3,3}\arrow[rd]\\
y_{1,2}\arrow[rd]\arrow[ru]&&y_{2,3}\arrow[rd]\arrow[ru]&&y_{3,4}\\
&y_{1,3}\arrow[rd]\arrow[ru]&&y_{2,4}\arrow[ru]\\
&&y_{1,4}\arrow[ru]
\end{tikzcd}
\end{equation}
In particular, note that for any incomparable $y_{r,s}$ and $y_{u,v}$ we may assume that $u<r\le s<v$ and, more generally, for any antichain $\{y_{r_1,s_1},\dots,y_{r_m,s_m}\}$ we may assume that $r_m<\dots<r_1\le s_1<\dots<s_m$.

We now consider the interpolating polytope $\Pi_{U_o,U_c}(\mathcal P(\mM))$ where we from now on denote \[U_o=\{y_{2,2},\ldots,y_{n-1,n-1}\}\] and $U_c=\{y_{i,j}|i>j\}$. As we know, the integer points in $\Pi_{U_o,U_c}(\mathcal P(\mM))$ are given by order ideals $J\subset\mathcal P(\mM)$.
\begin{proposition}
For any order ideal $J\subset\mathcal P(\mM)$ the set $K_{U_o,U_c}(J)$ consists of the elements $y_{i,i}$ with $i\le k$ for some $1\le k\le n-1$ and a sequence of elements $(y_{r_1,s_1},\ldots,y_{r_m,s_m})$ such that $r_m<\ldots<r_1\le k<s_1<\ldots<s_m$.
\end{proposition}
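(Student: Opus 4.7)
The plan is to invoke Proposition~\ref{combdesc} to decompose $K_{U_o,U_c}(J)$ into its intersections with $U_o$ and $U_c$, analyze each piece separately, and then verify that the interaction between the two pieces forces the stated inequalities $r_1\le k$ and $s_1>k$.

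First I would handle $J\cap U_o$. Since $U_o=\{y_{2,2},\ldots,y_{n-1,n-1}\}$ and the partial order restricted to these elements is the total order $y_{2,2}<y_{3,3}<\cdots<y_{n-1,n-1}$ (by the description of $<$ on $y$-coordinates), the set $J\cap U_o$ must be an initial segment $\{y_{2,2},\ldots,y_{k,k}\}$ for some $k\in[1,n-1]$, where we adopt the convention that $k=1$ corresponds to the empty set. This produces the parameter $k$ in the statement.

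Next I would turn to the $U_c$ part. By Proposition~\ref{combdesc} this is the set of $<$-maximal elements of $J$ lying in $U_c$, which is in particular an antichain in $\mathcal P(\mM)$. Using the paper's observation that any antichain in $\mathcal P(\mM)$ can be indexed as $(y_{r_1,s_1},\dots,y_{r_m,s_m})$ with $r_m<\cdots<r_1\le s_1<\cdots<s_m$, the remaining task is to show $r_1\le k$ and $s_1>k$.

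The inequality $r_1\le k$ is handled by noting that since $y_{r_1,s_1}\in U_c$ we have $r_1<s_1$, so if $r_1\ge 2$ then $y_{r_1,r_1}$ exists and satisfies $y_{r_1,r_1}<y_{r_1,s_1}$, forcing $y_{r_1,r_1}\in J$ and hence $k\ge r_1$; the case $r_1=1$ is trivial. For $s_1>k$, if $k\ge 2$ and $s_1\le k$ we would have $y_{r_1,s_1}\le y_{k,k}\in J$, contradicting maximality of $y_{r_1,s_1}$, while if $k=1$ the inequality $s_1>1$ is automatic from $s_1>r_1\ge 1$. Conversely, any configuration satisfying these inequalities arises from an order ideal, so there is no obstacle on the other side. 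None of these steps is particularly hard; the only place requiring care is matching up the edge cases ($k=1$, $r_1=1$, $s_1=n$) with the non-existence of the corner elements $y_{1,1}$ and $y_{n,n}$ in $\mathcal P(\mM)$.
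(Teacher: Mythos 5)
Your proposal is correct and follows essentially the same route as the paper: invoke Proposition~\ref{combdesc}, observe that $J\cap U_o$ is an initial segment of the chain $y_{2,2}<\dots<y_{n-1,n-1}$, observe that the $U_c$-part is an antichain and cite the paper's standing indexing convention, and then squeeze $k$ between $r_1$ and $s_1$. The only cosmetic difference is at the end: you derive $r_1\le k$ and $k<s_1$ by two short contradiction arguments, whereas the paper gets the same conclusion via a case split on whether $y_{k,k}$ is incomparable to, or strictly below, $y_{r_1,s_1}$; these are the same elementary check in different clothing.
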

\begin{proof}
This is a direct consequence of Proposition~\ref{combdesc}. Here $J\cap U_o$ consists of the elements $y_{2,2},\ldots,y_{k,k}$ and $k=1$ if the intersection is empty. The elements $y_{r_i,s_i}$ are those elements of $U_c$ that are maximal in $J$, they form an antichain in $\mathcal P(\mM)$. Furthermore, either $y_{k,k}$ is incomparable to $y_{r_1,s_1}$, in which case $r_1<k<s_1$, or $y_{k,k}<y_{r_1,s_1}$ and then $r_1=k<s_1$ (since $y_{k+1,k+1}\not\le y_{r_1,s_1}$).
\end{proof}

For $a\in\mM$ let us define a sequence $\nu(a)=(\nu_1(a),\ldots,\nu_{\kappa(a)}(a))$ as follows. Let the integer $k$ and the sequence $(y_{r_1,s_1},\ldots,y_{r_m,s_m})$ be as in the above proposition applied to $J=\iota_\mM(a)$. Then $\kappa(a)=k$ and $\nu_i(a)=i$ unless $i\in\{r_1,\ldots,r_m\}$. If, however, $i=r_j$ then we set $\nu_i(a)=s_j$. In other words, we take the sequence $(1,\ldots,k)$ and then replace every $r_j$ with $s_j$. 
\begin{example}
Let $n=4$ as in the above diagram. When $\iota_\mM(a)=\{y_{1,2},y_{2,2},y_{1,3},\\y_{2,3},y_{1,4}\}$ we have $k=2$, $m=2$, $(y_{r_1,s_1},y_{r_2,s_2})=(y_{2,3},y_{1,4})$ and $\nu(a)=(4,3)$. When $\iota_\mM(a)=\{y_{1,2},y_{2,2},y_{1,3}\}$ we have $k=2$, $m=1$, $y_{r_1,s_1}=y_{1,3}$ and $\nu(a)=(3,2)$. When $\iota_\mM(a)=\varnothing$ we have $k=1$, $m=0$ and $\nu(a)=(1)$. (The elements $a\in\mM$ themselves are uniquely determined by $\iota_\mM(a)$ and can be computed but are not relevant to this example.)
\end{example}

Now we are ready to recall the definition of PBW-semistandard Young tableaux which were introduced in~\cite{Fe}. All tableaux in consideration are on the alphabet $[1,n]$. A Young tableau consisting of one column $(\alpha_1,\ldots,\alpha_k)$ is PBW-semistandard if $\alpha_r\le k$ implies $\alpha_r=r$ and for $r_1<r_2$ such that $\alpha_{r_1}>k$ and $\alpha_{r_2}>k$ we must have $\alpha_{r_1}>\alpha_{r_2}$. A Young tableau with two columns $(\alpha_1,\ldots,\alpha_k)$ and $(\beta_1,\ldots,\beta_l)$ (in that order) where $k\ge l$ is PBW-semistandard if each column is PBW-semistandard by itself and for every $\beta_r$ there exists a $s\ge r$ such that $\alpha_s\ge \beta_r$. An arbitrary Young tableau is semistandard if any of its two consecutive columns form a PBW-semistandard Young tableau. A simple observation that we will use repeatedly is that if the one-column tableau $(\alpha_1,\ldots,\alpha_k)$ is PBW-semistandard, then for every $j$ we have $\alpha_j\ge j$. Hence in the above definition of a two-column PBW-semistandard tableau it is sufficient to require the existence of $s\ge r$ with $\alpha_s\ge \beta_r$ only for $\beta_r>l$. 

\begin{lemma}\label{ordersame}
We have $a\ge b$ in $\mM$ if and only if the two-column Young tableau with columns $\nu(a)$ and $\nu(b)$ (in this order) is PBW-semistandard.
\end{lemma}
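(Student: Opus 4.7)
The plan is to work on the order-ideal side via Birkhoff's theorem and translate $\iota_\mM(a) \supseteq \iota_\mM(b)$ directly into the PBW-semistandard condition. I would proceed in three steps: (a) re-encode the order ideal $\iota_\mM(a)$ in the combinatorial data that constitute $\nu(a)$; (b) unpack containment of ideals in this data; (c) match the resulting inequalities with the two-column PBW condition.

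Step (a): from the preceding proposition, $\iota_\mM(a)$ is the order ideal in $\mathcal P(\mM)$ generated by $y_{\kappa(a),\kappa(a)}$ together with the antichain $(y_{r_1^a,s_1^a},\ldots,y_{r_m^a,s_m^a})$, where $k=\kappa(a)$ and the $(r_j^a,s_j^a)$ are the pairs recorded by $K_{U_o,U_c}(\iota_\mM(a)) \cap U_c$. The tuple $\nu(a)$ is precisely a repackaging of these data: its length equals $k$, its \emph{large} positions (those $i$ with $\nu_i(a) > k$) are exactly the $r_j^a$, and $\nu_{r_j^a}(a) = s_j^a$. The constraint $r_m^a < \cdots < r_1^a \le k < s_1^a < \cdots < s_m^a$ from the proposition verifies that $\nu(a)$ is automatically PBW-semistandard as a one-column tableau.

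Step (b): since $\iota_\mM(b)$ is generated by $y_{l,l}$ and the $y_{r_j^b,s_j^b}$ with $l=\kappa(b)$, the containment $\iota_\mM(a) \supseteq \iota_\mM(b)$ is equivalent to two requirements: (i) $y_{l,l}$ lies in $\iota_\mM(a)$, i.e.\ $l \le k$; and (ii) each $y_{r_j^b,s_j^b}$ lies in $\iota_\mM(a)$, i.e.\ either $s_j^b \le k$ (so $y_{r_j^b,s_j^b} \le y_{k,k}$, using $r_j^b \le l \le k$) or there exists $i$ with $r_j^b \le r_i^a$ and $s_j^b \le s_i^a$.

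Step (c): I would verify that (i)--(ii) is exactly the two-column PBW condition on $(\nu(a), \nu(b))$. Condition (i) is the length comparison $k \ge l$ required for any two-column tableau. For condition (ii), the only constrained positions $r$ of $\nu(b)$ are the large ones $r = r_j^b$ (with $\beta_r = s_j^b > l$), and the existence of $s \ge r_j^b$ with $\alpha_s \ge s_j^b$ splits into two cases according to whether $s$ is a non-large position of $\nu(a)$ (forcing $\alpha_s = s$, which gives $s_j^b \le s \le k$ and matches the first alternative of (ii) via the witness $s=s_j^b$) or a large position $s = r_i^a$ (forcing $\alpha_s = s_i^a \ge s_j^b$ with $r_i^a \ge r_j^b$, matching the second alternative). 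The converse direction is symmetric. No single step is particularly deep; the only care required is tracking the dual roles of the large positions --- they are simultaneously the maximal $U_c$-generators of an order ideal and the positions where $\nu$ departs from the diagonal.
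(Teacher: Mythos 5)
Your proposal is correct and follows essentially the same route as the paper: both translate $a\ge b$ into containment $\iota_\mM(b)\subset\iota_\mM(a)$ via Birkhoff's theorem, then reduce that containment to the requirement that each maximal element of $\iota_\mM(b)$ (namely $y_{\kappa(b),\kappa(b)}$ and the $y_{r_j^b,s_j^b}$) lie below some maximal element of $\iota_\mM(a)$, and finally observe that this condition is a verbatim restatement of the two-column PBW-semistandard requirement on $(\nu(a),\nu(b))$. The only difference is cosmetic: you spell out the case analysis on whether the witness position $s$ is a ``large'' position of $\nu(a)$, which the paper compresses into the single phrase ``this is equivalent to the existence of a $t\ge u_j$ such that $\nu(a)_t\ge\nu(b)_{u_j}=v_j$.''
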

\begin{proof}
It is immediate from the definition of $\nu$ that the tableau with a single column $\nu(a)$ is PBW-semistandard for any $a\in\mM$. For any PBW-semistandard one-column tableau $(\alpha_1,\ldots,\alpha_k)$ one obtains an $a$ with $\nu(a)=(\alpha_1,\ldots,\alpha_k)$ by setting $a=\iota_\mM^{-1}(J)$ for the minimal order ideal $J$ containing $y_{k,k}$ and all $y_{j,\alpha_j}$ with $\alpha_j>j$.

Now, recall that $a\ge b$ if and only if $\iota_\mM(b)\subset\iota_\mM(a)$. The latter holds if and only if for any maximal $y$ in $\iota_\mM(b)$ there exists a maximal $y'$ in $\iota_\mM(a)$ such that $y'\ge y$. Consider $a,b\in\mM$ and denote \[K_{U_o,U_c}(\iota_\mM(a))=\{y_{i,i}|i\le\kappa(a)\}\sqcup\{y_{r_1,s_1},\ldots,y_{r_m,s_m}\},\]\[K_{U_o,U_c}(\iota_\mM(b))=\{y_{i,i}|i\le\kappa(b)\}\sqcup\{y_{u_1,v_1},\ldots,y_{u_l,v_l}\}.\]

The maximal elements in $K_{U_o,U_c}(\iota_\mM(a))$ are $y_{\kappa(a),\kappa(a)}$ and all the $y_{r_j,s_j}$ and similarly for $K_{U_o,U_c}(\iota_\mM(b))$. In particular, some element of $\iota_\mM(a)$ is greater than or equal to $y_{\kappa(b),\kappa(b)}$ if and only if $\kappa(a)\ge\kappa(b)$, since all $r_i\le\kappa(a)$. Furthermore, some element of $\iota_\mM(a)$ is greater than or equal to $y_{u_j,v_j}$ if and only if $v_j\le\kappa(a)$ or $r_l\ge u_j$ and $s_l\ge v_j$ for some $l$. This is equivalent to the existence of a $t\ge u_j$ such that $\nu(a)_t\ge \nu(b)_{u_j}=v_j$. We have seen that, indeed, $\nu(a)$ and $\nu(b)$ form a PBW-semistandard Young tableau if and only if every maximal element in $\iota_\mM(b)$ is less than or equal to some maximal element in $\iota_\mM(a)$.
\end{proof}

Let us consider the poset $(\mN,<)$ defined as follows. The elements of $\mN$ are all possible $b_{\alpha_1,\ldots,\alpha_k}$ such that $1\le k\le n-1$ and $(\alpha_1,\ldots,\alpha_k)$ is a one-column PBW-semistandard tableau. The order relation is given by $b_{\beta_1,\ldots,\beta_l}\le b_{\alpha_1,\ldots,\alpha_k}$ whenever the columns $(\alpha_1,\ldots,\alpha_k)$ and $(\beta_1,\ldots,\beta_l)$ (in this order!) form a PBW-semistandard tableau. We have established the following.

\begin{theorem}\label{M=L}
The map $\tau:a\mapsto b_{\nu(a)}$ is an isomorphism between the posets $(\mM,<)$ and $(\mN,<)$.
\end{theorem}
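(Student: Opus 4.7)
The plan is to assemble the theorem essentially as a direct corollary of Lemma~\ref{ordersame}, with small bookkeeping for bijectivity. There are three things to check: that $\tau$ is well-defined as a map $\mM\to\mN$, that it is a bijection, and that it preserves the order relation in both directions. Order preservation is already Lemma~\ref{ordersame} verbatim, so the real content is packaging well-definedness and bijectivity from pieces already proved.

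For well-definedness, I would first observe that by construction $\nu(a)$ is a one-column PBW-semistandard tableau: the entries at positions $j\notin\{r_1,\dots,r_m\}$ are equal to $j$ (hence $\le \kappa(a)$ and equal to the row index), while the entries $\nu_{r_j}(a)=s_j$ lie above $\kappa(a)$ and are strictly decreasing in $j$ since $s_1<\dots<s_m$ while $r_1>\dots>r_m$. This is exactly the content already noted at the start of the proof of Lemma~\ref{ordersame}, so $b_{\nu(a)}\in\mN$.

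For bijectivity, I would use the explicit inverse provided inside the proof of Lemma~\ref{ordersame}. Given any one-column PBW-semistandard tableau $(\alpha_1,\dots,\alpha_k)$, set $J\subset\mathcal P(\mM)$ to be the smallest order ideal containing $y_{k,k}$ together with all $y_{j,\alpha_j}$ for which $\alpha_j>j$, and let $a=\iota_\mM^{-1}(J)$. A quick check using Proposition~\ref{combdesc} (and the description of $K_{U_o,U_c}(J)$) shows that $\kappa(a)=k$ and $\nu(a)=(\alpha_1,\dots,\alpha_k)$, so $\tau(a)=b_{\alpha_1,\dots,\alpha_k}$; this gives surjectivity. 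Injectivity is immediate because $\tau(a)$ determines the tuple $\nu(a)$, which via the same recipe reconstructs $\iota_\mM(a)$ and hence $a$ itself.

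Finally, order preservation in both directions is exactly Lemma~\ref{ordersame}: the relation $a\ge b$ in $\mM$ is equivalent to the two-column tableau with columns $\nu(a),\nu(b)$ being PBW-semistandard, which is precisely the definition of $b_{\nu(b)}\le b_{\nu(a)}$ in $\mN$. Combining the three ingredients yields the required isomorphism. I do not expect any genuine obstacle here: the theorem is essentially a repackaging of Lemma~\ref{ordersame} together with the explicit inverse of $\nu$ exhibited in its proof, so the only work is to state the bijectivity half cleanly.
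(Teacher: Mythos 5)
Your proposal is correct and takes essentially the same approach as the paper: order preservation is read off from Lemma~\ref{ordersame}, and bijectivity is the only remaining bookkeeping. The one cosmetic difference is that you establish surjectivity via the explicit inverse exhibited inside the proof of Lemma~\ref{ordersame}, whereas the paper instead notes injectivity and then counts (every subset $\{i_1,\dots,i_k\}\subset[1,n]$ carries exactly one one-column PBW-semistandard arrangement, so $|\mN|=|\mM|$); both are immediate and equivalent in substance.
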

\begin{proof}
The order ideal $\iota_\mM(a)$ and $a$ itself are uniquely determined by the sequence $\nu(a)$, therefore, $\tau$ is injective. All the elements of a one-column PBW-semistandard tableau are distinct and for every $1\le i_1<\ldots<i_k\le n$ there is exactly one one-column PBW-semistandard tableau with elements $\{i_1,\ldots,i_k\}$, therefore, $|\mN|=|\mM|$. The rest follows from Lemma~\ref{ordersame}.
\end{proof}

\begin{example}\label{hasse}
Below are the Hasse diagrams of $\mM$ (left) and $\mN$ (right) for $n=4$ with $i_1\dots i_k$ in place of $a_{i_1,\dots,i_k}$ and $\alpha_1\dots\alpha_k$ in place of $b_{\alpha_1,\dots,\alpha_k}$. The join-irreducible elements in both lattices are shown in red. Here the isomorphism $\tau$ simply translates the left diagram onto the right one.
\begin{center}
\begin{tikzcd}[row sep=.5mm,column sep=2.5mm]
&&&\color{red}4\\
&&3\arrow[ur]\\
&2\arrow[ur]&&\color{red}34\arrow[ul]\\
\color{red}1\arrow[ur]&&24\arrow[ur]\arrow[ul]\\
&\color{red}14\arrow[ur]\arrow[ul]&&23\arrow[ul]\\
&&13\arrow[ur]\arrow[ul]&&\color{red}234\arrow[ul]\\
&\color{red}12\arrow[ur]&&\color{red}134\arrow[ur]\arrow[ul]\\
&&\color{red}124\arrow[ur]\arrow[ul]\\
&123\arrow[ur]
\end{tikzcd}
\begin{tikzcd}[row sep=.5mm,column sep=2.5mm]
&&&\color{red}124\\
&&143\arrow[ur]\\
&423\arrow[ur]&&\color{red}14\arrow[ul]\\
\color{red}123\arrow[ur]&&43\arrow[ur]\arrow[ul]\\
&\color{red}13\arrow[ur]\arrow[ul]&&42\arrow[ul]\\
&&32\arrow[ur]\arrow[ul]&&\color{red}4\arrow[ul]\\
&\color{red}12\arrow[ur]&&\color{red}3\arrow[ur]\arrow[ul]\\
&&\color{red}2\arrow[ur]\arrow[ul]\\
&1\arrow[ur]
\end{tikzcd}
\end{center}
\end{example}

\begin{remark}
One can notice a slight inconsistency in the above, namely, that semistandard tableaux correspond to weakly \emph{increasing} sequences in $(\mM,<)$ while PBW-semistandard tableaux correspond to weakly \emph{decreasing} sequences in $(\mN,<)$. Of course, one could say that $\mM$ is easily seen to be self-dual and the lattices are isomorphic regardless of the directions of the order relations. But there does not seem to be a natural way to redefine $\nu$ that would make $\tau$ an isomorphism between $(\mM,<)$ the dual of $(\mN,<)$. Indeed, that would require the odd-looking condition $\nu(\iota^{-1}(\varnothing))=(1,\ldots,n-2,n)$. Perhaps, the right way around this is to redefine PBW-semistandard tableaux.
\end{remark}

Note that $\mathcal P(\mN)$ is the image $\tau(\mathcal P(\mM))$. Define a partition of $\mathcal P(\mN)$ by setting $V_o=\tau(U_o)$ and $V_c=\tau(U_c)$. Our next goal is to show that the ideal $I^{V_o,V_c}(\mN)$ is an initial ideal of the ideal of Pl\"ucker relations. In fact, the subvariety it cuts out in the product $\bP$ is the toric variety associated with the FFLV polytope. We will not give a precise definition of this polytope, instead we will make use of a certain characterization of the toric ideal found in~\cite{fafefom}.

Let $I$ be the ideal of Pl\"ucker relations (as in Section~\ref{semistandard}) and let $\mM$ and $\mN$ be as in the previous section. First of all, we identify $R(\mN)$ with $R(\mM)$ by setting $X_{b_{\alpha_1,\ldots,\alpha_k}}=X_{\alpha_1,\ldots,\alpha_k}$. We will now write $R$ to denote either ring. 

Consider variables $z_{i,j}$ with $1\le i<j\le n$ and also variables $z_k$ with $1\le k\le n-1$. Consider the homomorphism $\psi:R\to\bC[\{z_{i,j}\},\{z_k\}]$ mapping $X_{\alpha_1,\ldots,\alpha_k}$ with $b_{\alpha_1,\ldots,\alpha_k}\in\mN$ to \[z_k\prod_{j|\alpha_j>k}z_{j,\alpha_j}.\] Let $I^{fflv}$ denote the kernel of $\psi$. The following fact is proved in~\cite{fafefom} (see Theorem 5.1 and its proof).
\begin{theorem}[\cite{fafefom}]
The ideal $I^{fflv}$ has the form $\initial_w I$ for some $w\in\bR^{\mN}$. The subvariety cut out by this ideal in the product $\bP$ is the toric variety associated with the Feigin--Fourier--Littelmann--Vinberg polytope of~\cite{FFL1}.
\end{theorem}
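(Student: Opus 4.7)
The plan is to establish the two assertions separately, treating the first as the main content and the second as essentially a tautology given the definition of $\psi$.

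For the identification $I^{fflv}=\initial_w I$, I would first pin down the weight $w \in \bR^{\mN}$. The natural choice is motivated by the PBW filtration: set $w_{b_{\alpha_1,\dots,\alpha_k}}=f\!\left(\sum_j(\alpha_j-j)\right)$ for a strictly convex function $f$ on $\bZ_{\geq 0}$, possibly refined by a lexicographic tiebreaker to guarantee that every Plücker relation has a \emph{single} $w$-minimal monomial. The purpose of this choice is to favour the ``PBW-degenerate'' summand in each relation, namely the unique quadratic term $X_{a\lor b}X_{a\odot_{V_o,V_c}b}$ appearing when we use the lattice structure on $\mN$ from Theorem~\ref{M=L} and the partition $V_o=\tau(U_o)$, $V_c=\tau(U_c)$.

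The containment $I^{fflv}\subseteq\initial_w I$ is the combinatorial core. By Corollary~\ref{quadgens} applied to $(\mN,V_o,V_c)$, it suffices to produce, for every incomparable pair $\{a,b\}\subset\mN$, an element $\sigma_{a,b}\in I$ such that $\initial_w\sigma_{a,b}=X_{a\lor b}X_{a\odot_{V_o,V_c}b}-X_aX_b$. The natural candidate for $\sigma_{a,b}$ is the classical Plücker relation of Lemma~\ref{classical} applied to the columns representing $a$ and $b$ (after translating back through $\tau^{-1}$). One must then verify, by direct combinatorial inspection using the explicit description of $\odot_{V_o,V_c}$ from Proposition~\ref{combdesc} and Proposition~\ref{star}, that among all summands of this Plücker relation, the $w$-weight is uniquely minimised by $X_{a\lor b}X_{a\odot_{V_o,V_c}b}$, with the desired sign coming from the signs appearing in Lemma~\ref{classical}.

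Equality then follows from a Hilbert-function argument. On one hand, $\dim(R/\initial_w I)_{\bd}=\dim(R/I)_{\bd}$ in every multidegree $\bd$, and the right-hand side is computed via the classical character formula for the flag variety. On the other hand, $R/I^{fflv}$ is by definition the image of $\psi$, so $\dim(R/I^{fflv})_{\bd}$ equals the number of lattice points in the appropriate Minkowski dilate of the FFLV polytope; by Lemma~\ref{ehrhart} and Proposition~\ref{minkowski} (transported to $\mN$) this count is independent of the partition choice, and specialising to $V_o=\mathcal P(\mN)$ recovers the lattice-point count of the Gelfand--Tsetlin polytope via Proposition~\ref{hibimon}. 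Since containment plus equal Hilbert functions imply equality of ideals, we obtain $I^{fflv}=\initial_w I$.

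The second assertion is almost immediate: $I^{fflv}=\ker\psi$ by definition, so the subvariety it cuts out in $\bP$ is $\mathrm{Proj}$ of the subalgebra of $\bC[\{z_{i,j}\},\{z_k\}]$ generated by the images $\psi(X_{\alpha_1,\dots,\alpha_k})$. These images are monomials whose exponent vectors are precisely the vertices (or, after homogenisation, the lattice points) of the FFLV polytope, so the resulting scheme is by definition the projective toric variety of that polytope (cf.\ Remark~\ref{generalizedtoric}).

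The main obstacle is the explicit matching in the second paragraph: one must check that for \emph{every} incomparable pair $\{a,b\}\subset\mN$, the $w$-initial term of some Plücker relation is exactly $X_{a\lor b}X_{a\odot_{V_o,V_c}b}-X_aX_b$. This amounts to a careful case analysis organised by the combinatorial description of $\odot_{V_o,V_c}$, and parallels (but is trickier than) the argument already given for the semistandard straightening relations in Theorem~\ref{strlaws} and Proposition~\ref{strrels}.
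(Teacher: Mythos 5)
The paper does not prove this theorem: it is cited directly from \cite{fafefom} (Theorem~5.1 there), so there is no in-house proof to compare against. That said, the proof you sketch has a genuine gap in its very first step, which makes the subsequent outline unsalvageable as written.

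The weight you propose, $w_{b_{\alpha_1,\dots,\alpha_k}}=f\bigl(\sum_j(\alpha_j-j)\bigr)$ for a strictly convex (or concave) $f$, depends only on the single integer $\sum_j(\alpha_j-j)$, the PBW degree. This is too crude: no such $w$ can lie in $C(I,I^{fflv})$. To see this concretely, take $n=4$ and the diamond pair $\{b_{1,3},\,b_{4,2}\}\subset\mN$ with $a\land b=b_{3,2}$, $a\lor b=b_{4,3}$. One checks via Propositions~\ref{diagonalsquares} and~\ref{p1q1} that this pair is special and that $a\odot_{V_o,V_c}b=b_{1,2}$, so the binomial $X_{1,3}X_{4,2}-X_{1,2}X_{4,3}$ lies in $I^{fflv}$ and we would need $w_{b_{1,3}}+w_{b_{4,2}}=w_{b_{1,2}}+w_{b_{4,3}}$. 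But the PBW degrees are $\{1,3\}$ on the left and $\{0,4\}$ on the right: the multisets do not match, so for any strictly convex or strictly concave $f$ the two sides differ. The correct weights in $C(I,I^{fflv})$, read off from Theorem~\ref{pbwtropical}, have the form $w_{b_{\alpha_1,\dots,\alpha_k}}=\sum_j g(\alpha_j-j)+c_k$ for a suitable strictly \emph{concave} $g$ and constants $c_k$ --- the convexity/concavity is applied \emph{termwise}, not to the sum, and that distinction is essential.

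There is also an internal inconsistency worth flagging: you say you will refine $w$ ``to guarantee that every Pl\"ucker relation has a \emph{single} $w$-minimal monomial,'' yet two sentences later you ask that $\initial_w\sigma_{a,b}$ be the \emph{binomial} $X_{a\lor b}X_{a\odot b}-X_aX_b$. A weight with a unique minimal monomial on every relation produces a monomial initial ideal (here that would be $I^m(\mN)$), not the toric ideal $I^{fflv}$; for a toric initial ideal you need ties, by design, on precisely the lattice relations and strict minima below everything else. Your Hilbert-function closing argument and the reading of the second assertion via the image of $\psi$ are both sound in spirit, but they rest on the containment step which, with the proposed $w$, does not go through.
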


We interpret $I^{fflv}$ as a generalized Hibi ideal.    
\begin{theorem}
$I^{fflv}$ coincides with the ideal $I^{V_o,V_c}(\mN)$.
\end{theorem}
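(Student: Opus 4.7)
The plan is to exhibit both $\psi$ and $\theta_{V_o,V_c}$ as monomial homomorphisms on $R$ and produce an injective monomial ring homomorphism $\phi$ between their codomains that satisfies $\theta_{V_o,V_c}=\phi\circ\psi$. Equality of kernels then follows at once.

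First I would unravel $\theta_{V_o,V_c}$. Given $a\in\mN$ write $a=\tau(a')$ for $a'\in\mM$. Because $\tau$ is a lattice isomorphism restricting to a poset isomorphism $\mathcal P(\mM)\to\mathcal P(\mN)$ that sends $U_o$ onto $V_o$ and $U_c$ onto $V_c$, one has $\iota_\mN(a)=\tau(\iota_\mM(a'))$ and $K_{V_o,V_c}(\iota_\mN(a))=\tau(K_{U_o,U_c}(\iota_\mM(a')))$ (a direct consequence of Proposition~\ref{combdesc} applied on either side). Using the previous proposition describing $K_{U_o,U_c}(\iota_\mM(a'))$ as $\{y_{i,i}:2\le i\le\kappa(a')\}\sqcup\{y_{r_1,s_1},\ldots,y_{r_m,s_m}\}$, I obtain
\[\theta_{V_o,V_c}(X_a)=t\cdot\prod_{j=2}^{\kappa(a')}z_{\tau(y_{j,j})}\cdot\prod_{i=1}^{m}z_{\tau(y_{r_i,s_i})}.\]

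Next I unravel $\psi$. Set $k=\kappa(a')$ and $(\alpha_1,\ldots,\alpha_k)=\nu(a')$. By the definition of $\nu$, $\alpha_j=j$ except that $\alpha_{r_i}=s_i$; the constraint $r_m<\cdots<r_1\le k<s_1<\cdots<s_m$ gives $\alpha_j>k$ precisely when $j\in\{r_1,\ldots,r_m\}$. Hence
\[\psi(X_a)=z_k\cdot\prod_{i=1}^{m}z_{r_i,s_i}.\]

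Now define $\phi\colon\bC[\{z_{i,j}\},\{z_k\}]\to S$ by $\phi(z_{i,j})=z_{\tau(y_{i,j})}$ for $i<j$ and $\phi(z_k)=t\prod_{j=2}^{k}z_{\tau(y_{j,j})}$ for $1\le k\le n-1$ (the empty product giving $\phi(z_1)=t$). A direct comparison with the two displayed formulas shows $\theta_{V_o,V_c}=\phi\circ\psi$, which gives $\ker\psi\subseteq\ker\theta_{V_o,V_c}$. The remaining task is to check injectivity of $\phi$. The target variables $z_{\tau(y_{i,j})}$ for $i<j$ appear only in the images $\phi(z_{i,j})$ and in no $\phi(z_k)$; the $\phi(z_k)$ live in the polynomial subring $\bC[t,z_{\tau(y_{2,2})},\ldots,z_{\tau(y_{n-1,n-1})}]$, and their exponent vectors in these $n-1$ variables form a lower-triangular matrix with $1$'s on the diagonal, hence are $\bZ$-linearly independent. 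So the images of the generators of the source are algebraically independent in $S$, $\phi$ is injective, and $\ker\psi=\ker\theta_{V_o,V_c}$, i.e. $I^{fflv}=I^{V_o,V_c}(\mN)$.

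The one potentially confusing point — the only genuine obstacle I anticipate — is the bookkeeping that reconciles the single factor $z_k$ in $\psi(X_a)$ with the longer product $t\prod_{j=2}^{\kappa(a')}z_{\tau(y_{j,j})}$ in $\theta_{V_o,V_c}(X_a)$; this is precisely what the telescoping definition $\phi(z_k)=t\prod_{j=2}^{k}z_{\tau(y_{j,j})}$ is engineered to absorb, and once this is written down the injectivity argument above closes the proof.
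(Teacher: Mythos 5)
Your proof is correct and is essentially the paper's approach spelled out in full: the paper simply asserts that $\psi$ and $\theta_{V_o,V_c}$ ``can be identified'' via the substitution $z_{\tau(y_{r,s})}=z_{r,s}$ (for $r<s$), $z_{\tau(y_{k,k})}=z_k/z_{k-1}$, $t=z_1$, which is exactly the inverse of your telescoping map $\phi$. Your version just makes the bookkeeping and the injectivity check explicit, turning the paper's one-line change of variables into a genuine (non-Laurent) polynomial ring homomorphism with a verified kernel identification.
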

\begin{proof}
Recall the map $\theta_{V_o,V_c}:R\to\bC[\{z_p,p\in\mathcal P(\mN)\},t]$ with kernel $I^{V_o,V_c}(\mN)$ that was used to define the generalized Hibi ideal. One sees that maps $\psi$ and $\theta_{V_o,V_c}$ can be identified by setting $z_{\tau(y_{r,s})}=z_{r,s}$ for $r<s$, $z_{\tau(y_{k,k})}=z_k/z_{k-1}$ and $t=z_1$.
\end{proof}

\begin{remark}
It was originally anticipated by the author that the ideal $I^{fflv}$ will be an ideal of the type considered in~\cite{HL}, i.e.\ will be equal to $I^{\varnothing,P(\mN)}(\mN)$. That is since the analog of this statement holds in the simpler Grassmannian case. Of course, for the complete flag variety this turns out to be false, for instance, when $n=4$ one can see that $X_{1,2,3}X_{1,4}-X_{1,4,3}X_1$ is among the quadratic generators of $I^{\varnothing,P(\mN)}(\mN)$ meaning that the ideal is not $\deg$-homogeneous and thus not an initial ideal of $I$. It would be interesting to find other natural bijections between the set of Pl\"ucker variables and the lattice $\mN\simeq\mM$ such that one of the generalized Hibi ideals would be an initial ideal of $I$.
\end{remark}

\begin{remark}
As we have mentioned, the subvarieties cut out in the product $\bP$ by the ideals $I^h(\mM)$ and $I^{V_o,V_c}(\mN)$ are the toric varieties associated with, respectively, the Gelfand-Tsetlin and the FFLV polytopes. It also follows from Remark~\ref{generalizedtoric} that the varieties cut out by these two ideals in, respectively, $\bP(\bC^\mM)$ and $\bP(\bC^\mN)$ are the toric varieties associated with the corresponding interpolating polytopes. To avoid confusion let us point out that the former two toric varieties are different from the latter two and the Gelfand--Tsetlin and FFLV polytopes are different from the interpolating polytopes. In particular, in the former case the varieties and polytopes have dimension $\frac{n(n-1)}2$ and in the latter case the dimension is $|\mathcal P(\mM)|=|\mathcal P(\mN)|=\frac{n^2+n-4}2$. However, it is shown in~\cite{ABS} that the Gelfand-Tsetlin and FFLV polytopes can be obtained as the \emph{marked order polytope} and the \emph{marked chain polytope} of a poset very similar to $\mathcal P(\mM)$.
\end{remark}

\section{An algebra with straightening laws}\label{ASL}

In this section we will show how the notion of a PBW-semistandard tableau induces the structure of an algebra with straightening laws on the Pl\"ucker algebra $R/I$. Consider an arbitrary finite poset $(\Omega,\le)$ and a commutative $\bC$-algebra $\mathcal R$ with an injection $\varepsilon:\Omega\to\mathcal R$. We say that $\mathcal R$ is an \emph{algebra with straightening laws} (or ASL) on $\Omega$ if the following two properties hold.
\begin{itemize}
\item[ASL1.] The set of products $\varepsilon(\omega_1)\dots\varepsilon(\omega_m)$ where $\omega_1\le\dots\le\omega_m$ form a basis in $\mathcal R$. Such products are known as standard monomials.
\item[ASL2.] For any incomparable $\chi_1,\chi_2\in\Omega$ consider the unique representation of $\varepsilon(\chi_1)\varepsilon(\chi_2)$ as a linear combination of standard monomials. For any standard monomial $\varepsilon(\omega_1)\dots\varepsilon(\omega_m)$ with $\omega_1\le\dots\le\omega_m$ occurring in this linear combination we have $\omega_1\le\chi_i$ for both $i$.
\end{itemize}

In the above definition we follow~\cite{H}. The notion of an algebra with straightening laws is closely related to and sometimes used interchangeably with the notion of a~\emph{Hodge algebra}, originally both terms are due to De Concini, Eisenbud and Procesi. In their work~\cite{DEP2} the structure defined above is termed an \emph{ordinal} Hodge algebra, while their definition of a Hodge algebra is slightly more general.

One can immediately observe that the injection $\varepsilon:\mM\to R/I$ mapping $a_{i_1,\dots,i_k}$ to the class of $X_{i_1,\dots,i_k}$ turns $R/I$ into an ASL over $\mM$. Indeed, as we have discussed, property ASL1 is a classical fact while ASL2 is implied by Theorem~\ref{strlaws}(c). In fact, Theorem~\ref{strlaws}(c) is somewhat stronger, it shows that if we reverse the order relation on $\mM$ the map $\varepsilon$ still makes $R/I$ an ASL over $\mM$. Historically, this construction was one of the first examples of an ASL (at least when restricted to the Grassmannian case) and motivated the definition.   

Our current goal is to prove an analog of Theorem~\ref{strlaws}, in particular, to show that the injection $\varepsilon':\mN\to R/I$ mapping $b_{\alpha_1,\dots,\alpha_k}$ to the class of $X_{\alpha_1,\dots,\alpha_k}$ turns $R/I$ into an ASL over $\mN$. 

Consider the ideal $I^m(\mN)$, it is spanned by monomials \[X_{\alpha_1^1,\ldots,\alpha_{k_1}^1}\dots X_{\alpha_1^m,\ldots,\alpha_{k_m}^m}\] such that each $(\alpha_m^1,\ldots,\alpha_{k_m}^m)$ is a PBW-semistandard tableau with one column but these $m$ columns cannot be arranged into a PBW-semistandard tableau. It is shown in~\cite{Fe} (and also follows from the fact that $I^m(\mN)$ is an initial ideal of $I$) that the monomials not contained in $I^m(\mN)$ project to a basis in $R/I$. This establishes property ASL1 for $\varepsilon'$.

For every monomial $M\in I^m(\mN)$ we have a unique straightening relation $M+Q\in I$ where $Q$ is a linear combination of monomials not contained in $I^m(\mN)$. In particular, for incomparable $a$ and $b$ in $\mN$ this element can be written as \[e(a,b)=X_aX_b-\sum_{i=0}^{m(a,b)}c_i(a,b)X_{g_i(a,b)}X_{h_i(a,b)}\] where the pairs $(g_i(a,b),h_i(a,b))\in\mN^2$ are pairwise distinct, $g_i(a,b)< h_i(a,b)$ and $c_i(a,b)\neq 0$. To prove our analog of Theorem~\ref{strlaws} we will need the following lemma.
\begin{lemma}[see {\cite[Equation {$(*)'$}]{DEP}}]\label{altsumlemma}
Consider two Pl\"ucker variables $X_{i_1,\dots,i_k}$ and $X_{j_1,\dots,j_l}$ with $k\ge l$ and an integer $1\le r\le l$. Then the we have 
\begin{equation}\label{altsum}
\sum_\sigma (-1)^\sigma X_{i_1,\dots,i_{r-1},\sigma(i_r),\dots,\sigma(i_k)}X_{\sigma(j_1),\dots,\sigma(j_r),j_{r+1},\dots,j_l}\in I
\end{equation}
where the summation ranges over all permutations $\sigma$ of the set of symbols $\{j_1,\ldots,j_r,\\i_r,\dots,i_k\}$.
\end{lemma}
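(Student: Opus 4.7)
The plan is to show that the alternating sum vanishes under the homomorphism $\pi\colon R\to\mathbb{C}[\{z_{i,j}\}]$ whose kernel is $I$, defined (as recalled in the proof of Lemma~\ref{k>l}) by sending $X_{a_1,\ldots,a_m}$ to the $m\times m$ determinant with $(p,q)$-entry $z_{p,a_q}$. Since $I=\ker\pi$, it suffices to establish $\pi(S)=0$ where $S$ denotes the alternating sum in the statement.

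The key reformulation is as follows. For each column label $\alpha$ set $v_\alpha=(z_{1,\alpha},\ldots,z_{k,\alpha})\in\mathbb{C}^k$, and let $P\colon\mathbb{C}^k\to\mathbb{C}^l$ be the projection onto the first $l$ coordinates, so that $\pi(X_{i_1,\ldots,i_k})=\det[v_{i_1}|\cdots|v_{i_k}]$ while $\pi(X_{j_1,\ldots,j_l})=\det[Pv_{j_1}|\cdots|Pv_{j_l}]$. I would then introduce the auxiliary function
\[
G(u_1,\ldots,u_{k+1})=\det[v_{i_1}|\cdots|v_{i_{r-1}}|u_1|\cdots|u_{k-r+1}]\cdot \det[Pu_{k-r+2}|\cdots|Pu_{k+1}|Pv_{j_{r+1}}|\cdots|Pv_{j_l}]
\]
on $(k+1)$-tuples of vectors in $\mathbb{C}^k$. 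Linearity of $P$ combined with the column-multilinearity of each determinant makes $G$ multilinear in each of its $k+1$ arguments, even though the last $r$ of them enter only after being projected.

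Fix any enumeration $\Omega=\{\alpha_1,\ldots,\alpha_{k+1}\}$ of the set of permuted symbols (say $\alpha_p=i_{r-1+p}$ for $p\le k-r+1$ and $\alpha_{k-r+1+q}=j_q$ for $q\le r$), and identify every permutation $\sigma$ of $\Omega$ with the permutation $\tau\in S_{k+1}$ defined by $\sigma(\alpha_p)=\alpha_{\tau(p)}$, noting that $\sigma$ and $\tau$ carry the same sign. Then $\pi(S)$ is recognized as precisely the total antisymmetrization
\[
\sum_{\tau\in S_{k+1}}(-1)^\tau G(v_{\alpha_{\tau(1)}},\ldots,v_{\alpha_{\tau(k+1)}}),
\]
which is a totally alternating multilinear function of $k+1$ vectors in $\mathbb{C}^k$ and therefore vanishes identically since it factors through $\bigwedge^{k+1}\mathbb{C}^k=0$.

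The step requiring the most care will be the sign bookkeeping in the identification of $\pi(S)$ with the antisymmetrization of $G$: one must verify that the sign $(-1)^\sigma$ appearing in the lemma's sum, together with any signs absorbed through the antisymmetry of the two determinants when their $\sigma$-dependent columns are reordered into the $G$-pattern, collapses to precisely $(-1)^\tau$ for the corresponding $\tau\in S_{k+1}$. Beyond this essentially combinatorial check, the vanishing of the sum is immediate from the top-exterior-power argument above.
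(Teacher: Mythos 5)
Your proof is correct. The paper does not actually prove this lemma; it cites De Concini--Eisenbud--Procesi and moves on, so there is no in-paper argument to compare against. Your argument is self-contained and is the standard derivation of such shuffle/Pl\"ucker relations: write each Pl\"ucker monomial as a product of two determinants via the homomorphism $\pi$ (consistent with the antisymmetry convention $X_{i_1,\ldots,i_k}=(-1)^\sigma X_{i_{\sigma(1)},\ldots,i_{\sigma(k)}}$, which is exactly the column-permutation sign of the determinant), observe that the alternating sum is the total antisymmetrization of the multilinear functional $G$ in $k+1$ vector arguments drawn from a rank-$k$ module, and conclude it vanishes because $\bigwedge^{k+1}$ of a rank-$k$ free module is zero.

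The sign bookkeeping you flag does indeed close up cleanly: with $\alpha_p=i_{r-1+p}$ for $p\le k-r+1$ and $\alpha_{k-r+1+q}=j_q$ for $q\le r$, the columns of the two determinants appearing in $\pi$ of the $\sigma$-term are $v_{\sigma(\alpha_1)},\dots,v_{\sigma(\alpha_{k-r+1})}$ and $Pv_{\sigma(\alpha_{k-r+2})},\dots,Pv_{\sigma(\alpha_{k+1})}$ \emph{in that order}, i.e., no column reordering (and hence no extra sign) is needed to match the pattern of $G$ once $\sigma(\alpha_p)=\alpha_{\tau(p)}$ is substituted, and $(-1)^\sigma=(-1)^\tau$ because $\sigma$ and $\tau$ are conjugate under $p\mapsto\alpha_p$. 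One cosmetic point worth making explicit: the vanishing of the antisymmetrization should be read as a polynomial identity in the $u$-coordinates (so that it may then be specialized to $u_p=v_{\alpha_p}$, whose entries are indeterminates $z_{i,\alpha_p}$, yielding vanishing in $\bC[\{z_{i,j}\}]$ rather than merely pointwise over $\bC$); this is automatic since a multilinear alternating form in more arguments than the rank is the zero polynomial, but it is the step that justifies concluding $S\in\ker\pi=I$.
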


For the chosen $V_o,V_c$ let us use the shorthand notation $\odot=\odot_{V_o,V_c}$.
\begin{theorem}\label{pbwstrlaws}
\hfill
\begin{enumerate}[label=(\alph*)]
\item The quadratic straightening relations $e(a,b)$ form a minimal generating set of $I$.
\item WLOG we may assume that $g_0(a,b)=a\odot b$, $h_0(a,b)=a\lor b$ and $c_0(a,b)=1$.
\item For $1\le i\le m(a,b)$ we have $g_i(a,b)\le a\land b$ (not $a\odot b$!) and $h_i(a,b)> a\lor b$. In particular, $R/I$ is an ASL over $\mN$ with respect to the injection $\varepsilon'$.
\end{enumerate}
\end{theorem}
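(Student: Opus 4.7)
My plan is to prove parts (a), (b), (c) in sequence, with the substantive work in (c).

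For (a), combining $I^{fflv} = \initial_{w_{\mathrm{fflv}}} I$ (for some $w_{\mathrm{fflv}} \in \bR^\mN$, as cited from~\cite{fafefom}) with $I^m(\mN) = \initial_{w'} I^{fflv}$ (Theorem~\ref{genhibimonfacets}), identity~\eqref{prop113} supplies a $w'' \in \bR^\mN$ satisfying $\initial_{w''} I = I^m(\mN)$. The elements $e(a,b)$ then have pairwise distinct leading monomials $X_aX_b$ which exhaust the quadratic generators of $I^m(\mN)$, so they form a Gr\"obner basis of $I$ with respect to $w''$ and in particular generate $I$. Minimality is immediate since $I$ is quadratically generated and distinct $e(a,b)$ have linearly independent leading terms modulo standard monomials.

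For (b), I would choose $w_{\mathrm{fflv}} \in C(I, I^{fflv})$ (which lies in a face of $\overline{C(I, I^m(\mN))}$ by transitivity in the Gr\"obner fan) and $w' \in C(I, I^m(\mN))$. By~\eqref{prop113}, $\initial_{w_{\mathrm{fflv}}+\varepsilon w'} e(a,b) = \initial_{w'}(\initial_{w_{\mathrm{fflv}}} e(a,b)) = X_aX_b$ for sufficiently small $\varepsilon > 0$. Hence $\initial_{w_{\mathrm{fflv}}} e(a,b) \in I^{fflv}$ contains $X_aX_b$ and otherwise only standard monomials of $\mN$. By the uniqueness in Proposition~\ref{minkowski}, the only standard product $X_gX_h$ (with $g\le h$ in $\mN$) satisfying $\psi(X_gX_h) = \psi(X_aX_b)$ is $X_{a\odot b}X_{a\lor b}$, so vanishing under $\psi$ forces $\initial_{w_{\mathrm{fflv}}} e(a,b) = X_aX_b - \alpha X_{a\odot b}X_{a\lor b}$ for some $\alpha \in \bC^*$. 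Subtracting $d_{V_o,V_c}(a,b) \in I^{fflv}$ and invoking primality of the toric ideal $I^{fflv}$ yields $\alpha = 1$, hence (b).

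Part (c) is the substantive claim: for $i \ge 1$, $h_i > a \lor b$ strictly and $g_i \le a \land b$ in $\mN$'s order, which together with (b) yields ASL2. My plan is to construct the straightening of $X_aX_b$ iteratively via the classical Pl\"ucker identity of Lemma~\ref{altsumlemma}, with the row index $r$ and subscript data drawn from the PBW-columns of $a$ and $b$; this writes $X_aX_b$ as an alternating sum of products of the same $\wt$ and $\deg$, and the non-standard summands are then recursively reduced by further Pl\"ucker relations. Termination is ensured by part (b): the $w_{\mathrm{fflv}}$-weight strictly increases at each reduction step beyond the base pair $(a\odot b, a\lor b)$, so the procedure halts, and uniqueness of the standard-monomial representation modulo $I$ forces this explicit expansion to coincide with $\sum_{i\ge 0} c_iX_{g_i}X_{h_i}$. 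The hard part, and the combinatorial heart of the argument, is the invariant to be propagated: every standard summand $X_gX_h$ produced other than the base must satisfy $g \le a \land_\mN b$ and $h > a \lor_\mN b$. Because the $\mN$-order on Pl\"ucker variables is dictated by the isomorphism $\tau:\mM\to\mN$ and the PBW-semistandardness conditions (Lemma~\ref{ordersame}), it is a priori unclear that these order-theoretic constraints interact coherently with the subscript-permutations of Lemma~\ref{altsumlemma}; verification will require an intricate case analysis along the $\nu$-encoding (Proposition~\ref{combdesc}) of the columns of $a$ and $b$, matching each produced subscript exchange against the explicit data of $\nu(a \land_\mN b)$ and $\nu(a \lor_\mN b)$. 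The author has flagged this as the longest and most technically involved proof in the paper, so I expect the bookkeeping to be substantial and to dominate the length of the argument.
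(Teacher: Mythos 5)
Parts (a) and (b) are essentially correct and use the same circle of ideas as the paper (part (a) can be argued more directly via a simple linear-independence and dimension count over each graded degree, and part (b) is proved in the paper in one sentence by noting $\initial_w e(a,b)$ must be a quadratic element of $I^{V_o,V_c}(\mN)$ containing $X_aX_b$ and otherwise standard, hence $d_{V_o,V_c}(a,b)$).

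For part (c), your high-level plan — iterate the alternating Pl\"ucker relation of Lemma~\ref{altsumlemma} to straighten $X_aX_b$ — matches the paper, but there are two structural gaps that would make the proposed bookkeeping futile if carried out as stated. First, the property you call "the invariant to be propagated," namely $g\le a\land b$ and $h>a\lor b$, is the \emph{conclusion} and is not preserved by the recursion: the intermediate (non-standard) monomials $X_cX_d$ that arise need not satisfy $c\ge a\lor b$ or $d\le a\land b$; in fact $c$ can be incomparable to $b$. What the paper actually tracks through the recursion is the weaker inequality $c>a$ and $d<b$ (each step replaces a monomial $X_cX_d$ by monomials $X_{c'}X_{d'}$ with $c'>c$, $d'<d$, which also gives termination without any appeal to $w_{\mathrm{fflv}}$). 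The remaining half of the conclusion ($g<a$, $h>b$) then comes from a separate symmetry argument: in the case $k=l$ one reruns the algorithm with $a$ and $b$ transposed and uses uniqueness of the straightening relation. Second, when $k>l$ transposition is unavailable, and this case genuinely needs different tools: the paper first handles the case where no subscript lies in $[l+1,k]$ by padding the short column via Lemma~\ref{k>l} to reduce to the equal-length case, and then reduces the general $k>l$ case to that one via a Weyl group permutation $\rho\in S_n$ together with a non-trivial claim that $\rho$ preserves the $\mN$-order on the relevant elements. Your proposal does not mention either the transposition step or the $k>l$ reduction, and without them the plan does not yield the full two-sided bound required for ASL2. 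Finally, your termination argument via strict increase of $w_{\mathrm{fflv}}$-weight is not justified: part (b) describes the final relation $e(a,b)$, not the intermediate alternating-sum relations in the recursion, whose lowest-weight terms need not reduce to the pair $(a\odot b, a\lor b)$.
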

\begin{proof}
The relations $e(a,b)$ are linearly independent because for every incomparable pair $\{a,b\}$ the monomial $X_aX_b$ occurs in exactly one of these relations. Their number is equal to the number of incomparable pairs in $\mN$ (or $\mM$) which is the dimension of the quadratic part of $I$. Part (a) follows. 

Now, in view of Corollary~\ref{quadgens}, the expression $X_aX_b-X_{a\odot b}X_{a\lor b}$ lies in $I^{V_o,V_c}(\mN)$. Consequently, for any $w$ with $\initial_w I=I^{V_o,V_c}(\mN)$ the initial part $\initial_we(a,b)$ must equal $X_aX_b-X_{a\odot b}X_{a\lor b}$ which implies part (b).

Part (c) is proved with the use of Lemma~\ref{altsumlemma} which provides an algorithm for computing the expressions $e(a,b)$ (that is a \textit{straightening algorithm}). Let $a=b_{\alpha_1,\dots,\alpha_k}$ and $b=b_{\beta_1,\dots,\beta_l}\in\mN$ be incomparable with $k\ge l$. This means that we have an $r\le l$ such that $\alpha_j<\beta_r$ for all $j\ge r$. Consider the expression
\begin{equation}\label{fflvaltsum}
\sum_\sigma (-1)^\sigma X_{\alpha_1,\dots,\alpha_{r-1},\sigma(\alpha_r),\dots,\sigma(\alpha_k)}X_{\sigma(\beta_1),\dots,\sigma(\beta_r),\beta_{r+1},\dots,\beta_l}\in I
\end{equation}
as in Lemma~\ref{altsumlemma}. First, let $\sigma$ preserve the set $\{\beta_1,\dots,\beta_r\}$ (and also the set $\{\alpha_r,\dots,\alpha_k\}$). Then one sees that \[X_{\alpha_1,\dots,\alpha_{r-1},\sigma(\alpha_r),\dots,\sigma(\alpha_k)}X_{\sigma(\beta_1),\dots,\sigma(\beta_r),\beta_{r+1},\dots,\beta_l}=(-1)^\sigma X_{\alpha_1,\dots,\alpha_k}X_{\beta_1,\dots,\beta_l}\] and, therefore, $X_{\alpha_1,\dots,\alpha_k}X_{\beta_1,\dots,\beta_l}$ appears in~\eqref{fflvaltsum} with a positive coefficient (i.e.\ $r!(k-r+1)!$). Now consider one of the remaining $\sigma$ and let \[\{\alpha_1,\dots,\alpha_{r-1},\sigma(\alpha_r),\dots,\sigma(\alpha_k)\}=\{\gamma_1,\ldots,\gamma_k\},\]\[\{\sigma(\beta_1),\dots,\sigma(\beta_r),\beta_{r+1},\dots,\beta_l\}=\{\delta_1,\dots,\delta_l\}\] for one-column PBW-semistandard tableaux $(\gamma_1,\ldots,\gamma_k)$ and $(\delta_1,\dots,\delta_l)$. We claim that $b_{\gamma_1,\ldots,\gamma_k}> b_{\alpha_1,\ldots,\alpha_k}$ and $b_{\delta_1,\dots,\delta_l}< b_{\beta_1,\ldots,\beta_l}$. The $k+1$ elements that are being permuted are divided into 4 groups:
\begin{enumerate}[label=(\roman*)]
\item the smallest elements are the $\beta_j$ with $\beta_j=j$, they are smaller than $r$,
\item then follow the $\alpha_j=j$, they lie in $[r,k]$, 
\item then the $\alpha_j> k$, they lie in $[k+1,\beta_r-1]$,
\item the largest elements are the $\beta_j>l$, they are no less than $\beta_r$.  
\end{enumerate}
In particular, these $k+1$ elements are pairwise distinct. The set $\{\delta_1,\dots,\delta_l\}$ is obtained from $\{\beta_1,\dots,\beta_l\}$ by replacing some of the elements in groups (i) and (iv) with an equal number of elements in groups (ii) and (iii). The set of $j\le r$ such that $\delta_j>l$ contains the set of $j'\le r$ such that $\beta_{j'}>l$ (these $\beta_{j'}$ make up group (iv)). Several largest among the $\delta_j>l$ with $j\le r$ (i.e.\ those with the smallest $j$) are those elements of group (iv) that were not exchanged. These last two facts show that if for $j\le r$ and $j'\le r$ we have $\delta_j=\beta_{j'}>l$, then $j\le j'$. All other $\delta_j>l$ with $j\le r$ either came from groups (ii) and (iii) or are equal to some $\beta_j$ with $j>r$. All such elements are no greater than $\beta_r$. We have verified the defining condition of PBW-semistandard tableaux for columns $(\beta_1,\dots,\beta_l)$ and $(\delta_1,\dots,\delta_l)$ for $\delta_j$ with $j\le r$. 

Now, the set of $j>r$ with $\delta_j>l$ is contained in the set of $j'$ with $\beta_{j'}>l$. The set of $\delta_j>l$ with $j>r$ consists of several smallest $\beta_j>l$ and several smallest of the $\alpha_j>l$ that were permuted. Let $j_i$ be the $i$th largest $j$ with $\delta_j>j>r$ and $j'_i$ be the $i$th largest $j'$ with $\beta_{j'}>j'>r$. We see that $j'_i\ge j_i$ and $\beta_{j'_i}\ge\delta_{j_i}$ for all $j_i$. We have shown that $b_{\delta_1,\dots,\delta_l}< b_{\beta_1,\ldots,\beta_l}$.

The proof that $b_{\gamma_1,\ldots,\gamma_k}> b_{\alpha_1,\ldots,\alpha_k}$ is similar. One sees that the set of $j<r$ with $\gamma_j>k$ is contained in the set of $j'<r$ with $\alpha_{j'}>k$. The set of $\gamma_j>k$ with $j<r$ consists of several largest $\alpha_j>k$ with $j<r$ while the remaining $\alpha_j>k$ with $j<r$ are equal to some $\gamma_j$ with $j\ge r$. We see that for every $\alpha_j>k$ with $j<r$ we have $\gamma_{j'}=\alpha_j$ for some $j'\ge j$. The set of $j\ge r$ with $\gamma_j>j$ contains the set of $j'\ge r$ with $\alpha_{j'}>j$. Several smallest $\gamma_j>k$ with $j\ge r$ are elements of group (iii). The remaining (larger) $\gamma_j>k$ with $j\ge r$ are made up of elements of group (iv) that were exchanged and several largest of the $\alpha_j>k$ with $j<r$. Let $j_i$ be the $i$th largest $j$ with $\gamma_j>j\ge r$ and $j'_i$ be the $i$th largest $j'$ with $\alpha_{j'}>j'>r$ (i.e.\ with $\alpha_{j'}$ in group (iii)). We see that $j_i\ge j'_i$ and $\gamma_{j_i}\ge\alpha_{j'_i}$ for all $j'_i$.

Next, if for some $\sigma$ the monomial 
\begin{equation}\label{monomial}
X_{\alpha_1,\dots,\alpha_{r-1},\sigma(\alpha_r),\dots,\sigma(\alpha_k)}X_{\sigma(\beta_1),\dots,\sigma(\beta_r),\beta_{r+1},\dots,\beta_l}
\end{equation}
is non-standard (lies in $I^m(\mN)$) we may again apply Lemma~\ref{altsumlemma} to this pair of tuples and subtract the result from~\eqref{fflvaltsum} with the coefficient needed to cancel out the monomial \eqref{monomial}. We may repeat this operation until there are no non-standard monomials left in the expression other than $X_{\alpha_1,\dots,\alpha_k}X_{\beta_1,\dots,\beta_l}$. This algorithm will terminate since every step replaces a monomial $X_cX_d$ with a sum of monomials $X_{c'}X_{d'}$ with $c'> c$ and $d'< d$ (we assume that we never transpose the first and second variable in any of the monomials during the procedure).

As a result we obtain the expression $e(a,b)$. We see that every $g_i(a,b)< b$ and every $h_i(a,b)> a$. Now first suppose that $k=l$. Then we may transpose $a$ and $b$ and repeat the straightening algorithm for this reversed pair. We must end up with the same expression, in particular, we see that every $g_i(a,b)< a$ and every $h_i(a,b)> b$. This proves part (c) when $k=l$ (we have strictly $h_i(a,b)>a\lor b$ for $i\ge 1$ since $h_i(a,b)=a\lor b=h_0(a,b)$ would imply $g_i(a,b)=g_0(a,b)$ and $i=0$).

Now suppose that $k>l$. We consider two cases. First suppose that none of the $\alpha_j$ or $\beta_j$ lie in $[l+1,k]$. We apply Lemma~\ref{k>l} to the relation $e(a,b)$ by adding the subscripts $l+1,\dots,k$ to the $l$-tuples. It is evident that $b_{\alpha_1,\dots,\alpha_k}$ is incomparable to $b_{\beta_1,\dots,\beta_l,l+1,\dots,k}$. It is also clear that for any standard monomial $X_{\alpha'_1,\dots,\alpha'_k}X_{\beta'_1,\dots,\beta'_l}$ appearing in $e(a,b)$ we have $b_{\alpha'_1,\dots,\alpha'_k}>b_{\beta'_1,\dots,\beta'_l,l+1,\dots,k}$. We see that the expression provided by Lemma~\ref{k>l} is precisely $e(a,b_{\beta_1,\dots,\beta_l,l+1,\dots,k})$. Having considered the $k=l$ case, we know that for any standard monomial $X_{\alpha'_1,\dots,\alpha'_k}X_{\beta'_1,\dots,\beta'_l,l+1,\dots,k}$ appearing in $e(a,b_{\beta_1,\dots,\beta_l,l+1,\dots,k})$ we have $b_{\alpha'_1,\dots,\alpha'_k}>b_{\beta_1,\dots,\beta_l,l+1,\dots,k}$ and $b_{\alpha_1,\dots,\alpha_k}>b_{\beta'_1,\dots,\beta'_l,l+1,\dots,k}$. These relations imply, respectively, $b_{\alpha'_1,\dots,\alpha'_k}>b_{\beta_1,\dots,\beta_l}$ and $b_{\alpha_1,\dots,\alpha_k}>b_{\beta'_1,\dots,\beta'_l}$ which proves part (c) for this case.

Suppose we are not in the above case. Since the expression $e(a,b)$ is independent of $n$ for large enough $n$ we can assume that all $\alpha_j$ and all $\beta_j$ are no greater than $n-k+l$. Consider the permutation $\rho\in S_n$ such that $\rho(j)=j$ for $j\le l$, $\rho(j)=j+k-l$ for $j\in[l+1,n-k+l]$ and $\rho(j)=j-n+k$ for $j>n-k+l$. This permutation acts on $R$ via $\rho:X_{i_1,\dots,i_m}\mapsto X_{\rho(i_1),\dots,\rho(i_m)}$. The automorphism $\rho$ preserves the ideal $I$, since it corresponds to the action of a Weyl group element. We also view $\rho$ as map from $\mN$ to itself via $\rho(b_{\gamma_1,\dots,\gamma_m})=b_{\gamma'_1,\dots,\gamma'_m}$ where $\{\gamma'_1,\dots,\gamma'_m\}=\{\rho(\gamma_1),\dots,\rho(\gamma_m)\}$ (so that $\rho(X_c)=\pm X_{\rho(c)}$). We show that $\rho(e(a,b))=\pm e(\rho(a),\rho(b))$, in particular, $\rho(a)$ and $\rho(b)$ are incomparable. Indeed, this follows directly from the following claim.

\textbf{Claim.} Let $b_{\gamma_1,\dots,\gamma_k},b_{\delta_1,\dots,\delta_l}\in\mN$ be such that all $\gamma_j$ and $\delta_j$ are no greater than $n-k+l$. Then $b_{\gamma_1,\dots,\gamma_k}>b_{\delta_1,\dots,\delta_l}$ if and only if $\rho(b_{\gamma_1,\dots,\gamma_k})>\rho(b_{\delta_1,\dots,\delta_l})$.

To prove the claim note that if $\rho(b_{\delta_1,\dots,\delta_l})=b_{\delta'_1,\dots,\delta'_l}$, then for any $j$ either $\delta'_j=\delta_j=j$ or $\delta'_j=\delta_j+k-l>k$. Similarly, if $\rho(b_{\gamma_1,\dots,\gamma_k})=b_{\gamma'_1,\dots,\gamma'_k}$, then for any $j\le l$ either $\gamma'_j=\gamma_j=j$ or $\gamma'_j=\gamma_j+k-l>k$. Meanwhile, the tuple $(\gamma'_{l+1},\dots,\gamma'_k)$ is a permutation of the tuple $(\gamma_{l+1}+k-l,\dots,\gamma_k+k-l)$ with all elements greater than $k$. For any $\delta_j>l$ it is now evident that there exists a $\gamma_{j'}\ge\delta_j$ with $j'\ge j$ if and only if there exists a $\gamma'_{j''}\ge\delta'_j=\delta_j+k-l$ with $j''\ge j$.

We see that $m(\rho(a),\rho(b))=m(a,b)$ and $g_i(\rho(a),\rho(b))=\rho(g_i(a,b))$ and $h_i(\rho(a),\rho(b))=\rho(h_i(a,b))$. Moreover, due to our choice of $\rho$ the pair $\{\rho(a),\rho(b)\}$ is covered by the previous case. This provides $g_i(\rho(a),\rho(b))<\rho(a)$ and $h_i(\rho(a),\rho(b))>\rho(b)$. By our proved claim this implies $g_i(a,b)<a$ and $h_i(a,b)>b$.
\end{proof}

\section{The PBW-semistandard maximal cone}

In this section we finally describe the cone $C(I,I^m(\mN))$. As a first step we have analogs of Propositions~\ref{diamondfacets} and~\ref{ssytredundant}. 
\begin{proposition}\label{pbwdiamondfacets}
For every diamond pair $\{a,b\}\subset\mN$ the hyperplane $w_a+w_b=w_{a\lor b}+w_{a\odot b}$ contains a facet of $C(I,I^m(\mN))$.
\end{proposition}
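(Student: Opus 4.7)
The plan is to mimic the Minkowski-sum argument of Proposition~\ref{diamondfacets}, simply replacing the role of the Hibi ideal $I^h(\mM)$ by the generalized Hibi ideal $I^{V_o,V_c}(\mN)=I^{fflv}$ and the role of Theorem~\ref{hibimonfacets} by its generalization Theorem~\ref{genhibimonfacets}. All three inputs needed are already available: $I^{fflv}$ is an initial ideal of $I$ by the preceding section, $I^m(\mN)$ is an initial ideal of $I^{V_o,V_c}(\mN)$ by the argument in the proof of Theorem~\ref{genhibimonfacets}, and Theorem~\ref{genhibimonfacets} explicitly identifies the facets of $\overline{C(I^{V_o,V_c}(\mN),I^m(\mN))}$ with the diamond-pair hyperplanes in $\mN$.

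First I would apply Proposition~\ref{minksum} with $J'=I^{V_o,V_c}(\mN)$ and $J=I^m(\mN)$, which gives
\[
\overline{C(I^{V_o,V_c}(\mN),I^m(\mN))} \;=\; \overline{C(I,I^m(\mN))} \;+\; \bR\,C(I,I^{V_o,V_c}(\mN)).
\]
Next I would repeat the abstract Minkowski-sum observation from the proof of Proposition~\ref{diamondfacets}: since the second summand is a linear subspace, any facet $F$ of the left-hand cone decomposes as $F = H + \bR\,C(I,I^{V_o,V_c}(\mN))$ with $H$ a face of $\overline{C(I,I^m(\mN))}$, and we may take $H = F\cap \overline{C(I,I^m(\mN))}$. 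Because this intersection still contains $\overline{C(I,I^{V_o,V_c}(\mN))}$, we have $\dim H = \dim F$, so $H$ is itself a facet of $\overline{C(I,I^m(\mN))}$ and lies in the same supporting hyperplane as $F$.

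To conclude, I would invoke Theorem~\ref{genhibimonfacets} applied to the distributive lattice $\mN$ with the partition $V_o\sqcup V_c$: each diamond pair $\{a,b\}\subset\mN$ supplies a facet of $\overline{C(I^{V_o,V_c}(\mN),I^m(\mN))}$ cut out by $w_a+w_b=w_{a\lor b}+w_{a\odot b}$. Combining this with the previous paragraph produces a facet of $\overline{C(I,I^m(\mN))}$ inside that same hyperplane, which is exactly the claim. I do not expect a genuine obstacle in this proof; the only small points to verify are the hypotheses of Proposition~\ref{minksum} (that both cones involved are nonempty) and the fact that $a\odot b$ here denotes $a\odot_{V_o,V_c} b$ as in the statement of Theorem~\ref{genhibimonfacets}, both of which are immediate from the material already developed.
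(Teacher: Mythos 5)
Your proposal matches the paper's proof exactly: the paper states that Proposition~\ref{pbwdiamondfacets} "is deduced from Theorem~\ref{genhibimonfacets} the same way as Proposition~\ref{diamondfacets} from Theorem~\ref{hibimonfacets}," and you have correctly reproduced and spelled out that Minkowski-sum argument with $I^{V_o,V_c}(\mN)$ playing the role of $I^h(\mM)$.
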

\begin{proof}
The proposition is deduced from Theorem~\ref{genhibimonfacets} the same way as Proposition~\ref{diamondfacets} from Theorem~\ref{hibimonfacets}.
\end{proof}

\begin{proposition}\label{pbwredundant}
The cone $C(I,I^m(\mN))$ is composed of all $w$ satisfying 
\begin{equation}\label{pbwstrineq}
w_a+w_b<w_{g_i(a,b)}+w_{h_i(a,b)}
\end{equation}
for all incomparable pairs $\{a,b\}\subset\mN$ and all $0\le i\le m(a,b)$.
\end{proposition}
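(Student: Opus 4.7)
The plan is to mirror the proof of Proposition~\ref{ssytredundant} almost verbatim, substituting the PBW straightening relations $e(a,b)$ from Theorem~\ref{pbwstrlaws} for the classical ones $s(a,b)$. The two directions are both short; the only real content is hidden in Theorem~\ref{pbwstrlaws}(c), which ensures that the non-leading monomials in $e(a,b)$ are standard in $\mN$.

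For the forward direction, suppose $w\in C(I,I^m(\mN))$, so that $\initial_w I=I^m(\mN)$. For every incomparable pair $\{a,b\}\subset\mN$ the relation $e(a,b)$ lies in $I$, hence $\initial_w e(a,b)\in I^m(\mN)$. The monomials appearing in $e(a,b)$ other than $X_aX_b$ are precisely the $X_{g_i(a,b)}X_{h_i(a,b)}$ with $0\le i\le m(a,b)$; by parts (b) and (c) of Theorem~\ref{pbwstrlaws} we have $g_i(a,b)\le a\land b<a\lor b\le h_i(a,b)$ (respectively $g_0(a,b)=a\odot b$ is still $\le$-comparable to $h_0(a,b)=a\lor b$), so these are all standard monomials in $\mN$ and none of them lies in $I^m(\mN)$. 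The only remaining candidate for the initial part is $X_aX_b$ itself, which forces inequality~\eqref{pbwstrineq} to hold strictly for every $i$.

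Conversely, if all the inequalities~\eqref{pbwstrineq} hold, then $\initial_w e(a,b)=X_aX_b$ for each incomparable pair. Since by Theorem~\ref{pbwstrlaws}(a) the $e(a,b)$ generate $I$, the monomials $X_aX_b$ over all incomparable pairs $\{a,b\}\subset\mN$ lie in $\initial_w I$; as these monomials generate $I^m(\mN)$, we obtain $I^m(\mN)\subset\initial_w I$. To upgrade this inclusion to equality I invoke the standard fact (used already in the proof of Proposition~\ref{hibimon}) that an ideal and any of its initial ideals have identical Hilbert functions in each degree. Since $I^m(\mN)$ is known to be an initial ideal of $I$ (it is an initial ideal of $I^{fflv}$, which in turn is an initial ideal of $I$), the degreewise dimensions of $I$ and $I^m(\mN)$ coincide, and the same holds for $\initial_w I$. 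Hence the inclusion $I^m(\mN)\subset\initial_w I$ is an equality, so $w\in C(I,I^m(\mN))$.

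There is essentially no obstacle here — all the combinatorial and algebraic work has been done in Theorem~\ref{pbwstrlaws}. The only step worth flagging is ensuring that every non-leading monomial of $e(a,b)$ is genuinely standard (including the $i=0$ term involving $a\odot b$ rather than $a\land b$), which is exactly what Theorem~\ref{pbwstrlaws}(c) guarantees.
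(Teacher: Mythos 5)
Your proof is correct and mirrors the paper's argument, which simply says the result is proved in the same way as Proposition~\ref{ssytredundant} (compare monomial‑ideal membership of $\initial_w e(a,b)$ for one direction, and the Hilbert‑function argument for the other). The only cosmetic remark is that invoking Theorem~\ref{pbwstrlaws}(a) in the converse direction is unnecessary — one only needs $e(a,b)\in I$ to get $X_aX_b\in\initial_w I$, not that the $e(a,b)$ generate $I$.
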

\begin{proof}
This is proved the same way as Proposition~\ref{ssytredundant}.
\end{proof}

Next we need to establish several basic properties of the diamond pairs in $\mN$, similarly to the study in Section~\ref{semistandard}. Of course, in view of Theorem~\ref{M=L}, $\{a,b\}\subset\mN$ is a diamond pair if and only if $\{\tau^{-1}(a),\tau^{-1}(b)\}\subset\mM$ is a diamond pair. However, since the combinatorics of the map $\tau$ and of PBW-semistandard tableaux is quite complicated in general, we will often use a more graphic approach in terms of the poset $\mathcal P(\mN)$. The visualization of $\mathcal P(\mN)$ similar to the one given for $\mathcal P(\mM)$ in~\eqref{n=4}, see Example~\ref{n=7}.

As we have pointed out, $a\in\mN$ covers $b\in\mN$ if and only if the order ideal $\iota_\mN(a)$ is obtained from $\iota_\mN(b)$ by adding one element. Consequently, any diamond pair in $\{a,b\}\subset\mN$ is obtained by choosing a $c\in\mN$, choosing two distinct minimal elements $x$ and $y$ in the difference $\mathcal P(\mN)\backslash\iota_\mN(c)$ and defining $a$ and $b$ by $\iota_\mN(a)=\iota_\mN(c)\cup\{x\}$ and $\iota_\mN(b)=\iota_\mN(c)\cup\{y\}$. We then have $a\land b=c$ and $\iota_\mN(a\lor b)=\iota_\mN(c)\cup\{x,y\}$. 

Let us introduce the notation $x_{i,j}=\tau(y_{i,j})$ for the elements of $\mathcal P(\mN)$. We point out that Theorem~\ref{M=L} together with the definition of $\nu$ directly implies the following way of recovering $b\in\mN$ from the order ideal $\iota_\mN(b)\subset\mathcal P(\mN)$.
\begin{proposition}\label{tableaufromideal}
Let $b=b_{\alpha_1,\dots,\alpha_k}$. Then $k$ is the maximal $l$ such that $x_{l,l}\in\iota_\mN(b)$ or $k=1$ if no such $l$ exists. Furthermore, for every maximal element $x_{s,t}$ in $\iota_\mN(b)$ we have $\alpha_s=t$ and all remaining $\alpha_r$ are given by $\alpha_r=r$.
\end{proposition}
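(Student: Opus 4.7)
The plan is to pull the statement back to $\mM$ via the isomorphism $\tau$ of Theorem~\ref{M=L}. Set $a = \tau^{-1}(b) \in \mM$, so that $(\alpha_1, \ldots, \alpha_k) = \nu(a)$ by that theorem. Since $\tau$ restricts to an order isomorphism $\mathcal P(\mM) \to \mathcal P(\mN)$ sending $y_{i,j}$ to $x_{i,j}$, it identifies $\iota_\mM(a)$ with $\iota_\mN(b)$ together with their sets of maximal elements, so it suffices to read both claims off the construction of $\nu(a)$.

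For the first claim, the proposition immediately preceding the definition of $\nu$ says that the integer $\kappa(a) = k$ is the largest $l$ with $y_{l,l} \in \iota_\mM(a)$, defaulting to $1$ when $\iota_\mM(a) \cap U_o$ is empty; translating through $\tau$ yields the claimed characterization of $k$.

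For the rest, recall that $\nu_i(a) = i$ unless $i = r_j$ for some $j$, in which case $\nu_i(a) = s_j$; here $(y_{r_1,s_1},\ldots,y_{r_m,s_m})$ are the maximal elements of $\iota_\mM(a) \cap U_c$. Given a maximal $x_{s,t} \in \iota_\mN(b)$, the preimage $y_{s,t} = \tau^{-1}(x_{s,t})$ is maximal in $\iota_\mM(a)$, and one splits on whether $s < t$ or $s = t$. If $s < t$, then $y_{s,t}$ coincides with some $y_{r_j,s_j}$, giving $\alpha_s = s_j = t$. If $s = t$, the relation $y_{s,s} < y_{s+1,s+1}$ forces $s = k$, and maximality of $y_{k,k}$ further excludes $k$ from $\{r_1,\ldots,r_m\}$ (otherwise some $y_{k,s_j}$ would dominate it), so $\alpha_k = k = t$. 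Finally, any index $r \in [1,k]$ that is not the first coordinate of a maximal element of $\iota_\mN(b)$ lies outside $\{r_1,\ldots,r_m\}$, whence $\alpha_r = \nu_r(a) = r$. The argument is essentially bookkeeping; there is no genuine obstacle beyond being careful with the diagonal subcase $s=t$.
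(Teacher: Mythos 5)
Your proof is correct and is precisely the unpacking the paper has in mind: the paper omits a proof, stating only that the proposition follows directly from Theorem~\ref{M=L} and the definition of $\nu$, and you carry out exactly that verification by transporting the order ideal through $\tau$ and reading off the components of $\nu(a)$ from the maximal elements, with the diagonal case $s=t$ handled correctly.
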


\begin{example}\label{n=7}
Here is a visualization of the poset $\mathcal P(\mN)$ for $n=7$.
\begin{center}
\setcounter{MaxMatrixCols}{20}
\scalebox{0.9}{$\begin{matrix}
&\color{red}x_{2,2}&&\color{red}x_{3,3}&&\color{red}x_{4,4}&&\color{blue}x_{5,5}&&x_{6,6}&\\
\color{red}x_{1,2}&&\color{red}x_{2,3}&&\color{red}x_{3,4}&&\color{red}x_{4,5}&&x_{5,6}&&x_{6,7}\\
&\color{red}x_{1,3}&&\color{red}x_{2,4}&&\color{red}x_{3,5}&&\color{blue}x_{4,6}&&x_{5,7}&&\\
&&\color{red}x_{1,4}&&\color{red}x_{2,5}&&\color{red}x_{3,6}&&x_{4,7}&&&&\\
&&&\color{red}x_{1,5}&&\color{red}x_{2,6}&&x_{3,7}&&&&&&\\
&&&&\color{red}x_{1,6}&&\color{blue}x_{2,7}&&&&&&&&\\
&&&&&\color{red}x_{1,7}&&&&&&&&&&\\
\end{matrix}$}
\end{center}
\vspace{2mm}
The red elements compose an order ideal $\iota_\mN(c)$ and Proposition~\ref{tableaufromideal} shows that $c=b_{7,2,6,5}$. The blue elements are the minimal elements of $\mathcal P(\mN)\backslash\iota_\mN(c)$, therefore, $c$ is covered by precisely three elements corresponding to the order ideals $\iota(c)\cup\{x_{5,5}\}$, $\iota(c)\cup\{x_{4,6}\}$ and $\iota(c)\cup\{x_{2,7}\}$. Consequently, there exist three diamond pairs $\{a,b\}\subset\mN$ such that $a\land b=c$ (each composed of two elements that cover $c$).
\end{example}

We will say that a diamond pair $\{a,b\}\subset\mN$ is special if $\{\tau^{-1}(a),\tau^{-1}(b)\}$ is special. It turns out that special pairs have a simple characterization in terms of $\mathcal P(\mN)$ and its visualization.
\begin{proposition}\label{diagonalsquares}
For a diamond pair $\{a,b\}\subset\mN$ let $\iota_\mN(a)\backslash\iota_\mN(a\land b)=x_{s,t}$ and $\iota_\mN(b)\backslash\iota_\mN(a\land b)=x_{u,v}$ and assume (without loss of generality) that $u<s\le t<v$. Then $\{a,b\}$ is special if and only if $u=s-1$ and $v=t+1$.
\end{proposition}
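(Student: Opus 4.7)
The plan is to pull back to $\mM$ via the poset isomorphism $\tau$ from Theorem~\ref{M=L} and invoke Proposition~\ref{specialpairs}. Since $\tau$ is a lattice isomorphism sending $y_{s,t}\in\mathcal{P}(\mM)$ to $x_{s,t}\in\mathcal{P}(\mN)$, the pair $\{a,b\}\subset\mN$ is a special diamond pair if and only if $\{a',b'\}=\{\tau^{-1}(a),\tau^{-1}(b)\}\subset\mM$ is, and the minimal elements of $\mathcal{P}(\mN)\setminus\iota_\mN(a\wedge b)$ added to obtain $\iota_\mN(a)$ and $\iota_\mN(b)$ correspond under this identification to those added to obtain $\iota_\mM(a')$ and $\iota_\mM(b')$. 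So it suffices to prove the statement inside $\mM$.

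The crucial auxiliary step is a dictionary between covers in $\mM$ (as enumerated in Proposition~\ref{cover}) and added join-irreducibles. I would establish the following. For a cover $c'\lessdot a'$ in $\mM$ with $c'=a_{\alpha_1,\ldots,\alpha_k}$, the unique join-irreducible in $\iota_\mM(a')\setminus\iota_\mM(c')$ is $y_{\alpha_{r_0}-r_0+1,\,n-r_0+1}$ if the cover is of type~(1) at position $r_0$ (i.e.\ $a'$ is obtained by incrementing $\alpha_{r_0}$), and is the diagonal element $y_{n-k+1,\,n-k+1}$ if the cover is of type~(2) (i.e.\ $\alpha_k=n$ and $a'$ is obtained by deleting $\alpha_k$). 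Both identifications follow from comparing the explicit entries of $y_{r,s}$ from Proposition~\ref{irreducible} with those of $a'$ and $c'$: $y_{r,s}\not\le c'$ forces either $r=n-k+1$ (in the type~(2) case) or $r=\alpha_{r_0}-r_0+1,\,s=n-r_0+1$ (in the type~(1) case), with larger values of $s$ ruled out using the gap condition $\alpha_{r_0+1}\geq\alpha_{r_0}+2$ that any valid type~(1) cover satisfies.

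With the dictionary in hand, the proof splits along the two cases of Proposition~\ref{diamondpairs}. In case~(1), both $a'\wedge b'\lessdot a'$ and $a'\wedge b'\lessdot b'$ are of type~(1), yielding added elements $y_{j_{r_2}-r_2+1,\,n-r_2+1}$ and $y_{j_{r_1}-r_1,\,n-r_1+1}$ respectively. Assigning the labels $(s,t)$ and $(u,v)$ so that $t<v$ (i.e.\ $x_{s,t}$ is the one with the smaller second index), the condition $u=s-1,\,v=t+1$ translates directly into $j_{r_2}-j_{r_1}=r_2-r_1$ and $r_2-r_1=1$, i.e.\ $r_1=r_2-1$ and $j_{r_1}=j_{r_2}-1$, which is exactly Proposition~\ref{specialpairs}(1). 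In case~(2), the cover to $a'$ is of type~(1) at $r_1$ and the cover to $b'$ is of type~(2), yielding $y_{j_{r_1}-r_1+1,\,n-r_1+1}$ and $y_{n-k+1,\,n-k+1}$; the diagonal element plays the role of $x_{s,t}$, and $u=s-1,\,v=t+1$ becomes $r_1=k-1$ and $j_{r_1}=n-2$, matching Proposition~\ref{specialpairs}(2).

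The main obstacle is proving the dictionary, specifically the case analysis on whether a position $j$ is below or above the ``jump'' index $n-s+1$ in the explicit form of $y_{r,s}$, combined with ruling out larger values of $s$ via the cover's gap constraint. A minor but necessary check is that the strict inequality $u<s$ always holds: in case~(1) this is immediate from $j_{r_1}<j_{r_2}$ together with $r_1<r_2$, while in case~(2) it follows from $j_l<n$, which is forced because otherwise $i_l=j_l=n=i_k$ would contradict the strict increase of the $i_r$.
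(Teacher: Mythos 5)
Your proposal is correct and follows essentially the same path as the paper's proof: reduce to $\mM$ via $\tau$, compute the coordinates $(s,t)$ and $(u,v)$ of the two join-irreducibles being added in terms of the parameters of Proposition~\ref{diamondpairs}, and match against the characterization of Proposition~\ref{specialpairs}. The paper carries out the cover/join-irreducible computation inline rather than isolating it as a standalone "dictionary" lemma, but the formulas you state (e.g.\ $y_{\alpha_{r_0}-r_0+1,\,n-r_0+1}$ for a type~(1) cover and $y_{n-k+1,\,n-k+1}$ for a type~(2) cover) coincide with those the paper derives, and the subsequent translation of $u=s-1$, $v=t+1$ into the conditions of Proposition~\ref{specialpairs} is identical.
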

\begin{proof}
Let $c=\tau^{-1}(a)$ and $d=\tau^{-1}(b)$. Note that $\iota_\mM(c)\backslash\iota_\mM(c\land d)=y_{s,t}$ and $\iota_\mM(d)\backslash\iota_\mM(c\land d)=y_{u,v}$. It suffices to show that $\{c,d\}$ is special if and only if $u=s-1$ and $v=t+1$.

Let $c\land d=a_{i_1,\dots,i_k}$. Now, if the diamond pair $\{c,d\}$ is of the type considered in possibility (1) in Proposition~\ref{diamondpairs}, then $c=a_{i'_1,\dots,i'_k}$ and $d=a_{i''_1,\dots,i''_k}$ where $i'_r=i_r$ for all $r$ except some value $r_2$ such that $i'_{r_2}=i_{r_2}+1$ and $i''_r=i_r$ for all $r$ except some value $r_1<r_2$ such that $i''_{r_1}=i_{r_1}+1$. By definition, $y_{s,t}$ is the only $y\in\mathcal P(\mM)$ such that $y\le c$ and $y\not\le c\land d$. From Proposition~\ref{irreducible} one then sees that $y_{s,t}=a_{j_1,\dots,j_{n-i_{r_2}+r_2-1}}$ where $j_r=r$ for $r<r_2$ and $j_r=r+i_{r_2}-r_2+1$ for $r\ge r_2$, in particular, $j_{r_2}=i_{r_2}+1=i'_{r_2}$ and $j_{n-i_{r_2}+r_2-1}=n$. We deduce that $s=i_{r_2}-r_2+1$ and $t=n-r_2+1$ (note that $s<t$). Similarly, we have $u=i_{r_1}-r_1+1$ and $v=n-r_1+1$. By Proposition~\ref{specialpairs} the pair $\{c,d\}$ is special if and only if $r_1=r_2-1$ (that is $v=t+1$) and $i_{r_2}=i_{r_1}+2$ (that is $u=s-1$).

If $\{c,d\}$ is of the type considered in possibility (2) in Proposition~\ref{diamondpairs}, then $i_k=n$ and $d=a_{i_1,\dots,i_{k-1}}$ while $c=a_{i'_1,\dots,i'_k}$ where $i'_r=i_r$ for all $r$ except some value $r_1$ such that $i'_{r_1}=i_{r_1}+1$. We have $y_{s,t}=a_{1,\dots,k-1}$ so that $s=t=n-k+1$. Also, as above, we deduce that $u=i_{r_1}-r_1+1$ and $v=n-r_1+1$. In this case $\{c,d\}$ is special if and only if $r_1=k-1$ and $i_{r_1}=n-2$ which means that $u=n-k$ and $v=n-k+2$.
\end{proof}
In terms of our visualization we see that a diamond pair $\{a,b\}\subset\mN$ is special if and only if one of the elements $\iota_\mN(a)\backslash\iota_\mN(a\land b)$ and $\iota_\mN(b)\backslash\iota_\mN(a\land b)$ is situated immediately above the other: it lies on the same vertical line but two rows higher. For instance, of the three diamond pairs obtained in Example~\ref{n=7} the only special one consists of $\iota_\mN^{-1}(\iota_\mN(c)\cup\{x_{5,5}\})$ and $\iota_\mN^{-1}(\iota_\mN(c)\cup\{x_{4,6}\})$. 
\begin{proposition}\label{p1q1}
Let $\{c,d\}\subset\mM$ be a special diamond pair and let $\iota_\mM(c)\backslash\iota_\mM(c\land d)=\{y_{s,t}\}$ and $\iota_\mM(d)\backslash\iota_\mM(c\land d)=\{y_{s-1,t+1}\}$. Then $\iota_\mM(c\land d)\backslash\iota_\mM(p_1(c,d))=\{y_{s-1,t}\}$ and $\iota_\mM(q_1(c,d))\backslash\iota_\mM(c\lor d)=\{y_{s,t+1}\}$.
\end{proposition}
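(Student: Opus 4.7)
The plan is to argue directly from the explicit descriptions of all the ingredients, splitting into the two possibilities of Proposition~\ref{specialpairs} and exploiting the fact that the assignment of join-irreducibles to covering moves was essentially computed inside the proof of Proposition~\ref{diagonalsquares}. Concretely, I will first extract from that proof the following elementary rule, which I'll use repeatedly: if $a'$ covers $a''$ in $\mathcal M$ by case (1) of Proposition~\ref{cover} (incrementing position $r$ from value $v$ to $v+1$), then $\iota_\mathcal M(a')\setminus \iota_\mathcal M(a'')=\{y_{v-r+1,n-r+1}\}$; and if it covers by case (2) (appending $n$ at the end of a length-$(k-1)$ tuple, i.e.\ $a''$ has length $k$ ending in $n$), then the new join-irreducible is $y_{n-k+1,n-k+1}$. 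Both statements are read off from the characterisation of $y_{r,s}$ in Proposition~\ref{irreducible} exactly as in the proof of Proposition~\ref{diagonalsquares}.

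Next I would handle possibility (1) of Proposition~\ref{specialpairs}. Writing $c\land d=a_{m_1,\dots,m_k}$, the special condition $r_1=r_2-1$ together with Proposition~\ref{diamondpairs} forces $m_{r_2}=m_{r_1}+2$, and then Proposition~\ref{strrels}(1) (its last clause being vacuous when $r_1=r_2-1$) gives the tuples of $p_1(c,d)$ and $q_1(c,d)$ explicitly. A direct comparison shows that $p_1(c,d)$ agrees with $c\land d$ except at position $r_2$, where its entry is $m_{r_2}-1$ instead of $m_{r_2}$; and $q_1(c,d)$ agrees with $c\lor d$ except at position $r_1$, where it has entry $m_{r_2}$ instead of $m_{r_1}+1=m_{r_2}-1$. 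Thus both moves are of type (1) in Proposition~\ref{cover}. Applying the rule from the first paragraph and recalling from the proof of Proposition~\ref{diagonalsquares} that $(s,t)=(m_{r_2}-r_2+1,\,n-r_2+1)$ in this possibility, the two new join-irreducibles come out to $y_{m_{r_2}-r_2,\,n-r_2+1}=y_{s-1,t}$ and $y_{m_{r_2}-r_2+1,\,n-r_2+2}=y_{s,t+1}$, as required.

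For possibility (2) of Proposition~\ref{specialpairs} the situation is even more rigid. Write $c=a_{p_1,\dots,p_{l-1},n-2}$ and $d=a_{p_1,\dots,p_{l-1},n-1,n}$, so that $c\land d=a_{p_1,\dots,p_{l-1},n-2,n}$ and $c\lor d=a_{p_1,\dots,p_{l-1},n-1}$. (Note that here $c$ is the shorter element; this is dictated by the WLOG convention $u<s\le t<v$ of Proposition~\ref{diagonalsquares}, and one checks that it is consistent with the formula $s=t=n-l$ coming from $c$ covering $c\land d$ by type (2).) Proposition~\ref{strrels}(2) then yields $p_1(c,d)=a_{p_1,\dots,p_{l-1},n-2,n-1}$ and $q_1(c,d)=a_{p_1,\dots,p_{l-1},n}$. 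The first differs from $c\land d$ at position $k=l+1$ (value $n-1$ vs.\ $n$), so the rule above produces $y_{n-l-1,n-l}=y_{s-1,t}$. The second differs from $c\lor d$ at position $l$ (value $n$ vs.\ $n-1$), giving $y_{n-l,n-l+1}=y_{s,t+1}$.

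The only real subtlety, and the place to be careful, is bookkeeping: Propositions~\ref{diamondpairs},~\ref{strrels},~\ref{specialpairs} and the proof of Proposition~\ref{diagonalsquares} each use the symbols $a,b,c,d,i_r,j_r$ with slightly different meanings, so one must translate between "$c\land d=a_{i_1,\dots,i_k}$" (the Proposition~\ref{diagonalsquares} convention, which matches my $m_r$) and "$a=a_{i_1,\dots,i_k}$" (the Proposition~\ref{strrels} convention) before invoking the formulas. Once this translation is made consistently, each of the two possibilities reduces to a one-line check of which position is modified and to which value, so the remainder is essentially bookkeeping rather than a genuine difficulty.
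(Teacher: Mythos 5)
Your proof is correct, and it takes a genuinely different route from the paper's. The paper argues abstractly: it observes that $y_{s-1,t}$ is maximal in $\iota_\mM(c\land d)$, supposes for contradiction that $\iota_\mM(p_1(c,d))$ is obtained by removing some other maximal element, shows that this would make one of $y_{s,t}$, $y_{s-1,t+1}$ minimal in the complement, and therefore produces an order ideal strictly between $\iota_\mM(p_1(c,d))$ and $\iota_\mM(q_1(c,d))$ that equals none of the four mandated ones -- contradicting Corollary~\ref{only4} -- and then treats $q_1$ symmetrically. Your proof instead computes everything explicitly: it extracts a "which join-irreducible is added under which covering move" dictionary from Proposition~\ref{irreducible} (as already worked out in the proof of Proposition~\ref{diagonalsquares}), then plugs the explicit tuples for $p_1$ and $q_1$ from Proposition~\ref{strrels} (simplified by the constraint $r_1=r_2-1$, resp.\ $r_1=l$) into that dictionary. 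I checked the bookkeeping in both possibilities (including the translation between the $a,b,i_r,j_r$ of Propositions~\ref{diamondpairs}/\ref{strrels}/\ref{specialpairs} and the $c,d,m_r$ of Proposition~\ref{diagonalsquares}, and the subtlety that in possibility (2) your $c$ must be the shorter element so that $s=t$) and the arithmetic comes out right. The trade-off is clear: the paper's proof is shorter and more conceptual but leans on Corollary~\ref{only4}; yours is longer and purely mechanical but self-contained at the level of explicit formulas, and as a bonus it re-derives the content of Proposition~\ref{pbwspecialpairs}'s input data along the way. One nit: "its last clause being vacuous" is imprecise -- two clauses in Proposition~\ref{strrels}(1) become vacuous when $r_1=r_2-1$, namely the cyclic-shift clauses in the descriptions of both $p_1$ and $q_1$ -- but this does not affect the argument.
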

\begin{proof}
Below is the corresponding fragment of $\mathcal P(\mM)$ where $J=\iota_\mM(c\land d)$ consists of the elements to the left of the dashed line. Note that $\iota_\mM(c)$ and $\iota_\mM(d)$ are obtained from $J$ by adding $y_{s,t}$ and $y_{s-1,t+1}$ respectively. We are to show that $\iota_\mM(p_1(c,d))$ is obtained from $J$ by removing $y_{s-1,t}$ and that $\iota_\mM(q_1(c,d))$ is obtained from $J$ by adding $y_{s,t}$, $y_{s-1,t+1}$ and $y_{s,t+1}$. 

\vspace{2mm}
\begin{center}
\scalebox{0.9}{
\begin{picture}(100,100)
\put(25,100){$y_{s,t}$}
\put(0,80){$y_{s-1,t}$}
\put(50,80){$y_{s,t+1}$}
\put(25,60){$y_{s-1,t+1}$}
\multiput(17.5,102)(5,4){2}{\line(5,4){2.5}}
\multiput(15,100)(5,-4){5}{\line(5,-4){2.5}}
\multiput(40,80)(-5,-4){5}{\line(-5,-4){2.5}}
\multiput(15,60)(5,-4){2}{\line(5,-4){2.5}}
\end{picture}
}
\end{center}
\vspace{-1.9cm}

Since $\iota_\mM(c\land d)$ does not contain $y_{s,t}$ and $y_{s-1,t+1}$, the element $y_{s-1,t}$ is maximal in $\iota_\mM(c\land d)$. Now suppose that $\iota_\mM(c\land d)\backslash\iota_\mM(p_1(c,d))$ consists of some element $y\neq y_{s-1,t}$ with $y$ maximal in $\iota_\mM(c\land d)$. In this case at least one of $y_{s,t}$ and $y_{s-1,t+1}$ is minimal in $\mathcal P(\mM)\backslash\iota_\mM(p_1(c,d))$. Indeed, since $y$ is incomparable to $y_{s-1,t}$, there are two possibilities. If the horizontal row containing $y$ lies above that containing $y_{s-1,t}$, then $y_{s-1,t+1}$ is minimal in $\mathcal P(\mM)\backslash\iota_\mM(p_1(c,d))$, if it lies below, then $y_{s,t}$ is minimal. Let $y'\in\{y_{s,t},y_{s-1,t+1}\}$ be minimal in the complement. Then $\iota_\mM(p_1(c,d))\cup\{y'\}$ is an order ideal that is strictly smaller than $\iota_\mM(c\lor d)$ and, therefore, $\iota_\mM(q_1(c,d))$. In other words, \[p_1(c,d)<\iota_\mM^{-1}(\iota_\mM(p_1(c,d))\cup\{y'\})<q_1(c,d)\] which contradicts Corollary~\ref{only4}. The description of $\iota_\mM(q_1(c,d))$ is obtained similarly by supposing that $\iota_\mM(q_1(c,d))\backslash\iota_\mM(c\lor d)$ consists of some element $y\neq y_{s,t+1}$.
\end{proof}
Let $\{a,b\}=\{\tau(c),\tau(d)\}$ be the single special diamond pair appearing in Example~\ref{n=7}. We see that $\iota_\mM(p_1(c,d))=\iota_\mM(c\land d)\backslash\{x_{4,5}\}$ and $\iota_\mM(q_1(c,d))=\iota_\mM(c\lor d)\cup\{x_{5,6}\}$. For special diamond pairs we will need an explicit description of the straightening relation.

\begin{proposition}\label{pbwspecialpairs}
Let $\{a,b\}\subset\mN$ be a special diamond pair with $a=\tau(c)$ and $b=\tau(d)$. Then $m(a,b)=1$ and, furthermore, $a\odot b=\tau(p_1(c,d))$, $g_1(a,b)=a\land b$ and $h_1(a,b)=\tau(q_1(c,d))$. 
\end{proposition}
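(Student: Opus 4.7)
The plan is to construct $e(a,b)$ directly from the classical Pl\"ucker relation of Lemma~\ref{classical} and then verify it matches the claimed form. The key inputs are Proposition~\ref{p1q1} (describing the order ideals of $p_1(c,d)$ and $q_1(c,d)$ relative to $c\land d$ and $c\lor d$), transported to $\mN$ via the lattice isomorphism $\tau$, together with Proposition~\ref{diagonalsquares} characterizing special diamond pairs.

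First I would settle the identity $a\odot b=\tau(p_1(c,d))$. By Proposition~\ref{p1q1} applied via $\tau$, $\iota_\mN(\tau(p_1(c,d)))$ is obtained from $\iota_\mN(a\land b)$ by removing $x_{s-1,t}$, where $x_{s,t}$ and $x_{s-1,t+1}$ are the two elements distinguishing $\iota_\mN(a)$ and $\iota_\mN(b)$ from $\iota_\mN(a\land b)$. On the other hand, $\iota_\mN(a\odot b)$ is the minimal order ideal containing the set $D$ from the proof of Proposition~\ref{star}. Computing $K_{V_o,V_c}$ of the three ideals $\iota_\mN(a),\iota_\mN(b),\iota_\mN(a\lor b)$ via Proposition~\ref{combdesc}, one sees that $D$ coincides with $K_{V_o,V_c}(\iota_\mN(a\land b))$ minus the image of $x_{s-1,t}$ and that the minimal ideal containing $D$ is precisely $\iota_\mN(a\land b)\setminus\{x_{s-1,t}\}$, which gives the claim.

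Next I would exhibit an explicit element of $I$ of the required form. Using Proposition~\ref{tableaufromideal}, read off the PBW-semistandard tuples for $a,b,a\odot b,a\lor b,a\land b,\tau(q_1(c,d))$ in each of the two sub-cases of Proposition~\ref{specialpairs} (pushed through $\tau$). Then apply Lemma~\ref{classical} to $X_aX_b$ with the exchange index $r$ chosen as in the straightening algorithm of the proof of Theorem~\ref{pbwstrlaws}(c), i.e.\ the smallest $r\le l$ with $\alpha_j<\beta_r$ for all $j\ge r$. The resulting sum has one term for each swap of $\beta_r$ with some $\alpha_s$. The plan is to show that for a special diamond pair exactly two such swaps produce a nonzero Pl\"ucker variable, the remaining swaps yielding a tuple with repeated subscripts thanks to the four-consecutive-integers structure in Proposition~\ref{specialpairs}(1) (and its analogue for (2)). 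After repairing signs via PBW-antisymmetry, the two surviving terms are identified as $X_{a\odot b}X_{a\lor b}$ and $X_{a\land b}X_{\tau(q_1(c,d))}$.

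The final step is to verify that these two monomials are standard in $\mN$, which follows from the chain $\tau(p_1(c,d))<a\land b<a,b<a\lor b<\tau(q_1(c,d))$ delivered by Proposition~\ref{p1q1}; this ensures $a\odot b\le a\lor b$ and $a\land b\le\tau(q_1(c,d))$. Uniqueness of the straightening relation, combined with Theorem~\ref{pbwstrlaws}(b) pinning down $g_0=a\odot b$, $h_0=a\lor b$, $c_0=1$, then identifies the constructed expression with $e(a,b)$ and yields $m(a,b)=1$, $g_1(a,b)=a\land b$, $h_1(a,b)=\tau(q_1(c,d))$. The hard part will be the term-by-term analysis of the Pl\"ucker expansion in the two sub-cases of Proposition~\ref{specialpairs}: isolating exactly which swaps produce a repeated subscript is where the special diamond condition (four consecutive integers, or equivalently the absence of any intermediate element strictly between $\tau(p_1(c,d))$ and $\tau(q_1(c,d))$ beyond $a,b,a\land b,a\lor b$, cf.\ Corollary~\ref{only4}) is essentially used.
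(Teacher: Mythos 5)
Your plan follows the paper's proof essentially step for step: transport Proposition~\ref{p1q1} through $\tau$, read off the PBW tuples via Proposition~\ref{tableaufromideal}, obtain the three-term expression from the classical Pl\"ucker relation of Lemma~\ref{classical} (which the paper does by reducing to the three positions where the tuples can differ, with the same sub-case analysis over the two possibilities of Proposition~\ref{specialpairs} and over whether $t'=t+1$ or $t'>t+1$), and conclude via uniqueness of the straightening relation together with Theorem~\ref{pbwstrlaws}(b). Your opening computation of $a\odot b=\tau(p_1(c,d))$ via the difference set $D$ of Proposition~\ref{star} is correct but ultimately redundant: once the three-term relation is identified with $e(a,b)$, Theorem~\ref{pbwstrlaws}(b) forces $(g_0,h_0)=(a\odot b,a\lor b)$, and of the two non-leading pairs only $(\tau(p_1(c,d)),a\lor b)$ has second coordinate $a\lor b$, so the identity drops out automatically.
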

\begin{proof}
Denote $a\land b=b_{\alpha_1,\dots,\alpha_k}$ and let $\iota_\mN(a)\backslash\iota_\mN(a\land b)=\{x_{s,t}\}$ and $\iota_\mN(b)\backslash\iota_\mN(a\land b)=\{x_{s-1,t+1}\}$. First suppose that $s>2$, the case $s=2$ will be considered at the end of the proof. The fact that both $x_{s,t}$ and $x_{s-1,t+1}$ are minimal in $\mathcal P(\mN)\backslash\iota_\mN(a\land b)$ means that $x_{s-1,t}$ is maximal in $\iota_\mN(a\land b)$ and there exists an element $x_{s-2,t'}$ with $t'>t$ which is also maximal in $\iota_\mN(a\land b)$. The latter element is obtained by considering $x_{s-2,t}\in\iota_\mN(a\land b)$ and then moving down and to the right until we reach a maximal element. 

\pagebreak
Below is the corresponding fragment of $\mathcal P(\mN)$ for the two cases we will be considering: when $t'>t+1$ (left) and $t'=t+1$ (right). The order ideal $J=\iota_\mN(a\land b)$ consists of the elements to the left of the dashed line. By Proposition~\ref{p1q1}, we obtain $\iota_\mN(\tau(p_1(c,d)))$ by removing $x_{s-1,t}$ from $J$. We also obtain $\iota_\mN(a)$ by adding $x_{s,t}$ to $J$ and $\iota_\mN(b)$ by adding $x_{s-1,t+1}$. By adding both $x_{s,t}$ and $x_{s-1,t+1}$ to $J$ we obtain $\iota_\mN(a\lor b)$ and when we also add $x_{s,t+1}$ we obtain $\iota_\mN(\tau(q_1(c,d)))$. We need to distinguish between the two cases because on the left $x_{s-1,t+1}$ is maximal in $\iota_\mN(b)$, $\iota_\mN(a\lor b)$ and $\iota_\mN(\tau(q_1(c,d)))$ and on the right it is not.

\vspace{5mm}
\begin{minipage}{.5\textwidth}
\begin{center}
\scalebox{0.9}{
\begin{picture}(100,100)
\put(25,100){$x_{s,t}$}
\put(0,80){$x_{s-1,t}$}
\put(50,80){$x_{s,t+1}$}
\put(25,60){$x_{s-1,t+1}$}
\put(0,40){$x_{s-2,t+1}$}
\put(25,20){$\ddots$}
\put(40,10){$x_{s-2,t'}$}
\multiput(17.5,102)(5,4){2}{\line(5,4){2.5}}
\multiput(15,100)(5,-4){5}{\line(5,-4){2.5}}
\multiput(40,80)(-5,-4){5}{\line(-5,-4){2.5}}
\multiput(15,60)(5,-4){12}{\line(5,-4){2.5}}
\multiput(75,12)(-5,-4){3}{\line(-5,-4){2.5}}
\end{picture}
}
\end{center}
\end{minipage}%
\begin{minipage}{.5\textwidth}
\begin{center}
\scalebox{0.9}{
\begin{picture}(100,100)
\put(25,100){$x_{s,t}$}
\put(0,80){$x_{s-1,t}$}
\put(50,80){$x_{s,t+1}$}
\put(25,60){$x_{s-1,t+1}$}
\put(-5,40){$x_{s-2,t+1}$}
\multiput(17.5,102)(5,4){2}{\line(5,4){2.5}}
\multiput(15,100)(5,-4){5}{\line(5,-4){2.5}}
\multiput(40,80)(-5,-4){5}{\line(-5,-4){2.5}}
\multiput(15,60)(5,-4){5}{\line(5,-4){2.5}}
\multiput(40,40)(-5,-4){3}{\line(-5,-4){2.5}}
\end{picture}
}
\end{center}
\end{minipage}
\vspace{1mm}

Now suppose that $\{c,d\}$ is of the type given by possibility (1) in Proposition~\ref{specialpairs}. As we have seen in the proof of Proposition~\ref{diagonalsquares}, this means that $s<t$, i.e.\ $x_{s,t}$ is not in the top row. In what follows we describe the elements $a\land b$, $\tau(p_1(c,d))$, $a$, $b$, $a\lor b$ and $\tau(q_1(c,d))$ by looking at the maximal elements in the corresponding order ideal and applying Proposition~\ref{tableaufromideal}. We see that $\alpha_{s-2}=t'$ and $\alpha_{s-1}=t$. Let $\tau(p_1(c,d))=b_{\alpha'_1,\dots,\alpha'_k}$, we have $\alpha'_r=\alpha_r$ unless $r=s-1$ and $\alpha'_{s-1}=s-1$. Denote $a=b_{\beta_1,\dots,\beta_k}$, then $\beta_r=\alpha_r$ unless $r=s$ or $r=s-1$ while $\beta_s=t$ and $\beta_{s-1}=s-1$. Denote $b=b_{\gamma_1,\dots,\gamma_k}$, then $\gamma_r=\alpha_r$ unless $r=s-1$ or $r=s-2$ and $\gamma_{s-1}=t+1$ while $\gamma_{s-2}$ depends on $t'$. If $t'=t+1$, then $\gamma_{s-2}=s-2$, if $t'>t+1$, then $\gamma_{s-2}=\alpha_{s-2}=t'$. Denote $a\lor b=b_{\delta_1,\dots,\delta_k}$, then $\delta_r=\gamma_r$ unless $r=s$  and $\delta_s=t$. Finally, denote $\tau(q_1(c,d))=b_{\delta'_1,\dots,\delta'_k}$, then $\delta'_r=\delta_r$ unless $r=s$ or $r=s-1$ while $\delta'_s=t+1$ and $\delta'_{s-1}=s-1$. We see that the expression 
\begin{equation}\label{pbwclassical}
X_{\beta_1,\dots,\beta_k}X_{\gamma_1,\dots,\gamma_k}-X_{\alpha'_1,\dots,\alpha'_k}X_{\delta_1,\dots,\delta_k}-X_{\alpha_1,\dots,\alpha_k}X_{\delta'_1,\dots,\delta'_k}
\end{equation}
is a classical Pl\"ucker relation given by Lemma~\ref{classical}. Indeed, since $\alpha_r=\alpha'_r=\beta_r=\gamma_r=\delta_r=\delta'_r$ unless $r\in\{s-2,s-1,s\}$, we may disregard all the subscripts in positions outside of $[s-1,s]$. Now, if $t'=t+1$, then~\eqref{pbwclassical} takes the form \[X_{t+1,s-1,t}X_{s-2,t+1,\alpha_s}-X_{t+1,s-1,\alpha_s}X_{s-2,t+1,t}-X_{t+1,t,\alpha_s}X_{s-2,s-1,t+1}\] which is seen to be a classical Pl\"ucker relation. If $t'>t+1$, then~\eqref{pbwclassical} takes the form \[X_{t',s-1,t}X_{t',t+1,\alpha_s}-X_{t',s-1,\alpha_s}X_{t',t+1,t}-X_{t',t,\alpha_s}X_{t',s-1,t+1}\] which is again a classical Pl\"ucker relation.

Suppose $\{c,d\}$ is of the type given by possibility (2) in Proposition~\ref{specialpairs}. This means that $s=t=k+1$. We have $\alpha_k=k+1$ and $\alpha_{k-1}=t'$. Similarly to the above, we denote $\tau(p_1(c,d))=b_{\alpha'_1,\dots,\alpha'_k}$, $a=b_{\beta_1,\dots,\beta_{k+1}}$, $b=b_{\gamma_1,\dots,\gamma_k}$, $a\lor b=b_{\delta_1,\dots,\delta_{k+1}}$ and $\tau(q_1(c,d))=b_{\delta'_1,\dots,\delta'_{k+1}}$ (whether the number of subscripts is $k$ or $k+1$ is determined using Proposition~\ref{tableaufromideal}). We have $\alpha_r=\alpha'_r=\beta_r=\gamma_r=\delta_r=\delta'_r$ for $r<k-1$. The remaining elements are defined as follows. We have $\alpha'_k=k$ and $\alpha'_{k-1}=t'$, we have $\beta_{k+1}=k+1$, $\beta_k=k$ and $\beta_{k-1}=t'$, we have $\gamma_k=k+2$ while $\gamma_{k-1}$ is equal to $k-1$ if $t'=t+1=k+2$ and to $t'$ if $t'>k+2$. We have $\delta_{k+1}=k+1$, $\delta_k=k+2$ and $\delta_{k-1}=\gamma_{k-1}$. Finally, $\delta'_{k+1}=k+2$, $\delta'_k=k$ and $\delta'_{k-1}=\gamma_{k-1}$. Again, we may disregard the subscripts in positions outside of $[k-1,k+1]$. If $t'=k+2$ we obtain the straightening relation \[X_{k+2,k,k+1}X_{k-1,k+2}-X_{k+2,k}X_{k-1,k+2,k+1}-X_{k+2,k+1}X_{k-1,k,k+2}.\] If $t'>k+2$ we obtain \[X_{t',k,k+1}X_{t',k+2}-X_{t',k}X_{t',k+2,k+1}-X_{t',k+1}X_{t',k,k+2}.\] Both of these expressions are seen to be classical Pl\"ucker relations.

In the case $s=2$ the elements $a\land b$, $\tau(p_1(c,d))$, $a$, $b$, $a\lor b$ and $\tau(q_1(c,d))$ are described similarly for both possibilities in Proposition~\ref{specialpairs}. The only difference is that we do not need to consider the $(s-2)$nd subscript in these descriptions, i.e.\ all subscripts coincide outside of positions $s-1$ and $s$. In particular, there is no value $t'$ in this case and we do not need to consider different cases depending on this value. 
\end{proof}
This is how the above proposition can be summed up as a subgraph in the Hasse diagram of $\mN$.
\begin{center}
\begin{tikzcd}[row sep=1mm, column sep=1mm]
&\tau(p_1(c,d))=h_1(a,b)&\\[4pt]
&a\lor b\arrow[u]&\\
 a\arrow[ru]&& b\arrow[lu]\\[3pt]
 &\arrow[lu] a\land b=g_1(a,b)\arrow[ru]&\\[4pt]
 &\tau(q_1(c,d))=a\odot b\arrow[u]&\\
\end{tikzcd}
\end{center}

Next, we will also need two facts concerning non-special diamond pairs.
\begin{proposition}\label{odotland}
If a diamond pair $\{a,b\}\subset\mN$ is not special, then $a\odot b=a\land b$.
\end{proposition}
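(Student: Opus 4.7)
The plan is to show directly that $\iota_\mN(a) \odot_{V_o,V_c} \iota_\mN(b) = \iota_\mN(a\land b)$ by computing the subset $D$ from Proposition~\ref{star} and showing it generates the order ideal $\iota_\mN(a\land b)$. Write $J = \iota_\mN(a\land b)$; the diamond structure gives $\iota_\mN(a) = J\cup\{x\}$, $\iota_\mN(b) = J\cup\{y\}$ and $\iota_\mN(a\lor b) = J\cup\{x,y\}$ where $x,y$ are two distinct minimal elements of $\mathcal P(\mN)\setminus J$. Since each is minimal in this complement, $x$ and $y$ are automatically incomparable. By Proposition~\ref{star}, $D\subset J$ and $\iota_\mN(a)\odot_{V_o,V_c}\iota_\mN(b)$ is the minimal order ideal containing $D$; thus the inclusion $\iota_\mN(a)\odot_{V_o,V_c}\iota_\mN(b)\subset J$ is automatic, and it suffices to show that every maximal element of $J$ lies in $D$.

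The set $D$ is determined pointwise by the identity~\eqref{indicator}. For $p\in V_o$, using the fact that $K_{V_o,V_c}(J')\cap V_o = J'\cap V_o$ one obtains $D\cap V_o = J\cap V_o$, independently of whether $x$ or $y$ happens to lie in $V_o$. For $p\in V_c$, Proposition~\ref{combdesc} identifies membership in $K_{V_o,V_c}(J')$ with maximality in $J'$; an elementary expansion (using that $x,y$ are incomparable, so for instance $x$ is maximal in $J\cup\{x,y\}$ iff $y$ does not cover $x$) gives
\[
\mathbf 1_D(p) = [p\text{ is maximal in }J]\bigl(1-[x\text{ covers }p]\cdot[y\text{ covers }p]\bigr)
\]
for $p\in V_c\cap J$, while $\mathbf 1_D(p) = 0$ for $p\in V_c\setminus J$. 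Hence $D\cap V_c$ consists exactly of those maximal elements of $J$ lying in $V_c$ that are \emph{not} simultaneously covered in $\mathcal P(\mN)$ by both $x$ and $y$.

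The key combinatorial input is then: if $\{a,b\}$ is not special, no element of $\mathcal P(\mN)$ is covered by both $x$ and $y$. Write $x = x_{s,t}$ and $y = x_{u,v}$ with $u<s\le t<v$, as in Proposition~\ref{diagonalsquares}; non-special means $(u,v)\neq(s-1,t+1)$. In $\mathcal P(\mN)$ the elements covered by $x_{s,t}$ lie in $\{x_{s-1,t},x_{s,t-1}\}$ and those covered by $x_{u,v}$ lie in $\{x_{u-1,v},x_{u,v-1}\}$. Matching indices pair by pair, the equality $x_{s-1,t}=x_{u,v-1}$ forces exactly $u=s-1$ and $v=t+1$ (the special configuration), while each of the three remaining possible identifications forces either $s=u$ or $s=u-1$, both contradicting $u<s$. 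So outside the special case these two sets of covers are disjoint, every maximal element of $J$ lies in $D$, and the minimal order ideal containing $D$ is $J$ itself. This yields $a\odot b = a\land b$, as required.

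The one place requiring some care is the index comparison above: one must check that the disjointness conclusion remains valid when some of the purported covers do not exist in $\mathcal P(\mN)$ (for instance $x_{s,t-1}$ does not exist when $s=t$, and $x_{u-1,v}$ does not exist when $u=1$). However, non-existence of a cover merely removes an element from the pool and cannot create a new coincidence, so the argument persists unchanged.
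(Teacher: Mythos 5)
Your proof is correct and follows essentially the same route as the paper's: both reduce the statement to the claim that, for a non-special diamond pair, no maximal element of $\iota_\mN(a\land b)$ is simultaneously covered by both $x_{s,t}$ and $x_{u,v}$, and both obtain this from the same index comparison (the paper phrases it via the covers $\{x_{\rho+1,\sigma},x_{\rho,\sigma+1}\}$ of a maximal $x_{\rho,\sigma}\in J$, while you phrase it dually via what $x_{s,t}$ and $x_{u,v}$ cover). You simply compute the set $D$ from~\eqref{indicator} more explicitly pointwise, where the paper draws the membership $p\in K_{V_o,V_c}(\iota_\mN(a\odot b))$ a bit more abstractly from the same identity.
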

\begin{proof}
We are to prove the inclusion $J=\iota_\mN(a\land b)\subset\iota_\mN(a\odot b)$ or, in other words, that every maximal element of $J=\iota_\mN(a)\cap\iota_\mN(b)$ is contained in $\iota_\mN(a\odot b)$. Now, if $x$ is maximal in $J$ and is also maximal in one of $\iota_\mN(a)$ and $\iota_\mN(b)$, then from equation~\eqref{indicator} and Proposition~\ref{star} we see that $x$ must lie in $K_{V_o,V_c}(\iota_\mN(a\odot b))\subset\iota_\mN(a\odot b)$. Indeed, if $x$ lies in both $K_{V_o,V_c}(\iota_\mN(a))$ and $K_{V_o,V_c}(\iota_\mN(b))$, then~\eqref{indicator} immediately implies that it also lies in $K_{V_o,V_c}(\iota_\mN(a\odot b))$. If it lies in just one of them, say $K_{V_o,V_c}(\iota_\mN(a))$, we apply Proposition~\ref{combdesc}. We see that $x$ may not lie in $V_o$, since $J\cap V_o=K_{V_o,V_c}(\iota_\mN(a))\cap K_{V_o,V_c}(\iota_\mN(b))\cap V_o$. This implies that $x$ is not maximal in $\iota_\mN(b)$ and, therefore, it does not lie in $K_{V_o,V_c}(\iota_\mN(a)\cup\iota_\mN(b))$. Now~\eqref{indicator} again shows that $x\in K_{V_o,V_c}(\iota_\mN(a\odot b))$. Thus it suffices to show that any maximal element of $J$ is also maximal in one of $\iota_\mN(a)$ and $\iota_\mN(b)$.

We have $\iota_\mN(a)=J\cup\{x_{s,t}\}$ and $\iota_\mN(b)=J\cup\{x_{u,v}\}$ where $u<s\le t<v$. The fact that $\{a,b\}$ is not special means that $s-u>1$ or $v-t>1$. If $x_{\rho,\sigma}$ is maximal in $J$, then the set $\{x_{\rho+1,\sigma},x_{\rho,\sigma+1}\}$ does not contain at least one of $x_{s,t}$ and $x_{u,v}$. If it does not contain the former, then $x_{\rho,\sigma}$ is maximal in $\iota_\mN(a)$, if the latter, then it is maximal in $\iota_\mN(b)$.
\end{proof}

\begin{proposition}
Let diamond pair $\{a,b\}\subset\mN$ be non-special and consider some $i\in[1,m(a,b)]$. Then there exists $c\notin\{a,b\}$ that is incomparable to at least one of $a$ and $b$ and such that $g_i(a,b)<c<h_i(a,b)$. 
\end{proposition}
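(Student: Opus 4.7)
The plan is to construct $c$ directly from its order-ideal representation. Writing $J = \iota_\mN(a\land b)$, $\iota_\mN(a) = J \cup \{x\}$ with $x = x_{s,t}$, and $\iota_\mN(b) = J \cup \{y\}$ with $y = x_{u,v}$, we may assume without loss of generality that $u < s \le t < v$, and by Proposition~\ref{diagonalsquares} non-specialness amounts to $(u,v) \neq (s-1, t+1)$. My plan is to take $c = \iota_\mN^{-1}(J \cup \{x, z_0\})$ or $c = \iota_\mN^{-1}(J \cup \{y, z_0\})$ for a suitable element $z_0 \in \iota_\mN(h_i(a,b)) \setminus \iota_\mN(a \lor b)$.

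Since $i \ge 1$, Theorem~\ref{pbwstrlaws}(c) gives $h_i(a,b) > a \lor b$, so $\iota_\mN(h_i(a,b)) \setminus \iota_\mN(a \lor b)$ is non-empty. Picking $z'$ in this set and then choosing $z_0 \le z'$ minimal in $\mathcal P(\mN) \setminus \iota_\mN(a \lor b)$, one quickly checks that $z_0$ is globally minimal in $\mathcal P(\mN) \setminus \iota_\mN(a \lor b)$ and still lies in $\iota_\mN(h_i(a,b))$. Assuming $y \not\le z_0$, all strict predecessors of $z_0$ in $\mathcal P(\mN)$ lie in $J \cup \{x\}$ (such predecessors must be in $\iota_\mN(a \lor b) = J \cup \{x,y\}$ by minimality of $z_0$), making $J \cup \{x, z_0\}$ an order ideal; let $c$ be its $\iota_\mN$-preimage. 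Straightforward checks against the order ideals then give $c \ne a, b$ (the ideal of $c$ has two more elements than $J$), $c$ is incomparable to $b$ (since $y \in \iota_\mN(b) \setminus \iota_\mN(c)$ and $x \in \iota_\mN(c) \setminus \iota_\mN(b)$), $g_i(a,b) \le a \land b < c$, and $c < h_i(a,b)$ (since $y \in \iota_\mN(h_i(a,b)) \setminus \iota_\mN(c)$). The case $x \not\le z_0$ is symmetric and produces a $c$ incomparable to $a$.

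The main obstacle is establishing that non-specialness of $\{a,b\}$ forces $x \not\le z_0$ or $y \not\le z_0$ for the $z_0$ chosen above. Suppose the contrary: both $x \le z_0$ and $y \le z_0$, hence strictly since $z_0 \notin \iota_\mN(a \lor b) \ni x, y$. Because $\mathcal P(\mN)$ carries the componentwise order on pairs of indices, we get $z_0 \ge x_{s,v}$, and one verifies that $x_{s,v}$ lies in $\mathcal P(\mN) \setminus \iota_\mN(a \lor b)$ — the key point being that $x < x_{s,v}$ together with $x \notin J$ excludes $x_{s,v}$ from $J$. Minimality of $z_0$ then forces $z_0 = x_{s,v}$, and its two immediate predecessors $x_{s-1,v}$ and $x_{s,v-1}$ must both lie in $J \cup \{x, y\}$. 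Since $y \le x_{s-1,v}$ (with equality iff $u = s-1$) and $y \notin J$, the inclusion $x_{s-1,v} \in J \cup \{x,y\}$ forces $x_{s-1,v} = y$, i.e., $u = s-1$; symmetrically $v = t+1$. But $(u,v) = (s-1,t+1)$ makes $\{a,b\}$ special by Proposition~\ref{diagonalsquares}, a contradiction that completes the argument.
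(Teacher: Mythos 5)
Your proof is correct, but it takes a genuinely different route from the paper's. The paper works ``from below'': it sets $J=\iota_\mN(g_i(a,b))$, uses Proposition~\ref{odotland} to get $J\subsetneq\iota_\mN(a\land b)$, and builds $J_1=J\cup\{z\le x_{s,t}\}$ and $J_2=J\cup\{z\le x_{u,v}\}$, producing a candidate $c$ with $g_i(a,b)<c<a\lor b$; the crux is to show that $J_1=\iota_\mN(a)$ and $J_2=\iota_\mN(b)$ cannot both hold, which forces the troublesome corner $x_{u,t}$ to be maximal in $\iota_\mN(a\land b)$ and contradicts non-specialness via the two covering elements $x_{s-1,t}$ and $x_{u,v-1}$. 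You instead work ``from above'': you exploit $h_i(a,b)>a\lor b$ (Theorem~\ref{pbwstrlaws}(c)) to pick a minimal $z_0\in\mathcal P(\mN)\setminus\iota_\mN(a\lor b)$ lying in $\iota_\mN(h_i(a,b))$, and adjoin $z_0$ to $\iota_\mN(a)$ or $\iota_\mN(b)$, producing a $c$ with $a<c<h_i(a,b)$ (or $b<c<h_i(a,b)$). Your crux is to show $z_0$ cannot dominate both $x_{s,t}$ and $x_{u,v}$, which forces $z_0=x_{s,v}$ and then reads off $(u,v)=(s-1,t+1)$ from the predecessors $x_{s-1,v}$ and $x_{s,v-1}$ — the same diamond-corner contradiction, just located at the opposite corner of the rectangle. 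Both arguments are elementary grid manipulations in $\mathcal P(\mN)$; yours avoids invoking Proposition~\ref{odotland} at the cost of leaning on part (c) of Theorem~\ref{pbwstrlaws} for $h_i$, which the paper also needs for $g_i$, so neither version saves a prerequisite. The two are about equally long and symmetric in spirit.
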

\begin{proof}
Let $\iota_\mN(a)=\iota_\mN(a\land b)\cup\{x_{s,t}\}$ and $\iota_\mN(b)=\iota_\mN(a\land b)\cup\{x_{u,v}\}$ where $u<s\le t<v$. Denote $J=\tau(g_i(a,b))$. Let $J_1$ be the union of $J$ and the set of all $x\le x_{s,t}$ and let $J_2$ be the union of $J$ and the set of all $x\le x_{u,v}$. Evidently, $J\subsetneq J_1$ and $J\subsetneq J_2$, since $J$ contains neither of $x_{s,t}$ and $x_{u,v}$. It is also evident that $J_1\subsetneq\iota_\mN(a\lor b)$ and $J_2\subsetneq\iota_\mN(a\lor b)$, since $x_{u,v}\notin J_1$ and $x_{s,t}\notin J_2$. Furthermore, since $x_{s,t}\in J_1$ and $x_{s,t}\notin\iota_\mN(b)$ while $x_{v,u}\notin J_1$ and $x_{v,u}\in\iota_\mN(b)$ we see that $\iota_\mN^{-1}(J_1)$ is incomparable to $b$. Similarly $\iota_\mN^{-1}(J_2)$ is incomparable to $a$. It remains to show that either $J_1\neq \iota_\mN(a)$ or $J_2\neq \iota_\mN(b)$, then we can take one of $\iota_\mN^{-1}(J_1)$ and $\iota_\mN^{-1}(J_2)$ as the desired $c$.

Note that $J\subsetneq\iota_\mN(a\land b)$, otherwise, in view of the previous proposition, we would have $g_i(a,b)=g_0(a,b)$. Let $K=\iota_\mN(a\land b)\backslash J$. Suppose that $J_1=\iota_\mN(a)$ and $J_2=\iota_\mN(b)$. This means that $K$ is contained in the set of $x\le x_{s,t}$ and is also contained in the set of $x\le x_{u,v}$. In other words, any $x\in K$ satisfies $x\le x_{u,t}$. However, the difference $K$ must contain at least one element that is maximal in $\iota_\mN(a\land b)$. Since $x_{u,t}\in\iota_\mN(a\land b)$, no $x\in K$ with $x\neq x_{u,t}$ can be maximal in $\iota_\mN(a\land b)$. Consequently, $x_{u,t}$ must be maximal in $\iota_\mN(a\land b)$. However, since $s-u>1$ or $v-t>1$, the elements $x_{s-1,t}$ and $x_{u,v-1}$ are distinct. Both $x_{s-1,t}\ge x_{u,t}$ and $x_{u,v-1}\ge x_{u,t}$ and, therefore, at least one of them is strictly greater than $x_{u,t}$ which contradicts $x_{u,t}$ being maximal in $\iota_\mN(a\land b)$.
\end{proof}
\begin{remark}
With some more effort one can, in fact, show that for a non-special diamond pair $\{a,b\}\subset M$ we also have $m(a,b)=1$ and give explicit descriptions of $g_1(a,b)$ and $h_1(a,b)$. However, we avoid going into more combinatorial detail than necessary to prove the main result below.
\end{remark}

We are ready to prove the minimal H-description of $C(I,I^m(\mN))$.
\begin{theorem}\label{mainpbw}
The cone $C(I,I^m(\mN))$ consists of those $w\subset\bR^\mN$ that satisfy \[w_a+w_b<w_{a\lor b}+w_{a\odot b}\] for every diamond pair $\{a,b\}\subset\mN$ and satisfy \[w_a+w_b<w_{g_1(a,b)}+w_{h_1(a,b)}\] for every special diamond pair $\{a,b\}\subset\mN$. This H-description is minimal.
\end{theorem}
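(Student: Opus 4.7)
The plan is to follow closely the structure of the proof of Theorem~\ref{maingt}. By Proposition~\ref{pbwredundant}, every facet of $\overline{C(I,I^m(\mN))}$ is cut out by an equation $w_a+w_b=w_{g_i(a,b)}+w_{h_i(a,b)}$ for some incomparable $\{a,b\}\subset\mN$ and some $0\le i\le m(a,b)$, and by Proposition~\ref{pbwdiamondfacets} every diamond pair already produces one such facet via $i=0$ (using $g_0=a\odot b$ and $h_0=a\lor b$ from Theorem~\ref{pbwstrlaws}(b)). Two tasks remain: (I) show that no other equation of this form contains a facet unless $\{a,b\}$ is a special diamond pair and $i=1$; (II) show that for every special diamond pair the inequality $w_a+w_b\le w_{g_1(a,b)}+w_{h_1(a,b)}$ is irredundant.

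For (I), I argue as in the proof of Theorem~\ref{maingt}. If $\{a,b\}$ is a special diamond pair then $i\ge 2$ cannot occur because $m(a,b)=1$ by Proposition~\ref{pbwspecialpairs}, so I need only handle the cases where $\{a,b\}$ is not a diamond pair (with $i=0$) or is a non-special diamond pair (with $i\ge 1$). In both cases I exhibit an element $c\notin\{a,b\}$ with $g_i(a,b)<c<h_i(a,b)$ that is incomparable to at least one of $a,b$. For $i=0$ with $\{a,b\}$ not a diamond pair, $a\lor b$ fails to cover one of $a,b$, and I pick $c$ strictly between this element and $a\lor b$: this $c$ is incomparable to the other of $a,b$ and lies above $a\odot b$. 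For $i\ge 1$ with $\{a,b\}$ a non-special diamond pair, the existence of $c$ is precisely the content of the last proposition before Theorem~\ref{mainpbw}. Having $c$, I select a point $u$ on the putative facet that violates only the inequality in question, set $\lambda=\deg(X_aX_bX_c)$ and $\mu=\wt(X_aX_bX_c)$, and exploit $\dim I^m(\mN)_{\lambda,\mu}=\dim(\initial_u I)_{\lambda,\mu}$. The monomial basis of $I^m(\mN)_{\lambda,\mu}$ embeds into the monomials of $(\initial_u I)_{\lambda,\mu}$ by the identity (every non-standard triple $\{p,q,r\}$ contains an incomparable pair different from $\{a,b\}$, using $c$ to reorder when $\{p,q\}=\{a,b\}$), but $X_{g_i(a,b)}X_cX_{h_i(a,b)}=\initial_u(e(a,b)X_c)$ lies in $(\initial_u I)_{\lambda,\mu}$ and outside the image, since $g_i(a,b)<c<h_i(a,b)$ makes $\{g_i(a,b),c,h_i(a,b)\}$ a chain and the monomial standard. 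This contradicts the equality of dimensions.

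For (II), for each special diamond pair $\{a,b\}\subset\mN$ I construct a point $v\in\bR^\mN$ satisfying $v_a+v_b>v_{g_1(a,b)}+v_{h_1(a,b)}$ while $v_c+v_d\le v_{c\odot d}+v_{c\lor d}$ for every diamond pair $\{c,d\}\ne\{a,b\}$ and $v_c+v_d\le v_{g_1(c,d)}+v_{h_1(c,d)}$ for every special diamond pair $\{c,d\}\ne\{a,b\}$. By Proposition~\ref{p1q1} pushed through the isomorphism $\tau$, the special diamond occupies the five grading levels $|a|-2,\dots,|a|+2$ with $a\odot b$ at the bottom and $h_1(a,b)$ at the top; I define $v_e=2^{||e|-|a||}$ outside this window and hand-pick values in $\{0,1,2\}$ inside, directly analogous to the construction in the proof of Theorem~\ref{maingt} with $a\odot b$ and $h_1(a,b)$ in the roles of $p_1(a,b)$ and $q_1(a,b)$, while $a\land b=g_1(a,b)$ keeps its role. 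The verification splits into cases on the position of $[|g_i(c,d)|,|h_i(c,d)|]$ relative to $[|a|-2,|a|+2]$; the one novelty relative to Theorem~\ref{maingt} is that a non-special $\{c,d\}$ with $i=0$ has $c\odot d=c\land d$ (Proposition~\ref{odotland}) and spans only two grading levels, whereas a special $\{c,d\}$ spans three at either $i=0$ or $i=1$, so the case list is slightly longer but structurally unchanged. I expect this case analysis, together with the careful verification that the values of $v$ dominate the gaps arising from $c\odot d$ versus $c\land d$ on the boundary levels $|a|\pm 1$, to be the main technical obstacle; part (I), by contrast, transcribes nearly verbatim from the SSYT argument once the existence lemma for $c$ is in hand.
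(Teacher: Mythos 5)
Your proposal is correct and follows essentially the same route as the paper. The one place where the paper is slightly slicker is part (II): rather than rebuilding the point $v$ from scratch in $\bR^\mN$, the paper sets $v_e=\hat v_{\tau^{-1}(e)}$ where $\hat v$ is the point already constructed in the proof of Theorem~\ref{maingt} for the corresponding special diamond pair $\{\tau^{-1}(a),\tau^{-1}(b)\}\subset\mM$; because $\tau$ is an order isomorphism and (Proposition~\ref{odotland}) $c\odot d=c\land d$ for every non-special diamond pair, the verification of~\eqref{pbwnonstrict} for all non-special pairs at $i=0$ is then literally cases 2--4 of the maingt proof, and only a fresh four-case check is needed when $\{c,d\}$ is itself special. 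Your version re-runs the whole case analysis in $\mN$; this works and produces the same point, but you would be re-proving several cases that already hold in $\mM$ verbatim.
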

\begin{proof}
The proof is similar to that of Theorem~\ref{maingt}. Let $\{a,b\}$ be an incomparable pair which is not a special diamond pair and consider $i\in[0,m(a,b)]$ such that $i>0$ if $\{a,b\}$ is a diamond pair. If the hyperplane $w_a+w_b=w_{g_i(a,b)}+w_{h_i(a,b)}$ contains a facet of $C(I,I^m(\mN))$, then there must exist a $u\in\bR^\mN$ such that \[u_a+u_b>u_{g_i(a,b)}+u_{h_i(a,b)}\] but \[u_c+u_d<u_{g_j(c,d)}+u_{h_j(c,d)}\] when $\{c,d\}\neq\{a,b\}$ or $j\neq i$. With the use of the last proposition we produce a $c$ which lies strictly between $g_i(a,b)$ and $h_i(a,b)$, is incomparable to at least one of $a$ and $b$ and is neither $a$ nor $b$. We then denote $\deg X_aX_bX_c=\la$ and $\wt X_aX_bX_c=\mu$ and show that the dimensions of the homogeneous components $\dim I_{\la,\mu}$ and $\dim (\initial_u I)_{\la,\mu}$ must be different, achieving a contradiction just as in the proof of Theorem~\ref{maingt}.

We have shown that every facet is given by one of the inequalities in the statement of the theorem, we are left to show that each of these inequalities does indeed provide a facet. In view of Proposition~\ref{pbwdiamondfacets}, it suffices to show that for a chosen special diamond pair $\{a,b\}\subset\mN$ there exists a point $v$ such that 
\begin{equation}\label{pbwreverse}
v_a+v_b> v_{g_1(a,b)}+v_{h_1(a,b)}
\end{equation}
but 
\begin{equation}\label{pbwnonstrict}
v_c+v_d\le v_{g_i(c,d)}+v_{h_i(c,d)}
\end{equation}
whenever $\{c,d\}\subset\mN$ is a diamond pair and $i=0$ or $\{c,d\}\neq\{a,b\}$ is a special diamond pair and $i=1$. Recall the point $v\in\bR^\mM$ which was defined in the proof of Theorem~\ref{maingt}. To avoid conflicting notations denote this latter point $\hat v$ and let $\{\tau^{-1}(a),\tau^{-1}(b)\}\subset\mM$ be the special diamond pair with respect to which $\hat v$ was defined (i.e\ the pair denoted $\{a,b\}$ in the proof of Theorem~\ref{maingt}). It turns out that we may simply set $v_c=\hat v_{\tau^{-1}(c)}$ for all $c\in\mN$.

Indeed, inequality~\eqref{pbwreverse} reduces to $2>1$. When $\{c,d\}=\{a,b\}$ and $i=0$ inequality~\eqref{pbwnonstrict} reduces to $2\ge 2$. Furthermore, inequality \eqref{pbwnonstrict} is evident whenever $|g_i(c,d)|<|a\odot b|$ or $|h_i(c,d)|>|h_1(a,b)|$. If $\{c,d\}$ is a non-special diamond pair and $i=0$, then, in view of Proposition~\ref{odotland}, the corresponding inequality is verified in cases 2--4 in the proof of Theorem~\ref{maingt}. We are left to consider four cases in all of which $\{c,d\}\neq\{a,b\}$ is a special diamond pair (here it should be helpful to consult the diagram in the proof of Theorem~\ref{maingt}).
\begin{enumerate}
\item We have $|c|=|d|=|a\land b|$ and $i=1$. Unless $g_i(c,d)=a\odot b$, we have $v_{g_i(c,d)}=2$ and \eqref{pbwnonstrict} follows. If $g_i(c,d)=a\odot b$, we must show that we do not have $v_{h_i(c,d)}=0$. However, $v_{h_i(c,d)}=0$ would imply $h_i(c,d)<h_1(a,b)$ and due to Corollary~\ref{only4} we would have $\{c,d\}\subset\{a,b,a\land b,a\lor b\}$\ which is impossible. 
\item We have $|c|=|d|=|a|$ and $i=0$. Just as in the previous case we have $v_{g_i(c,d)}=2$ unless $g_i(c,d)=a\odot b$. In the latter case we may not have $v_{h_i(c,d)}=0$, since that would imply $\{c,d\}=\{a,b\}$.
\item We have $|c|=|d|=|a|$ and $i=1$. Then $v_{h_i(c,d)}=2$ unless $h_i(c,d)=h_1(a,b)$. Suppose that $h_i(c,d)=h_1(a,b)$. Since $|g_i(c,d)|=|a\land b|$ and $v_{g_i(c,d)}=1$ we are left to show that we can't have $v_c=v_d=1$. Since $c,d\le h_1(a,b)$, the condition $v_c=v_d=1$ would imply that $c,d<a\lor b$, i.e.\ $c\lor d=a\lor b$. This means that for special diamond pairs $\{\tau^{-1}(a),\tau^{-1}(b)\}\subset\mM$ and $\{\tau^{-1}(c),\tau^{-1}(d)\}\subset\mM$ have $\tau^{-1}(a)\lor \tau^{-1}(b)=\tau^{-1}(c)\lor \tau^{-1}(d)$ and $q_1(\tau^{-1}(a),\tau^{-1}(b))=q_1(\tau^{-1}(c),\tau^{-1}(d))$. However, Proposition~\ref{specialpairs} shows that the values of $\lor$ and $q_1$ determine a special diamond pair in $\mM$ uniquely.
\item We have $|c|=|d|=|a\lor b|$ and $i=0$. Again $v_{h_i(c,d)}=2$ unless $h_i(c,d)=h_1(a,b)$. If $h_i(c,d)=h_1(a,b)$, then $c,d<h_1(a,b)$ and at least one of $v_c$ and $v_d$ is $0$ while $v_{g_i(c,d)}$=1. Inequality~\eqref{pbwnonstrict} follows.\qedhere
\end{enumerate}
\end{proof}

\begin{remark}
We see that the cones $C(I,I^m(\mM))$ and $C(I,I^m(\mN))$ have the same number of facets. However, it is unclear whether these cones are unimodularly or even combinatorially equivalent when $n\ge 4$. This could be an interesting question for investigation. 
\end{remark}

\section{A convex geometric interpretation}

The goal of this section is to show that the described cones $C(I,I^m(\mM))$ and $C(I,I^m(\mN))$ possess an interesting convex geometric property. This property provides a connection between our descriptions of these maximal cones and descriptions of the smaller cones $C(I,I^h(\mM))$ and $C(I,I^{V_o,V_c}(\mN))$ that were given in~\cite{M} and~\cite{fafefom}. Let us recall these results. 

First, we consider the ${n+1}\choose 2$-dimensional space $\Xi=\bR^{\{1\le s\le t\le n\}}$ where $z\in\Xi$ has coordinates $z_{s,t}$. We define a $n\choose 2$-dimensional relatively open cone $\mathcal K\subset\Xi$ composed of $z$ such that
\begin{enumerate}[label=(\roman*)]
\item for any $1\le s\le n$ we have $z_{s,s}=0$ and
\item for any pair $1\le s<t\le n-1$ we have \[z_{s,t}+z_{s+1,t+1}<z_{s,t+1}+z_{s+1,t}.\]
\end{enumerate}
It is easily verified (also see~\cite{M}) that $\mathcal K$ is the product of an $(n-1)$-dimensional real space and a ${n-1}\choose 2$-dimensional simplicial cone. Thus the ${n-1}\choose 2$-inequalities in (ii) provide the facets of $\overline{\mathcal K}$.   

\begin{theorem}[{\cite[Theorem 6.2]{M}}]\label{gttropical}
The cone $C(I,I^h(\mM))$ is the image $\sigma(\mathcal K\times\bR^{n-1})$ where the map $\sigma:\Xi\times\bR^{n-1}\to\bR^\mM$ is defined as follows. For $z\in\Xi$ and $c\in\bR^{n-1}$ the coordinate of $\sigma((z,c))$ corresponding to $a_{i_1,\ldots,i_k}$ with $i_1<\dots<i_k$ is equal to the sum $z_{1,i_1}+\dots+z_{k,i_k}+c_k$.
\end{theorem}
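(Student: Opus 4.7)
The plan is to prove the equality $C(I, I^h(\mM)) = \sigma(\mathcal K \times \bR^{n-1})$ by establishing both inclusions. Throughout, write $w = \sigma(z,c)$ for a given input.

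For the forward inclusion, the first observation is that for every incomparable pair $\{a,b\} \subset \mM$ one automatically has $w_a + w_b = w_{a \land b} + w_{a \lor b}$. Indeed, writing $a = a_{i_1,\ldots,i_k}$, $b = a_{j_1,\ldots,j_l}$ with $k \ge l$, the explicit formulas for meet and join in $\mM$ (recalled in Section~\ref{semistandard}) show that for each $r \le l$ the pair $\{i_r, j_r\}$ equals $\{\min(i_r, j_r), \max(i_r, j_r)\}$, while the entries $i_{l+1}, \ldots, i_k$ appear unchanged on both sides; so the $z$-contributions match row-by-row and the $c$-contributions cancel by the equality $k + l = |a \land b| + |a \lor b|$ in the size grading. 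In particular every Hibi generator $d(a,b) = X_a X_b - X_{a\land b} X_{a\lor b}$ is $w$-homogeneous. To upgrade this to $\initial_w I \supseteq I^h(\mM)$, I need $\initial_w s(a,b) = d(a,b)$ for every straightening relation, which is equivalent to showing that every higher term $X_{p_i(a,b)}X_{q_i(a,b)}$ (with $i \ge 1$) has strictly larger $w$-weight. By Theorem~\ref{strlaws}(c) the pair $(p_i, q_i)$ satisfies $p_i < a \land b$ and $q_i > a \lor b$, with the same multiset of subscripts as $\{a,b\}$; I would express the weight difference $(w_{p_i} + w_{q_i}) - (w_a + w_b)$ as an explicit nonnegative-integer combination of the primitive quantities $(z_{s,t+1} + z_{s+1,t}) - (z_{s,t} + z_{s+1,t+1})$, each strictly positive on $\mathcal K$ by condition (ii). Matching Hilbert functions then forces $\initial_w I = I^h(\mM)$.

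For the reverse inclusion, given $w \in C(I, I^h(\mM))$, I would reconstruct $(z,c)$ inductively in $k$. Using the lineality freedom (setting $z_{s,s} = 0$ and adjusting by a shift along $(1,\ldots,1)$), define $c_k$ from a chosen reference Plücker variable $X_{1,2,\ldots,k-1,t_0}$, and then read off $z_{k,t}$ from $w_{a_{1,\ldots,k-1,t}}$ minus the already-determined lower-row quantities. The crucial point is that the extracted value $z_{k,t}$ must be independent of the earlier indices one substitutes in place of $1, \ldots, k-1$; this compatibility translates exactly into the equalities $w_a + w_b = w_{a\land b} + w_{a\lor b}$ for appropriately chosen incomparable pairs, all of which are available because every $d(a,b)$ lies in $I^h = \initial_w I$. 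Having produced $(z,c)$, the strict inequalities cutting out $\mathcal K$ are read off by applying $\initial_w s(a,b) = d(a,b)$ to the distinguished incomparable pairs whose straightening is a single classical Plücker relation supported in rows $s, s+1$: this gives $w_a + w_b < w_{p_1(a,b)} + w_{q_1(a,b)}$, which under the formula defining $\sigma$ becomes precisely $z_{s,t} + z_{s+1,t+1} < z_{s,t+1} + z_{s+1,t}$.

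The main obstacle I anticipate is the combinatorial step in the forward direction: showing that \emph{every} higher-order term in $s(a,b)$ strictly exceeds $X_a X_b$ in $w$-weight, without relying on an explicit description of the straightening relations in general. My strategy for this obstacle is induction on the lattice distance $|q_i(a,b)| - |a \lor b|$: any pair $(p_i, q_i)$ with $p_i < a \land b$ and $q_i > a \lor b$ can be connected to $(a \land b, a \lor b)$ by a sequence of diamond moves, each of which contributes a single primitive difference $(z_{s,t+1} + z_{s+1,t}) - (z_{s,t} + z_{s+1,t+1})$; summing these strict positive increments yields the desired inequality and closes the argument.
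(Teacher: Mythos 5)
This statement is Theorem~\ref{gttropical}, which the paper does not prove: it is quoted from \cite[Theorem~6.2]{M} (a separate paper by the same author) and the only original content here is the routine reformulation recorded in the paragraph following the theorem statement, where the characterization of the slice $\{w_{a_1}=w_{a_{1,2}}=\dots=w_{a_{1,\dots,n-1}}=0\}$ as $\sigma(\mathcal K\times\{0\})$ is extended to the full cone $\sigma(\mathcal K\times\bR^{n-1})$ using $\deg$-homogeneity. So there is no internal proof against which to match your argument, and what you have written is a from-scratch attempt at the result of~\cite{M}.

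On its own terms your outline is structurally reasonable: the equality $w_a+w_b=w_{a\land b}+w_{a\lor b}$ for all incomparable pairs indeed holds identically on the image of $\sigma$, the Hilbert-function comparison to pass from $I^h\subseteq\initial_w I$ to equality is standard and used elsewhere in the paper, and the reverse inclusion by reconstructing $(z,c)$ from the rectangular Pl\"ucker weights $w_{a_{1,\dots,k-1,t}}$ and verifying consistency via the Hibi equalities is a sensible plan. But the part you yourself flag as the main obstacle is a genuine gap, not a routine verification. You assert that any pair $(p_i,q_i)$ with $p_i<a\land b$ and $q_i>a\lor b$ can be reached from $(a\land b,a\lor b)$ by a sequence of ``diamond moves'' each of which contributes a single primitive difference $(z_{s,t+1}+z_{s+1,t})-(z_{s,t}+z_{s+1,t+1})$. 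You never say what a ``move'' on a pair of columns is, why such moves exist at every step (the multiset of subscripts is fixed globally, not row by row, so a naive diamond step inside one column need not preserve the constraint), nor why the resulting decomposition has strictly positive coefficients rather than merely nonnegative ones --- the latter is essential since you need the strict inequality. Absent this, the forward inclusion is not established. Note also that proving the inequality only for the $(p_i,q_i)$ coming from Theorem~\ref{strlaws}(c) (where $p_i<a\land b$ and $q_i>a\lor b$ are guaranteed) would suffice, so you do not need the minimizer over all pairs with fixed multiset; but you still need a complete argument for those.

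Two smaller points. In the reverse direction the natural choice is $c_k=w_{a_{1,\dots,k}}$ and $z_{k,t}=w_{a_{1,\dots,k-1,t}}-c_k$, after which the claim to verify is that $w_{a_{i_1,\dots,i_k}}=\sum_r z_{r,i_r}+c_k$ for arbitrary $a_{i_1,\dots,i_k}$; you should spell out which Hibi equalities drive the induction (the subtlety is that an element $a'$ obtained from $a$ by lowering one interior subscript may be comparable to $a$, so the incomparable pair supplying the relation must be chosen with care). Finally, to conclude $z\in\mathcal K$ you invoke $\initial_w s(a,b)=d(a,b)$ for distinguished pairs; the right pairs here are the special diamond pairs of Section~\ref{semistandard}, and the computation that their straightening inequality reduces to the facet inequality of $\mathcal K$ is essentially the one carried out in the proof of Theorem~\ref{gtconvex} --- it would be worth citing that.
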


\begin{theorem}[{\cite[Theorem 7.3]{fafefom}}]\label{pbwtropical}
The cone $C(I,I^{V_o,V_c}(\mN))$ is the image $\rho(\mathcal K\times\bR^{n-1})$ where the map $\rho:\Xi\times\bR^{n-1}\to\bR^\mM$ is defined as follows. For $z\in\Xi$ and $c\in\bR^{n-1}$ the coordinate of $\rho((z,c))$ corresponding to $b_{\alpha_1,\ldots,\alpha_k}$ with $(\alpha_1,\dots,\alpha_k)$ a one-column PBW-semistandard tableau is equal to $-z_{1,i_1}-\dots-z_{k,i_k}+c_k$.
\end{theorem}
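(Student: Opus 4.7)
The plan is to treat $\rho$ as the tropicalization of the monomial parametrization $\psi: R\to\bC[\{z_{i,j}\},\{z_k\}]$ whose kernel is $I^{V_o,V_c}(\mN)=I^{fflv}$, and to exhibit $\overline{C(I,I^{fflv})}$ as the face of $\overline{C(I,I^m(\mN))}$ cut out by turning all diamond-pair inequalities of Theorem~\ref{mainpbw} into equalities while keeping the special-pair strict inequalities. (Note that by Theorem~\ref{gfans}\ref{IJ'J} this face really is $\overline{C(I,I^{fflv})}$, since both $C(I,I^{fflv})$ and $C(I^{fflv},I^m(\mN))$ are nonempty.)

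First I would prove the inclusion $C(I,I^{fflv})\subseteq\rho(\Xi\times\bR^{n-1})$. Because $I^{fflv}$ is prime and contains no monomial, for $w\in C(I,I^{fflv})$ and any binomial $X_m-X_{m'}\in I^{fflv}$ we have $\initial_w(X_m-X_{m'})=X_m-X_{m'}$, i.e.\ $(w,m)=(w,m')$. Hence $w$ is induced by some weights $\lambda_{i,j}$ on $z_{i,j}$ and $\mu_k$ on $z_k$ via $w_{b_{\alpha_1,\dots,\alpha_k}}=\mu_k+\sum_{j:\alpha_j>k}\lambda_{j,\alpha_j}$. Using that PBW-semistandardness forces $\alpha_j=j$ whenever $\alpha_j\le k$, the substitution $z_{i,j}:=-\lambda_{i,j}$ (for $i<j$), $z_{i,i}:=0$, $c_k:=\mu_k$ gives $w=\rho(z,c)$. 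Injectivity of $\rho$ on $\Xi\times\bR^{n-1}$ follows by reading off $c_k=w_{b_{1,\dots,k}}$ from the base tableau and then $z_{j_0,m}=c_k-w_b$ from the one-column tableau with a single $\alpha_{j_0}=m>k$, so the preimage $(z,c)$ is unique.

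Second, I would verify that this preimage satisfies the $\mathcal{K}$-inequalities (ii). By Proposition~\ref{pbwredundant}, $w\in C(I,I^{fflv})$ imposes $w_a+w_b<w_{g_i(a,b)}+w_{h_i(a,b)}$ for all incomparable $\{a,b\}$ and $i\ge 1$; once the toric equalities are built in, the same face-of-a-maximal-cone argument as in the proof of Theorem~\ref{mainpbw} reduces these to the strict inequalities for $i=1$ and special diamond pairs. Substituting $w=\rho(z,c)$ into $w_a+w_b-w_{g_1(a,b)}-w_{h_1(a,b)}$ with the explicit descriptions in Propositions~\ref{diagonalsquares} and~\ref{pbwspecialpairs}, the $c_k$ terms cancel (the multiset of column lengths on each side agrees), and all $z_{j,\alpha_j}$ coming from positions other than the $2\times 2$ diagonal square spanned by $x_{s-1,t},x_{s,t},x_{s-1,t+1},x_{s,t+1}$ in $\mathcal P(\mN)$ also cancel, leaving precisely $-(z_{s,t+1}+z_{s+1,t}-z_{s,t}-z_{s+1,t+1})<0$ for some $1\le s<t\le n-1$ (and the marginal case $s=t$ for possibility (2) of Proposition~\ref{specialpairs} collapses to the same family after reindexing with $z_{s,s}=0$). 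This identification is the technical heart of the argument and is where I expect the main obstacle: the careful bookkeeping of which $z_{j,\alpha_j}$'s appear in each of the six relevant elements $a,b,a\odot b, a\lor b, g_1(a,b), h_1(a,b)$, split by the two possibilities in Proposition~\ref{pbwspecialpairs} and the subcases $t'=t+1$ versus $t'>t+1$.

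For the reverse inclusion $\rho(\mathcal{K}\times\bR^{n-1})\subseteq C(I,I^{fflv})$, take any $w=\rho(z,c)$ with $z\in\mathcal{K}$. Since $\psi$ is a graded homomorphism when the $X$-variables are given weights $\rho(z,c)$ and the $z$-variables the weights $-z$ and $c$, every binomial in $\ker\psi=I^{fflv}$ is $w$-homogeneous; by Corollary~\ref{quadgens} this gives $I^{fflv}\subseteq\initial_w I$. Equality then follows from Proposition~\ref{pbwredundant} once we verify $w_a+w_b<w_{g_i(a,b)}+w_{h_i(a,b)}$ for all $i\ge 1$, which by the same reduction as above amounts to the strict inequalities defining $\mathcal{K}$, now assumed. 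Since $C(I,I^{fflv})$ is a relatively open polyhedral cone contained in $\rho(\Xi\times\bR^{n-1})$ and cut out there precisely by the $\mathcal{K}$-inequalities, the two sides agree and the theorem follows.
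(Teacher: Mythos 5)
The first thing to note is that the paper does not prove this statement at all: it is imported verbatim (up to normalization) from \cite{fafefom}, Theorem 7.3, and the only accompanying text is the remark explaining why adding the constants $c_k$ to the description given there does not change the initial ideal. So there is no proof in the paper to compare yours against; what you have written is an attempt to rederive the cited result from the paper's internal machinery. That is a legitimate enterprise and your overall shape is right: the row-space/lineality argument for the toric ideal $I^{fflv}=\ker\psi$ does produce the parametrization $w=\rho(z,c)$ with $z_{s,s}=0$, and the identification of the special-diamond-pair inequalities with the $\mathcal K$-inequalities is exactly the computation the paper carries out later in the proof of Theorem~\ref{pbwconvex}. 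Note also that there is no circularity in invoking Theorem~\ref{mainpbw}, since its proof only uses the nonemptiness of $C(I,I^{fflv})$, not its description.

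That said, three steps are genuinely gapped. First, Proposition~\ref{pbwredundant} describes $C(I,I^m(\mN))$ and says nothing about $C(I,I^{fflv})$; the claim that $w\in C(I,I^{fflv})$ forces $w_a+w_b<w_{g_i(a,b)}+w_{h_i(a,b)}$ for $i\ge 1$ is true but needs its own argument (e.g.\ $\initial_w e(a,b)$ must equal $d_{V_o,V_c}(a,b)$ because standard monomials are linearly independent modulo $I^{fflv}$); the same misattribution recurs in the reverse inclusion, where the inclusion $I^{fflv}\subseteq\initial_w I$ itself requires $\initial_w e(a,b)=d_{V_o,V_c}(a,b)$ and the final equality follows from the Hilbert-function count, not from Proposition~\ref{pbwredundant}. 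Second, the reduction of the strict inequalities for all incomparable pairs and all $i\ge 1$ to those for special diamond pairs and $i=1$, on the subspace where all the $i=0$ relations become equalities, is asserted (``the same face-of-a-maximal-cone argument'') but not carried out, and it is not literally the argument of Theorem~\ref{mainpbw}, which concerns the facets of a different, full-dimensional cone. Third, for the forward inclusion $C(I,I^{fflv})\subseteq\rho(\mathcal K\times\bR^{n-1})$ you must also verify that \emph{every} $\mathcal K$-inequality indexed by $1\le s<t\le n-1$ is realized by some special diamond pair; otherwise your argument only confines $z$ to a possibly larger cone than $\mathcal K$. This surjectivity holds (for given $\sigma<\tau\le n-1$ take the special pair obtained by adjoining $x_{\sigma+1,\tau}$ and $x_{\sigma,\tau+1}$ to a suitable order ideal, cf.\ Proposition~\ref{diagonalsquares}), but it is nowhere checked in your write-up.
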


It should be noted that~\cite{M} (resp.~\cite{fafefom}), in fact, describes the intersection of the $C(I,I^h(\mM))$ (resp. $C(I,I^{V_o,V_c}(\mN))$) with the subspace $w_{a_1}=w_{a_{1,2}}=\dots=w_{a_{1,\dots,n-1}}=0$ (resp. $w_{b_1}=w_{b_{1,2}}=\dots=w_{b_{1,\dots,n-1}}=0$). In the papers these intersections are characterized as the images $\sigma(\mathcal K\times\{0\})$ and $\rho(\mathcal K\times\{0\})$. However, for a point $w\in\bR^\mM$ adding some value $c_k$ to all coordinates with $k$ subscripts does not change $\initial_w I$, due to $I$ being $\deg$-homogeneous, the same goes for $w\in\bR^\mN$. This immediately provides the above reformulations.

Recall that $\overline{C(I,I^h(\mM))}$ is a face of $\overline{C(I,I^m(\mM))}$ and $\overline{C(I,I^{V_o,V_c}(\mN))}$ is a face of $\overline{C(I,I^m(\mN))}$. The mentioned convex geometric properties are as follows.
\begin{theorem}\label{gtconvex}
Every facet of $\overline{C(I,I^m(\mM))}$ either contains $\overline{C(I,I^h(\mM))}$ or intersects $\overline{C(I,I^h(\mM))}$ in a facet of the latter cone.
\end{theorem}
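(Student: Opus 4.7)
\noindent The plan is to analyze the two types of facets of $\overline{C(I,I^m(\mM))}$ provided by Theorem~\ref{maingt} separately. The first type are the Hibi-style facets defined by $w_a+w_b=w_{a\land b}+w_{a\lor b}$ for a diamond pair $\{a,b\}$, and the second are the special facets defined by $w_a+w_b=w_{p_1(a,b)}+w_{q_1(a,b)}$ for a special diamond pair $\{a,b\}$. I would show that facets of the first type contain $\overline{C(I,I^h(\mM))}$, while facets of the second type intersect it in a facet of the latter cone.

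For a facet of the first type the containment is immediate: for every $w\in C(I,I^h(\mM))$ the initial ideal $\initial_w I=I^h(\mM)$ contains the generator $X_aX_b-X_{a\land b}X_{a\lor b}$, which forces $w_a+w_b=w_{a\land b}+w_{a\lor b}$. Hence every Hibi facet hyperplane contains $\overline{C(I,I^h(\mM))}$.

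For the special facets I would use the parametrization provided by Theorem~\ref{gttropical}: $\overline{C(I,I^h(\mM))}=\sigma(\overline{\mathcal K}\times\bR^{n-1})$, where $\overline{\mathcal K}$ is the product of a linear space with a simplicial cone whose $\binom{n-1}{2}$ facets are cut out by the equations $z_{s,t}+z_{s+1,t+1}=z_{s,t+1}+z_{s+1,t}$ for $1\le s<t\le n-1$. The map $\sigma$ is readily checked to be injective, so the facets of $\overline{C(I,I^h(\mM))}$ are precisely the $\sigma$-images of the facets of $\overline{\mathcal K}\times\bR^{n-1}$. It then suffices to prove that for every special diamond pair $\{a,b\}$ the linear form $w_a+w_b-w_{p_1(a,b)}-w_{q_1(a,b)}$ pulls back under $\sigma$ to exactly one of the above facet-defining expressions of $\overline{\mathcal K}$.

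The main obstacle is this last verification, which the plan handles by a direct bookkeeping argument based on the explicit descriptions in Propositions~\ref{strrels} and~\ref{specialpairs}. The key observation is that all six of $a,b,a\land b,a\lor b,p_1(a,b),q_1(a,b)$ agree in entries outside a specific pair of positions: positions $r_1$ and $r_2=r_1+1$ in Possibility~(1), and positions $l$ and $l+1$ in Possibility~(2). All common $z_{r,i_r}$-terms and all $c_k$-terms thus cancel in $w_a+w_b-w_{p_1(a,b)}-w_{q_1(a,b)}$, leaving for Possibility~(1) the expression $z_{r_1,j_{r_1}}+z_{r_1+1,j_{r_1}+1}-z_{r_1,j_{r_1}+1}-z_{r_1+1,j_{r_1}}$, which is the facet equation of $\overline{\mathcal K}$ at $(s,t)=(r_1,j_{r_1})$, and for Possibility~(2) the expression $z_{l,n-1}+z_{l+1,n}-z_{l,n}-z_{l+1,n-1}$, which is the facet equation at $(s,t)=(l,n-1)$. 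A final check that in each case the resulting $(s,t)$ lies in the valid range $1\le s<t\le n-1$ completes the argument.
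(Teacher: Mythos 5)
Your proposal is correct and follows essentially the same approach as the paper: split the facets into the diamond-pair ($i=0$) and special-diamond-pair ($i=1$) types, show the first type of hyperplane contains $\overline{C(I,I^h(\mM))}$, and for the second type compute the pullback of $w_a+w_b-w_{p_1(a,b)}-w_{q_1(a,b)}$ under $\sigma$ and match it to a facet equation of $\overline{\mathcal K}$. The only minor deviation is that for $i=0$ you argue via $\initial_w s(a,b)=d(a,b)$ rather than computing both sides directly from the parametrization as the paper does, but this is a cosmetic difference.
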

\begin{proof}
Consider the facet of $\overline{C(I,I^m(\mM))}$ contained in the hyperplane $H=\{w_a+w_b=w_{p_i(a,b)}+w_{q_i(a,b)}\}$ where either $\{a,b\}\subset\mM$ is a diamond pair and $i=0$ or $\{a,b\}$ is a special diamond pair and $i=1$. Denote $a=a_{i_1,\dots,i_k}$ and $b=a_{j_1,\dots j_l}$ where either $k=l$ or $k=l+1$. 

First consider the case when $i=0$. We claim that $H$ contains the cone $C(I,I^h(\mM)$. Indeed, consider $w=\sigma((z,c))\in C(I,I^h(\mM)$ as in Theorem~\ref{gttropical}. We see that \[w_a+w_b=z_{1,i_1}+\dots+z_{k,i_k}+c_k+z_{1,j_1}+\dots+z_{l,j_l}+c_l\] and 
\begin{multline*}
w_{p_i(a,b)}+w_{q_i(a,b)}=w_{a\land b}+w_{a\lor b}=z_{1,\min(i_1,j_1)}+\dots+z_{l,\min(i_l,j_l)}+\\z_{l+1,i_{l+1}}+\dots+z_{k,i_k}+c_k+z_{1,\max(i_1,j_1)}+\dots+z_{l,\max(i_l,j_l)}+c_l,
\end{multline*}
the claim follows. (In fact, the same argument shows that the similarly defined hyperplane $H$ contains $C(I,I^h(\mM))$ for any incomparable $\{a,b\}$ when $i=0$.)

Now, suppose that $i=1$ and $\{a,b\}$ is a special diamond pair. We show that $H$ intersects $\overline{C(I,I^h(\mM))}$ in a facet. Let $\Xi_0\subset\Xi$ be the subspace $z_{1,1}=\dots=z_{n,n}=0$. The restriction of $\sigma$ to $\Xi_0\times\bR^{n-1}$ is seen to be injective, therefore, each facet of $\overline{C(I,I^h(\mM))}$ has the form  $\sigma(f\times\bR^{n-1})$ where $f$ is a facet of $\overline{\mathcal K}$.

Denote $p_1(a,b)=a_{\alpha_1,\dots,\alpha_k}$ and $q_1(a,b)=a_{\beta_1,\dots,\beta_l}$. For $w=\sigma((z,c))\in \overline{C(I,I^h(\mM)}$ (where $z\in\overline{\mathcal K}$ and $c\in\bR^{n-1}$) consider the difference $Q=w_a+w_b-w_{p_1(a,b)}-w_{q_1(a,b)}$. If $\{a,b\}$ is given by possibility (1) in Proposition~\ref{specialpairs} so that $i_s=j_s-1=j_{s+1}-2=i_{s+1}-3$, then \[Q=\sum_{r=1}^k(z_{r,i_r}+z_{r,j_r}-z_{r,\alpha_r}-z_{r,\beta_r})=z_{s,i_s+1}-z_{s,i_s+2}+z_{s+1,i_s+2}-z_{s+1,i_s+1}.\] If $\{a,b\}$ is given by possibility (2) in Proposition~\ref{specialpairs} so that $i_k=i_{k-1}+1=j_{k-1}+2=n$, then \[Q=\sum_{r=1}^{k-1}(z_{r,i_r}+z_{r,j_r}-z_{r,\alpha_r}-z_{r,\beta_r})+z_{k,i_k}-z_{k,\alpha_k}=z_{k-1,n-1}-z_{k-1,n}+z_{k,n}-z_{k,n-1}.\] In both cases we see that those points in $\overline{C(I,I^h(\mM)}$ where $Q$ vanishes form a facet of $\overline{C(I,I^h(\mM)}$.
\end{proof}

\begin{theorem}\label{pbwconvex}
Every facet of $\overline{C(I,I^m(\mN))}$ either contains $\overline{C(I,I^{V_o,V_c}(\mN))}$ or intersects $\overline{C(I,I^{V_o,V_c}(\mN))}$ in a facet of the latter cone.
\end{theorem}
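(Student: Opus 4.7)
The plan is to mirror the argument of Theorem~\ref{gtconvex}. By Theorem~\ref{mainpbw} every facet of $\overline{C(I,I^m(\mN))}$ is cut out by an equation of one of two forms: (a) $w_a+w_b=w_{a\lor b}+w_{a\odot b}$ for a diamond pair $\{a,b\}\subset\mN$, or (b) $w_a+w_b=w_{a\land b}+w_{h_1(a,b)}$ for a special diamond pair $\{a,b\}\subset\mN$ (using $g_1(a,b)=a\land b$ from Proposition~\ref{pbwspecialpairs}). First I would handle case (a): by Corollary~\ref{quadgens} the binomial $X_aX_b-X_{a\odot b}X_{a\lor b}$ lies in $I^{V_o,V_c}(\mN)$, so for any $w_0\in C(I,I^{V_o,V_c}(\mN))$ the identity $\initial_{w_0}I=I^{V_o,V_c}(\mN)$ forces this binomial to equal its own initial part; hence $w_a+w_b=w_{a\odot b}+w_{a\lor b}$ throughout $\overline{C(I,I^{V_o,V_c}(\mN))}$, which shows the hyperplane in (a) contains the smaller cone.

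For case (b), I would identify the slice of the hyperplane with a facet of $\overline{C(I,I^{V_o,V_c}(\mN))}$ using the parametrization $\rho:\overline{\mathcal K}\times\bR^{n-1}\to\overline{C(I,I^{V_o,V_c}(\mN))}$ of Theorem~\ref{pbwtropical}. Since case (a) gives $w_a+w_b=w_{a\odot b}+w_{a\lor b}$ on the smaller cone, the defining condition of (b) restricts to $Q=0$ there, where
\[Q:=w_{a\odot b}+w_{a\lor b}-w_{a\land b}-w_{h_1(a,b)}.\]
With $\{x_{s,t},x_{s-1,t+1}\}=\iota_\mN(a\lor b)\setminus\iota_\mN(a\land b)$ as in Proposition~\ref{diagonalsquares}, Propositions~\ref{p1q1} and~\ref{pbwspecialpairs} give
\[\iota_\mN(a\odot b)=\iota_\mN(a\land b)\setminus\{x_{s-1,t}\},\qquad \iota_\mN(h_1(a,b))=\iota_\mN(a\lor b)\cup\{x_{s,t+1}\}.\]
Splitting $Q=\Delta_1-\Delta_2$ with $\Delta_1=w_{a\odot b}-w_{a\land b}$ and $\Delta_2=w_{h_1(a,b)}-w_{a\lor b}$, and using Proposition~\ref{tableaufromideal} to write $w_\beta=c_{k(\beta)}-\sum_{r} z_{r,\alpha_r(\beta)}$ for $w=\rho((z,c))$ (with $z_{r,r}=0$), a direct bookkeeping of maximal elements should yield $\Delta_1=z_{s-1,t}$ and $\Delta_2=z_{s,t}+z_{s-1,t+1}-z_{s,t+1}$. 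Therefore
\[Q=(z_{s-1,t}+z_{s,t+1})-(z_{s-1,t+1}+z_{s,t}),\]
which is exactly the linear form whose vanishing defines the facet of $\overline{\mathcal K}$ indexed by $(u,v)=(s-1,t)$; passing through $\rho$ gives a facet of $\overline{C(I,I^{V_o,V_c}(\mN))}$.

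The main obstacle will be justifying the explicit formulas for $\Delta_1$ and $\Delta_2$ via a careful analysis of maximal elements. For $\Delta_1$, removing $x_{s-1,t}$ from $\iota_\mN(a\land b)$ could in principle promote new maximal elements; the candidate predecessors $x_{s-2,t}$ and $x_{s-1,t-1}$ remain dominated by $x_{s-2,t+1}$ and $x_{s,t-1}$ respectively, both forced into $\iota_\mN(a\land b)$ by the minimality of $x_{s-1,t+1}$ and $x_{s,t}$ in $\mathcal P(\mN)\setminus\iota_\mN(a\land b)$. In the diagonal case $s=t$ (possibility (2) of Proposition~\ref{specialpairs}) a new maximal element $x_{s-1,s-1}$ does appear, but it contributes $z_{s-1,s-1}=0$ and so the formula for $\Delta_1$ is unchanged. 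For $\Delta_2$, the key input is that $x_{s,t+1}$ covers exactly $x_{s,t}$ and $x_{s-1,t+1}$ in $\mathcal P(\mN)$, so adding it to $\iota_\mN(a\lor b)$ demotes precisely these two maximal elements while introducing $x_{s,t+1}$ as a new maximum. The degree grading cancels in $Q$ because $k(a\odot b)+k(a\lor b)=k(a\land b)+k(h_1(a,b))$, which is an instance of the $\deg$-homogeneity of the straightening relation.
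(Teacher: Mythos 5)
Your proof is correct and follows the same overall strategy as the paper: for $i=0$ show $Q=0$ on the smaller cone, and for $i=1$ compute the linear form $Q$ in the $z$-coordinates and recognize it as a facet inequality of $\overline{\mathcal K}$. The organizational choices, however, are worth noting. For case (a) you invoke primality of the toric ideal $I^{V_o,V_c}(\mN)$ to force $w_0$-homogeneity of $X_aX_b-X_{a\odot b}X_{a\lor b}$ (the paper reuses this argument only inside Theorem~\ref{pbwstrlaws}(b), and in the proof of Theorem~\ref{pbwconvex} instead computes $Q$ directly from the $V_c$-maximal-element description of $\odot$); the two are equivalent and your version is arguably more transparent. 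For case (b) you exploit the case (a) identity to rewrite $Q=w_{a\odot b}+w_{a\lor b}-w_{a\land b}-w_{h_1(a,b)}$ and then decompose as $\Delta_1-\Delta_2$, where each difference corresponds to adjoining or deleting a single element of the order ideal, so the linear form is read off from the change in maximal elements via Proposition~\ref{tableaufromideal}. The paper instead substitutes the explicit subscript tuples from Proposition~\ref{pbwspecialpairs} and must treat possibilities (1) and (2) of Proposition~\ref{specialpairs} (and the edge case $s=2$) separately; your bookkeeping handles the diagonal case $s=t$ uniformly since the spurious new maximal element $x_{s-1,s-1}$ lies in $V_o$ and contributes $z_{s-1,s-1}=0$. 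The one small point to make explicit is that, as in the proof of Theorem~\ref{gtconvex}, the restriction of $\rho$ to $\Xi_0\times\bR^{n-1}$ (where $\Xi_0=\{z_{r,r}=0\}$) is injective, so that the $\rho$-image of a facet of $\overline{\mathcal K}$ is indeed a facet of $\overline{C(I,I^{V_o,V_c}(\mN))}$.
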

\begin{proof}
Consider the facet of $\overline{C(I,I^m(\mN))}$ contained in the hyperplane $H=\{w_a+w_b=w_{g_i(a,b)}+w_{h_i(a,b)}\}$ where either $\{a,b\}\subset\mN$ is a diamond pair and $i=0$ or $\{a,b\}$ is a special diamond pair and $i=1$. Denote $a=b_{\alpha_1,\dots,\alpha_k}$ and $b=b_{\beta_1,\dots \beta_l}$, $g_i(a,b)=b_{\gamma_1,\dots,\gamma_k}$ and $h_i(a,b)=b_{\delta_1,\dots,\delta_l}$ with $k\le l$. For $w=\sigma((z,c))\in \overline{C(I,I^h(\mM)}$ (where $z\in\overline{\mathcal K}$ and $c\in\bR^{n-1}$) consider the difference $Q=w_a+w_b-w_{g_i(a,b)}-w_{h_i(a,b)}$.

We claim that if $i=0$, then necessarily $Q=0$. Indeed, in this case $g_i(a,b)=a\odot b$ and $h_i(a,b)=a\lor b$. The set $\{x_{r,\alpha_r}|\alpha_r>r\}\subset\mathcal P(\mN)$ is the set of maximal elements in $\iota_\mN(a)$ that lie in $V_c$. The sets $\{x_{r,\beta_r}|\beta_r>r\}$, $\{x_{r,\gamma_r}|\gamma_r>r\}$ and $\{x_{r,\delta_r}|\delta_r>r\}$ are characterized similarly. From the definition of $\odot$ we now see that $Q=0$.

Now suppose that $i=1$ and $\{a,b\}$ is a special diamond pair. Denote $\iota_\mN(a)\backslash\iota_\mN(a\land b)=x_{s,t}$. Now, suppose that the special diamond pair $\{\tau^{-1}(a),\tau^{-1}(b)\}\subset\mM$ is given by possibility (1) in Proposition~\ref{specialpairs}. In the proof of Proposition~\ref{pbwspecialpairs} we have seen that $k=l$ and $\alpha_r=\beta_r=\gamma_r=\delta_r$ unless $r\in[s-2,s]$ while $\alpha_{s-2}=\gamma_{s-2}$, $\beta_{s-2}=\delta_{s-2}$, $\alpha_{s-1}=\delta_{s-1}$ and $\beta_{s}=\gamma_{s}$. There we have also seen that $\beta_{s-1}=\delta_s=t+1$ and $\alpha_s=\gamma_{s-1}=t$. We obtain \[Q=-z_{s-1,t+1}-z_{s,t}+z_{s-1,t}+z_{s,t+1}.\]

If $\{\tau^{-1}(a),\tau^{-1}(b)\}\subset\mM$ is given by possibility (1) in Proposition~\ref{specialpairs}, then by similarly consulting the proof of Proposition~\ref{pbwspecialpairs} we obtain \[Q=-z_{k,k+2}-z_{k+1,k+1}+z_{k,k+1}+z_{k+1,k+2}.\]

Again, in both cases we see that those points in $\overline{C(I,I^h(\mM)}$ where $Q$ vanishes form a facet of $\overline{C(I,I^h(\mM)}$.
\end{proof}

\begin{remark}
By applying Theorems~\ref{gtconvex} and~\ref{pbwconvex} one could make the proofs of the main Theorems~\ref{maingt} and~\ref{mainpbw} somewhat simpler and more natural. Therefore, it would be interesting to obtain proofs of Theorems~\ref{gtconvex} and~\ref{pbwconvex} which do not rely on the main theorems. No such proofs have been found as of yet despite several attempts. 
\end{remark}

\begin{remark}
Theorems~\ref{gtconvex} and~\ref{pbwconvex} show that both $\overline{C(I,I^m(\mM))}$ and $\overline{C(I,I^m(\mN))}$ have a curious property. There exists a proper face $F$ such that every facet either contains $F$ or intersects $F$ in a facet of $F$. This seems to be a reasonably strong property, most convex polyhedra do not have such faces. A simple example of a polyhedron and its face which do have this property is a pyramid and its base. It would be interesting to see if one could deduce any further information about the convex geometry of $C(I,I^m(\mM))$ and $C(I,I^m(\mN))$ from this property.
\end{remark}

\end{document}